\documentclass[notitlepage,11pt,reqno]{amsart}  
\usepackage[foot]{amsaddr}
\usepackage[numbers,sort]{natbib}
\usepackage{amssymb,nicefrac,bm,upgreek,mathtools,verbatim,yhmath}

\usepackage[mathscr]{eucal}
\usepackage{dsfont} 	

\usepackage[margin=10pt,font=small,labelfont=bf]{caption}

\usepackage{xcolor}

\makeatletter
\renewcommand{\@setref}[3]{%
  \ifx#1\relax
    \textcolor{red}{\textbf{[??]}}%
    \G@refundefinedtrue
  \else
    \expandafter#2#1\null
  \fi
}
\makeatother

\usepackage{hyperref}

\usepackage[margin=1in]{geometry} 

\allowdisplaybreaks  
\usepackage{color}  
\definecolor{dmagenta}{rgb}{.4,.1,.5}       
\definecolor{dblue}{rgb}{.0,.0,.5}     
\definecolor{mblue}{rgb}{.0,.0,.8}     
\definecolor{ddblue}{rgb}{.0,.0,.4}            
\definecolor{dred}{rgb}{.6,.0,.0}   
\definecolor{dgreen}{rgb}{.0,.5,.0}  
\definecolor{Eeom}{rgb}{.0,.0,.5}

\usepackage[normalem]{ulem}
\usepackage{algorithm}
\newfloat{algorithm}{t}{lop}

\newtheorem{lemma}{Lemma}[section]
\newtheorem{theorem}{Theorem}[section]
\newtheorem{proposition}{Proposition}[section]
\newtheorem{corollary}{Corollary}[section]

\theoremstyle{definition}

\newtheorem{assumption}{Assumption}[section]

\theoremstyle{remark}
\newtheorem{remark}{Remark}[section]
\numberwithin{equation}{section}

\hypersetup{
  colorlinks=true,
  citecolor=dblue,
  linkcolor=dblue,
  frenchlinks=false,
  pdfborder={0 0 0},
  naturalnames=false, 
  hypertexnames=false,
  breaklinks}
\usepackage[capitalize,nameinlink]{cleveref}

\crefname{section}{Section}{Sections}
\crefname{subsection}{Section}{Sections}
\crefname{condition}{Condition}{Conditions}
\crefname{hypothesis}{Hypothesis}{Conditions}
\crefname{assumption}{Assumption}{Assumptions} 
\crefname{lemma}{Lemma}{Lemmas} 
  
\crefformat{equation}{\textup{#2(#1)#3}}
\crefrangeformat{equation}{\textup{#3(#1)#4--#5(#2)#6}}
\crefmultiformat{equation}{\textup{#2(#1)#3}}{ and \textup{#2(#1)#3}}
{, \textup{#2(#1)#3}}{, and \textup{#2(#1)#3}}
\crefrangemultiformat{equation}{\textup{#3(#1)#4--#5(#2)#6}}%
{ and \textup{#3(#1)#4--#5(#2)#6}}{, \textup{#3(#1)#4--#5(#2)#6}}%
{, and \textup{#3(#1)#4--#5(#2)#6}}

\Crefformat{equation}{#2Equation~\textup{(#1)}#3}
\Crefrangeformat{equation}{Equations~\textup{#3(#1)#4--#5(#2)#6}}
\Crefmultiformat{equation}{Equations~\textup{#2(#1)#3}}{ and \textup{#2(#1)#3}}
{, \textup{#2(#1)#3}}{, and \textup{#2(#1)#3}}
\Crefrangemultiformat{equation}{Equations~\textup{#3(#1)#4--#5(#2)#6}}%
{ and \textup{#3(#1)#4--#5(#2)#6}}{, \textup{#3(#1)#4--#5(#2)#6}}%
{, and \textup{#3(#1)#4--#5(#2)#6}}

\crefdefaultlabelformat{#2\textup{#1}#3}

\newcommand{\cB}{{\mathcal{B}}}

\newcommand{\sV}{\mathscr{V}}
\newcommand{\Lg}{\mathcal{L}}

\newcommand{\sQ}{{\mathscr{Q}}}

\newcommand{\sS}{{\mathscr{S}}}

\newcommand{\cX}{{\mathcal{X}}}

\newcommand{\frB}{\mathfrak{B}}

\newcommand{\tP}{\mathscr{P}}

\newcommand{\beql}[1]{\begin{equation}\label{#1}}

\newcommand{\beq}{\begin{displaymath}}
\newcommand{\eeqno}{\end{displaymath}}
\newcommand{\eeq}{\end{equation}}

\newcommand{\E}{\mathbb{E}}
\newcommand{\PP}{\mathbb{P}}

\newcommand{\RR}{\mathds{R}}
\newcommand{\NN}{\mathds{N}}

\newcommand{\D}{\mathrm{d}}

\newcommand{\Uadm}{\mathfrak{U}}
\newcommand{\Um}{\mathfrak{U}_{\mathrm{M}}}
\newcommand{\Usm}{\mathfrak{U}_{\mathrm{SM}}}

\newcommand{\Ind}{\mathds{1}}   
\newcommand{\Cc}{\mathcal{C}}

\newcommand{\transp}{^{\mathsf{T}}}

\newcommand{\grad}{\nabla}

\newcommand{\calP}{\mathcal{P}}

\newcommand{\cC}{\mathcal{C}}

\newcommand{\cF}{\mathcal{F}}

\newcommand{\frC}{\mathfrak{C}}

\newcommand{\wV}{\widetilde V}

\newcommand{\Upu}{\beta}

\newcommand{\vt}{\vartheta}

\makeatletter
\DeclareRobustCommand\widecheck[1]{{\mathpalette\@widecheck{#1}}}
\def\@widecheck#1#2{%
    \setbox\z@\hbox{\m@th$#1#2$}%
    \setbox\tw@\hbox{\m@th$#1%
       \widehat{%
          \vrule\@width\z@\@height\ht\z@
          \vrule\@height\z@\@width\wd\z@}$}%
    \dp\tw@-\ht\z@
    \@tempdima\ht\z@ \advance\@tempdima2\ht\tw@ \divide\@tempdima\thr@@
    \setbox\tw@\hbox{%
       \raise\@tempdima\hbox{\scalebox{1}[-1]{\lower\@tempdima\box
\tw@}}}%
    {\ooalign{\box\tw@ \cr \box\z@}}}
\makeatother

\usepackage{accents}
\newlength{\dhatheight}

\newcommand{\bU}{\mathbb{U}}

\newcommand{\Wsm}{\mathfrak{W}_{\mathrm{SM}}}
\newcommand{\Wm}{\mathfrak{W}_{\mathrm{M}}}

\newcommand{\mynegspace}{\hspace{-0.12em}}
\newcommand{\lvvvert}{\rvert\mynegspace\rvert\mynegspace\rvert}
\newcommand{\rvvvert}{\rvert\mynegspace\rvert\mynegspace\rvert}
\DeclarePairedDelimiter{\vvvert}{\lvvvert}{\rvvvert}

\let\oldtocsection=\tocsection
\let\oldtocsubsection=\tocsubsection
\let\oldtocsubsubsection=\tocsubsubsection
\renewcommand{\tocsection}[2]{\hspace{0em}\oldtocsection{#1}{#2}}
\renewcommand{\tocsubsection}[2]{\hspace{1em}\oldtocsubsection{#1}{#2}}
\renewcommand{\tocsubsubsection}[2]{\hspace{2em}\oldtocsubsubsection{#1}{#2}}

\newcommand{\ttl}{\Large Exponential rate of convergence of relative value iteration \\[5pt] algorithms 
 for  ergodic controls of diffusions  }

\begin{document}
\title[]{\ttl}

\author{Sumith Reddy Anugu$^\dagger$}
\author{Guodong Pang$^\ddag$}

\address{$^\dagger$Institut F\"ur Mathematik, Technische Universit\"at Ilmenau, Ilmenau, Germany 98693}
\email{sumith-reddy.anugu@tu-ilmenau.de}
\address{$^\ddag$Department of Computational Applied Mathematics and Operations Research,
George R. Brown School of Engineering and Computing,
Rice University,
Houston, TX 77005}
\email{gdpang@rice.edu}

\keywords{Exponential rate of convergence, relative value iterations, diffusions, ergodic control, long-run average cost, ergodic risk sensitive control, uniform stability, weighted semi-norm/sup-norm}

\date{\today}

\maketitle

\allowdisplaybreaks

\begin{abstract}  
In this paper, we investigate the rate of convergence of the relative value iteration (RVI) algorithms for diffusions in $\RR^d$ under both the conventional ergodic cost (CEC) and ergodic risk-sensitive cost (ERSC) criteria, and under the uniform exponential stability condition. 
The existing RVI algorithms for the CEC and ERSC problems solve the associated initial value Hamilton-Jacobi-Bellman type equations whose solutions are shown to converge asymptotically to the corresponding optimal values. However, the rates of convergence for such algorithms have remained open. 
This paper proposes discrete-time implementations for the RVI algorithms based on slight modifications of the associated PDEs, and proves that the rates of convergence of these RVI algorithms are exponential under a weighted sup-norm.  These implementations have discrete-time iterates that can be explicitly expressed as recursive systems. The difference between these iterates and  the desired value function  in the CEC case can then  be expressed in terms of  the associated Markov kernels. Similarly, this can be done for the 
 logarithms of the corresponding iterates and desired value function in the ERSC case in terms of the  associated Markov kernels for the extended diffusion.  As a result, we are able to prove the desirable contraction properties in order to establish the exponential rate of convergence by making use of a weighted semi-norm in which Markov kernel acts a contraction. 

\end{abstract}

\medskip

\section{Introduction}

Ergodic control of diffusions under the conventional ergodic cost (CEC) criterion  and under the ergodic risk-sensitive cost (ERSC) criterion  have been extensively studied in the literature; see for example, the book \cite{arapostathis2012ergodic} and survey \cite{biswas2022survey}. 
In particular, under the condition that the controlled diffusion is uniformly stable, the well-posedness (existence and uniqueness of solutions) of the Hamiltonian-Jacobi-Bellman (HJB) equations,  and the characterizations of optimal controls via these HJB equations  are well established for CEC problems \cite[Chapter~3]{arapostathis2012ergodic} and for ERSC problems in  \cite{FM95,arapostathis2019strict,AB20,ABBK20}.  The solutions to these HJB equations comprise of two quantities:  the  value function and the optimal value (due to the form of the respective HJB equations, the value function is unique up to an additive constant in the CEC case and unique up to a multiplicative constant in the ERSC case). In general, explicit solutions to the HJB equations (resulting in concrete optimal controls) are difficult to obtain, except in very structured problems such as linear-quadratic problems. 
Hence, in order to find the optimal controls for these problems, one resorts to computational algorithms. Two such common algorithms  include  relative value iteration (RVI) algorithms and policy iteration algorithms. In this paper we focus on RVI algorithms only, and
 for the study of policy iteration algorithms, we 
 refer the reader to \cite{arapostathis1993discrete,meyn1997policy} (for the CEC criterion in discrete-time Markov chains - DTMCs), \cite{arapostathis2012} (for the CEC criterion in diffusions),   \cite{biswas2022ergodic,borkar2002risk} (for the ERSC criterion in DTMCs), 
  \cite{arapostathis2021policy} (for the ERSC criterion in diffusions), and \cite{biswas2022survey} (for a survey of results under the ERSC criterion). 

The brief overview of the structure of the RVI algorithms is as follows:  the iterates of these algorithms approximate the value function and the optimal value directly, and after many iterates we use these approximations, to construct a control which is nearly optimal. To do this, one fixes a reference point from the state space and treats the iterate evaluated at the reference point as an approximation of the optimal value. Then, one constructs a particular kind of recursion  such that the associated fixed point equation matches with the respective HJB equation.  The iterates of this recursion are the approximations of the value function. 

For ergodic control of diffusions, the RVI algorithm is first studied in Arapostathis et al.  \cite{arapostathis2012relative} (see also \cite{arapostathis2017correction}), which involves solving a nonlinear parabolic PDE given an initial value function, see \eqref{eq-rvi-org-0}.  
Under the uniform stability condition,  its solution is shown to converge to the optimal value function, and one also obtains the optimal cost/value and optimal stationary Markov controls as a result. This RVI algorithm was also shown to converge under the near-monotonicity condition in \cite{arapostathis2014convergence}.  
For ERSC control of diffusions, Arapostathis and Borkar \cite{arapostathis2020relative} studied the multiplicative RVI algorithm, 
which requires solving  an initial-value nonlinear parabolic PDE that is constructed directly from the multiplicative HJB equation, see \eqref{eq-rvi-org-gamma}. 
Under the positive recurrence condition of the associated ground diffusion (a special case of the extended diffusion, see \eqref{eq-Z-cont}), the convergence is established only within a neighborhood of the solution to the risk-sensitive
HJB equation, while under the uniform exponential stability condition,
the global convergence of the multiplicative RVI algorithm starting
from any positive initial condition is established. 
See also \cite{hmedi2023global} on the global convergence result under the near-monotone condition. 
However, the rates of convergence of these RVI algorithms have remained open (see \cite{arapostathis2019open}).

We recall that the implementation of the RVI algorithm requires a numerical solution to the associated initial-value PDE problem. Exact numerical schemes are irrelevant in the proofs for the convergence of the PDE solution for the RVI algorithm  in   \cite{arapostathis2012relative}. The proofs rely heavily on analytical properties of the associated PDEs, and the theory of monotone dynamical systems, using the theory of reverse martingales.
However, we find that this method is not amenable for establishing the rate of convergence.

In this paper we propose discrete-time implementations of the slightly modified RVI algorithms under both the CEC and ERSC criteria and under the uniformly  exponential stability of the controlled diffusions, and show that their rates of convergence are exponential in an appropriate sense.  For the CEC problems,  the direct implementations of the RVI algorithms are via~\eqref{eq-rvi-org-0}, which solves the associated PDEs ``once'' over a large time period, and solutions are sampled at the discretized times. 
We propose a modified implementation of the RVI algorithms given in Algorithms~\ref{alg-rvi-mod-0}. 
Specifically,
in each iterate time window, the associated PDEs are solved using the solutions at the end of the previous time period as the initial values. 
Consequently,  we obtain an  explicit recursive system for the outputs at the discretized end times of the iterate time windows. We observe that the difference between the  iterates and the desired value function  can be expressed via Markov kernels by applying It{\^o}'s formula; see~\eqref{eq-V-conv-11} and~\eqref{eq-V-conv-12}, and also,~\eqref{eq-sketch-1} in the uncontrolled case for illustration purpose. 
 To prove the rate of convergence, a key step is to establish 
the contraction property of the associated Markov kernels under an appropriate weighted semi-norm, for which we apply the  results from  Hairer and Mattingly  \cite{hairer2011}.  Specifically, this requires checking two conditions (i) a Foster-Lyapunov inequality and (ii) a minorization condition for the Markov kernel (see Theorem 3.1 of \cite{hairer2011}, summarized in Proposition~\ref{prop-contract} for convenience). Under the uniform  Foster-Lyapunov condition (Assumption~\ref{a-main}), we are able to  verify these conditions (see Corollaries~\ref{cor-lyap-drift} and ~\ref{cor-minor}). We choose to work with the aforementioned weighted semi-norm instead of the conventional weighted sup-norm because  using the weighted semi-norm, we can establish the convergence of the iterates of the value function to the desired value function and then proceed to use this to  analyze the convergence of the value iterates to the optimal value. In contrast, this kind of ``decoupling'' does not seem to follow upon working with weighted sup-norm.

  We use the contraction property of Markov kernels to establish that the iterations of the RVI algorithm converge to the desired value function of the CEC problem under the appropriately chosen weighted semi-norm, at an exponential rate; see Theorem~\ref{thm-rvi-0}. 
   This is done by observing that the difference between iterates  of the value function of our modified RVI algorithm and the desired value function is bounded (from above and below) by integrals of the difference of the preceding iterates and the desired functions over Markov kernels (associated with appropriate choice of Markov controls), modulo additive terms that are constants; see Equations~\eqref{eq-V-conv-11} and~\eqref{eq-V-conv-12}. From here, using the aforementioned contraction property, we conclude that the iterates of the value function in our algorithm converge exponentially to the desired value function in the weighted semi-norm.  Next,  we show that the sequence of  value function iterates evaluated at the reference point converges to the desired optimal value at an exponential rate, in Lemma~\ref{lem-L-conv}. Finally,  using the relation between weighted semi-norm and sup-norm (see Lemma~\ref{lem-compare}), we conclude that the value function iterates of our modified algorithm converge to the desired value function in the weighted sup-norm, at an exponential rate; see Corollary~\ref{cor-norm-conv}.

   \smallskip
   
 The proof for the rate of convergence of the RVI algorithm the ERSC problem is much more involved. In this case, we assume  a stronger form of Foster-Lyapunov condition (Assumption~\ref{a-main-gamma}) and also restrict the  initial condition of the RVI algorithm to non-negative continuous functions with appropriate growth condition. Under these conditions, we establish that the logarithm of the  iterates of our modified RVI algorithm converges to the logarithm of the desired value function in the weighted semi-norm in Theorem~\ref{thm-rvi-gamma}. We start with transforming the recursive system, multiplicative HJB equation  and 
 the Lyapunov condition into the corresponding additive forms by an exponential transformation in Sections~\ref{sec-add-eq-hjb-gamma} and~\ref{sec-add-a-main-gamma}, respectively. The recursive system and the resulting HJB equation have an additional maximization operation over an auxiliary control set. That corresponds to a CEC problem for an extended diffusion, with the auxiliary control in the drift,  and a quadratic (in terms of the auxiliary control) penalty function in the running cost. From this setup, it might appear that the methodology developed for the CEC problem could be easily adapted for this, but unfortunately, this is not the case and the verification of the Lyapunov and  minorization conditions (for the Markov kernel associated with the extended diffusion) in Proposition~\ref{prop-contract} is substantially more difficult to establish. 
 This is because unlike the CEC problem, the  verification of these conditions involves obtaining the appropriate moment bounds (uniform in the iteration number) in terms of the auxiliary control. However, these bounds are difficult to obtain.

 To overcome this difficulty, we introduce an intermediate  family of iterates (via~\eqref{eq-widehat}) which are  identical to the family of  iterates of our modified RVI algorithm, modulo an additive constant term depending only on the iteration number. The reason for such a construction is three-fold: (1) In terms of the weighted semi-norm, the desired value function is equidistant from the iterates of our modified RVI algorithm and  the intermediate family of iterates (see~\eqref{eq-const-series});    (2) The aforementioned moments bounds (uniform in iteration number) on the auxiliary controls associated with the intermediate  iterates is easier to establish in this case; (3) The difference between the logarithm of the intermediate  iterates and the logarithm of the desired value function is bounded (from above and below) by integrals of the difference of the preceding intermediate iterates and the desired functions over Markov kernels associated with appropriate choice of Markov controls for the extended diffusion, modulo constant additive terms (independent of the iteration number); see Equations~\eqref{eq-mk-exp-ub} and~\eqref{eq-mk-exp-lb}. We begin by proving the one-step contraction under the assumption that certain weighted    sup-norm between the intermediate iterates and the desired value function is strictly less than $1$.     In light of the above reasons, we establish the aforementioned moments bounds for the extended diffusion for a specific choice of  control-auxiliary control pairs.     Then, for the extended diffusion under the aforementioned control pairs, we verify the aforementioned  Foster-Lyapunov condition (in Lemma~\ref{lem-exp-conv}) and  the minorization condition  (in Lemma~\ref{lem-exp-minor}) for the applicability of Proposition~\ref{prop-contract}. 
 From hereon, following the arguments from the CEC case, we establish that if the scale of the weight of the weighted sup-norm is chosen small enough,  the weighted semi-norm of the difference between the logarithms of the  intermediate iterates and the desired value function satisfies a one-step contraction property. We then show that the above condition also holds for the next iterate; hence  by repeating the argument,   the one-step contraction property is shown to be  preserved for all the iterates, given that the first iterate satisfies the aforementioned property on the weighted sup-norm.   It immediately follows that the logarithm of the iterates of our RVI algorithm converge to that of the desired value function under the weighted semi-norm at an exponential rate. 
  Using the arguments from the CEC case, we proceed to show that the exponential convergence, in fact, also holds under the weighted sup-norm.

 \smallskip

To the best of our knowledge,  this is the first work concerning the analysis of rate of convergence of RVI algorithms for ergodic control of Markov processes under either CEC or ERSC criteria beyond the case of finite-state space. 
We refer  to \cite{white1963dynamic,federgruen1978contraction,schweitzer1988contraction} for the geometric rate of convergence analysis of White's RVI algorithm for DTMCs under the CEC criterion, and \cite{bielecki1999risk,cavazos2003value} for that of direct extension of White's RVI algorithm for DTMCs under the ERSC criterion. 
Both lines of work make use of the contraction property of the risk neutral or risk sensitive Bellman operators in order to prove the rate of convergence in the span-norm.
 We also refer to \cite{bertsekas1998new} for the geometric rate of convergence of Jacobi-like RVI algorithm for DTMCs under the  CEC criterion, and  \cite{anugu2025new} for that of the corresponding Jacobi-like RVI algorithms under the  ERSC criterion. Both works show the rate of convergence in certain weighted sup-norms, by exploiting either the (global or local) contraction properties of the associated Bellman operators. Like these works, the choice of the norms for the convergence also plays a significant role in our study, which uses the weighted sup-norm (that is closely related to a weighted semi-norm) inspired by \cite{hairer2011}. 
For DTMCs with countable state space, we refer to \cite{aviv1999value, hordijk1974convergence,cavazos1996value,cavaozs1996value1} under the CEC criterion and \cite{borkar2002risk} under the ERSC criterion, for the convergence of the RVI algorithms, but no rate of convergence is established; see the same results in \cite{arapostathis2020relative} for DTMCs with general Polish space. 

We believe that our framework may be employed and further developed to study rates of convergence of RVI algorithms in CEC and ERSC formulations for DTMCs with countable and more general state spaces, as well as for continuous-time Markov processes with general state spaces.  
In the context of diffusions,  the methodology could be potentially extended to study the rate of convergence of the value iteration algorithms for other types of control problems, by exploiting properties of the associated Markov kernels.
For example, a RVI algorithm is developed for zero-sum  stochastic  differential games in \cite{arapostathis2013relative}, and our methodology may be extended to study its rate of convergence. It would be interesting to investigate these in future work.

\subsection{Notation} In the entire paper, we set $(\Omega, \mathcal{F},\mathcal{F}_t,\PP)$ as the underlying filtered probability space. For any $k$ and $m$, $\cC^m(\RR^k)$ is the set of $m$--times differentiable functions on $\RR^k$. Let $\RR^k_T\doteq [0,T]\times \RR^k$ and $\cC^{l,m}(\RR^k_T)$ denoted the set of functions on $\RR^k_T$ that are $l$--times differentiable in the time coordinate and $m$--times differentiable in the space coordinate on every open subset of $\RR^k_T$.  For any measurable space $(\cX,\cB)$, $\calP(\cX)$ is the set of probability measures on $(\cX,\cB)$. For a non-negative   $\cB$-measurable function $\mathscr{F} :\cX\rightarrow \RR$ and $\beta>0$, we set   
\begin{align}\label{def-norm} &\qquad\qquad\|f\|_{\Upu,\mathscr{F}}\doteq \sup_{x\in \cX} \frac{|f(x)|}{1+\Upu \mathscr{F}(x)} 
\end{align}
and define 
\begin{align*}
\cC_{\beta,\mathscr{F}}(\cX)&\doteq \Big\{f:\cX\rightarrow \RR: f \text{ is $\cB$-measurable and }  \|f\|_{\Upu,\mathscr{F}}<\infty\Big\}\,.\end{align*} The space $\cC_{\beta,\mathscr{F}}(\cX)$ is equipped with topology induced by $\|\cdot\|_{\Upu,\mathscr{F}}$. The Lebesgue measure on $\RR^k$ is denoted by $m$, \emph{i.e.,} $m(A)=\int_A \D x$, for  every Borel measurable set $A\subset \RR^k$. For a Borel set $\frB\subset \RR^k$, $\widehat \tau_\frB$ denotes the first time the underlying process enters the set $\frB$.

\medskip

\section{Controlled Diffusion Model} 
We consider a $\RR^d$--valued controlled diffusion $X=\{X_t: t\ge 0\}$ given as the strong solution to  
\begin{equation} \label{eqn-X}
X_t = {X}_0 + \int_0^t b(X_s, U_s)\D s + \int_0^t\Sigma(X_s) \D W_s\,.
\end{equation}
Here, the process  $U$ (referred to as control) is assumed to take values in a compact metric space $\bU$ and the coefficients $b$ and $\Sigma$ satisfy the assumptions below. 
\begin{assumption}\label{a-regularity} 
\begin{enumerate}
\item[]
\item[(i)] (Local Lipschitz continuity) $b:\RR^d\times \bU\rightarrow \RR^d$ and $\Sigma:\RR^d\rightarrow\RR^{d\times d}$ are continuous and for every $R>0$, there exists $C_R>0$ such that  
\begin{align*}
\|b(x,u)-b(y,u)\|+ \|\Sigma(x)-\Sigma(y)\|\leq C_R\|x-y\|, \text{ for  $x,y\in B_R$,  and $u \in \bU$}\,.
\end{align*}
\item[(ii)] (Linear growth in drift) There exists a constant $C_b>0$ such that 
$$ \|b(x,u)\|^2  \leq C_b(1+\|x\|^2), \text{ for $x\in \RR^d$ and $u \in \bU$\,.}$$
\item [(iii)](Non-degeneracy and boundedness of diffusion coefficient) There exists $\sigma>0$ such that 
$$ \sigma^{-1} \|z\|^2\leq z\transp\Sigma(x)\big(\Sigma(x)\big)\transp z\leq \sigma \|z\|^2, \text{ for $z\in \RR^d$  and $x \in \RR^d$}\,.$$
\end{enumerate}
\end{assumption}
 For simplicity, assume that  ${X}_0={x}$ is a deterministic constant.

A $\bU$--valued control process $U$ is said to be admissible if it satisfies the following: if $U_t=U_t(\omega)$ is jointly measurable in $(t,\omega)\in \RR^+\times \Omega$ and  for every $0\leq s< t$, $W_t-W_s$ is independent of the completed filtration (with respect to $(\Omega, \cF,\PP)$) generated by $\{X_0,U_r, W_r: r\leq s\} $. The set of all such controls is denoted by $\Uadm$.
Let $\Um\subset \Uadm$ be the set of Markov controls,  \emph{i.e.,} every $u\in \Um$ is of the form $u_t=v(t,X_t)$, for some Borel measurable function $v:\RR_+\times \RR^d\rightarrow \bU$. With slight abuse of notation, we write $v(t,\cdot)$ to denote a generic Markov control. If $v(t,\cdot)$ is independent of $t$, then $v$ is referred to as a stationary Markov control. Let $\Usm\subset \Um$ denote the set of stationary Markov controls.  For every $u\in\bU$, we denote the generator $\Lg^{u}:\Cc^{2}(\RR^{d})\mapsto\Cc(\RR^{d})$ of the controlled diffusion ${X}$ as 
\begin{align}\label{eq-gen}
\Lg^{u} f(x) & \doteq   \sum_{i=1}^d b_i(x,u) \frac{\partial}{\partial x_i} f(x) + \frac{1}{2}\sum_{i,j=1}^d A_{ij}(x) \frac{\partial^2}{\partial x_i \partial x_j} f(x)\,, 
\end{align}
where $A(x)\doteq \Sigma(x)\Sigma(x)\transp$. In the following, whenever we are dealing with a generic admissible control, we denote it by $U$ and a generic stationary Markov control is denoted by $v$. 

Let $r:\RR^d\times\bU\rightarrow \RR_+$ be the running cost that is continuous  and for every $R>0$, there exists $K_R>0$ such that
\begin{align*}
|r(x,u)-r(y,u)|\leq K_R\|x-y\|, \text{ for  $x,y\in B_R$,  and $u \in \bU$}\,.
\end{align*}
 To keep the expressions concise, we let 
$$  \Lg^{v} f(x) \doteq   \sum_{i=1}^d b_i(x,v(x)) \frac{\partial}{\partial x_i} f(x) + \frac{1}{2}\sum_{i,j=1}^d A_{ij}(x) \frac{\partial^2}{\partial x_i \partial x_j} f(x)\,, $$
whenever the underlying control is $v\in \Usm$. In what follows, we emphasize that  the initial condition is $X_0=x$ and  that the underlying controls are $U\in \Uadm$ and $v\in \Usm$ by writing expectation $\E$ as  $\E_x^{U}$ and $\E_x^v$, respectively. 

\begin{remark}From hereon, concerning quantities like the value function, the optimal value, iterates of an algorithm and so on, the superscript  is always used to denote iteration number. The subscript on these quantities is used to denote either the case of conventional ergodic control problem (in which case, we use subscript $0$; it is the content of Section~\ref{sec-gamma-0}) or the case of  ergodic risk sensitive control problem with risk sensitivity parameter $\gamma>0$ (in which case, we use subscript $\gamma$; this is the content of Section~\ref{sec-gamma}). 
\end{remark}

\section{Rate of Convergence of RVI Algorithm in CEC Problems}

\subsection{Conventional ergodic control problem}\label{sec-gamma-0}

The conventional ergodic control (CEC)  cost function is given by
\begin{equation*} 
J_0(x, U) \doteq \limsup_{T\to\infty} \frac{1}{T}  \E_x^U\left[ \int_0^T r({X}_t, U_t )\D t \right]\,.
\end{equation*}
The associated CEC cost minimization problem is given by
\begin{equation}\label{def-inf-u}
{\Lambda}^*_0({x}) \doteq  \inf_{U\in \Uadm}  J_0({x}, U)\, \quad \text{ and } \quad \Lambda^*_0\doteq \inf_{x\in \RR^d} \Lambda_0(x)\,.
\end{equation}
In addition, let 
\begin{equation}\label{def-inf-sm}
\Lambda_{\text{SM},0}^*({x}) \doteq \inf_{v\in \Usm}  J_0({x}, v) \quad \text{ and } \quad  \Lambda^*_{\text{SM},0}\doteq \inf_{x\in \RR^d}\Lambda_{\text{SM},0}(x)\,.
\end{equation}
 It is easy to see that $\Lambda^*_0\leq \Lambda^*_{\text{SM},0}$. We now state a Foster-Lyapunov condition, which along with Assumption~\eqref{a-regularity} implies that the controlled diffusion $X_t$ is uniformly exponentially stable.   
\begin{assumption}\label{a-main} There exist an inf-compact $\cC^2(\RR^d) $ function $\sV\geq 0$ and constants $\lambda_0,\lambda_1 >0$  such that for every $x\in \RR^d$ and $u\in \bU$, we have
\begin{align}\label{eq-erg-0}
\Lg^u \sV(x)\leq -\lambda_0\sV(x)+\lambda_1\,.
\end{align}
\end{assumption}
We now recall the well-known existing result (which is \cite[Theorem 3.7.12]{arapostathis2012ergodic}) concerning the associated HJB equation and complete characterization of the optimal stationary Markov controls.

\begin{theorem}\label{thm-hjb-0} Suppose Assumptions~\ref{a-regularity} and~\ref{a-main} hold and the running cost $r$ satisfies 
\begin{align}\label{eq-r-growth}
\limsup_{\|x\|\to\infty} \sup _{u\in \bU} \frac{|r(x.u)|}{ \sV(x)}<\infty\,.
\end{align}
 Then, there exists a function $V^*_0\in \cC^2(\RR^d)\cap \cC_{\Upu,\sV}(\RR^d) $, unique up to an additive constant,  such that 
\begin{align}\label{eq-hjb-0}
\min_{u\in \bU}\big[\Lg^uV^*_0(x) +r(x,u)\big]=\Lambda^*_0\,.
\end{align}
Moreover, $\Lambda^*_0=\Lambda_{\text{SM},0}^*$ and $v\in \Usm$ is optimal if and only if  
$$ \Lg^{v(x)} V^*_0(x)+r\big(x,v(x)\big)= \min_{u\in \bU}\big[\Lg^uV^*_0(x) +r(x,u)\big], \text{ a.e. $x\in \RR^d$.}$$
\end{theorem}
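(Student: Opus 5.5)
This is \cite[Theorem 3.7.12]{arapostathis2012ergodic}. The plan is the standard vanishing-discount argument under the uniform geometric drift of Assumption~\ref{a-main}.

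\emph{Step 1: stability under every stationary Markov control.} For each $v\in\Usm$, the process is non-degenerate by Assumption~\ref{a-regularity}(iii), so it is positive recurrent with a unique invariant measure $\mu_v$. Integrating \eqref{eq-erg-0} against $\mu_v$ gives the uniform moment bound $\int\sV\,\D\mu_v\le\lambda_1/\lambda_0$. Combined with \eqref{eq-r-growth}, which gives $r\le C(1+\sV)$, this yields $\sup_v\int r\,\D\mu_v<\infty$, so $\Lambda^*_0$ is finite.

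\emph{Step 2: discounted problems.} For $\alpha>0$ define
\[
V_\alpha(x)\doteq\inf_{U\in\Uadm}\E_x^U\int_0^\infty e^{-\alpha t}r(X_t,U_t)\,\D t .
\]
It is the minimal nonnegative $\cC^2$ solution of $\min_u[\Lg^uV_\alpha+r]=\alpha V_\alpha$. Set $\bar V_\alpha\doteq V_\alpha-V_\alpha(0)$. The central estimate is a bound on $\bar V_\alpha$ that is uniform in $\alpha$, and this is the step I expect to be hardest.

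To get it, I would use a first-hitting decomposition at a ball $\frB$ together with Dynkin's formula applied to $\sV$. This gives
\[
\E_x^U\Big[\int_0^{\widehat\tau_\frB}(1+\sV(X_t))\,\D t\Big]\le C(1+\sV(x)),
\]
uniformly over controls. Therefore $\bar V_\alpha(x)\le C(1+\sV(x))+\mathrm{osc}_{\frB}V_\alpha$. The oscillation over $\frB$ is controlled uniformly in $\alpha$ by the Harnack inequality for $\alpha V_\alpha-\Lg^uV_\alpha\ge0$ (Krylov--Safonov, using the uniform ellipticity). A symmetric argument gives the lower bound. Together these show $\bar V_\alpha\in\cC_{\Upu,\sV}$ uniformly in $\alpha$.

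\emph{Step 3: passing to the limit.} Interior $W^{2,p}$ estimates give local bounds on $\bar V_\alpha$ that are uniform in $\alpha$. Along a subsequence $\alpha\to0$, we then have $\alpha V_\alpha(0)\to\varrho$ and $\bar V_\alpha\to V^*_0$ in $\cC^{1}_{\mathrm{loc}}$. The limit $V^*_0$ solves \eqref{eq-hjb-0} with constant $\varrho$; elliptic regularity upgrades it to $\cC^2$, and Step 2 gives $V^*_0\in\cC_{\Upu,\sV}$.

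\emph{Step 4: identifying the constant.} Take an admissible $U$. Apply Itô's formula to $V^*_0$, stopped and localized. Because $|V^*_0|\le C(1+\sV)$ and $\E_x^U\sV(X_T)\le\sV(x)+\lambda_1/\lambda_0$, we have $\E_x^U V^*_0(X_T)/T\to0$. This gives $J_0(x,U)\ge\varrho$, with equality for a measurable selector $v$ of the minimizer. Hence $\varrho=\Lambda^*_0=\Lambda^*_{\text{SM},0}$, and any $v$ attaining the minimum a.e. is optimal.

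\emph{Step 5: the converse and uniqueness.} For the converse, suppose $v$ is optimal. Then $h\doteq\Lg^vV^*_0+r(\cdot,v)-\Lambda^*_0\ge0$ and $\int h\,\D\mu_v=0$. The measure $\mu_v$ has a positive density, so $h=0$ a.e.

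For uniqueness, let $V,V'$ be two solutions in $\cC_{\Upu,\sV}$ and let $v$ be a selector for $V'$. Then $\Lg^v(V-V')\ge0$, and $V-V'$ has growth $o$-comparable to $\sV$. So $V-V'$ is a subharmonic function for a positive recurrent diffusion, bounded by a function integrable against the process, and must be constant. This follows from the strong maximum principle, or from the martingale convergence of $(V-V')(X_t)$.
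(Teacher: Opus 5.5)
The paper gives no proof here; it only quotes \cite[Theorem~3.7.12]{arapostathis2012ergodic}. Your vanishing-discount outline is therefore the natural route, but the step you flagged as hardest does not work as written. The Krylov--Safonov Harnack inequality is multiplicative. Applied to the nonnegative solution of $\Lg^{v_\alpha}V_\alpha-\alpha V_\alpha=-r(\cdot,v_\alpha)$ on a ball $\frB'$ of twice the radius of $\frB$ (with $v_\alpha$ a measurable minimizing selector of the discounted HJB equation), it gives $\sup_{\frB}V_\alpha\le C_H\bigl(\inf_{\frB}V_\alpha+\|r\|_{L^d(\frB')}\bigr)$. Hence you only get $\mathrm{osc}_{\frB}V_\alpha\le (C_H-1)\inf_{\frB}V_\alpha+C_H\|r\|_{L^d(\frB')}$, and $\inf_{\frB}V_\alpha$ is of order $\varrho/\alpha\to\infty$. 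No purely local estimate can repair this. In $d=1$, $\varphi(x)=\alpha^{-1}e^{\sqrt{2\alpha}\,x}$ satisfies $\tfrac12\varphi''=\alpha\varphi$ with $0\le\alpha\varphi\le e^{2\sqrt2}$ on $[-2,2]$, yet $\mathrm{osc}_{[-1,1]}\varphi\approx 2\sqrt2\,\alpha^{-1/2}$. Some global input from the stability hypothesis is needed to locate the bottom of $V_\alpha$.

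The fix uses the hitting-time estimate you already have. Choose $\frB\ni 0$ so large that $\Lg^u\sV\le-1$ on $\frB^c$ for all $u$, so that $\E^U_x[\widehat\tau_\frB]\le\sV(x)$. Note also that $0\le\alpha V_\alpha\le C(1+\sV)$ uniformly in $\alpha\in(0,1)$. For $x\in\frB^c$, drop the nonnegative running cost and use $e^{-s}\ge1-s$:
\[
V_\alpha(x)\;\ge\;\E_x^{v_\alpha}\bigl[e^{-\alpha\widehat\tau_\frB}\bigr]\min_{\partial\frB}V_\alpha\;\ge\;\min_{\partial\frB}V_\alpha-\sV(x)\max_{\partial\frB}\alpha V_\alpha .
\]
This gives $\min_{\overline{\frB}}V_\alpha-\min_{\overline{\frB'}}V_\alpha\le C$ uniformly in $\alpha$. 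Now apply Harnack not to $V_\alpha$ but to $\psi_\alpha\doteq V_\alpha-\min_{\overline{\frB'}}V_\alpha\ge0$. It solves $\Lg^{v_\alpha}\psi_\alpha=\alpha V_\alpha-r(\cdot,v_\alpha)$, whose right-hand side is bounded on $\frB'$ uniformly in $\alpha$, and $\inf_{\frB}\psi_\alpha\le C$. This yields $\mathrm{osc}_{\frB}V_\alpha\le\sup_{\frB}\psi_\alpha\le C$.

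The same display is what your lower bound for $\bar V_\alpha$ on $\frB^c$ actually rests on. It is not symmetric to the upper bound, since it needs $e^{-\alpha s}\ge1-\alpha s$ and the uniform bound on $\alpha V_\alpha$ over compacts.

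Less seriously, in Steps~4--5 you remove the localization in It\^o's formula for functions that are only $O(\sV)$. The bound $\sup_t\E^U_x\sV(X_t)<\infty$ does not by itself give the uniform integrability in $R$ of $\sV(X_{T\wedge\tau_R})$ that this requires. So two steps still need a genuine argument: passing from the stopped to the unstopped It\^o inequality in Step~4, and the claim ``subharmonic and integrable, hence constant'' in Step~5.
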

\begin{remark}
Since $V^*_0$ is unique up to an additive constant, we can fix the value of $V^*_0$ at any chosen  point to a chosen value. In particular, we choose the state $0$ and the corresponding value to be  $0$ for convenience, \emph{i.e.,} $V^*_0(0)=0$.
\end{remark}

\subsection{Numerical computation of $(V^*_0,\Lambda^*_0)$}
The objective  is to numerically compute $(V^*_0,\Lambda^*_0)$. We do this by building on the existing RVI algorithm  from \cite[Pg. 1891]{arapostathis2012relative}: the approximation to $(V^*_0,\Lambda^*_0)$ is given by $(\widetilde V(t,x)-\widetilde V(t,0), \widetilde V(t,0))$, for large $t$, where  $\widetilde V(t,x)$ is the  function in $\cC^{1,2}(\RR_T^d)$ for every  $T>0$,  that is the unique classical 
solution to the PDE: 
\begin{align}\label{eq-rvi-org-0}
\partial_t\widetilde V(t,x)= \min_{u\in \bU}\big[\Lg^u\widetilde V(t,x) +r(x,u)\big] - \widetilde V(t,0),\quad \widetilde V(0,x)=V^0(x),
\end{align} 
where $V^0\in \cC^2(\RR^d)\cap \cC_{\Upu,\sV}(\RR^d)$. In Theorem 2.3 of \cite{arapostathis2012relative}, the convergence of $\widetilde V(t,\cdot)$ as $t\to\infty$ is investigated, which is restated in  Proposition~\ref{prop-conv-0}  below.   
\begin{proposition}\label{prop-conv-0}
Suppose Assumptions~\ref{a-regularity} and~\ref{a-main} hold, the running cost $r$ satisfies~\eqref{eq-r-growth} and for  $\Upu>0$ and $V^0\in \cC^2(\RR^d)\cap\cC_{\Upu,\sV}(\RR^d)$. Then, there exists a unique function $\widetilde V\in \cC_{\Upu,\sV}(\RR^d_T)\cap \cC^{1,2}(\RR^d_T)$, for all $T>0$ that satisfies~\eqref{eq-rvi-org-0}. Additionally as $t\to\infty$, $\widetilde V(t,0)$ converges to $\Lambda^*_0$  and $\widetilde V(t,\cdot)$  converges to $V^*_0(\cdot)+ \Lambda^*_0$ uniformly on the compact sets of $\RR^d$. 
\end{proposition}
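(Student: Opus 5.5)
This is the restatement of Theorem 2.3 of \cite{arapostathis2012relative}. The plan is to reproduce the PDE/monotone-dynamical-systems argument of that paper under Assumptions~\ref{a-regularity} and~\ref{a-main}, which cover its hypotheses. We proceed in three steps: well-posedness, reduction to a value-iteration equation, and convergence.

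\textbf{Step 1: well-posedness.} First we solve the unnormalized Cauchy problem
\begin{align*}
\partial_t \overline V(t,x)=\min_{u\in\bU}\big[\Lg^u\overline V(t,x)+r(x,u)\big],\qquad \overline V(0,\cdot)=V^0 .
\end{align*}
We solve it on balls $B_R$ with boundary data $V^0$, using standard quasilinear parabolic theory. The hypotheses needed are the local Lipschitz coefficients, the uniform ellipticity in Assumption~\ref{a-regularity}(iii), and compactness of $\bU$, which makes the Hamiltonian Lipschitz in the gradient. We then pass $R\to\infty$ using interior Schauder and $W^{2,p}$ estimates.

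The growth bound in $\cC_{\Upu,\sV}$, uniformly on $[0,T]$, comes from comparison with barriers of the form $\pm C(1+t)(1+\Upu\sV)$. These are admissible because of~\eqref{eq-erg-0} and~\eqref{eq-r-growth}. The same comparison in $\cC_{\Upu,\sV}$ gives uniqueness.

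Next we set $\widetilde V(t,x)=\overline V(t,x)-\int_0^t \varrho(s)\,\D s$ with $\varrho(t)=\widetilde V(t,0)$. This requires $\varrho(t)-\overline V(t,0)+\int_0^t\varrho\,\D s=0$, a linear Volterra equation with explicit solution
\begin{align*}
\varrho(t)=\overline V(t,0)-\int_0^t e^{-(t-s)}\,\overline V(s,0)\,\D s .
\end{align*}
Hence existence and uniqueness for~\eqref{eq-rvi-org-0} follow from those of $\overline V$.

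\textbf{Step 2: reduction to value iteration.} The function $\overline V$ is the finite-horizon value function,
\begin{align*}
\overline V(t,x)=\inf_{U}\E^U_x\Big[\int_0^t r(X_s,U_s)\,\D s+V^0(X_t)\Big].
\end{align*}
Using the HJB equation~\eqref{eq-hjb-0} together with It\^o's formula applied to $V^*_0$ under optimal and arbitrary Markov controls, we obtain the sandwich
\begin{align*}
\inf_U\E^U_x\big[(V^0-V^*_0)(X_t)\big]\le \overline V(t,x)-\Lambda^*_0 t-V^*_0(x)\le \sup_U\E^U_x\big[(V^0-V^*_0)(X_t)\big].
\end{align*}
Under Assumption~\ref{a-main}, $\E^U_x[\sV(X_t)]\le e^{-\lambda_0 t}\sV(x)+\lambda_1/\lambda_0$. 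Therefore $w(t,x)\doteq\overline V(t,x)-\Lambda^*_0t-V^*_0(x)$ is bounded on compacts, uniformly in $t$, and has $\sV$-growth. Parabolic estimates then make the family $\{w(t+\cdot,\cdot)\}_{t\ge0}$ precompact in $\cC^{1,2}_{\mathrm{loc}}$.

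\textbf{Step 3: convergence (main obstacle).} The key is to show that $w(t,\cdot)$ converges to a constant $c$. The upper envelope $\sup_x w$ is not directly available, since $w$ is not bounded globally. So we follow \cite{arapostathis2012relative} and study the $\omega$-limit set of $w$.

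Any limit point $\bar w(s,x)$ along $t_n\to\infty$ is an entire solution, for $s\in\RR$, of the equation
\begin{align*}
\partial_s \bar w=\min_{u\in\bU}\big[\Lg^u(V^*_0+\bar w)+r\big]-\Lambda^*_0 .
\end{align*}
By the sandwich, $\bar w$ is a sub/supersolution of linear equations driven by Markov kernels of uniformly ergodic diffusions. Evaluating along optimal controls shows that $s\mapsto\int \bar w(s,\cdot)\,\D\mu^*$ is monotone, where $\mu^*$ is the invariant measure of an optimal stationary Markov control from Theorem~\ref{thm-hjb-0}. Boundedness and the reverse martingale argument then force this quantity to be constant. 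A strong maximum principle argument (Harnack) then forces $\bar w$ to be constant in $x$.

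Since the limit $c$ is the same along all subsequences (it is determined by the monotone quantity), $w(t,\cdot)\to c$ uniformly on compacts. Equivalently, $\overline V(t,\cdot)-\Lambda^*_0t\to V^*_0+c$.

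Finally, we transfer this to $\widetilde V$. Write $\varrho(t)=\Lambda^*_0+w(t,0)-\int_0^te^{-(t-s)}w(s,0)\,\D s$, where we used $V^*_0(0)=0$. Since $w(t,0)\to c$, we get $\varrho(t)\to\Lambda^*_0$, because the $w$-terms cancel in the limit. Similarly, $\widetilde V(t,\cdot)=\overline V(t,\cdot)-\int_0^t\varrho$, and here the constant shifts combine to give $\widetilde V(t,\cdot)\to V^*_0+\Lambda^*_0$, matching the stated limit.

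I expect the identification of the $\omega$-limit set as constants in Step 3 to be the delicate point.
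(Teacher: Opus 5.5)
Your proposal is correct and follows the same route as the paper, which gives no argument of its own and simply imports Theorem~2.3 of \cite{arapostathis2012relative}; you are reconstructing that PDE and $\omega$-limit-set argument, and your Volterra reduction and the cancellation of the constant $c$ in the last step check out (up to a harmless missing term $-\Lambda^*_0e^{-t}$ in $\varrho$). Two small repairs: first, the barrier $C(1+t)(1+\Upu\sV)$ is in general not a supersolution for large $t$ (at a minimum point of $\sV$ its time derivative stays bounded while $\Lg^u$ of it grows linearly in $t$), so use $C(1+t)+A\sV$ with $A\ge C_r/\lambda_0$, where $r\le C_r(1+\sV)$; second, constancy of an $\omega$-limit point $\bar w$ follows more directly from ergodicity than from Harnack, since $\bar w$ is bounded (the $e^{-\lambda_0t}\sV(x)$ part of your bound dies out) and $\bar w(s,\cdot)\le\tP^{v^*}_t\bar w(s-t,\cdot)\to\ell\doteq\lim_{t\to\infty}\int w(t,\cdot)\,\D\mu^*$, which together with $\int\bar w(s,\cdot)\,\D\mu^*=\ell$ and the full support of $\mu^*$ forces $\bar w\equiv\ell$.
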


The above proposition is the reason behind referring to~\eqref{eq-rvi-org-0} as an algorithm - solving this PDE numerically, gives us the approximations of $V^*$ and $\Lambda^*$, as $t\to\infty$.
It is clear that even though the above RVI algorithm is given in terms of  continuous time $t\geq 0$, we can easily obtain a `discrete' version of the existing RVI algorithm  as follows: 
 fix $\tau>0$ and choose $n^*\in \NN$, with an initialization  $ V^0 \in \cC^2(\RR^d)\cap\cC_{\Upu,\sV}(\RR^d)$, for $0\leq t\leq n^* \tau $, solve the PDE in \eqref{eq-rvi-org-0}, and for  $n\leq n^*$ , set $\widetilde V^n(\cdot)= \widetilde V(n\tau,\cdot)\in \cC^2(\RR^d)\cap\cC_{\Upu,\sV}(\RR^d)$, and then output $\big(\widetilde V^n-\widetilde V^n(0),  \widetilde V^n(0)\big)$ as the approximation of  $(V^*_0,\Lambda^*_0)$.

\setcounter{algorithm}{0}

 In the following, we provide a modified `version' of the RVI algorithm. Fix $\delta,\tau>0$ such that $\delta\tau<1$; see Remark~\ref{rem-delta-tau} for further discussion regarding this choice.

 \setcounter{algorithm}{0}
\renewcommand{\thealgorithm}{RN}

\begin{algorithm}[H]
	\caption{Modified `discrete' version of RVI Algorithm}\label{alg-rvi-mod-0}
	\begin{enumerate}
	\item [(i)] Choose $\delta,\tau>0$ and $n^*\in \NN$.
		\item [(ii)] Input: initialize with $n=0$ and $V^0 \in \cC^2(\RR^d)\cap\cC_{\Upu,\sV}(\RR^d)$.
		\item [(iii)] Update: for $0\leq t\leq \tau$, solve the PDE below 
		\begin{align}\label{eq-rvi-mod-0}
\partial_t V(t,x)= \min_{u\in \bU}\big[\Lg^uV(t,x)+ r(x,u)\big]- \delta V^n(0),\quad V(0,x)= V^n(x)\,. 
\end{align}
		\item [(iv)] Set $n\leftarrow n+1$ and $V^n= V(\tau,\cdot)  \in \cC^2(\RR^d)\cap\cC_{\Upu,\sV}(\RR^d)$.
		\item [(v)] While $n\leq n^*$, repeat Steps (iii) and (iv). 
		\item [(vi)] Output: the approximate of $(V^*_0,\Lambda^*_0)$ is $\big(V^n-V^n(0), \delta V^n(0)\big)$. 
	\end{enumerate}
\end{algorithm}
\begin{remark}\label{rem-delta-tau}
Although the above algorithm is well-defined for all choices of $\delta>0$ and $\tau>0$, it is not clear if iterates $\big(V^n-V^n(0), \delta V^n(0)\big)$ of the above algorithm with an arbitrary choice of $(\delta,\tau)$ converge to  $(V^*_0,\Lambda^*_0)$. In Theorem~\ref{thm-rvi-0}, we will see that in addition to Assumptions~\ref{a-regularity} and~\ref{a-main}, if we have  $\delta\tau<1$, then we can guarantee that the above convergence holds and is, in fact, exponential under $\|\cdot\|_{\beta,\sV}$. In other words, $\tau>0$ can be chosen arbitrarily positive and $\delta>0$ is chosen such that $\delta\tau<1$. Since the expression for $V^{n+1}$ in~\eqref{eq-expression} only involves $\delta$ \emph{via.} $\delta V^n(0)$,   this term need not necessarily be very small even if $\delta$ is.  However, the impact of the choice of $\tau$ on the numerical implementation is as follows:  the computational effort needed to compute the next iterate from the current iterate is less (more, resp.) when  $\tau$ is small (large, resp.). On the other hand, according to Theorem~\ref{thm-rvi-0}, the number of iterations needed to approximate $(V^*_0,\Lambda^*_0)$ with a given accuracy is more for smaller $\tau$.

\end{remark} 
\begin{remark}\label{rem-compare} There are two main differences between the existing RVI algorithm and Algorithm~\ref{alg-rvi-mod-0}:  (i) the terms  $\widetilde V(t,0)$ and $\delta V^n(0)$ in~\eqref{eq-rvi-org-0} and~\eqref{eq-rvi-mod-0}, respectively and (ii) in the existing RVI algorithm, we solve~\eqref{eq-rvi-org-0} only once over the interval $[0,n^*\tau]$ with initial condition $V^0$, whereas in Algorithm~\ref{alg-rvi-mod-0}, we solve~\eqref{eq-rvi-mod-0} over $[0,\tau]$ for $n^*$ times with successive initial conditions $V^0, V^1,\ldots, V^{n-1}$ for $n<n^*$.
\end{remark}

The following result guarantees the well-posedness of~\eqref{eq-rvi-mod-0} in Step (iii) above.  Recall that $\RR_\tau^d= [0,\tau]\times \RR^d$. 
\begin{proposition}\label{prop-ext-0}
Suppose Assumptions~\ref{a-regularity} and~\ref{a-main} hold. For $\Upu>0$, there exists a unique solution $ V\in \cC_{\Upu,\sV}(\RR^d_\tau)\cap \cC^{1,2}(\RR^d_\tau)$ to~\eqref{eq-rvi-mod-0} with $ V(0,x)=f(x)\in\cC^2(\RR^d)\cap\cC_{\Upu,\sV}(\RR^d) $. 
\end{proposition}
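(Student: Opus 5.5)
The plan is to reduce \eqref{eq-rvi-mod-0} to a standard Cauchy problem for a semilinear parabolic HJB equation. The term $\delta V^n(0)$ is a fixed constant $c\doteq\delta f(0)$ on the whole window $[0,\tau]$, since it depends only on the initial datum. Setting $W(t,x)\doteq V(t,x)+ct$, the function $V$ solves \eqref{eq-rvi-mod-0} if and only if $W$ solves
\begin{align*}
\partial_t W(t,x)=\min_{u\in\bU}\big[\Lg^u W(t,x)+r(x,u)\big],\qquad W(0,x)=f(x).
\end{align*}
Moreover, $W\in\cC_{\Upu,\sV}(\RR^d_\tau)\cap\cC^{1,2}(\RR^d_\tau)$ if and only if $V$ is, because $ct$ is bounded on $[0,\tau]$. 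It therefore suffices to prove existence and uniqueness for this time-homogeneous finite-horizon HJB equation.

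\textbf{Existence.} I would follow the construction used for \eqref{eq-rvi-org-0} in \cite{arapostathis2012relative}, which is simpler here because the equation contains no nonlocal term. On balls $B_R$, solve the Cauchy--Dirichlet problem with boundary data $f$. Existence of a $\cC^{1,2}$ (indeed H\"older $\cC^{1+\alpha/2,2+\alpha}$) solution $W_R$ follows from standard quasilinear parabolic theory (Ladyzhenskaya--Solonnikov--Ural'tseva), using Assumption~\ref{a-regularity}: local Lipschitz coefficients, uniform ellipticity, continuity of $r$, and compactness of $\bU$, so that the minimum is Lipschitz in $\grad W$.

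Next I need a uniform weighted bound. Using a measurable minimizing selector $v_R(t,x)$ and It\^o's formula up to $\tau\wedge\widehat\tau_{B_R^c}$, write $W_R$ as the value of a finite-horizon control problem. Then use Assumption~\ref{a-main}: Dynkin's formula gives $\E^U_x[\sV(X_t)]\le \sV(x)+\lambda_1 t$. Combined with the growth condition \eqref{eq-r-growth} (which I assume is in force, as in Proposition~\ref{prop-conv-0}) and $|f|\le \|f\|_{\Upu,\sV}(1+\Upu\sV)$, this yields $|W_R(t,x)|\le C(1+\Upu\sV(x))$ uniformly in $R$ and $t\le\tau$.

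Interior parabolic Schauder/$W^{2,1}_p$ estimates on compacts then give local equicontinuity of $W_R$ and its derivatives. A diagonal Arzel\`a--Ascoli argument as $R\to\infty$ produces a classical solution $W$ that satisfies the same weighted bound.

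\textbf{Uniqueness.} Given two solutions $W_1,W_2$ in the class, take $v$ a measurable selector attaining the minimum for $W_2$. Then
\[
\partial_t(W_1-W_2)\le \Lg^{v}(W_1-W_2)
\]
with zero initial data. Applying It\^o's formula to $(W_1-W_2)(\tau-s,X_s)$ under $v$, stopped at the exit from $B_R$, gives
\[
(W_1-W_2)(t,x)\le \E^v_x\big[(W_1-W_2)(t-\widehat\tau_R,X_{\widehat\tau_R})\Ind_{\{\widehat\tau_R<t\}}\big].
\]
The right side is bounded by $C\,\E^v_x[(1+\Upu\sV(X_{\widehat\tau_R}))\Ind_{\{\widehat\tau_R<t\}}]$.

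This is the main obstacle: showing the boundary term vanishes as $R\to\infty$. It requires more than a first-moment bound on $\sV$; I would need uniform integrability of $\sV(X_{t\wedge\widehat\tau_R})$. The first attempt is to note that \eqref{eq-erg-0} implies a drift inequality for $\sV^{1+\epsilon}$ or $e^{\epsilon\sV}$ up to constants. If that fails, I would instead shrink the weight: the solution class $\cC_{\Upu,\sV}$ does not depend on $\Upu$, and I would work with solutions growing like $\sV$ while $\min_{\partial B_R}\sV\to\infty$ by inf-compactness. From $\E[\sV(X_{\widehat\tau_R})\Ind]\le \sV(x)+\lambda_1\tau$ one gets $\PP(\widehat\tau_R<t)\to0$. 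To bound the product itself, I would use a Lyapunov function $\sV^{\prime}$ with $\sV/\sV^{\prime}\to0$, which is available by standard arguments (e.g.\ from \cite[Chapter~3]{arapostathis2012ergodic}), or else restrict attention to the concave-function trick $\sV\log\sV$.

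Exchanging the roles of $W_1$ and $W_2$ gives the reverse inequality, so $W_1=W_2$.
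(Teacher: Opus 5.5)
Your shift $W=V+\delta f(0)\,t$ is correct. In substance it is the same reduction the paper makes. The paper's proof simply invokes Lemma~4.1 of \cite{arapostathis2012relative}, which gives existence and uniqueness in $\cC_{\Upu,\sV}(\RR^d_\tau)\cap\cC^{1,2}(\RR^d_\tau)$ for $\partial_t \check V=\min_{u\in\bU}[\Lg^u\check V+r]-g(t)$ with bounded continuous $g$, and takes $g\equiv\delta V^n(0)$. Your existence sketch is the standard route and is fine: Dirichlet problems on balls, the $R$-uniform weighted bound from Dynkin's formula for $\sV$ together with \eqref{eq-r-growth}, interior estimates, and a diagonal limit. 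You are also right that \eqref{eq-r-growth} has to be in force even though the statement omits it.

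\textbf{The gap: uniqueness.} It sits exactly at the step you flag, and none of your proposed fixes closes it. After localization you need $\E^{v}_x\bigl[\sV(X_{\widehat\tau_R})\Ind_{\{\widehat\tau_R<t\}}\bigr]\to0$. Dynkin's formula only bounds this by $\sV(x)+\lambda_1 t$. Its limit is in fact $\sV(x)+\E^v_x\int_0^t\Lg^{v}\sV(X_s)\,\D s-\E^v_x[\sV(X_t)]$. So you need Dynkin's formula for $\sV$ to hold \emph{without} localization, which is a uniform-integrability (true-martingale) property that \eqref{eq-erg-0} alone does not supply. Your proposed fixes each fall short:
\begin{itemize}
\item \emph{Drift inequality for $\sV^{1+\epsilon}$ or $e^{\epsilon\sV}$.} This fails, because for convex $\phi$ the It\^o term $\tfrac12\phi''(\sV)\|\Sigma\transp\grad\sV\|^2$ is not controlled by \eqref{eq-erg-0}. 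Take $d=1$, $b(x)=-x$, $\Sigma=1$, and $\sV=e^{ax^2}$ with $0<a<1$. Then $\Lg\sV=(a-2a(1-a)x^2)\sV$, so \eqref{eq-erg-0} holds. Yet for $a(1+\epsilon)>1$ one has $\Lg\sV^{1+\epsilon}\to+\infty$, and $\sV^{1+\epsilon}$ is not even integrable against the invariant law $\propto e^{-x^2}$; $e^{\epsilon\sV}$ is worse.
\item \emph{$\sV\log\sV$.} This function is convex, not concave, so it has the same defect. Concave transforms of $\sV$, or the direct argument, only give uniqueness among solutions of growth $o(\sV)$. But $f$, and hence the solutions, may genuinely grow like $\sV$.
\item \emph{A dominating Lyapunov function $\sV'$ with $\sV/\sV'\to0$.} You assert this exists with the required moment bounds along $X_{t\wedge\widehat\tau_R}$, but you do not construct it. By de la Vall\'ee-Poussin, its existence is essentially equivalent to the uniform integrability you are trying to prove, and it is not a citable consequence of \eqref{eq-erg-0}.
\end{itemize}

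As written, then, only existence is established. To finish you need an additional ingredient. One option is a genuine proof that $\{\sV(X_{t\wedge\widehat\tau_R})\}_R$ is uniformly integrable, using the linear growth of $b$ and the nondegeneracy of $\Sigma$ rather than only \eqref{eq-erg-0}. Another is a Tychonoff-type uniqueness theorem for linear-growth drifts, which would also require a pointwise growth bound on $\sV$. The simplest is what the paper does: cite the well-posedness lemma of \cite{arapostathis2012relative}, which after your shift you may apply with $g\equiv0$.
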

\begin{proof} The proof uses arguments in  the proof of Lemma 4.1 in \cite{arapostathis2012relative} where the existence of a unique solution $\widecheck V\in \cC_{\Upu,\sV}(\RR^d_\tau)\cap \cC^{1,2}(\RR^d_\tau)$ is established, which satisfies 
\begin{align}\label{eq-sol}
\partial_t \widecheck V(t,x)= \min_{u\in \bU}\big[\Lg^u\widecheck V(t,x)+ r(x,u)\big]- g(t),\quad \widecheck V(0,x)= f(x)\in\cC^2(\RR^d)\cap\cC_{\Upu,\sV}(\RR^d),
\end{align}
for a bounded continuous function $g:[0,\tau]\rightarrow \RR^d$.  We next set $g(t)= \delta V^n(0)$ to obtain the desired result.
\end{proof}

Before we move on to the analysis of the convergence of the above algorithm, we discuss its advantage over the existing RVI algorithm.  From the first of part of the  Proposition~\ref{prop-conv-0}, we know that for every $t>0$, there exists a map $\widetilde \sS_{t}:\cC^2(\RR^d)\cap\cC_{\Upu,\sV}(\RR^d)\rightarrow \cC^{2}(\RR^d)\cap \cC_{\Upu,\sV}(\RR^d)$ that satisfies 
$$\widetilde   V(t,x)= (\widetilde \sS_{t} V^0)(x)\,.$$
However, 
a more explicit form of the map $\widetilde \sS_{t}$ is not apparent. This issue makes the analysis of the RVI algorithm in~\eqref{eq-rvi-org-0}, more challenging.  

  In contrast, the map analogous to $\widetilde\sS_{t}$ can be expressed in a more explicit form that is amenable to the convergence analysis that will follow.
 From Proposition~\ref{prop-ext-0}, we know for $0\leq t\leq \tau $,  there exists a map $\sS_{t}: \cC^2(\RR^d)\cap \cC_{\Upu,\sV}(\RR^d)\rightarrow \cC^{2}(\RR^d)\cap \cC_{\Upu,\sV}(\RR^d) $ that satisfies 
 $$ V(t,x)=(\sS_{t} f)(x), \quad \text{for $0\leq t\leq \tau$}$$
 with $V(t,x)$ being the solution of~\eqref{eq-rvi-mod-0} and $V(0,x)=f(x)$. In particular, we have  $V(\tau,x)= (\sS_{\tau} f)(x)$. To see that the map $\sS_{\tau}$ can be expressed in a more explicit form,  we apply It\^o's formula to $V(\tau-s,X_s)$ on $[0,t]$ with $t\leq \tau$, we obtain for $0\leq t\leq \tau$,
 \begin{align}\label{eq-vn-min}
 V(t,x)&= \min_{U\in \Uadm}\E^U_x\Big[\int_0^t \big(r(X_s,U_s)-\delta f(0)\big)\D s + f(X_t)\Big] \doteq (\sS_{t} f)(x)\,. 
 \end{align}
 To get the second equality, we match the right hand side of the first line with the definition of $\sS_{t}$. In particular, we can conclude that 
 \begin{align}\label{def-Sn} (\sS_{\tau} f)(x)= \min_{U\in \Uadm}\E^U_x\Big[\int_0^\tau \big(r(X_t,U_t)-\delta f(0)\big)\D t + f(X_\tau)\Big]\,.\end{align}
Hence, in each iteration $n=0,1,2,..., n^*-1$, we have \begin{align}\label{eq-Sn}V^{n+1}(x) = (\sS_{\tau} V^n)(x).\end{align} By repeated iteration, we can write for $n\ge 1$,  \[V^n(x) =   (\sS^n_{\tau} V^0)(x),\]
  where $\big(\sS_{\tau}^n f\big)(x)\doteq (\sS_{\tau}^{n-1} \sS_{\tau} f\big)(x)$, for every $n>2$ with $\big(\sS_{\tau}^2 f)(x)\doteq \big(\sS_{\tau} (\sS_{\tau} f)\big)(x)$.
  Finally, the output of Algorithm~\ref{alg-rvi-mod-0} gives the approximation  of $(V^*_0,\Lambda^*_0)$ as follows: 
  \[
\big(V^n(\cdot)- V^n(0), \delta V^n(0)\big) =  \Big((\sS^n_{\tau} V^0)(\cdot)- (\sS^n_{\tau} V^0)(0), \delta (\sS^n_{\tau} V^0)(0)\Big)\,. 
  \]
We note that, in comparison with the existing RVI algorithm, in each iteration, it requires to solve the PDE~\eqref{eq-rvi-mod-0} in Step (iii), but the computation is only over the interval $[0,\tau]$, with an updated initial condition $V(0,x) = V^n(x)$.

 \begin{remark}\label{rem-int}
 Before we proceed further, we intuitively verify that  for the $V^n$ defined above, whenever it converges, the limit of $\big(V^n-V^n(0),\delta V^n(0)\big)$ is $(V^*_0, \Lambda^*_0)$ . 
To do this, we suppose that $V^n$ is  convergent, \emph{i.e.,} $V^n$ is independent of $n$, for large $n$. Then, from Theorem~\ref{thm-hjb-0}, it is clear that $\delta V^n(0)$ approaches $\Lambda^*_0$ and that $V^n$ approaches $V^*_0$, up to an additive constant. But as $V^*_0(0)=0$ and $\delta V^n(0)$ approaches $\Lambda^*_0$, we can intuitively conclude that $V^n(\cdot)-V^n(0)$ approaches $V^*_0(\cdot)$. 
\end{remark}

 \subsection{First main result} 
 The notations  $\|\cdot\|_{\beta,\sV}$ and $\vvvert{\cdot}_{\beta,\sV}$ used below are defined~\eqref{def-norm} and~\eqref{def-norm-w}, respectively. Here, $\beta>0$ and $\sV$ is the inf-compact function from Assumption~\ref{a-main}.    Define 
 $$ \Phi^n(x)\doteq V^n(x)-V^*_0(x)-\delta^{-1} \Lambda^*_0\,.$$
 As $V^*_0(0)=0$, we have $\Phi^n(0)= V^n(0)-\delta^{-1}\Lambda^*_0$. Recall that $m(\cdot)$ is the Lebesgue measure on $\RR^d$.

 \begin{theorem}\label{thm-rvi-0}
Suppose Assumptions~\ref{a-regularity} and~\ref{a-main} hold, the running cost $r$ satisfies~\eqref{eq-r-growth} and $0<\delta\tau<1$. Then,  for $R>2\lambda_0\lambda_1^{-1}$, we have 
\footnote{Here and in the following, to avoid any confusion with the superscript $n$ denoting the $n$th iterate, we write the $n$th power in the rates of convergence as $(\kappa)^n$, $(\tilde\kappa)^n$, $(1-\delta\tau)^n$ etc.} 
\begin{align*}
\vvvert{\Phi^n}_{\Upu,\sV}&\leq (\kappa) ^n \vvvert{\Phi^0}_{\Upu,\sV},\\
|\Phi^n(0)|&\leq  (\widetilde \kappa)^n\bigg(| \Phi^0(0)| + \frac{C_0}{e\kappa_{\max} \log (\widetilde \kappa \kappa_{\max}^{-1})}  \vvvert{\Phi^0}_{\Upu,\sV}\bigg), \end{align*}
for every $n\geq 1$, where, $C_0\doteq 2+2\Upu\sV(0)+\Upu\lambda_1\tau$, 
\begin{align}\label{def-kappa} 0<\kappa\doteq \big(1+\lambda_3-\alpha_{R,\tau}m(K)\big) \vee \Big(\frac{2+\lambda_2\Upu R}{2+\Upu R}\Big)&<1,\quad  \kappa_{\max}\doteq  \max\{\kappa, 1-\delta\tau\}<\widetilde \kappa<1\,,
\end{align}
with
\begin{align}\label{def-upu} \lambda_2\doteq e^{-\lambda_0\tau} + \frac{2\lambda_1(1-e^{-\lambda_0\tau})}{\lambda_0 R}<1,\quad 0<\lambda_3 &<\alpha_{R,\tau}m(K)<1,\quad  \Upu\doteq \frac{\lambda_3 \lambda_0}{\lambda_1(1-e^{-\lambda_0\tau})}. 
\end{align}
Here, $\alpha_{R,\tau}$ and the compact set $K\subset \RR^d$ are from Corollary~\ref{cor-minor}.

\end{theorem}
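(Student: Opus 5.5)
The proof reduces the recursion $V^{n+1}=\sS_\tau V^n$ to sandwich bounds for $\Phi^{n+1}$ by Markov-kernel integrals of $\Phi^n$, up to additive constants. After that, the Hairer--Mattingly contraction (Proposition~\ref{prop-contract}) handles the semi-norm, and a scalar recursion handles the value at $0$.

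\textbf{Step 1: sandwich bounds.} Let $v^*\in\Usm$ be a minimizing selector in the HJB equation \eqref{eq-hjb-0}. Applying It\^o's formula to $V^*_0(X_t)$ under $v^*$ gives
\[
V^*_0(x)=\E^{v^*}_x\Big[\int_0^\tau \big(r(X_s,v^*(X_s))-\Lambda^*_0\big)\D s+V^*_0(X_\tau)\Big].
\]
For an arbitrary admissible $U$ one gets the same identity with ``$\le$''. Now let $v^n(t,\cdot)$ be a Markov control attaining the minimum in \eqref{eq-vn-min} for $f=V^n$. Comparing \eqref{def-Sn} with the identity above, and using $\delta\cdot\delta^{-1}\Lambda^*_0=\Lambda^*_0$, yields
\[
\int \Phi^n(y)\,P^{v^n}_\tau(x,\D y)-\delta\tau\,\Phi^n(0)\;\le\;\Phi^{n+1}(x)\;\le\;\int \Phi^n(y)\,P^{v^*}_\tau(x,\D y)-\delta\tau\,\Phi^n(0).
\]
Here $P^{v}_\tau$ denotes the time-$\tau$ transition kernel under the control $v$. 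These are the bounds I would label \eqref{eq-V-conv-11}--\eqref{eq-V-conv-12}.

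\textbf{Step 2: semi-norm contraction.} The semi-norm $\vvvert{\cdot}_{\Upu,\sV}$ is the Hairer--Mattingly one, $\sup_{x\ne y}|f(x)-f(y)|/(2+\Upu\sV(x)+\Upu\sV(y))$, which ignores additive constants. Fix $x\neq y$. Subtracting the lower bound at $y$ from the upper bound at $x$ cancels the term $\delta\tau\Phi^n(0)$, so
\[
\Phi^{n+1}(x)-\Phi^{n+1}(y)\le (\delta_x P^{v^*}_\tau-\delta_y P^{v^n}_\tau)\Phi^n.
\]
The two kernels here differ. To handle this, I would apply Proposition~\ref{prop-contract} in the form that admits any kernels satisfying a uniform Lyapunov bound and a uniform minorization on $K$ with the same constants. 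Corollaries~\ref{cor-lyap-drift} and~\ref{cor-minor} give exactly this uniformity over all Markov controls, with constants $\lambda_2$, $\alpha_{R,\tau}$ and $K$. Concretely, I would couple the two kernels through their common minorizing component $\alpha_{R,\tau}\,m(\cdot\cap K)$ and rerun the Hairer--Mattingly argument, splitting into the cases $\sV(x)+\sV(y)\gtrless R$. The constraint on $\Upu$ in \eqref{def-upu} is what balances the two cases. The result is $\vvvert{\Phi^{n+1}}_{\Upu,\sV}\le\kappa\vvvert{\Phi^n}_{\Upu,\sV}$ with $\kappa$ as in \eqref{def-kappa}, and iterating gives the first bound. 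I expect this step to be the main obstacle: the statement of Proposition~\ref{prop-contract} is for a single kernel, and it has to be adapted to two different kernels with shared constants.

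\textbf{Step 3: the value at $0$.} Evaluate the sandwich at $x=0$. For any probability measure $\mu$,
\[
\Big|\int\Phi^n\,\D\mu-\Phi^n(0)\Big|\le \vvvert{\Phi^n}_{\Upu,\sV}\int\big(2+\Upu\sV(0)+\Upu\sV\big)\,\D\mu .
\]
The drift bound gives $P_\tau\sV(0)\le \sV(0)+\lambda_1\tau$, so the integral is at most $C_0$. Hence
\[
|\Phi^{n+1}(0)|\le(1-\delta\tau)|\Phi^n(0)|+C_0(\kappa)^n\vvvert{\Phi^0}_{\Upu,\sV}.
\]
Unrolling this recursion gives
\[
|\Phi^n(0)|\le(1-\delta\tau)^n|\Phi^0(0)|+C_0\vvvert{\Phi^0}_{\Upu,\sV}\,n\,\kappa_{\max}^{\,n-1}.
\]
Finally, maximizing $n(\kappa_{\max}/\widetilde\kappa)^n$ over $n$ gives at most $1/(e\log(\widetilde\kappa\kappa_{\max}^{-1}))$. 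This yields the stated bound with the factor $1/(e\kappa_{\max}\log(\widetilde\kappa\kappa_{\max}^{-1}))$.
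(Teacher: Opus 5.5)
Your Steps 1 and 3 follow the paper; Step 3 is Lemma~\ref{lem-L-conv} essentially line by line. Your sandwich also has the correct orientation: the RVI minimizer gives the lower bound and $v^*$ the upper bound. In the paper, \eqref{eq-V-conv-11} and \eqref{eq-V-conv-12} have $\bar v$ and $v^*$ interchanged.

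The real difference is Step 2, and there your route is the one that actually works. The paper never forms a cross difference. It turns each one-sided bound into a one-sided weighted bound on $\Phi^n+\widetilde c$, where $\widetilde c$ is built from the optimal shift of Lemma~\ref{lem-compare}. It then argues by cases on the order of $\widetilde c_{\bar v}$ and $\widetilde c_{v^*}$ to obtain $\vvvert{\Phi^n}_{\Upu,\sV}\le\max\{\vvvert{\tP^{\bar v}_\tau\Phi^{n-1}}_{\Upu,\sV},\vvvert{\tP^{v^*}_\tau\Phi^{n-1}}_{\Upu,\sV}\}$, and applies the single-kernel Proposition~\ref{prop-contract} to each kernel. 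This would allow Hairer--Mattingly to be cited verbatim, but the second case fails. The first inequality of \eqref{eq-V-conv-5} needs $-\inf_x\frac{\Phi^n(x)+\widetilde c_{\bar v}}{1+\Upu\sV(x)}\le-\inf_x\frac{\Phi^n(x)+\widetilde c_{v^*}}{1+\Upu\sV(x)}$. Since $c\mapsto-\inf_x\frac{\Phi^n(x)+c}{1+\Upu\sV(x)}$ is nonincreasing, this requires $\widetilde c_{\bar v}\ge\widetilde c_{v^*}$, which is the opposite of that case's hypothesis.

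No better choice of constants rescues the scheme. It uses only the sandwich and the two separate semi-norms, and a function squeezed between two functions of small semi-norm can have large semi-norm: anything between the constants $-1$ and $1$ is squeezed between two functions of semi-norm $0$. Bounding $\tP^{v^*}_\tau\Phi^n(x)-\tP^{v^n}_\tau\Phi^n(y)$ directly, as you propose, is what yields $\vvvert{\Phi^{n+1}}_{\Upu,\sV}\le\kappa\vvvert{\Phi^n}_{\Upu,\sV}$.

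When you write Step 2 out, state explicitly the three facts that let the proof of \cite[Theorem 3.1]{hairer2011} go through verbatim for a pair of kernels.
\begin{itemize}
\item[(i)] Both kernels are Markov, so shifting $\Phi^n$ by the constant $c^*_{\Phi^n}$ of Lemma~\ref{lem-compare} leaves the cross difference unchanged. You may therefore assume $\|\Phi^n\|_{\Upu,\sV}=\vvvert{\Phi^n}_{\Upu,\sV}$.
\item[(ii)] On $\{\sV(x)+\sV(y)\ge R\}$, the bound $|\tP^{v^*}_\tau g(x)-\tP^{v^n}_\tau g(y)|\le\|g\|_{\Upu,\sV}\big(2+\Upu\tP^{v^*}_\tau\sV(x)+\Upu\tP^{v^n}_\tau\sV(y)\big)$ uses only the drift bound of each kernel separately. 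Corollary~\ref{cor-lyap-drift} supplies that bound with the same constants for every Markov control.
\item[(iii)] On $\{\sV(x)+\sV(y)<R\}$, both points lie in $\sV_R$ because $\sV\ge0$. Corollary~\ref{cor-minor} gives the same $\alpha_{R,\tau}$, $K$ and $\nu_K$ for every Markov control. Hence the two terms $\alpha_{R,\tau}m(K)\,\nu_K(g)$ cancel exactly, leaving $1-\alpha_{R,\tau}m(K)$ times a difference of residual kernels.
\end{itemize}

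Finally, $\lambda_2<1$ in \eqref{def-upu}, which you need for $\kappa<1$, is equivalent to $R>2\lambda_1\lambda_0^{-1}$. The condition $R>2\lambda_0\lambda_1^{-1}$ in the statement is a typo.
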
 
\begin{corollary}\label{cor-norm-conv}
Under the conditions of Theorem~\ref{thm-rvi-0} and with the constants there, we have 
\begin{align}\label{eq-contraction-2} \|\Phi^n\|_{\Upu,\sV}\leq C (\widetilde  \kappa)^n\|\Phi^0\|_{\Upu,\sV}, \end{align}
where, \[C\doteq  \bigg( 3+\Upu\sV(0) +  \frac{(2+2\Upu\sV(0)+\Upu\lambda_1\tau)}{e\kappa_{\max} \log (\widetilde \kappa \kappa_{\max}^{-1})}\bigg).\]
\end{corollary}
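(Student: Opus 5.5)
The corollary is obtained by combining the two estimates of Theorem~\ref{thm-rvi-0} through the comparison between the weighted semi-norm $\vvvert{\cdot}_{\Upu,\sV}$ and the weighted sup-norm $\|\cdot\|_{\Upu,\sV}$ (Lemma~\ref{lem-compare}). Following Hairer--Mattingly, the semi-norm is
\[
\vvvert{f}_{\Upu,\sV}=\sup_{x\neq y}\frac{|f(x)-f(y)|}{2+\Upu\sV(x)+\Upu\sV(y)}.
\]
Two elementary comparisons hold. First, for any $f$ and $x,y$,
\[
|f(x)-f(y)|\le |f(x)|+|f(y)|\le \|f\|_{\Upu,\sV}\bigl(2+\Upu\sV(x)+\Upu\sV(y)\bigr),
\]
so $\vvvert{f}_{\Upu,\sV}\le \|f\|_{\Upu,\sV}$. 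Second, taking $y=0$,
\[
|f(x)|\le |f(0)|+\vvvert{f}_{\Upu,\sV}\bigl(2+\Upu\sV(x)+\Upu\sV(0)\bigr).
\]
Dividing by $1+\Upu\sV(x)$ and using $\frac{2+\Upu\sV(0)+\Upu\sV(x)}{1+\Upu\sV(x)}\le 2+\Upu\sV(0)$ gives
\[
\|f\|_{\Upu,\sV}\le |f(0)|+\bigl(2+\Upu\sV(0)\bigr)\vvvert{f}_{\Upu,\sV}.
\]
I expect this to be exactly the content of Lemma~\ref{lem-compare}.

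Next apply this to $f=\Phi^n$ and insert both bounds of Theorem~\ref{thm-rvi-0}. Since $\kappa\le\kappa_{\max}<\widetilde\kappa$, we have $(\kappa)^n\le(\widetilde\kappa)^n$. This yields
\[
\|\Phi^n\|_{\Upu,\sV}\le (\widetilde\kappa)^n\Big(|\Phi^0(0)|+\big(2+\Upu\sV(0)\big)\vvvert{\Phi^0}_{\Upu,\sV}+\frac{C_0}{e\kappa_{\max}\log(\widetilde\kappa\kappa_{\max}^{-1})}\vvvert{\Phi^0}_{\Upu,\sV}\Big).
\]

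Finally, bound the initial quantities by $\|\Phi^0\|_{\Upu,\sV}$. We have $|\Phi^0(0)|\le (1+\Upu\sV(0))\|\Phi^0\|_{\Upu,\sV}$, and from the first comparison $\vvvert{\Phi^0}_{\Upu,\sV}\le\|\Phi^0\|_{\Upu,\sV}$. Summing the coefficients gives
\[
1+\Upu\sV(0)+2+\Upu\sV(0)+\frac{C_0}{e\kappa_{\max}\log(\widetilde\kappa\kappa_{\max}^{-1})}.
\]
This is slightly larger than the stated $C$ by an extra $\Upu\sV(0)$. I would therefore sharpen the bookkeeping. One option is to use $\vvvert{f}\le\|f\|$ together with a refined form of the second comparison: when $x=0$, $|f(0)|$ alone bounds the ratio, and for $x\neq0$ one can split the constant differently. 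The other option is to check whether Lemma~\ref{lem-compare} already contains the precise constant, namely $\|f\|\le|f(0)|+(2+\Upu\sV(0))\vvvert{f}$ with the $|f(0)|$ term then controlled as $|\Phi^0(0)|\le\|\Phi^0\|_{\Upu,\sV}$ under the normalization convention of the paper, which is plausible if $\sV(0)=0$ or if the paper absorbs that factor.

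The only real obstacle is this constant bookkeeping; the exponential rate $(\widetilde\kappa)^n$ follows immediately from the two estimates of Theorem~\ref{thm-rvi-0}. If the exact constant does not come out, I would state the bound with the slightly larger constant $3+2\Upu\sV(0)+\dots$. I expect, however, that Lemma~\ref{lem-compare} is formulated precisely so that the stated $C$ results.
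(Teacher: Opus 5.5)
Your route is the paper's route. The paper also reduces to $\|\Phi^n\|_{\Upu,\sV}\le(2+\Upu\sV(0))\vvvert{\Phi^n}_{\Upu,\sV}+|\Phi^n(0)|$ and then inserts the two bounds of Theorem~\ref{thm-rvi-0}. It gets that reduction via the optimal shift $c_n$ from Lemma~\ref{lem-compare} plus the triangle inequality, rather than by comparing with the point $0$ directly. Note that Lemma~\ref{lem-compare} is only the identity $\vvvert{f}_{\Upu,\sV}=\inf_c\|f+c\|_{\Upu,\sV}$, not your inequality.

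Your bookkeeping is right, and the discrepancy you found is real. The paper reaches the stated $C$ only through the step ``$f(0)\le\|f\|_{\Upu,\sV}$'', which holds only when $\sV(0)=0$, and no such normalization is made. Your suggested repairs do not close the gap either. The function $f=1+\Upu\sV$ attains $|f(0)|=(1+\Upu\sV(0))\|f\|_{\Upu,\sV}$ and $\vvvert{f}_{\Upu,\sV}=\|f\|_{\Upu,\sV}=1$ simultaneously; the latter holds as a supremum, since $\sV$ is unbounded. So as long as you feed in Theorem~\ref{thm-rvi-0} only in its final form, you get $C+\Upu\sV(0)$. As written, your proposal proves the corollary with that slightly larger constant, not the stated one.

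The missing idea is to use the estimate from the proof of Lemma~\ref{lem-L-conv} before its last simplification:
\[
|\Phi^n(0)|\le(1-\delta\tau)^n|\Phi^0(0)|+nC_0(\kappa_{\max})^{n-1}\vvvert{\Phi^0}_{\Upu,\sV}.
\]
The step $n(\kappa_{\max})^{n-1}\le(\widetilde\kappa)^n/(e\kappa_{\max}\log(\widetilde\kappa\kappa_{\max}^{-1}))$ has enough slack to absorb $\Upu\sV(0)$, because $C_0\ge 2+2\Upu\sV(0)$. Put $P\doteq\Upu\sV(0)$, $L\doteq\log(\widetilde\kappa\kappa_{\max}^{-1})$ and $m\doteq nL\ge L$. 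Your comparisons, $\vvvert{\Phi^n}_{\Upu,\sV}\le(\kappa)^n\vvvert{\Phi^0}_{\Upu,\sV}$, and $(1-\delta\tau)\vee\kappa\le\kappa_{\max}$ give
\[
\frac{\|\Phi^n\|_{\Upu,\sV}}{(\widetilde\kappa)^n\|\Phi^0\|_{\Upu,\sV}}\le(3+2P)e^{-m}+\frac{C_0}{\kappa_{\max}L}\,me^{-m},\qquad C=3+P+\frac{C_0}{\kappa_{\max}L}\,e^{-1}.
\]
So it suffices that $3(e^{-m}-1)+P(2e^{-m}-1)\le\frac{C_0}{\kappa_{\max}L}\big(e^{-1}-me^{-m}\big)$.

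The right side is at least $\frac{2+2P}{m}(e^{-1}-me^{-m})\ge 0$, using $C_0/\kappa_{\max}\ge 2+2P$ and $L\le m$. The $P$-free part then holds trivially, since its left side is nonpositive. The $P$-part reduces to $m(4e^{-m}-1)\le 2/e$. Its left side is concave on $(0,2)$ and negative for $m\ge 2$. It is maximized at the root $m^*\in(0.56,0.562)$ of $4(1-m)e^{-m}=1$, with value $(m^*)^2/(1-m^*)<0.73<2/e$. This gives the corollary with exactly the stated $C$, which the paper's own argument does not actually establish.
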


 The proofs of both the results above are deferred to Section~\ref{sec-proof-1}.
\subsection{ Sketch of the proof of Theorem~\ref{thm-rvi-0}}
Here, we  sketch the arguments involved in the proof of Theorem~\ref{thm-rvi-0}. To prove the theorem, it suffices to show that  for $n\geq 1$, \begin{align}\label{eq-sketch-V}
\vvvert{\Phi^n}_{\Upu,\sV}&\leq \kappa  \vvvert{\Phi^{n-1}}_{\Upu,\sV}\\\label{eq-sketch-lambda}
|\Phi^n(0)|&\leq (1-\delta\tau) |\Phi^{n-1}(0)| + C_0  (\kappa)^n \vvvert{\Phi^0}_{\Upu,\sV} \,.\end{align}   
Indeed,  a repeated application of the above inequalities and simple algebraic computations then provide us the result. To understand the methodology of the proof,  we consider the simpler problem where the control set $\bU=\{ u^*\}$, a singleton set. In other words, our simpler problem becomes uncontrolled and hence, the infimum operation over $\bU$ in~\eqref{def-inf-u} and~\eqref{def-inf-sm} 
and  minimization operation in~\eqref{eq-hjb-0},~\eqref{eq-rvi-org-0},~\eqref{eq-rvi-mod-0} and~\eqref{eq-vn-min} become redundant. In this case, we discuss the convergence of iterates $V^n$ of Algorithm~\ref{alg-rvi-mod-0}.  Recall that $V^n$ in terms of $V^{n-1}$ is given by~\eqref{def-Sn} and~\eqref{eq-Sn}: for $x\in \RR^d$,
\begin{align}\label{eq-expression}V^{n+1}(x)=\E^{u^*}_x\Big[\int_0^\tau \big(r(X_t,u^*)-\delta V^{n-1}(0)\big)\D t + V^{n-1}(X_\tau)\Big]\,.
\end{align}
From Remark~\ref{rem-int}, we have intuitively seen that $V_0^*+\delta^{-1}\Lambda^*_0$ is the fixed point of the above iteration, \emph{i.e.,} 
\begin{align*}V_0^{*}(x)=\E^{u^*}_x\Big[\int_0^\tau \big(r(X_t,u^*)-\Lambda^*_0\big)\D t + V_0^{*}(X_\tau)\Big]\,.
\end{align*}
From the above two displays, it is clear that 
\begin{align} V^n(x)-V_0^*(x)&= \E_x^{u^*}\Big[ V^{n-1}(X_\tau)-V_0^{*}(X_\tau)\Big] + \tau\big( \delta V^{n-1}(0)-\Lambda^*_0\big)\nonumber\\\label{eq-sketch-1}
&= \Big(\tP^{u^*}_\tau\big(V^{n-1}-V_0^*\big)\Big)(x) + \tau\big( \delta V^{n-1}(0)-\Lambda^*_0\big)\,  .\end{align}
Here, $\tP_\tau^{u^*}$ the Markov kernel associated with $X$ under $u^*$; see~\eqref{def-markov-ker-x} for the definition. To the above display, we add a constant  $\widetilde c_{u^*}-\delta^{-1}\Lambda^*_0$ (where $\widetilde c_u^*= c_{\tP^{u^*}_\tau\Phi^{n-1}}$ which is given in~\eqref{def-c*}), divide by $1+\Upu \sV(x)$ on both sides and use Lemma~\ref{lem-compare} to get
\begin{align*} \frac{V^n(x)-V_0^*(x)- \tau\big( \delta V^{n-1}(0)-\Lambda^*_0\big) +\widetilde c_{u^*} -\delta^{-1}\Lambda^*_0}{1+\Upu \sV(x)} &= \frac{\Big(\tP^{u^*}_\tau\big(V^{n-1}-V_0^*\big)\Big)(x) +\widetilde c_{u^*}-\delta^{-1}\Lambda^*_0}{1+\Upu \sV(x)}\\
&\leq \vvvert{\tP^{u^*}_\tau\Phi^{n-1}}_{\Upu,\sV}\,. \end{align*}
To get the second line, we use the definition of $\Phi^{n-1}$. Again, using Lemma~\ref{lem-compare} and the sub-optimality of the constant $- \tau\big( \delta V^{n-1}(0)-\Lambda^*_0\big) +\widetilde c_{u^*}$ with respect to the infimum in Lemma~\ref{lem-compare}, 
we get 
\begin{align}
\vvvert{\Phi^n}_{\Upu,\sV}\leq \vvvert{\tP^{u^*}_\tau\Phi^{n-1}}_{\Upu,\sV}\,.
\end{align}
From here, using Proposition~\ref{prop-contract}, in conjunction with Corollaries~\ref{cor-lyap-drift} and~\ref{cor-minor}, we obtain the desired contraction,  \emph{i.e.,} 
\begin{align}\label{eq-sk-1}
\vvvert{\Phi^n}_{\Upu,\sV}\leq  \kappa \vvvert{\Phi^{n-1}}_{\Upu,\sV}\,.
\end{align}
In terms of $\Phi^n$, ~\eqref{eq-sketch-1} becomes
\begin{align}\label{eq-sketch-2}
\Phi^n(x)= -\tau\delta \Phi^{n-1}(0) + \big(\tP^{u^*}_\tau\Phi^{n-1}\big)(x)\,.
\end{align}
From the repeated iteration of~\eqref{eq-sk-1}, we have
$$\vvvert{\Phi^n}_{\Upu,\sV}\leq (\kappa)^n  \vvvert{\Phi^0}_{\Upu,\sV}\,.$$
Using the definition of $\vvvert{\cdot}_{\Upu,\sV}$ and the sub-optimality of the pair $(x,0)$ with respect to the supremum in the definition of $\vvvert{\cdot}_{\Upu,\sV}$, we get
 \begin{align*}
\Phi^{n-1}(0) - &(\kappa)^{n-1}\Big(2+\Upu \sV(x)+\Upu \sV(0)\Big)\vvvert{\Phi^0}_{\Upu,\sV}\\
&\leq \Phi^{n-1}(x) \leq \Phi^{n-1}(0) + (\kappa)^{n-1}\Big(2+\Upu \sV(x)+\Upu \sV(0)\Big)\vvvert{\Phi^0}_{\Upu,\sV}\,.
 \end{align*}
 
 Applying $\tP^{u^*}_\tau$ (evaluated at $x$) to the above display and substituting~\eqref{eq-sketch-2} into the resulting equation gives us
 \begin{align*}
\Phi^{n-1}(0) - &(\kappa)^{n-1}\Big(\tP_\tau^{u^*}\big(2+\Upu \sV(x)+\Upu \sV(0)\big)\Big)(x)\vvvert{\Phi^0}_{\Upu,\sV}\\
&\leq \big(\tP_\tau^{u^*}\Phi^{n-1}\big)(x) \leq \Phi^{n-1}(0) + (\kappa)^{n-1}\Big(\tP_\tau^{u^*}\big(2+\Upu \sV(x)+\Upu \sV(0)\big)\Big)(x)\vvvert{\Phi^0}_{\Upu,\sV}\\
\implies \Phi^{n-1}(0) - &(\kappa)^{n-1}\Big(\tP_\tau^{u^*}\big(2+\Upu \sV(x)+\Upu \sV(0)\big)\Big)(x)\vvvert{\Phi^0}_{\Upu,\sV}\\
&\leq \Phi^n(x)+\tau\delta \Phi^{n-1}(0) \leq \Phi^{n-1}(0) + (\kappa)^{n-1}\Big(\tP_\tau^{u^*}\big(2+\Upu \sV(x)+\Upu \sV(0)\big)\Big)(x)\vvvert{\Phi^0}_{\Upu,\sV}\,.
 \end{align*} 
Evaluating the second  line for $x=0$, bounding the terms involving $\tP^{u^*}_\tau$ by $C_0$ and following simple algebraic computation, we obtain
\begin{align} 
|\Phi^n(0)|\leq (1-\delta\tau)|\Phi^{n-1}(0)|+ C_0 (\kappa)^n \vvvert{\Phi^0}_{\Upu,\sV}\,.
\end{align}
This proves~\eqref{eq-sketch-lambda}.

The extension of  the above argument to the controlled case (where we relax the assumption that $\bU$ is a singleton set), involves an appropriate choice of Markov controls for minimization operations involved in~\eqref{eq-hjb-0} and~\eqref{eq-rvi-mod-0}.

\subsection{Proof of Theorem~\ref{thm-rvi-0}} \label{sec-proof-1}
The theorem follows from Lemmas~\ref{lem-V-conv} and~\ref{lem-L-conv} that are stated and proved in what follows. Below, we establish a few auxiliary results used in the proof of Lemmas~\ref{lem-V-conv} and~\ref{lem-L-conv}. 

\begin{lemma}\label{lem-simple-bound} Suppose Assumptions~\ref{a-regularity} and~\ref{a-main} hold. Then, the following hold:
\begin{enumerate}
\item [(i)] For any $U\in \Uadm$ and $t>0$, we have 
$$ \E^U_x\Big[\int_0^t r(X_s,U_s)\D s- \Lambda^*_0t\Big]\geq V^*_0(x)-\E^U_x\big[V^*_0(X_t)\big]\,.$$
\item [(ii)] For $t>0$ and $v^*\in \Usm$ that satisfies 
\begin{align*} \min_{u\in \bU}\big[\Lg^uV^*_0(x) +r(x,u)\big]&= \Lg^{v^*(x)}V^*_0(x) +r(x,v^*(x)),\quad \text{for a.e. $x\in \RR^d$}, \end{align*}
we have 
\[
 \E^{v^*}_x\Big[\int_0^t r\big(X_s,v^*(X_s)\big)\D s- \Lambda^*_0t\Big]= V^*_0(x)-\E^{v^*}_x\big[V^*_0(X_t)\big]\,.\] 
\end{enumerate}
\end{lemma}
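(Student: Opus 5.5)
We apply It\^o's formula (Dynkin's formula) to $V^*_0(X_s)$ on $[0,t]$, combine it with the HJB equation~\eqref{eq-hjb-0}, and use a localization argument to handle the stochastic integral and the unbounded growth of $V^*_0$. Fix $U\in\Uadm$ and, for $R>0$, let $\tau_R$ be the exit time of $X$ from the ball $B_R$. Since $V^*_0\in\cC^2(\RR^d)$, It\^o's formula gives
\begin{align*}
\E^U_x\big[V^*_0(X_{t\wedge\tau_R})\big]-V^*_0(x)=\E^U_x\Big[\int_0^{t\wedge\tau_R}\Lg^{U_s}V^*_0(X_s)\,\D s\Big],
\end{align*}
because the stochastic integral is a true martingale up to $\tau_R$ ($\Sigma$ is bounded and $\grad V^*_0$ is bounded on $B_R$). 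By~\eqref{eq-hjb-0}, $\Lg^{u}V^*_0(x)\geq \Lambda^*_0-r(x,u)$ for every $u\in\bU$ and every $x$. Hence
\begin{align*}
\E^U_x\Big[\int_0^{t\wedge\tau_R}\big(r(X_s,U_s)-\Lambda^*_0\big)\D s\Big]\geq V^*_0(x)-\E^U_x\big[V^*_0(X_{t\wedge\tau_R})\big].
\end{align*}
For part (ii) we use the stationary Markov control $v^*$ instead. Equality holds a.e.\ in $x$, and this suffices: under Assumption~\ref{a-regularity}(iii) the law of $X_s$ under $v^*$ is absolutely continuous for $s>0$ (Krylov's estimate), so a Lebesgue-null set contributes nothing to the time integral. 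This gives the corresponding identity with equality up to $t\wedge\tau_R$.

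Next we let $R\to\infty$. Since $r\geq 0$, monotone convergence takes care of $\E\int_0^{t\wedge\tau_R} r\,\D s$, and $\tau_R\to\infty$ a.s.\ because there is no explosion. For the $V^*_0$ term we need uniform integrability of $\{V^*_0(X_{t\wedge\tau_R})\}_R$. Since $V^*_0\in\cC_{\Upu,\sV}(\RR^d)$, we have $|V^*_0|\leq C(1+\Upu\sV)$. So it suffices to bound a moment of $\sV(X_{t\wedge\tau_R})$ of order higher than one, uniformly in $R$ and in $U$.

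This uniform integrability is the main obstacle, because Assumption~\ref{a-main} only controls $\E\sV$ directly. We would get the higher moment as follows. Assumption~\ref{a-main} and Dynkin's formula give $\E^U_x[\sV(X_{t\wedge\tau_R})]\leq \sV(x)+\lambda_1 t$ uniformly in $R$, which only gives $L^1$-boundedness. We then try $\sV^{p}$ with $p>1$ close to $1$, or $\sV\log\sV$: the extra term from the Hessian in $\Lg^u(\sV^p)$ is $p(p-1)\sV^{p-2}\,\grad\sV\transp A\,\grad\sV$, and we hope it can be absorbed.

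If that fails, we use a different route. Along the sequence $t\wedge\tau_R$, the family $V^*_0(X_{t\wedge\tau_R})$ converges a.s.\ to $V^*_0(X_t)$, and $\sV(X_{t\wedge\tau_R})$ converges a.s.\ to $\sV(X_t)$. The upper bound from Dynkin, together with the supermartingale-type bound coming from $\sV(X_{t\wedge\tau_R})+\lambda_0\int_0^{t\wedge\tau_R}\sV(X_s)\,\D s$, gives
\begin{align*}
\E\big[\sV(X_{t\wedge\tau_R})\big]\to\E\big[\sV(X_t)\big].
\end{align*}
This is the Scheff\'e-type argument: $\sV\geq 0$ by assumption, the a.s.\ limit is bounded in expectation, and we use Fatou's lemma in both directions on $C(1+\Upu\sV)\pm V^*_0$. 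That yields convergence of $\E V^*_0(X_{t\wedge\tau_R})$. Passing to the limit gives (i) and (ii), the latter with equality since both sides converge.
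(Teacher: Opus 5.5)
Your core idea is the paper's entire proof: Dynkin's formula for $V^*_0(X_s)$ combined with $\Lg^uV^*_0\geq\Lambda^*_0-r(\cdot,u)$ from~\eqref{eq-hjb-0}, with equality along $v^*$. The paper applies It\^o's formula on $[0,t]$ directly and rearranges, with no localization and no discussion of integrability. Its displayed inequality also has the integrand's sign reversed; your $\Lambda^*_0-r$ is the correct one. Your localized inequality is fine, and so is your use of Krylov's estimate to handle the a.e.\ equality in (ii).

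The gap is the limit $R\to\infty$, which you flag as the main obstacle but do not close. For (i) you need $\limsup_{R\to\infty}\E^U_x[V^*_0(X_{t\wedge\tau_R})]\leq\E^U_x[V^*_0(X_t)]$. This is the non-Fatou direction whenever $V^*_0$ is unbounded above. Your only tool is the domination $|V^*_0|\leq C(1+\Upu\sV)$, and with it the step reduces to uniform integrability of $\{\sV(X_{t\wedge\tau_R})\}_R$.

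Neither of your routes delivers that.

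Route (a) is only a hope. The extra term $\tfrac{p(p-1)}{2}\sV^{p-2}\grad\sV\transp A\grad\sV$ is nonnegative, and Assumption~\ref{a-main} gives no control of $\grad\sV$, so there is nothing to absorb it with.

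Route (b) is circular. Take $Y_R\doteq\sV(X_{t\wedge\tau_R})\geq 0$ with $Y_R\to\sV(X_t)$ a.s. By Scheff\'e's lemma and Vitali's theorem, the convergence $\E Y_R\to\E\sV(X_t)<\infty$ is equivalent to uniform integrability of $\{Y_R\}$, which is exactly what you are trying to prove. What you actually have is:
\begin{itemize}
\item $\sup_R\E Y_R\leq\sV(x)+\lambda_1 t$;
\item by Fatou, $\E\sV(X_t)\leq\liminf_R\E Y_R$.
\end{itemize}
The supermartingale structure cannot give the reverse inequality; that inequality fails for nonnegative strict local martingales. Adding the term $\lambda_0\E\int_0^{t\wedge\tau_R}\sV(X_s)\D s$ does not help either, because Assumption~\ref{a-main} is only an inequality, so the resulting bound on $\limsup_R\E Y_R$ cannot be compared with $\E\sV(X_t)$.

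Closing the gap needs a genuinely stronger input. Two options would work:
\begin{itemize}
\item Growth $|V^*_0|\leq\epsilon\sV+C_\epsilon$ for every $\epsilon>0$. Then your uniform $L^1$ bound on $\sV(X_{t\wedge\tau_R})$ already gives uniform integrability of $V^*_0(X_{t\wedge\tau_R})$.
\item A superlinear moment bound $\sup_R\E[\phi(\sV(X_{t\wedge\tau_R}))]<\infty$ with $\phi(s)/s\to\infty$.
\end{itemize}
Either would have to come from an extra hypothesis, or from a separate argument exploiting the linear growth of $b$ and the boundedness of $\Sigma$.
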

\begin{proof} Fix $U\in \Uadm$. Applying It\^o's formula to $V^*_0(X_t)$ and using~\eqref{eq-hjb-0}, we obtain
\begin{align}
\E_x^U\big[V^*_0(X_t)\big]-V^*_0(x)\geq  \E_x^U\Big[\int_0^t \big(r(X_t,U_t)-\Lambda^*_0\big)\D t\Big]\,.
\end{align}
Rearranging the above display proves the lemma. Similarly, part (ii) can be proved.
\end{proof}
 
In the following, we apply Proposition~\ref{prop-contract} in our case.  To do this, we introduce the following notation: for any $v\in \Um$, we define an associated Markov kernel $\tP_\tau^{v}:\RR^d\times \cB(\RR^d)\rightarrow [0,1]$ as 
\begin{align}\label{def-markov-ker-x}
\tP_\tau^{v}(x,A)= \E_x^v\big[\Ind_{A}(X_\tau)\big]\,.
\end{align}
In other words, $(\tP_\tau^vf)(x)$ for a Borel measurable function $f:\RR^d\rightarrow\RR$ is the expectation of $\E_x^v[f(X_\tau)]$, where $X$ is the solution to~\eqref{eqn-X} with $U_t=v(t,X_t)$.  
$\tP^{v}_\tau$ is clearly a Markov kernel.  

Next we show that $\tP_\tau^v$ satisfies~\eqref{eq-drift} of Proposition~\ref{prop-contract}, for every $v\in \Um$. 
\begin{corollary}\label{cor-lyap-drift} Suppose Assumption~\ref{a-main} holds. Then, for $0<\eta=\eta(\tau,\lambda_0)\doteq  e^{-\lambda_0\tau}<1$ and $K=K(\tau,\lambda_0,\lambda_1)\doteq   \lambda_1 \lambda_0^{-1} (1-e^{-\lambda_0 \tau})$, 
\begin{align*}
\big(\tP^v_\tau \sV\big)(x) \leq \eta \sV(x)+ K, \text{ for every $x\in \RR^d$ and $v\in\Um$.}
\end{align*}

\end{corollary}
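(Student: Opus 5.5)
The plan is to apply It\^o's formula to the time-weighted function $e^{\lambda_0 t}\sV(X_t)$ under an arbitrary Markov control $v\in\Um$, use the uniform drift bound of Assumption~\ref{a-main}, and remove the stochastic integral by localization. Since \eqref{eq-erg-0} holds for every $u\in\bU$, the same bound holds pointwise along any control path $U_t=v(t,X_t)$. This uniformity is what makes the resulting constants $\eta,K$ independent of $v$.

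Fix $x\in\RR^d$ and $v\in\Um$, and let $X$ be the corresponding solution of \eqref{eqn-X}. For $R>0$ let $\widehat\tau_R$ be the exit time of $X$ from the ball $B_R$. Because $\sV\in\Cc^2(\RR^d)$, It\^o's formula applied to $e^{\lambda_0 t}\sV(X_t)$ on $[0,\tau\wedge\widehat\tau_R]$ gives
\begin{align*}
\E^v_x\big[e^{\lambda_0(\tau\wedge\widehat\tau_R)}\sV(X_{\tau\wedge\widehat\tau_R})\big]
= \sV(x)+\E^v_x\Big[\int_0^{\tau\wedge\widehat\tau_R} e^{\lambda_0 s}\big(\lambda_0\sV(X_s)+\Lg^{v(s,X_s)}\sV(X_s)\big)\D s\Big].
\end{align*}
The martingale term vanishes because the integrand is bounded on $B_R$; here we use the boundedness of $\Sigma$ from Assumption~\ref{a-regularity}(iii) and the continuity of $\grad\sV$. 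By \eqref{eq-erg-0}, the integrand is at most $\lambda_1 e^{\lambda_0 s}$. Hence the right-hand side is at most $\sV(x)+\lambda_1\lambda_0^{-1}(e^{\lambda_0\tau}-1)$, uniformly in $R$.

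Next let $R\to\infty$. The solution is non-explosive: it is a strong solution under the linear growth condition of Assumption~\ref{a-regularity}(ii), or alternatively one can use the Lyapunov bound itself. Therefore $\widehat\tau_R\to\infty$ a.s., and $\tau\wedge\widehat\tau_R\to\tau$. Since $\sV\ge 0$ and $e^{\lambda_0(\cdot)}\ge 1$, Fatou's lemma gives
\[
e^{\lambda_0\tau}\,\E^v_x[\sV(X_\tau)]\le \sV(x)+\lambda_1\lambda_0^{-1}(e^{\lambda_0\tau}-1).
\]
Multiplying by $e^{-\lambda_0\tau}$ yields $(\tP^v_\tau\sV)(x)\le e^{-\lambda_0\tau}\sV(x)+\lambda_1\lambda_0^{-1}(1-e^{-\lambda_0\tau})$, which is the claim with $\eta=e^{-\lambda_0\tau}\in(0,1)$ and the stated $K$.

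The only delicate point is the localization and passage to the limit. Nonnegativity of $\sV$ makes Fatou directly applicable, so no integrability of $\sV(X_\tau)$ needs to be assumed in advance. The bound also depends on $v$ only through \eqref{eq-erg-0} being uniform in $u$, so it holds for every $v\in\Um$, including time-dependent Markov controls.
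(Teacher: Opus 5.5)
Your proposal is correct and follows the same route as the paper: It\^o's formula for $e^{\lambda_0 t}\sV(X_t)$ under $v$, the uniform bound \eqref{eq-erg-0} to dominate the $\D s$-integrand by $\lambda_1 e^{\lambda_0 s}$, and then multiplication by $e^{-\lambda_0\tau}$. The paper leaves the localization by exit times and the Fatou passage (using $\sV\ge 0$) implicit, and you handle both correctly.
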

\begin{proof}
Fix $v\in \Um$. Applying It\^o's formula to $e^{\lambda_0 t}\sV(X_t)$ up to $t=\tau$ with $X_t$ under $v\in \Um$, we obtain 
\begin{align*}
e^{\lambda_0 \tau}\E_x^v\Big[\sV(X_\tau)\Big]&= \sV(x) + \E_x^v\Big[\int_0^\tau \Big( e^{\lambda_0 s}\Lg^v \sV(X_s)  + \lambda_0 e^{\lambda_0 s}\sV(X_s) \Big) \D s\Big]\\
&\leq  \sV(x) + \E_x^v\Big[\int_0^\tau \lambda_1e^{\lambda_0s}\D s\Big]\\
&\leq \sV(x) + \frac{\lambda_1 (e^{\lambda_0 \tau}-1)}{\lambda_0}\,.
\end{align*}
To get the second line, we use Assumption~\ref{a-main}. From the definition of  $\tP^v_\tau$, we obtain the desired result. 
\end{proof}

\begin{corollary}\label{cor-minor} Suppose Assumption~\ref{a-regularity} holds and $v\in \Um$. Define $\sV_R\doteq \{x\in \RR^d: \sV(x)\leq R\}$. Then, for every  $R>0$ and a compact set $K\subset \RR^d$, there exists a constant $\alpha_{R,\tau}>0$  such that  $0< \alpha_{R,\tau}m(K)  <1$  and
\begin{align} \label{eq-p-minor}\inf_{x\in \sV_R} \tP_\tau ^v(x,A)\geq \alpha_{R,\tau}m(K) \nu_K(A), \text{ for every Borel set $A\subset \RR^d$}\end{align}
where $\nu_K(\cdot)\doteq \frac{m(\cdot \cap K)}{m(K)}$ and $m$ is the Lebesgue measure on $\RR^d$. Moreover, the above constants are independent of $v\in \Um$ and $\tP^v_\tau$ satisfies~\eqref{eq-minor} of Proposition~\ref{prop-contract}.
\end{corollary}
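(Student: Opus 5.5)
Our plan is to obtain the minorization from a lower bound on the transition density of $X$ under an arbitrary Markov control that holds uniformly over starting points in the compact set $\sV_R$ and target points in $K$. First, since $\sV$ is inf-compact, $\sV_R$ is compact, so we may fix $R_0>0$ with $\sV_R\cup K\subset B_{R_0}$. For $v\in\Um$, the law of $X_\tau$ under $\PP^v_x$ has a density $p^v_\tau(x,y)$ with respect to $m$, because the diffusion matrix is nondegenerate by Assumption~\ref{a-regularity}(iii). The key claim is
\begin{align*}
c_0\doteq \inf_{v\in\Um}\ \inf_{x\in\sV_R,\,y\in K} p^v_\tau(x,y)>0 .
\end{align*}
Granting it, for every Borel $A$ and $x\in\sV_R$,
\begin{align*}
\tP^v_\tau(x,A)\ \ge\ \int_{A\cap K}p^v_\tau(x,y)\,\D y\ \ge\ c_0\, m(A\cap K)=c_0\, m(K)\,\nu_K(A).
\end{align*}
We set $\alpha_{R,\tau}\doteq c_0$, decreased if needed so that $\alpha_{R,\tau}m(K)<1$; shrinking the constant keeps the inequality valid. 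Since $c_0$ does not depend on $v$, this gives \eqref{eq-p-minor} uniformly in $v\in\Um$, which is exactly \eqref{eq-minor} of Proposition~\ref{prop-contract} with minorizing measure $\nu_K$.

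To prove the density bound, we will localize to a ball in which the coefficients are nice. Take a ball $B\doteq B_{2R_0}$ and compare $X$ with the process killed on exiting $B$, so that $p^v_\tau(x,y)\ge p^{v,B}_\tau(x,y)$, the Dirichlet heat kernel of $\partial_t-\Lg^{v}$ on $B$. Inside $B$ we have the following properties, all uniform in $v$:
\begin{itemize}
\item the drift $b(\cdot,v(t,\cdot))$ is merely measurable but bounded by $\sqrt{C_b(1+4R_0^2)}$;
\item $A$ is uniformly elliptic with constant $\sigma$;
\item $A$ is Lipschitz with constant $C_{2R_0}$.
\end{itemize}
The Harnack inequality for nonnegative solutions of nondivergence-form parabolic equations with bounded measurable drift (Krylov--Safonov) then applies, with constants depending only on $d$, $\sigma$, the drift bound, $R_0$ and $\tau$. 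Chaining the Harnack inequality along paths from $x$ to $y$ inside $B$ gives a Gaussian-type lower bound $p^{v,B}_\tau(x,y)\ge c_0>0$ for $x,y\in B_{R_0}$, with $c_0$ independent of $v$. This is the uniform lower bound used in \cite[Chapter~2]{arapostathis2012ergodic}. If a time-inhomogeneous $v(t,x)$ causes trouble, we argue via Girsanov instead: we remove the drift on $B$ at the cost of a Radon--Nikodym factor whose moments are bounded by the drift bound. We then reduce to the driftless diffusion $\D Y=\Sigma(Y)\D W$, whose killed density has a positive lower bound on $B_{R_0}\times B_{R_0}$ by continuity and strict positivity, and finish with Cauchy--Schwarz/Hölder.

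The main obstacle is exactly this uniformity over all measurable (possibly time-dependent) Markov controls. It cannot come from any continuity in $v$, so it must come from estimates depending only on ellipticity and the drift bound; Krylov--Safonov Harnack or the Girsanov comparison supplies this. Everything else is bookkeeping.
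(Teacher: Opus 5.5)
Your reduction is the same as the paper's. You prove a lower bound on the transition density, uniform in $v\in\Um$ and in $(x,y)\in\sV_R\times K$, and integrate it over $A\cap K$. You are in fact more careful than the paper, whose bound $\inf_{x\in\sV_R}p^v_\tau(x,y)\ge\alpha_{R,\tau}$ is written without restricting $y$ to $K$.

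\textbf{The gap.} That density bound is the whole content of the corollary. The paper simply imports it from the two-sided heat-kernel estimates of \cite[Theorem~1.2]{menozzi2021}. The two tools you propose instead do not deliver it as described.

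\emph{Girsanov route.} Take $E=\{Y_\tau\in A,\ \tau_B>\tau\}$ and let $\mathcal{E}$ be the Radon--Nikodym factor. Cauchy--Schwarz gives only $\E[\mathcal{E}\Ind_E]\ge\PP(E)^2/\E[\mathcal{E}^{-1}]$, and H\"older gives at best $c_p\PP(E)^p$ with $p>1$, because $\mathcal{E}$ is not bounded below. A bound $\tP^v_\tau(x,A)\ge c\,m(A\cap K)^p$ with $p>1$ is not a minorization by a measure, and it gives no pointwise bound on $p^v_\tau(x,y)$. Getting a linear bound would require conditioning on $Y_\tau=y$ and controlling the bridge drift, i.e.\ gradient bounds on the logarithm of the density of $Y$.

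\emph{Krylov--Safonov route.} The Harnack inequality acts on nonnegative solutions of the backward equation, i.e.\ in the starting point. Chaining in $x$ only transfers a lower bound from one starting point to another; it cannot create the factor $m(A\cap K)$. For that you need an anchor in the forward variable, such as $p^{v,B}(s,y;\tau,y)\gtrsim(\tau-s)^{-d/2}$. That is Harnack-type control of the Fokker--Planck equation, which Krylov--Safonov does not provide. Moreover, with constants depending only on $d$, $\sigma$ and the drift bound, as you assert, no pointwise lower bound on the density of a nondivergence-form diffusion is available. The regularity of $\Sigma\Sigma\transp$ must enter, and your sketch lists its Lipschitz continuity but never uses it.

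\textbf{The repair} is exactly to use it. On $B$, set $a=\Sigma\Sigma\transp$. Since $a$ is Lipschitz there, both the backward operator and the forward operator can be written in divergence form with bounded measurable lower-order terms, e.g.\ $\tfrac12\partial_{ij}(a_{ij}p)-\partial_i(b_ip)=\partial_i\big(\tfrac12a_{ij}\partial_jp+(\tfrac12\partial_ja_{ij}-b_i)p\big)$. After extending the coefficients off $B$, two tools then apply with constants depending only on $d$, $\sigma$, $C_{2R_0}$, the drift bound and $\tau$, hence uniformly in $v$:
\begin{itemize}
\item Aronson's two-sided Gaussian bounds, which are valid for time-dependent measurable coefficients;
\item Moser's parabolic Harnack inequality, now in both variables.
\end{itemize}
First identify $p^v$ with the fundamental solution via uniqueness for the Fokker--Planck equation. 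The interior lower bound on the killed kernel then follows by the usual short-time estimate plus Chapman--Kolmogorov. A parametrix expansion around the frozen-coefficient Gaussian also works; there the bounded drift contributes an integrable $(t-s)^{-1/2}$ singularity. Alternatively, cite a heat-kernel theorem as the paper does.

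Repaired this way, your localized argument uses only the local hypotheses of Assumption~\ref{a-regularity}. The paper instead relies on a global theorem whose hypotheses have to be checked against the control-dependent drift $b(\cdot,v(t,\cdot))$.
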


\begin{proof}
 To begin with, we note that from \cite[Theorem 1.2]{menozzi2021}, for every $v\in \Um$, there exists  a function $p^v_\tau (x,y)$, continuous in $x,y\in \RR^d$, such that  
$$ \tP^v_\tau(x,A)= \int_{A} p^v_\tau(x,y)\D y, \text{ for every Borel $A\subset \RR^d$}\,.$$
Moreover, \cite[Theorem 1.2]{menozzi2021} also helps us conclude that for $R>0$, there exists a constant $\alpha_{R,\tau}>0$ such that 
$$ \inf_{x\in \sV_R} p_\tau^v(x,y)\geq \alpha_{R,\tau}\,.$$
Now choose any compact set $K\subset \RR^d$ and any Borel set $A\subset K$. From the above display, we can see that for $R>0$,
\begin{align*} \inf_{x\in \sV_R} \tP_\tau ^v(x,A) &= \int_A p^v_\tau(x,y)\D y\geq \alpha_{R,\tau}  m(A)= \alpha_{R,\tau}m(K)\frac{m(A)}{m(K)}\,. 
 \end{align*}
From the definition of $\nu_K$, we obtain~\eqref{eq-p-minor}. The rest of the proof follows from the independence of $\alpha_{R,\tau}$ on $v\in \Um$. 
\end{proof}

\subsubsection{Convergence of $\Phi^n$ to $0$ in $\vvvert{\cdot}_{\Upu,\sV}$}
\begin{lemma}\label{lem-V-conv} Suppose Assumptions~\ref{a-regularity} and~\ref{a-main} hold. Let $\kappa$ and $\Upu$   be as defined in~\eqref{def-kappa} and~\eqref{def-upu}, respectively. Then, for every $n\geq 1$, 
$$ \vvvert{\Phi^n}_{\Upu,\sV} \leq (\kappa)^n \vvvert{\Phi^0}_{\Upu,\sV}\,.$$
Moreover, for any  real-valued sequence $\{\phi^n:n\geq 0\}$, we have
\begin{align}\label{eq-V-conv-f}
\vvvert{\Phi^n+ \phi^n}_{\Upu,\sV} \leq (\kappa)^n \vvvert{\Phi^0+\phi^0}_{\Upu,\sV}\,.
\end{align}
\end{lemma}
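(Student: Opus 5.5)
We prove the one-step bound $\vvvert{\Phi^n}_{\Upu,\sV}\le\kappa\vvvert{\Phi^{n-1}}_{\Upu,\sV}$ for every $n\ge1$ and then iterate it. Here $\vvvert{f}_{\Upu,\sV}$ is the weighted semi-norm of~\eqref{def-norm-w}, namely $\sup_{x\ne y}|f(x)-f(y)|/(2+\Upu\sV(x)+\Upu\sV(y))$. It is invariant under adding constants, so $\vvvert{\Phi^n+\phi^n}_{\Upu,\sV}=\vvvert{\Phi^n}_{\Upu,\sV}$ for any real $\phi^n$. Hence~\eqref{eq-V-conv-f} follows at once from the first claim.

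\textbf{Sandwich for $\Phi^n$.} Fix $n\ge 1$ and let $v^*\in\Usm$ be a minimizing selector for the HJB equation~\eqref{eq-hjb-0}. For the iteration~\eqref{eq-rvi-mod-0} on $[0,\tau]$ with initial value $V^{n-1}$, a measurable selector of the minimum gives a Markov control $v_n(t,x)\in\Um$; by~\eqref{eq-vn-min} it attains the minimum in $\sS_\tau V^{n-1}$.

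For the upper bound, use $v^*$ (suboptimal) in~\eqref{def-Sn} and subtract the identity in Lemma~\ref{lem-simple-bound}(ii):
\[
V^n(x)-V^*_0(x)\le \big(\tP^{v^*}_\tau(V^{n-1}-V^*_0)\big)(x)+\tau\big(\Lambda^*_0-\delta V^{n-1}(0)\big).
\]
For the lower bound, use the optimal $v_n$ in~\eqref{def-Sn} and Lemma~\ref{lem-simple-bound}(i) with $U=v_n$:
\[
V^n(x)-V^*_0(x)\ge \big(\tP^{v_n}_\tau(V^{n-1}-V^*_0)\big)(x)+\tau\big(\Lambda^*_0-\delta V^{n-1}(0)\big).
\]
Subtracting $\delta^{-1}\Lambda^*_0$ throughout, and using that Markov kernels preserve constants, gives
\[
\tP^{v_n}_\tau\Phi^{n-1}+c_n\le\Phi^n\le\tP^{v^*}_\tau\Phi^{n-1}+c_n,
\]
where $c_n=-\tau\delta\Phi^{n-1}(0)$ is the same constant on both sides.

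\textbf{From the sandwich to the contraction.} Taking differences at two points $x,y$ gives
\[
\Phi^n(x)-\Phi^n(y)\le(\tP^{v^*}_\tau\Phi^{n-1})(x)-(\tP^{v_n}_\tau\Phi^{n-1})(y).
\]
The two kernels here differ, so a single-kernel contraction does not apply directly. We handle this with the coupling form of the Hairer--Mattingly argument. The key observation is that Corollaries~\ref{cor-lyap-drift} and~\ref{cor-minor} hold \emph{uniformly} in $v\in\Um$, with the same $\eta,K,\alpha_{R,\tau},\nu_K$. The proof of Proposition~\ref{prop-contract} (Theorem 3.1 of \cite{hairer2011}) only uses the drift bound at each starting point and the common minorizing measure $\nu_K$ on $\sV_R$. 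It therefore yields
\[
|(\tP^{v}_\tau f)(x)-(\tP^{v'}_\tau f)(y)|\le\kappa\,\vvvert{f}_{\Upu,\sV}\big(2+\Upu\sV(x)+\Upu\sV(y)\big)
\]
for any pair $v,v'\in\Um$.

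Both one-sided bounds on $\Phi^n(x)-\Phi^n(y)$ then give $\vvvert{\Phi^n}_{\Upu,\sV}\le\kappa\vvvert{\Phi^{n-1}}_{\Upu,\sV}$. The constants $\Upu,\lambda_2,\lambda_3,R$ from~\eqref{def-upu} give exactly the $\kappa$ in~\eqref{def-kappa}: the two branches of $\kappa$ correspond to the cases $\sV(x)+\sV(y)\le R$ (minorization) and $>R$ (drift), and $R>2\lambda_0\lambda_1^{-1}$ makes $\lambda_2<1$.

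\textbf{Main obstacle.} The delicate step is justifying this two-kernel version of the contraction. If rederiving it is awkward, a fallback is to write
\[
(\tP^{v^*}_\tau f)(x)-(\tP^{v_n}_\tau f)(y)=\int f\,\D\big(\tP^{v^*}_\tau(x,\cdot)-\tP^{v_n}_\tau(y,\cdot)\big),
\]
and bound it by the weighted total variation between the two measures, using the dual characterization of $\vvvert{\cdot}_{\Upu,\sV}$. The weighted total variation bound $\kappa(2+\Upu\sV(x)+\Upu\sV(y))$ follows in the minorization region by subtracting $\alpha_{R,\tau}m(K)\nu_K$ from both measures, and in the complement from the drift bounds. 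One must also check that the measurable selector $v_n$ exists and that the Itô arguments are valid for $V^{n-1}\in\cC_{\Upu,\sV}$. Both follow from Proposition~\ref{prop-ext-0} and the moment bound of Corollary~\ref{cor-lyap-drift}.
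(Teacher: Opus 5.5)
Your proof is correct. At the decisive step it goes a different way from the paper, and the difference matters.

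Both arguments start from the same two-sided bound. Yours has the right orientation: the RVI minimizer $v_n$ gives the lower bound and the HJB selector $v^*$ the upper one. The paper's \eqref{eq-V-conv-11} and \eqref{eq-V-conv-12} state them the other way round, which is harmless by symmetry.

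The paper then tries to avoid comparing two different kernels. It recenters the one-sided bounds by the constants $\widetilde c_{\bar v},\widetilde c_{v^*}$ of Lemma~\ref{lem-compare}. It then splits into the cases $\widetilde c_{\bar v}\ge\widetilde c_{v^*}$ and $\widetilde c_{\bar v}<\widetilde c_{v^*}$ to reach \eqref{eq-V-conv-6}, and applies Proposition~\ref{prop-contract} to each kernel as a black box.

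You instead re-run the Hairer--Mattingly argument for a pair of kernels at a pair of points, and this is legitimate:
\begin{itemize}
\item the $\alpha_{R,\tau}m(K)\nu_K$ parts cancel in $(\tP^v_\tau f)(x)-(\tP^{v'}_\tau f)(y)$;
\item the drift bound is used at each point separately;
\item the preliminary recentering $f\mapsto f+c^*_f$ still costs nothing, because both are probability kernels.
\end{itemize}
So the same $\kappa$ comes out, by the uniformity in Corollaries~\ref{cor-lyap-drift} and~\ref{cor-minor}.

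Your route requires opening up the proof of Proposition~\ref{prop-contract}, but it is the one that actually closes the step. In the paper's second case, $\widetilde c_{\bar v}<\widetilde c_{v^*}$ gives
\[
-\inf_{x}\frac{\Phi^n(x)+\widetilde c_{\bar v}}{1+\Upu\sV(x)}\;\ge\;-\inf_{x}\frac{\Phi^n(x)+\widetilde c_{v^*}}{1+\Upu\sV(x)},
\]
which is the reverse of the inequality used there. More fundamentally, a pointwise sandwich $g_1\le f\le g_2$ can never by itself give $\vvvert{f}_{\Upu,\sV}\le\max\{\vvvert{g_1}_{\Upu,\sV},\vvvert{g_2}_{\Upu,\sV}\}$. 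To see this, take $g_1<g_2$ two distinct constants, which have zero semi-norm, with a nonconstant $f$ between them. So the cross-kernel estimate you flagged as the main obstacle is genuinely needed, applied to $\Phi^n(x)-\Phi^n(y)\le(\tP^{v^*}_\tau\Phi^{n-1})(x)-(\tP^{v_n}_\tau\Phi^{n-1})(y)$ and its mirror image. Your treatment of it is sound.

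One small slip, inherited from the statement of Theorem~\ref{thm-rvi-0}: $\lambda_2<1$ is equivalent to $R>2\lambda_1\lambda_0^{-1}$, not $R>2\lambda_0\lambda_1^{-1}$. Since $\lambda_2<1$ is already part of \eqref{def-upu}, nothing else is affected.
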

\begin{proof} Fix $x\in \RR^d$, $n\geq 1$ and recall $\Phi^n(x)= V^n(x)-V^*_0(x)-\delta^{-1}\Lambda^*_0$.  Then, we choose $v\in \Um$ such that it is the minimizer of~\eqref{eq-rvi-mod-0} - we remark that this chosen $v$ depends on $n$, but we suppress its dependence on $n$.  Applying It\^o's formula to $V(\tau-t,X_t)$ up to $t=\tau$ with $X_t$ under $\bar v(t,\cdot)\doteq v(\tau-t,\cdot)$, we have
\begin{align}\label{eq-markov-ker-0}
V^n(x)= \E_{x}^{\bar v}\Big[\int_0^\tau \Big(r\big(X_s,\bar v(s,X_s)\big)-\delta V^{n-1}(0)\Big) \D s+ V^{n-1}(X_\tau)\Big]\,.
\end{align} 
Recall that $V^n(x)=V(\tau,x)$ with $V(0,x)=V^{n-1}(x)$.
Similarly, applying It\^o's formula to $V^*(X_t)$ up to $t=\tau$ with $X_t$ under $\bar v(t,\cdot)\doteq v(\tau-t,\cdot)$, we have
\begin{align*}
V^*(x)\leq \E_{x}^{\bar v}\Big[\int_0^\tau \Big(r\big(X_s,\bar v(s,X_s)\big)-\Lambda^*_0\Big) \D s+ V^*(X_\tau)\Big]\,.
\end{align*} 
Recall that $V^*$ satisfies~\eqref{eq-hjb-0}.
From the above two displays and the definition of $\Phi^n$, we have
\begin{align}\nonumber
\Phi^n(x)&\leq  -\E_{x}^{\bar v}\Big[\int_0^\tau \big(\delta V^{n-1}(0)-\Lambda^*_0\big) \D s+ V^*(X_\tau)-V^{n-1}(X_\tau)\Big]-\delta^{-1}\Lambda^*_0\\\nonumber
&\leq -\delta \tau \Phi^{n-1}(0)+ \E_{x}^{\bar v}\Big[ \Phi^{n-1}(X_\tau)\Big] \\\label{eq-V-conv-11}
&=- \delta \tau \Phi^{n-1}(0)+ \big(\tP^{\bar v}_\tau \Phi^{n-1}\big)(x)\,.\end{align}
The last line above follows from the definition of $\tP_\tau^{\bar v}$. Let   
\begin{align}\label{eq-c-v}c_{\bar v}\doteq \inf_{y\in \RR^d}\Big(\vvvert{\tP^{\bar v}_\tau \Phi^{n-1}}_{\Upu,\sV}\big(1+\Upu\sV(y)\big)-\tP^{\bar v}_\tau \Phi^{n-1}(y)\Big).\end{align}  Adding $c_{\bar v}$ and dividing by $1+\Upu\sV(x)$ on both sides of the above display, we have 
\begin{align*}
\frac{\Phi^n(x)+\widetilde c_{\bar v}}{1+\Upu\sV(x)}\leq  \frac{\big(\tP^{\bar v}_\tau \Phi^{n-1}\big)(x)+c_{\bar v}}{1+\Upu\sV(x)}&\leq \sup_{y\in \RR^d}\frac{\big|\big(\tP^{\bar v}_\tau \Phi^{n-1}\big)(y)+c_{\bar v}\big|}{1+\Upu\sV(y)}\\
&= \|\tP^{\bar v}_\tau \Phi^{n-1}+c_{\bar v}\|_{\Upu,\sV}\\
&= \vvvert{\tP^{\bar v}_\tau \Phi^{n-1}}_{\Upu,\sV}\,.
\end{align*}
Here, $\widetilde c_{\bar v}= c_{\bar v}+ \delta \tau  \Phi^{n-1}(0)$. In the above, to get the second line, we take supremum over $x$; to get the third line, we use the definition of $\|\cdot\|_{\Upu,\sV}$ and to get the last line, we use Lemma~\ref{lem-compare}. Therefore, as $x$ is arbitrary, we have shown that 
\begin{align}\label{eq-V-conv-1}
\inf_{x\in \RR^d}\frac{\Phi^n(x)+\widetilde c_{\bar v}}{1+\Upu\sV(x)}\leq \sup_{x\in \RR^d}\frac{\Phi^n(x)+\widetilde c_{\bar v}}{1+\Upu\sV(x)}\leq \vvvert{ \tP^{\bar v}_\tau \Phi^{n-1}}_{\Upu,\sV}\,.
\end{align}

We now proceed to obtain a reverse inequality analogous to the above display. The proof follows exactly along the same lines as above. To that end, we choose $v^*\in \Usm$ such that it is the minimizer of~\eqref{eq-hjb-0}. From It\^o's formula to $V(\tau-t,X_t)$ up to $t=\tau$ with $X_t$ under $v^*$, we have
\begin{align}
V^n(x)\leq  \E_{x}^{ v^*}\Big[\int_0^\tau \Big(r\big(X_s,v^*(X_s)\big)-\delta V^{n-1}(0)\Big) \D s+ V^{n-1}(X_\tau)\Big]\,.
\end{align} 
Similarly, applying It\^o's formula to $V^*(X_t)$ up to $t=\tau$ with $X_t$ under $v^*$, we have
\begin{align}
V^*(x)= \E_{x}^{v^*}\Big[\int_0^\tau \Big(r\big(X_s,v^*(X_s)\big)-\Lambda^*_0\Big) \D s+ V^*(X_\tau)\Big]\,.
\end{align} 
From the above two displays and the definition of $\phi^n$, we have
\begin{align}\label{eq-V-conv-12}
\Phi^n(x)&\geq -  \delta \tau \Phi^{n-1}(0)+\big(\tP^{v^*}_\tau \Phi^{n-1}\big)(x)\,.\end{align}
Analogous to the previous case, let  \[c_{v^*}\doteq \inf_{y\in \RR^d}\Big(\vvvert{\tP^{ v^*}_\tau \Phi^{n-1}}_{\Upu,\sV}\big(1+\Upu\sV(y)\big)-\big(\tP^{ v^*}_\tau \Phi^{n-1}\big)(y)\Big).\] Adding $c_{v^*}$ and dividing by $1+\Upu\sV(x)$ on both sides of the above display, we have 
\begin{align*}
\frac{\Phi^n(x)+\widetilde c_{v^*}}{1+\Upu\sV(x)}\geq  \frac{\big(\tP^{v^*}_\tau \Phi^{n-1}\big)(x)+c_{ v^*}}{1+\Upu\sV(x)}&\geq -\sup_{y\in \RR^d}\frac{\big|\big(\tP^{v^*}_\tau \Phi^{n-1}\big)(y)+c_{v^*}\big|}{1+\Upu\sV(y)}\\
&= -\|\tP^{v^*}_\tau \Phi^{n-1}+c_{ v^*}\|_{\Upu,\sV}\\
&=- \vvvert{ \tP^{v^*}_\tau \Phi^{n-1}}_{\Upu,\sV}\,.
\end{align*}
Here, $\widetilde c_{v^*}= c_{ v^*}+ \delta \tau \Phi^{n-1}(0)$. In the above, as earlier, to get the second inequaltiy, we take supremum over $x$ and the definition of $\Phi^n$; to get the first equality, we use the definition of $\|\cdot\|_{\Upu,\sV}$ and to get the last line, we use Lemma~\ref{lem-compare}. Therefore, as $x$ is arbitrary, we have shown that 
\begin{align}\label{eq-V-conv-2}
\sup_{x\in \RR^d}\frac{\Phi^n(x)+\widetilde c_{v^*}}{1+\Upu\sV(x)}\geq \inf_{x\in \RR^d}\frac{\Phi^n(x)+\widetilde c_{v^*}}{1+\Upu\sV(x)}\geq -\vvvert{\tP^{v^*}_\tau \Phi^{n-1}}_{\Upu,\sV}\,.
\end{align}

We next show that 
\begin{align}\label{eq-V-conv-3}
\|\Phi^n\|_{\Upu,\sV}\leq \max\Big\{\vvvert{ \tP^{\bar v}_\tau \Phi^{n-1}}_{\Upu,\sV},  \vvvert{\tP^{ v^*}_\tau \Phi^{n-1}}_{\Upu,\sV} \Big\}\,.
\end{align}
To do this, we first observe that for any $f:\RR^d\rightarrow \RR$,
\begin{align}\label{eq-simp-obs} \|f\|_{\Upu,\sV}=\max\Big\{\sup_{x\in \RR^d} \frac{f(x)}{1+\Upu\sV(x)}, -\inf_{x\in \RR^d} \frac{f(x)}{1+\Upu\sV(x)}\Big\}\,.\end{align}
Also, we first suppose that $\widetilde c_{\bar v}\geq \widetilde c_{v^*}$. Then, we have 
\begin{align}\nonumber
\|\Phi^n +\widetilde c_{v^*}\|_{\Upu, \sV}&= \max\Big\{ \sup_{x\in \RR^d}\frac{\Phi^n(x)+\widetilde c_{v^*}}{1+\Upu\sV(x)}, -\inf_{x\in \RR^d} \frac{\Phi^n(x)+\widetilde c_{v^*}}{1+\Upu\sV(x)}\Big\}\\\nonumber
& \leq  \max\Big\{ \sup_{x\in \RR^d}\frac{\Phi^n(x)+\widetilde c_{\bar v}}{1+\Upu\sV(x)}, -\inf_{x\in \RR^d} \frac{\Phi^n(x)+\widetilde c_{v^*}}{1+\Upu\sV(x)}\Big\}\\\label{eq-V-conv-4}
&\leq   \max\Big\{ \vvvert{ \tP^{\bar v}_\tau \Phi^{n-1}}_{\Upu,\sV} , \vvvert{ \tP^{ v^*}_\tau \Phi^{n-1}}_{\Upu,\sV}\Big\}\,.
\end{align}
In the above, to get the first line, we use~\eqref{eq-simp-obs}; to get the second line, we use the fact that $\widetilde c_{\bar v}\geq \widetilde c_{v^*}$, and to get the first term inside the maximum in the third line, we use~\eqref{eq-V-conv-1} and to get the second term inside the maximum, we use~\eqref{eq-V-conv-2}.

Now suppose $\widetilde c_{\bar v}< \widetilde c_{v^*}$. Then, computations analogous to those leading up to~\eqref{eq-V-conv-4} give us
\begin{align}\nonumber
\|\Phi^n +\widetilde c_{\bar v}\|_{\Upu, \sV}&= \max\Big\{ \sup_{x\in \RR^d}\frac{\Phi^n(x)+\widetilde c_{\bar v}}{1+\Upu\sV(x)}, -\inf_{x\in \RR^d} \frac{\Phi^n(x)+\widetilde c_{\bar v}}{1+\Upu\sV(x)}\Big\}\\\nonumber
& \leq  \max\Big\{ \sup_{x\in \RR^d}\frac{\Phi^n(x)+\widetilde c_{\bar v}}{1+\Upu\sV(x)}, -\inf_{x\in \RR^d} \frac{\Phi^n(x)+\widetilde c_{ v^*}}{1+\Upu\sV(x)}\Big\}\\\label{eq-V-conv-5}
&\leq   \max\Big\{ \vvvert{ \tP^{\bar v}_\tau \Phi^{n-1}}_{\Upu,\sV} , \vvvert{ \tP^{ v^*}_\tau \Phi^{n-1}}_{\Upu,\sV}\Big\}\,.
\end{align}
From Lemma~\ref{lem-compare}, we know that $\vvvert{f}_{\Upu,\sV}\leq \|f+c\|_{\Upu,\sV}$, for every $c\in \RR$. Therefore, combining this with~\eqref{eq-V-conv-4} and~\eqref{eq-V-conv-5} gives us
\begin{align}\label{eq-V-conv-6}
\vvvert{\Phi^n}_{\Upu,\sV}\leq  \max\Big\{ \vvvert{ \tP^{\bar v}_\tau \Phi^{n-1}}_{\Upu,\sV} , \vvvert{ \tP^{ v^*}_\tau \Phi^{n-1}}_{\Upu,\sV}\Big\}\,.
\end{align}
From Corollaries~\ref{cor-lyap-drift} and~\ref{cor-minor}, Proposition~\ref{prop-contract} and with  $\kappa$ and $\Upu$  as defined in~\eqref{def-kappa} and~\eqref{def-upu}, respectively, we have
$$ \vvvert{ \tP^{v}_\tau \Phi^{n-1}}_{\Upu,\sV}\leq \kappa \vvvert{\Phi^{n-1}}_{\Upu,\sV},$$
for every $v\in\Um$.  This gives us 
$$ \vvvert{\Phi^n}_{\Upu,\sV}\leq \kappa  \vvvert{  \Phi^{n-1}}_{\Upu,\sV}\,.$$

From here,  the first part of the  lemma follows immediately. 
Finally, to get~\eqref{eq-V-conv-f}, we use the fact that for any function $g$ and  constant $c$,  $\vvvert{g+c}_{\Upu,\sV}=\vvvert{g}_{\Upu,\sV}$. This completes the proof.
\end{proof}

\subsubsection{Convergence of $\Phi^n(0)$ to $0$}

It is important to note that Lemma~\ref{lem-V-conv} does not necessarily imply that $\Phi^n(0)$ converges to zero, as $\vvvert{\cdot}_{\Upu,\sV}$ is only a semi-norm. From the definition of $\vvvert{\cdot}_{\Upu,\sV}$, we can conclude that 
\begin{align}\label{eq-est-v-v^*}
 \sup_{0\neq x\in \RR^d}\frac{|\Phi^n(x)- \Phi^n(0)|}{2+\Upu \sV(x)+\Upu \sV(0)}\leq \sup_{x\neq y} \frac{|\Phi^n(x)- \Phi^n(y)|}{2+\Upu \sV(x)+\Upu \sV(y)}=\vvvert{\Phi^n}_{\Upu,\sV}\leq \kappa^n \vvvert{\Phi^0}_{\Upu,\sV}\,.
\end{align}
To obtain the first inequality, we use the sub-optimality of $(x,y)=(x,0)$ (with $x\neq 0$) with respect to the supremum over $(x,y)$ such that $x\neq y$. From the above, we can also conclude that  $\Phi^n$ converges to zero, at an exponential rate, if we can show that $\Phi^n(0)$   converges to zero, at an exponential rate. This is the content of the next lemma. 
\begin{lemma}\label{lem-L-conv}
Suppose Assumptions~\ref{a-regularity} and~\ref{a-main} hold and that $0<\delta \tau<1$. Then,
with $\widetilde \kappa$ as defined in~\eqref{def-kappa}, we have
$$ | \Phi^n(0)|\leq (\widetilde \kappa)^n\bigg(| \Phi^0(0)| + \frac{C_0}{e\kappa_{\max} \log (\widetilde \kappa \kappa_{\max}^{-1})} \vvvert{\Phi^0}_{\Upu,\sV}\bigg), \text{ for every $n\geq 1$}\,.$$ Recall   $\kappa_{\max}$ from~\eqref{def-kappa} and recall that   $C_0\doteq 2+2\Upu\sV(0)+\Upu\lambda_1\tau$ with $\Upu$ as defined in~\eqref{def-upu}.
\end{lemma}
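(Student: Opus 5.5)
The plan is to first establish the one-step recursion
\begin{align*}
|\Phi^n(0)|\leq (1-\delta\tau)|\Phi^{n-1}(0)| + C_0(\kappa)^{n-1}\vvvert{\Phi^0}_{\Upu,\sV},
\end{align*}
and then unroll it. For the recursion I evaluate the two-sided bounds \eqref{eq-V-conv-11} and \eqref{eq-V-conv-12} at $x=0$:
\begin{align*}
-\delta\tau\Phi^{n-1}(0)+\big(\tP^{v^*}_\tau\Phi^{n-1}\big)(0)\leq \Phi^n(0)\leq -\delta\tau\Phi^{n-1}(0)+\big(\tP^{\bar v}_\tau\Phi^{n-1}\big)(0).
\end{align*}
Next I compare $\Phi^{n-1}(y)$ with $\Phi^{n-1}(0)$. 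By \eqref{eq-est-v-v^*} and Lemma~\ref{lem-V-conv}, for every $y$,
\begin{align*}
|\Phi^{n-1}(y)-\Phi^{n-1}(0)|\leq (\kappa)^{n-1}\big(2+\Upu\sV(y)+\Upu\sV(0)\big)\vvvert{\Phi^0}_{\Upu,\sV}.
\end{align*}
I integrate this against $\tP^{v}_\tau(0,\D y)$, with $v$ either $\bar v$ or $v^*$; it is a probability kernel, so constants pass through. Corollary~\ref{cor-lyap-drift} gives $(\tP^v_\tau\sV)(0)\leq e^{-\lambda_0\tau}\sV(0)+\lambda_1\lambda_0^{-1}(1-e^{-\lambda_0\tau})\leq \sV(0)+\lambda_1\tau$, using $1-e^{-s}\leq s$. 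Hence
\begin{align*}
\big|(\tP^v_\tau\Phi^{n-1})(0)-\Phi^{n-1}(0)\big|\leq (\kappa)^{n-1}C_0\vvvert{\Phi^0}_{\Upu,\sV},
\end{align*}
uniformly in $v\in\Um$, with $C_0=2+2\Upu\sV(0)+\Upu\lambda_1\tau$. Substituting into both sides gives $|\Phi^n(0)-(1-\delta\tau)\Phi^{n-1}(0)|\leq C_0(\kappa)^{n-1}\vvvert{\Phi^0}_{\Upu,\sV}$. Since $0<1-\delta\tau<1$, this yields the recursion.

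Unrolling the recursion gives
\begin{align*}
|\Phi^n(0)|\leq (1-\delta\tau)^n|\Phi^0(0)|+C_0\vvvert{\Phi^0}_{\Upu,\sV}\sum_{k=1}^{n}(1-\delta\tau)^{n-k}(\kappa)^{k-1}.
\end{align*}
Each summand is at most $(\kappa_{\max})^{n-1}$, so the sum is at most $n(\kappa_{\max})^{n-1}$. I then need $n(\kappa_{\max})^{n-1}\leq (\widetilde\kappa)^n/(e\kappa_{\max}\log(\widetilde\kappa\kappa_{\max}^{-1}))$. Put $q=\kappa_{\max}/\widetilde\kappa<1$; the condition becomes $nq^n\leq 1/(e\log(1/q))$. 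This holds because $\sup_{t>0}tq^t$ is attained at $t=1/\log(1/q)$, where it equals $1/(e\log(1/q))$.

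Finally, $(1-\delta\tau)^n\leq(\widetilde\kappa)^n$, which gives the stated bound. The main point needing care is the uniformity of the kernel estimate in the $n$-dependent Markov control $\bar v$. Corollary~\ref{cor-lyap-drift} holds for all $v\in\Um$, so this is handled. The other delicate point is matching the exponent $(\kappa)^{n-1}$ against the constant $C_0$; the crude bound with $n(\kappa_{\max})^{n-1}$ together with the $t q^t$ maximization absorbs it exactly.
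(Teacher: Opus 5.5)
Your proposal is correct and follows the paper's proof essentially step by step. You evaluate the two-sided kernel bounds at $0$ and control $(\tP^v_\tau\Phi^{n-1})(0)-\Phi^{n-1}(0)$ using the semi-norm estimate together with $\E^v_0[\sV(X_\tau)]\le \sV(0)+\lambda_1\tau$; you take that bound from Corollary~\ref{cor-lyap-drift}, while the paper applies It\^o's formula directly. You then unroll the recursion for $|\Phi^n(0)|$ and absorb $n(\kappa_{\max})^{n-1}$ via $\sup_{t>0}tq^t=1/(e\log(1/q))$. Because your kernel estimate is uniform over $v\in\Um$, it is also unaffected by the fact that the roles of $\bar v$ and $v^*$ in \eqref{eq-V-conv-11}--\eqref{eq-V-conv-12} are actually swapped: a difference of two minima is bounded above using the minimizer of the subtracted problem, $v^*$, and below using $\bar v$. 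Nothing in your argument needs to change.
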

\begin{proof} 
 In terms of $\Phi^n$, rearranging~\eqref{eq-V-conv-11} and~\eqref{eq-V-conv-12}, we have 
\begin{align*}
\big(\tP^{ v^*}_\tau \Phi^{n-1}\big)(x)&\leq\Phi^n(x) + \delta \tau\Phi^{n-1}(0) \leq \big(\tP^{ \bar v}_\tau \Phi^{n-1}\big)(x)\,. 
\end{align*}
From~\eqref{eq-est-v-v^*}, for any $x\in \RR^d$ we have 
 \begin{align*}
\Phi^{n-1}(0) - &(\kappa)^{n-1}\Big(2+\Upu \sV(x)+\Upu \sV(0)\Big)\vvvert{\Phi^0}_{\Upu,\sV}\\
&\leq \Phi^{n-1}(x) \leq \Phi^{n-1}(0) + (\kappa)^{n-1}\Big(2+\Upu \sV(x)+\Upu \sV(0)\Big)\vvvert{\Phi^0}_{\Upu,\sV}\,.
 \end{align*}
Define $\bar \sV_\Upu(x)\doteq 2+\Upu \sV(x)+\Upu \sV(0)$. From the above two displays, we obtain
\begin{align*}
\Phi^{n-1}(0)-(\kappa)^{n-1}\vvvert{\Phi^0}_{\Upu,\sV}\E_0^{v^*} \big[\bar \sV_\Upu (X_\tau)\big]&\leq \Phi^{n}(0) + \delta\tau \Phi^{n-1}(0)  \\
& \leq \Phi^{n-1}(0)+(\kappa)^{n-1}\vvvert{\Phi^0}_{\Upu,\sV}\E_0^{ \bar  v} \big[\bar \sV_\Upu (X_\tau)\big]\,.
\end{align*}
Using Assumption~\ref{a-main} and It\^o's formula, we can conclude that  for any $v\in \Um$,
\begin{align*} \E_0^v\big[\bar \sV(X_\tau)\big] &= 2+\Upu \E_0^v\big[\sV(X_\tau)\big] +\Upu \sV(0) \\
&\leq  2+\Upu \big(\sV(0) +\lambda_1\tau\big) +\Upu \sV(0)= 2+2\Upu\sV(0)+\Upu\lambda_1\tau= C_0\,. \end{align*}
To summarize, we have shown that 
\begin{align*}
(1-\delta \tau)\Phi^{n-1}(0)- C_0 (\kappa)^{n-1} \vvvert{\Phi^0}_{\Upu,\sV}\leq \Phi^n(0)\leq (1-\delta \tau)\Phi^{n-1}(0)+ C_0(\kappa)^{n-1}\vvvert{\Phi^0}_{\Upu,\sV}\,.
\end{align*}
Therefore, iterating the above upper bound once gives us
\begin{align*}
\Phi^n(0)&\leq (1-\delta \tau)\Phi^{n-1}(0)+ C_0(\kappa)^{n-1}\vvvert{\Phi^0}_{\Upu,\sV}\\
&\leq (1-\delta \tau)\big((1-\delta \tau)\Phi^{n-2}(0)+ C_0(\kappa)^{n-2}\vvvert{\Phi^0}_{\Upu,\sV}\big)+ C_0(\kappa)^{n-1}\vvvert{\Phi^0}_{\Upu,\sV}\\
&\leq (1-\delta\tau)^2 \Phi^{n-2}(0) + 2 ( \kappa_{\max})^{n-1} C_0\vvvert{\Phi^0}_{\Upu,\sV}\,.
\end{align*}
From here, we can see that as we iterate up to $n-1$ times we get
$$ \Phi^n(0)\leq (1-\delta\tau)^n \Phi^0(0)+n(  \kappa_{\max})^{n-1}C_0\vvvert{\Phi^0}_{\Upu,\sV}\,.$$
Similarly, we get 
$$ \Phi^n(0)\geq (1-\delta\tau)^n \Phi^0(0)- n(  \kappa_{\max})^{n-1}C_0\vvvert{\Phi^0}_{\Upu,\sV}\,.$$
From the above two displays, we have
\begin{align*}
|\Phi^n(0)|&\leq (1-\delta\tau)^n |\Phi^0(0)|+n(  \kappa_{\max})^{n-1}C_0\vvvert{\Phi^0}_{\Upu,\sV}\\
&\leq (1-\delta\tau)^n |\Phi^0(0)|+n(  \frac{\kappa_{\max}}{\widetilde \kappa})^{n} (\widetilde \kappa)^{n} \frac{C_0}{\kappa_{\max}}\vvvert{\Phi^0}_{\Upu,\sV}\\
&\leq (1-\delta\tau)^n |\Phi^0(0)|+ (\widetilde \kappa)^{n} \frac{C_0}{e\kappa_{\max} \log (\widetilde \kappa \kappa_{\max}^{-1})}  \vvvert{\Phi^0}_{\Upu,\sV}\,. 
\end{align*}
To get the last line, we use the fact that maximum value of  $xa^x$ on $\RR$, for $0<a<1$ is $-(e\log a)^{-1}$. This completes the proof of the lemma as $(1-\delta\tau)<\widetilde\kappa $.
\end{proof}
\begin{proof}[Proof of Corollary~\ref{cor-norm-conv}]  
Fix $n\geq 1$. Let  $\kappa$ and $\Upu$   be as defined in ~\eqref{def-kappa} and~\eqref{def-upu}, respectively. Also, let  $c_n \doteq \inf_{x\in \RR^d} \Big(\vvvert{\Phi^n}_{\Upu,\sV}\big(1+\Upu\sV(x)\big) -\Phi^n(x)\Big)$.
Suppose $c_n\geq 0$. We obtain 
\begin{align*}
\|\Phi^n\|_{\Upu,\sV}
  & \leq \|\Phi^n+c_n\|_{\Upu,\sV}+  \|c_n\|_{\Upu,\sV} \\ 
  &\leq \|\Phi^n+c_n\|_{\Upu,\sV}+  c_n\\
&= \vvvert{\Phi^n}_{\Upu,\sV} +c_n\\
&\leq (\kappa)^n  \vvvert{\Phi^0}_{\Upu,\sV}  + \vvvert{\Phi^n}_{\Upu,\sV}\big(1+\Upu\sV(0)\big) -\Phi^n(0)\\
&\leq (\kappa)^n  \vvvert{\Phi^0}_{\Upu,\sV}  + \vvvert{\Phi^n}_{\Upu,\sV}\big(1+\Upu\sV(0)\big) +\big|\Phi^n(0)\big|\\
&\leq \big( 2+\Upu\sV(0)\big)(\kappa)^n \vvvert{\Phi^0}_{\Upu,\sV} + (\widetilde \kappa)^n\Big(| \Phi^0(0)| + \frac{C_0}{e\kappa_{\max} \log (\widetilde \kappa \kappa_{\max}^{-1})} \vvvert{\Phi^0}_{\Upu,\sV}\Big)\\
&\leq \Big( 2+\Upu\sV(0) +  \frac{C_0}{e\kappa_{\max} \log (\widetilde \kappa \kappa_{\max}^{-1})} \Big)(\kappa)^n \vvvert{\Phi^0}_{\Upu,\sV}  + (\widetilde \kappa)^n| \Phi^0(0)|\\
&\leq  \Big( 3+\Upu\sV(0) +  \frac{C_0}{e\kappa_{\max} \log (\widetilde \kappa \kappa_{\max}^{-1})} \Big)(\widetilde \kappa)^n \|\Phi^0\|_{\Upu,\sV}\\
&\leq \Big( 3+\Upu\sV(0) +  \frac{(2+2\Upu\sV(0)+\Upu\lambda_1\tau)}{e\kappa_{\max} \log (\widetilde \kappa \kappa_{\max}^{-1})}\Big)(\widetilde \kappa)^n \|\Phi^0\|_{\Upu,\sV} \\
&=C( \widetilde \kappa)^n \|\Phi^0\|_{\Upu,\sV}  \,.
\end{align*}
In the above, to get the second inequality, we use the definition of $\|\cdot\|_{\Upu,\sV}$, the non-negativity of $\sV$ and  the fact that $c_n$ is a constant; to get the first equality, we apply Lemma~\ref{lem-compare}; to get the third inequality, we use the sub-optimality of $0$ in the definition of $c_n$; to get the fourth inequality, we use the fact that $-z\leq |z|$, for $z\in \RR$; to get the fifth inequality, we use Lemmas~\ref{lem-V-conv} and~\ref{lem-L-conv}; to get the seventh inequality, we use the fact that $f(0)\leq \|f\|_{\Upu,\sV}$.

Similarly, we can argue that the above conclusion (in particular, the last inequality of the above display) still holds when $c_n<0$. This completes the proof of Theorem~\ref{thm-rvi-0}.
\end{proof}

\medskip

\section{Rate of Convergence of RVI Algorithm in ERSC Problems}

\subsection{Ergodic risk sensitive control problem} \label{sec-gamma}
In this section, we are   interested in optimizing the following criterion: for a fixed risk sensitive parameter $\gamma>0$,
\begin{equation*} 
J_\gamma(x, U) \doteq \limsup_{T\to\infty} \frac{1}{\gamma T}\log   \E_x^U\left[\exp\Big( \gamma\int_0^T r({X}_t, U_t )\D t\Big) \right]\,.
\end{equation*}
More precisely, the problem of interest is  
\begin{equation*}
{\Lambda}_\gamma^*({x}) \doteq  \inf_{U\in \Uadm}  J_\gamma({x}, U)\, \quad \text{ and } \quad \Lambda_\gamma^*\doteq \inf_{x\in \RR^d} \Lambda_\gamma(x)\,.
\end{equation*}
 In this case, we assume the following stronger version of the Foster-Lyapunov condition than the one in~\eqref{eq-erg-0}.
\begin{assumption}\label{a-main-gamma} There exist an inf-compact  $\cC^2(\RR^d) $ function $\widetilde \sV\geq 1$, an inf-compact $\cC(\RR^d)$ function $l\geq 1$ and  $\bar l>0$ such that for every $x\in \RR^d$ and $u\in \bU$,
\begin{align*}
\Lg^u \widetilde \sV(x)\leq \big(\bar l -l(x)\big)\widetilde \sV(x)\,.
\end{align*}
Moreover, $ l(\cdot)-\gamma \max_{u\in \bU} r(\cdot,u)$ is inf-compact. 
\end{assumption}

We now recall the well-known existing result (which is \cite[Theorem 4.1]{arapostathis2019strict}) concerning the associated multiplicative HJB equation and complete characterization of the optimal stationary Markov controls associated with ERSC problem defined above.
\begin{theorem}\label{thm-hjb-gamma} Suppose Assumptions~\ref{a-regularity} and~\ref{a-main-gamma} hold. Then, there exists a  function $\widetilde V^*_\gamma$ in $\cC^2(\RR^d)$, unique up to a multiplicative constant,  such that 
\begin{align}\label{eq-hjb-gamma}
\min_{u\in \bU}\big[\Lg^u\widetilde V^*_\gamma(x) +\gamma r(x,u)\widetilde V^*_\gamma(x)\big]=\gamma \Lambda^*_\gamma \widetilde V^*_\gamma(x)\,.
\end{align}
Moreover, 
\begin{enumerate}
\item[(i)]$v\in \Usm$ is optimal if and only if  
\begin{align}\label{eq-minimizer} \Lg^{v(x)} \widetilde V^*_\gamma(x)+\gamma r\big(x,v(x)\big)\widetilde V^*_\gamma(x)= \min_{u\in \bU}\big[\Lg^u\widetilde V^*_\gamma(x) +\gamma r(x,u)\widetilde V^*_\gamma(x)\big], \text{ a.e. $x\in \RR^d$.}\end{align}
\item [(ii)] For any $v\in \Usm$ that satisfies~\eqref{eq-minimizer}, we have
$$ \widetilde V^*_\gamma(x)= \E_x^v\Big[\exp\big(\gamma \int_0^{\widehat \tau_\frB}\big(r(X_t,v(X_t))-\Lambda^*_\gamma\big)\D t\big) \widetilde V^*_\gamma(X_{\widehat \tau_\frB})\Big], \quad\text{for $x\in \frB^c$,}$$
where $\frB$ is an open ball in $\RR^d$ and $\widehat \tau_\frB$ is the associated first hitting time of $\frB$.
\end{enumerate}
\end{theorem}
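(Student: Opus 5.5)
Since the statement is \cite[Theorem 4.1]{arapostathis2019strict}, the proof can be obtained by verifying that its hypotheses hold under Assumptions~\ref{a-regularity} and~\ref{a-main-gamma}. The sketch below follows the standard route: approximate on balls, pass to the limit, and obtain uniqueness and the characterization of optimal controls through a stochastic representation. The first step is to solve a Dirichlet principal eigenvalue problem on balls $B_k$. For the nonlinear (concave in the Hessian, minimum over $\bU$) operator $\mathcal{G}f\doteq \min_{u\in\bU}[\Lg^u f+\gamma r(\cdot,u) f]$, the Krein--Rutman and Quaas--Sirakov type theory for Bellman operators gives, for each $k$, a principal eigenpair $(\lambda_k,\psi_k)$. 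Here $\psi_k>0$ in $B_k$, $\psi_k=0$ on $\partial B_k$, $\psi_k\in\cC^{2}(B_k)\cap \cC(\bar B_k)$, $\mathcal{G}\psi_k=\gamma\lambda_k\psi_k$, and we normalize $\psi_k(0)=1$. Monotonicity of Dirichlet eigenvalues in the domain shows that $\lambda_k$ is nondecreasing in $k$.

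The second step bounds $\lambda_k$ uniformly and passes to the limit. The upper bound uses Assumption~\ref{a-main-gamma}. Since $l-\gamma\max_u r$ is inf-compact, there is a compact set $\mathcal{K}$ off which $\gamma r-l\le -\bar l-1$. This gives $\mathcal{G}\widetilde\sV\le(\bar l+\gamma r-l)\widetilde\sV\le -\widetilde\sV$ off $\mathcal{K}$, so $\widetilde\sV$ is a strict supersolution there. Together with the eigenfunction comparison, this yields $\gamma\lambda_k\le \bar l+\gamma\max_{\mathcal{K}}\max_u r$. Since $r\ge 0$, we also have $\lambda_k\ge0$. The uniform ellipticity in Assumption~\ref{a-regularity}(iii) supplies a Harnack inequality on each fixed ball, which gives local bounds on $\psi_k$ from the normalization. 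Interior $W^{2,p}$ estimates then allow extraction of a subsequence converging in $\cC^{1,\alpha}_{loc}$ to a positive solution $\widetilde V^*_\gamma$ of~\eqref{eq-hjb-gamma} with eigenvalue $\lambda^*=\lim\lambda_k$. Regularity of $r,b$ upgrades $\widetilde V^*_\gamma$ to $\cC^2$.

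The third step, which I expect to be the main obstacle, identifies $\lambda^*=\Lambda^*_\gamma$ and proves uniqueness together with part~(ii). Applying It\^o's formula to $e^{\gamma\int_0^t (r-\lambda^*)\,\D s}\widetilde V^*_\gamma(X_t)$ with Fatou's lemma gives $\lambda^*\le J_\gamma(x,U)$ for all $U$. For a measurable selector $v$ of~\eqref{eq-minimizer}, the stochastic representation in~(ii) up to $\widehat\tau_\frB$ follows from It\^o's formula stopped at $\widehat\tau_\frB\wedge\tau_{B_R}\wedge T$, letting $R,T\to\infty$. The delicate point is showing that the contribution on $\{\widehat\tau_\frB>T\}$ vanishes. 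I would try to handle it by dominating $\widetilde V^*_\gamma$ by a multiple of $\widetilde\sV$ outside $\frB$: since $\widetilde\sV$ is a supersolution with a strictly negative potential off $\mathcal{K}$, choosing $\frB\supset\mathcal{K}$ makes $e^{\gamma\int(r-\lambda^*)}\widetilde\sV(X_t)$ a supermartingale that decays. This representation gives $J_\gamma(x,v)\le\lambda^*$, hence $\lambda^*=\Lambda^*_\gamma$ and $v$ is optimal.

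Uniqueness up to a scalar follows by comparing two positive solutions through their common representation on $\frB^c$ and applying the strong maximum principle to the ratio. For the converse in~(i), I would argue as follows: if an optimal $v$ failed~\eqref{eq-minimizer} on a set of positive measure, the Dirichlet eigenvalue of the linear operator $\Lg^v+\gamma r(\cdot,v)$ would be strictly larger than $\Lambda^*_\gamma$ by strict monotonicity of the principal eigenvalue. By the same representation argument, this would force $J_\gamma(x,v)>\Lambda^*_\gamma$, contradicting the optimality of $v$.
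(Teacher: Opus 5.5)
The paper gives no proof of this theorem. It quotes \cite[Theorem~4.1]{arapostathis2019strict} verbatim. Your opening sentence does the same, and had you stopped there the proposal would coincide with the paper. The self-contained sketch you add follows the standard architecture (Dirichlet eigenpairs on $B_k$, monotone limit, identification of $\lambda^*=\lim_k\lambda_k$ with $\Lambda^*_\gamma$), but it has genuine gaps.

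\begin{enumerate}
\item \emph{The bound $\lambda^*\le J_\gamma(x,U)$.} Under an arbitrary $U$, the process $e^{\gamma\int_0^t(r(X_s,U_s)-\lambda^*)\D s}\,\widetilde V^*_\gamma(X_t)$ is only a local submartingale.
Localization gives $\widetilde V^*_\gamma(x)\le\E^U_x[\,\cdot\,]$ at time $t\wedge\tau_{B_R}$. Fatou's lemma removes $\tau_{B_R}$ only in the opposite direction. Even granting the inequality at time $t$, you get no lower bound on $\E^U_x[e^{\gamma\int_0^t r(X_s,U_s)\D s}]$, because $\widetilde V^*_\gamma$ is unbounded.
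Use the Dirichlet pairs instead. Since $\psi_k$ is bounded and vanishes on $\partial B_k$, you get $\psi_k(x)\le\|\psi_k\|_\infty e^{-\gamma\lambda_k t}\,\E^U_x\big[e^{\gamma\int_0^t r(X_s,U_s)\D s}\big]$. Hence $J_\gamma(x,U)\ge\lambda_k$ for every $k$.

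\item \emph{Representation (ii) and the bound $J_\gamma(x,v)\le\lambda^*$.} Let $r_v(x)\doteq r(x,v(x))$ and $M_t\doteq e^{\gamma\int_0^t(r_v(X_s)-\lambda^*)\D s}\widetilde V^*_\gamma(X_t)$.
Dominating $\widetilde V^*_\gamma$ by $C\widetilde\sV$ disposes of the event $\{T<\widehat\tau_\frB\}$, but not of exits through $\partial B_R$. The stopped supermartingale $e^{\gamma\int_0^t(r_v-\lambda^*)\D s}\widetilde\sV(X_t)$ only yields $\E^v_x[M_{\tau_{B_R}}\Ind_{\{\tau_{B_R}<\widehat\tau_\frB\}}]\le\sup_{\partial B_R}(\widetilde V^*_\gamma/\widetilde\sV)\,\widetilde\sV(x)$. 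This bound tends to $0$ only if $\widetilde V^*_\gamma=o(\widetilde\sV)$, a growth estimate you have not proved. You cannot borrow it from the paper's Lemma~\ref{lem-b-*}: that lemma is derived \emph{from} (ii), and under the stronger Assumption~\ref{a-main-gamma-conv}.
Moreover, (ii) does not give $J_\gamma(x,v)\le\lambda^*$. The supermartingale bound $\E^v_x[M_T]\le\widetilde V^*_\gamma(x)$ would control $\E^v_x[e^{\gamma\int_0^T(r_v-\lambda^*)\D s}]$ if $\inf\widetilde V^*_\gamma>0$, but that fails in general (e.g.\ for compactly supported $r$).
A working argument chooses $\mathcal K$ compact with $l-\gamma\max_u r\ge\bar l-\gamma\lambda^*$ off $\mathcal K$. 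Then $\Lg^v\widetilde\sV+\gamma(r_v-\lambda^*)\widetilde\sV\le C\Ind_{\mathcal K}\widetilde V^*_\gamma$. Since $\widetilde\sV\ge1$, this gives $\E^v_x[e^{\gamma\int_0^T(r_v-\lambda^*)\D s}]\le\widetilde\sV(x)+CT\,\widetilde V^*_\gamma(x)$.

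\item \emph{The converse in (i).} This rests entirely on ``strict monotonicity of the principal eigenvalue'', which is the main theorem of \cite{arapostathis2019strict}. At this point the sketch is no more self-contained than the citation.
Concretely, let $f\ge0$ be the defect in \eqref{eq-minimizer}. You must show two things: that the generalized eigenvalue of $\Lg^v+\gamma r_v$ strictly exceeds that of $\Lg^v+\gamma r_v-f/\widetilde V^*_\gamma$, \emph{and} that the latter equals $\gamma\Lambda^*_\gamma$. The positive eigenfunction $\widetilde V^*_\gamma$ alone gives only ``$\le$''. Only then do Dirichlet eigenvalues on large balls exceed $\gamma\Lambda^*_\gamma$.
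Both facts come from recurrence of the ground-state diffusion with drift $b(\cdot,v(\cdot))+A\grad\log\widetilde V^*_\gamma$, i.e.\ the process $Z$ of \eqref{eq-Z-cont} under $(v,w^*)$. Its natural Lyapunov function $\widetilde\sV/\widetilde V^*_\gamma$ is inf-compact precisely when $\widetilde V^*_\gamma=o(\widetilde\sV)$. Establishing this under Assumption~\ref{a-main-gamma} is the substance of the cited result.
The same recurrence is what gives (ii) for balls $\frB$ not containing $\mathcal K$. Your decay argument covers only $\frB\supset\mathcal K$, which suffices for the paper's use in Lemma~\ref{lem-b-*}.
\end{enumerate}

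Two smaller points. First, $\lambda_k\ge0$ is false: for $r\equiv0$ every Dirichlet eigenvalue is negative. This is harmless, since $\lambda_k\ge\lambda_1$.

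Second, there is no ``common representation'' for uniqueness. An arbitrary positive solution $\phi$ satisfies only the one-sided bound $\phi(x)\ge\E^{v_\phi}_x\big[e^{\gamma\int_0^{\widehat\tau_\frB}(r(X_t,v_\phi(X_t))-\Lambda^*_\gamma)\D t}\phi(X_{\widehat\tau_\frB})\big]$ under its own selector $v_\phi$. Uniqueness goes through by pairing this with the reverse inequality for $\widetilde V^*_\gamma$ under $v_\phi$, which again needs the growth control of item~2. You then apply the strong maximum principle to $\phi-c\,\widetilde V^*_\gamma$ with $c=\min_{\bar\frB}\phi/\widetilde V^*_\gamma$.
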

\begin{remark}
Since $\widetilde V^*_\gamma$ is unique up to a multiplicative constant, we can fix the value of $\widetilde V^*_\gamma$ at any chosen  point to a chosen value. In particular, we choose the state $0$ and the corresponding value to be $1$ for convenience,  \emph{i.e.,} $\widetilde V^*_\gamma (0)=1$.
\end{remark}

 \subsection{Numerical computation of $(\widetilde V^*_\gamma,\Lambda^*_\gamma)$}
  We first recall the existing  algorithm from \cite[Section 3.3]{arapostathis2020relative} below: the approximation to $(\widetilde V^*_\gamma,\Lambda^*_\gamma)$ is given by $(\widetilde V_\gamma(t,x)(\widetilde V_\gamma(t,0))^{-1},  \gamma ^{-1}\widetilde V_\gamma(t,0))$, for large $t$, where  $\widetilde V_\gamma(t,x)$ is the  function in $\cC^{1,2}(\RR_T^d)$ for every $T>0$,  
  that is the unique classical 
  solution to the PDE: 
\begin{align}\label{eq-rvi-org-gamma}
\partial_t\widetilde V_\gamma(t,x)= \min_{u\in \bU}\big[\Lg^u\widetilde V_\gamma(t,x) +\gamma r(x,u)\widetilde V_\gamma(t,x)\big] - \widetilde V_\gamma(t,0) \widetilde V_\gamma(t,x) \,, \quad \widetilde V_\gamma(0,x)=\wV_\gamma^0(x),
\end{align} 
where $\widetilde V_\gamma^0\in \cC^2(\RR^d)\cap \cC_{\Upu,\sV}(\RR^d)$. In Theorem 3.4 of \cite{arapostathis2020relative}, the convergence of $\widetilde V_\gamma(t,\cdot)$ as $t\to\infty$ is investigated, which is restated in  Proposition~\ref{prop-conv-gamma}  below.

\begin{proposition}\label{prop-conv-gamma}
Suppose Assumptions~\ref{a-regularity} and~\ref{a-main-gamma} hold. 
Also, suppose $\widetilde V_\gamma^0\in \cC^2(\RR^d)$ satisfies that $\widetilde V_\gamma^0$ is bounded from below away from $0$ and $\|\widetilde V^0_\gamma\|_{\Upu,\sV}<\infty$. 
 Then, there exists a unique function $\widetilde V_\gamma\in \cC_{\Upu,\sV}(\RR^d_T)\cap \cC^{1,2}(\RR^d_T)$, for all $T>0$ that satisfies~\eqref{eq-rvi-org-gamma}. Additionally, there exists a $\Gamma=\Gamma(\widetilde V^0_\gamma)>0$ such that as $t\to\infty$, $\widetilde V_\gamma(t,0)$ converges to $\gamma\Lambda^*_\gamma$  and $\widetilde V_\gamma(t,\cdot)$  converges to $\Gamma V^*_\gamma(\cdot)$ uniformly on compact sets of $\RR^d$.\end{proposition}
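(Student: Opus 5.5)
This proposition restates \cite[Theorem 3.4]{arapostathis2020relative}, so the plan is to check that its hypotheses follow from Assumptions~\ref{a-regularity} and~\ref{a-main-gamma} and then to outline the structure of that argument.

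\textbf{Step 1: existence and uniqueness.} First I would establish existence and uniqueness of a classical solution of~\eqref{eq-rvi-org-gamma} on every $\RR^d_T$. The nonlocal term $\widetilde V_\gamma(t,0)\widetilde V_\gamma(t,x)$ makes the equation nonlinear. I would handle it as in Proposition~\ref{prop-ext-0}, by replacing $\widetilde V_\gamma(t,0)$ with a given bounded continuous $g(t)$. This gives a linear-in-potential Bellman parabolic equation $\partial_t \psi=\min_u[\Lg^u\psi+\gamma r\psi]-g\psi$, which is solvable in $\cC^{1,2}\cap\cC_{\Upu,\sV}$ by the standard approximation on balls plus Lyapunov-type barriers. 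Assumption~\ref{a-main-gamma} controls growth: $\widetilde\sV$ is a supersolution up to the factor $e^{\bar l t}$, because $l-\gamma\max_u r$ is inf-compact.

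\textbf{Step 2: closing the fixed point.} Next I would close the loop with the Feynman--Kac representation
\[
\widetilde V_\gamma(t,x)=\min_U \E_x^U\Big[e^{\int_0^t(\gamma r-\widetilde V_\gamma(s,0))\D s}\,\widetilde V^0_\gamma(X_t)\Big].
\]
A fixed-point or continuation argument in $g$ then yields a unique solution. I would also show that $\widetilde V_\gamma$ stays bounded below away from $0$, which uses positivity of $\widetilde V^0_\gamma$.

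\textbf{Step 3: convergence.} The key observation is a decoupling. Let $\psi$ solve the equation without the nonlocal term, i.e.\ $\partial_t\psi=\min_u[\Lg^u\psi+\gamma r\psi]$ with $\psi(0)=\widetilde V^0_\gamma$. Then
\[
\widetilde V_\gamma(t,x)=\psi(t,x)\,e^{-\int_0^t\widetilde V_\gamma(s,0)\D s},
\]
so $\widetilde V_\gamma(t,0)$ is determined by $\psi(t,0)$ through a scalar ODE. The convergence therefore reduces to showing that $e^{-\gamma\Lambda^*_\gamma t}\psi(t,\cdot)\to \Gamma'\widetilde V^*_\gamma$ uniformly on compacts. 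For this I would use Theorem~\ref{thm-hjb-gamma} and the stochastic representation in part (ii) to compare $\psi$ with $\widetilde V^*_\gamma$ from above and below. Under the optimal $v$ the twisted (ground) process is positive recurrent by Assumption~\ref{a-main-gamma}, and a strict-monotonicity (Harnack plus $\omega$-limit set) argument, as in the monotone dynamical systems approach, forces the limit set to be the singleton ray $\{c\widetilde V^*_\gamma\}$.

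\textbf{Step 4: identifying the constants.} Finally I would determine $\Gamma$ and the limit of $\widetilde V_\gamma(t,0)$ from the scalar ODE, showing $\widetilde V_\gamma(t,0)\to\gamma\Lambda^*_\gamma$.

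\textbf{Main obstacle.} I expect Step 3 to be the hardest: proving that the rescaled $\psi$ actually converges, rather than merely staying bounded. This requires uniform-in-$t$ growth bounds in $\cC_{\Upu,\sV}$, the strict positivity of the ground-state, and the uniqueness in Theorem~\ref{thm-hjb-gamma}.
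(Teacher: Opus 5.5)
The paper gives no argument for this proposition; it imports it from \cite[Theorem 3.4]{arapostathis2020relative}. So the defensible version of your proposal is its first sentence, followed by an actual check that the hypotheses there follow from Assumptions~\ref{a-regularity} and~\ref{a-main-gamma} and the conditions on $\widetilde V^0_\gamma$. You announce that check but never carry it out.

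\textbf{The gap in Step~3.} As a self-contained outline, Step~3 has a genuine gap, and it is where all the content of the theorem lies. Write $\varphi(t,\cdot)\doteq e^{-\gamma\Lambda^*_\gamma t}\psi(t,\cdot)$. Your mechanism compares with multiples of $\widetilde V^*_\gamma$ from above and below, then runs a strict-monotonicity argument on the $\omega$-limit set. That requires a global sandwich $c_1\widetilde V^*_\gamma\le\varphi(t,\cdot)\le c_2\widetilde V^*_\gamma$, and on $\RR^d$ this fails in general.
\begin{itemize}
\item Since $r\ge0$, your own representation gives $\psi(t,\cdot)\ge\inf\widetilde V^0_\gamma>0$.
\item $\widetilde V^*_\gamma$ can vanish at infinity. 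Take the uncontrolled Ornstein--Uhlenbeck process $\D X_t=-X_t\D t+\D W_t$ with $r$ a compactly supported bump. Assumption~\ref{a-main-gamma} holds with $\widetilde\sV=e^{ax^2}$, $0<a<1$, and here $\Lambda^*_\gamma>0$. Theorem~\ref{thm-hjb-gamma}(ii) with $\frB\supset\operatorname{supp} r$ then gives $\widetilde V^*_\gamma(x)\le C\,\E_x[e^{-\gamma\Lambda^*_\gamma\widehat\tau_\frB}]\to0$.
\item Hence $\sup_x\varphi(t,x)/\widetilde V^*_\gamma(x)=\infty$ for every $t$. Nothing forces $\widetilde V^0_\gamma=O(\widetilde V^*_\gamma)$ either.
\item Even when a sandwich holds, $\sup_x\varphi/\widetilde V^*_\gamma$ is only lower semicontinuous under locally uniform convergence. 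So it need not be constant on the $\omega$-limit set, and the strong-maximum-principle step does not close without tail control.
\end{itemize}
This is why the conclusion is only local uniform convergence, and why uniform exponential stability is needed. You must localize. One route is the ground-state transform $e^{-\gamma\Lambda^*_\gamma t}\E^{v^*}_x\bigl[e^{\gamma\int_0^t r}\,\widetilde V^0_\gamma(X_t)\bigr]=\widetilde V^*_\gamma(x)\,\widetilde{\E}_x\bigl[(\widetilde V^0_\gamma/\widetilde V^*_\gamma)(Y_t)\bigr]$, where $Y$ is the twisted diffusion. This needs a Lyapunov function for $Y$ dominating $\widetilde V^0_\gamma/\widetilde V^*_\gamma$, which is where the growth restriction on $\widetilde V^0_\gamma$ enters. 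You also need a matching lower bound for the minimization. Its optimizers are time-dependent, so positive recurrence of $Y$ under $v^*$ gives only the upper side. Finally, you must identify limit points within the class where positive solutions of the eigen-equation are unique. None of this is in the proposal, so Step~3 restates the theorem rather than proving it.

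\textbf{What is right, and what needs correcting.} Your decoupling is correct and in fact exact and explicit. By positive homogeneity of the Hamiltonian, differentiation shows
\[
\widetilde V_\gamma(t,x)=\frac{\psi(t,x)}{1+\int_0^t\psi(s,0)\,\D s}\,.
\]
Conversely, any solution of \eqref{eq-rvi-org-gamma} multiplied by $e^{\int_0^t\widetilde V_\gamma(s,0)\D s}$ solves the equation for $\psi$. So existence and uniqueness follow directly from the Cauchy problem for $\psi$, and the fixed-point argument of Step~2 is unnecessary. The formula also settles Step~4, with one correction:
\begin{itemize}
\item If $\varphi(t,\cdot)\to\Gamma'\widetilde V^*_\gamma$ locally uniformly with $\Gamma'>0$ and $\Lambda^*_\gamma>0$, then $\widetilde V_\gamma(t,\cdot)\to\gamma\Lambda^*_\gamma\widetilde V^*_\gamma$.
\item So $\Gamma=\gamma\Lambda^*_\gamma$, independent of $\widetilde V^0_\gamma$, and the limit in the statement should read $\Gamma\widetilde V^*_\gamma$.
\item If $\Lambda^*_\gamma=0$, which $r\ge0$ allows (e.g.\ $r\equiv0$), the same formula gives $\widetilde V_\gamma(t,\cdot)\to0$. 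So $\Gamma>0$ requires $\Lambda^*_\gamma>0$.
\end{itemize}
Finally, the $\widetilde\sV$-barrier of Step~1 bounds the solution by $Ce^{ct}\widetilde\sV$. It does not place it in $\cC_{\Upu,\sV}$ when $\sV=\log\widetilde\sV$, as in the rest of this section. For unbounded $r$, that membership generally fails for $t>0$.
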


We obtain a natural `discrete' version of the existing risk-sensitive  RVI algorithm as follows: 
fix $\tau>0$ and choose $n^*\in \NN$, with an initialization  $ \widetilde V^0_\gamma \in \cC^2(\RR^d)\cap\cC_{\Upu,\sV}(\RR^d)$, for $0\leq t\leq n^* \tau $, solve the PDE in \eqref{eq-rvi-org-gamma}, and for $n\leq n^*$, set 
$\widetilde V_\gamma^n(\cdot)= \widetilde V_\gamma(n\tau,\cdot)\in \cC^2(\RR^d)\cap\cC_{\Upu,\sV}(\RR^d)$, and then output $\Big(\frac{\widetilde V_\gamma ^n}{\widetilde V^n_\gamma(0)},  \gamma^{-1} \widetilde V_\gamma ^n(0)\Big)$ as an approximation of $(\widetilde V_\gamma ^*,\Lambda_\gamma ^*)$.

\setcounter{algorithm}{0}

 In the following, we provide a modified `discrete' version of the risk-sensitive RVI algorithm: fix $\delta,\tau>0$.

 \setcounter{algorithm}{0}
\renewcommand{\thealgorithm}{RS}
\begin{algorithm}[H]
	\caption{Modified `discrete' version of RVI Algorithm}\label{alg-rvi-mod-gamma}
	\begin{enumerate}
	\item [(i)] Choose $\delta,\tau>0$ and $n^*\in \NN$.
		\item [(ii)] Input: initialize with $n=0$ and $\widetilde V^0_\gamma:\RR^d \rightarrow \RR^+$.
		\item [(iii)] Update: for $0\leq t\leq \tau$, solve the PDE below 
		\begin{align}\label{eq-rvi-mod-gamma}
\partial_t \widetilde V_\gamma (t,x)= \min_{u\in \bU}\big[\Lg^u\widetilde V_\gamma (t,x)+ \gamma r(x,u)\widetilde V_\gamma (t,x)\big]- \big( \widetilde V^n_\gamma (0)\big)^\delta \widetilde V_\gamma (t,x),\quad \widetilde V_\gamma (0,x)= \widetilde V_\gamma^n(x)\,.
\end{align}
		\item [(iv)] Set $n\leftarrow n+1$ and $\widetilde V_\gamma ^n= \widetilde V_\gamma (\tau,\cdot)$.
		\item [(v)] While $n\leq n^*$, repeat Steps (iii) and (iv). 
		\item [(vi)] Output: the approximate of $(\widetilde V_\gamma ^*,\Lambda_\gamma ^*)$ is $\Big(\frac{\widetilde V_\gamma ^n}{\widetilde V^n_\gamma(0)},  \gamma^{-1}\big(\widetilde V_\gamma ^n(0)\big)^\delta\Big)$.

	\end{enumerate}
\end{algorithm}
\begin{remark} Comments analogous to Remarks~\ref{rem-delta-tau} and~\ref{rem-compare} are applicable in the risk-sensitive case as well. Moreover, a discussion (analogous to that in the case of CEC problem) concerning the advantage of Algorithm~\ref{alg-rvi-mod-gamma} over the existing RVI algorithm, in terms of the convergence analysis can also be made in the risk-sensitive case.

\end{remark}

 \begin{remark}\label{rem-int-gamma}
  Before we proceed further, we again intuitively verify that $\widetilde V_\gamma^n$ defined above, whenever it converges, the limit of $\Big((\widetilde V^n_\gamma(0))^{-1}\widetilde V_\gamma^n,\gamma^{-1} \big(\widetilde V_\gamma^n(0)\big)^\delta \Big)$ is $(\widetilde V^*_\gamma,  \Lambda^*_\gamma)$. 
Suppose $\widetilde V_\gamma^n$ is  convergent, \emph{i.e.,} $\widetilde V_\gamma^n$ is independent of $n$, for large $n$. Then, from Theorem~\ref{thm-hjb-0}, it is clear that $ \gamma^{-1}\big(\widetilde V_\gamma^n(0)\big)^\delta$ approaches $\Lambda^*_\gamma$ and that $\widetilde V_\gamma^n$ approaches $\widetilde V^*_\gamma$, up to a multiplicative constant. But as $\widetilde V^*_\gamma(0)=1$ and $ \gamma^{-1}\big(\widetilde  V_\gamma^n(0)\big)^\delta$ approaches $\Lambda^*_\gamma$, we can intuitively conclude that $(\widetilde V^n_\gamma(0))^{-1}\widetilde V_\gamma^n(\cdot)$ approaches $\widetilde V^*_\gamma(\cdot)$. 
\end{remark}

The following result guarantees the well-posedness of~\eqref{eq-rvi-mod-gamma} in Step (iii) above.  The proof of this result follows the arguments in the proof of \cite[Lemma 4.1]{arapostathis2012relative} and hence, we omit it. Recall that $\RR_\tau^d= [0,\tau]\times \RR^d$.
\begin{proposition}\label{prop-ext-gamma}
Suppose Assumptions~\ref{a-regularity} and~\ref{a-main-gamma} hold. For  $n\in \NN$, there exists a unique solution $\widetilde V_\gamma\in  \cC^{1,2}(\RR^d_\tau)$ to~\eqref{eq-rvi-mod-gamma} with $\widetilde V_\gamma(0,x)=\widetilde V_\gamma^n(x)$.   Moreover, for every $n$,  if $\widetilde V_\gamma^n>0$, then $\widetilde V_\gamma^{n+1}>0$.
\end{proposition}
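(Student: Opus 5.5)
The plan is to reduce \eqref{eq-rvi-mod-gamma} to a linear parabolic problem with a bounded zeroth-order coefficient and then get positivity from a Feynman--Kac representation. Fix $n$ and suppose $\widetilde V^n_\gamma>0$ is given (with the regularity and growth inherited from the previous step). The term $(\widetilde V^n_\gamma(0))^\delta$ is a fixed positive constant, say $\rho_n$, during the $n$th update. With the substitution
\[
W(t,x)\doteq e^{\rho_n t}\,\widetilde V_\gamma(t,x),
\]
\eqref{eq-rvi-mod-gamma} becomes
\[
\partial_t W=\min_{u\in\bU}\big[\Lg^u W+\gamma r(x,u)W\big],\qquad W(0,\cdot)=\widetilde V^n_\gamma .
\]
This is the finite-horizon risk-sensitive HJB equation. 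Existence and uniqueness of a classical solution in $\cC^{1,2}(\RR^d_\tau)$ will follow exactly as in \cite[Lemma 4.1]{arapostathis2012relative}, as the paper indicates. Concretely:
\begin{itemize}
\item Solve the Dirichlet problems on balls $B_k$, using the uniform ellipticity and local Lipschitz bounds of Assumption~\ref{a-regularity} and the continuity of $r$. The minimum of affine operators gives a convex Hamiltonian, so Krylov--Safonov/Evans--Krylov type estimates apply.
\item Use a barrier of the form $C e^{\bar l t}\widetilde\sV$ from Assumption~\ref{a-main-gamma} to get bounds uniform in $k$. Since $\Lg^u\widetilde\sV\le(\bar l-l)\widetilde\sV$ and $l-\gamma r$ is inf-compact, $\widetilde\sV$ is a supersolution up to a constant factor, which is what keeps the $\gamma r$ term under control.
\item Pass to the limit $k\to\infty$ via interior Schauder estimates.
\item Deduce uniqueness within the class dominated by $\widetilde\sV$ from the comparison principle, using the same barrier.
\end{itemize}
I expect this step to be the main obstacle: the unbounded potential $\gamma r$ must be controlled by the Lyapunov function, and this is where the inf-compactness of $l-\gamma\max_u r$ is used.

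For positivity, apply It\^o's formula to $e^{\gamma\int_0^s r(X_\theta,\bar v(\theta,X_\theta))\D\theta}\,W(\tau-s,X_s)$, where $\bar v(s,\cdot)=v(\tau-s,\cdot)$ and $v$ is a measurable selector of the minimizer. A localization argument, justified by the same barrier bound, then gives
\[
\widetilde V^{n+1}_\gamma(x)=e^{-\rho_n\tau}W(\tau,x)=e^{-\rho_n\tau}\,\E^{\bar v}_x\Big[e^{\gamma\int_0^\tau r(X_s,\bar v(s,X_s))\D s}\,\widetilde V^n_\gamma(X_\tau)\Big].
\]
Since $r\ge0$, the exponential weight is at least $1$. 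Since $\widetilde V^n_\gamma>0$ everywhere and, by Corollary~\ref{cor-minor} (the Gaussian lower bound from \cite{menozzi2021}), the law of $X_\tau$ has a strictly positive density, the expectation is strictly positive. Hence $\widetilde V^{n+1}_\gamma(x)>0$ for every $x$.

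An alternative route to positivity is the strong maximum principle applied to the linear equation satisfied by $W$ with the frozen optimal control. Either way, positivity is a soft argument; the real technical work is the a priori growth bound in the existence step.
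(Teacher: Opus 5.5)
Your route is the paper's. The paper simply defers existence and uniqueness to an adaptation of \cite[Lemma 4.1]{arapostathis2012relative} and gives no argument for positivity. Your substitution $W=e^{\rho_n t}\widetilde V_\gamma$ is correct and shows cleanly that the frozen term is harmless. There is, however, a slip and one genuine problem in the uniqueness step.

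\emph{The slip.} $l-\gamma\max_{u}r$ is inf-compact, hence only bounded below. With $k_0\doteq\sup_{x}\big(\gamma\max_{u\in\bU}r(x,u)-l(x)\big)<\infty$ you get $\Lg^u\widetilde\sV+\gamma r(\cdot,u)\widetilde\sV\le(\bar l+k_0)\widetilde\sV$, so the supersolution is $Ce^{(\bar l+k_0)t}\widetilde\sV$, not $Ce^{\bar l t}\widetilde\sV$.

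\emph{The problem.} A comparison argument whose only barrier is $\widetilde\sV$ does not give uniqueness among solutions dominated by $\widetilde\sV$. The difference $\phi$ of two solutions satisfies $\partial_t\phi\le\Lg^{v}\phi+\gamma r(\cdot,v)\phi$ for the minimizing selector $v$ of one of them. With $\sigma_k$ the exit time from $B_k$, you are left with the exit term $\E^{\bar v}_x\big[e^{\gamma\int_0^{\sigma_k}r}\,\phi(t-\sigma_k,X_{\sigma_k})\Ind_{\{\sigma_k\le t\}}\big]$. The barrier only bounds this by $\sup_{[0,t]\times\partial B_k}\big(|\phi|/\widetilde\sV\big)\,e^{(\bar l+k_0)t}\widetilde\sV(x)$, which tends to zero only if $|\phi|/\widetilde\sV\to0$ at infinity. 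The analytic version, subtracting $\epsilon e^{(\bar l+k_0+1)t}\widetilde\sV$, needs the same condition.

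So your argument gives uniqueness in $\mathfrak o(\widetilde\sV)$, and the same restriction applies to the Feynman--Kac equality you justify with this bound. In $O(\widetilde\sV)$ you would need a second Lyapunov function growing strictly faster than $\widetilde\sV$, which Assumption~\ref{a-main-gamma} does not provide. Some growth class has to be fixed anyway: with none, uniqueness fails, e.g.\ by Tychonoff's example transported to the Ornstein--Uhlenbeck operator with $r\equiv0$. $\mathfrak o(\widetilde\sV)$ is the natural choice, since it contains the iterates in the regime of Theorem~\ref{thm-rvi-gamma}, which grow at most like $\widetilde\sV^{\theta'}$ with $\theta'<1$ (cf.\ Lemma~\ref{lem-b-n}).

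Positivity does not need that equality. Nonnegativity of $W$ comes from the construction (the Dirichlet approximations on $B_k$ with nonnegative data are nonnegative) or from comparison with the zero solution. Then It\^o's formula up to $\tau\wedge\sigma_k$ under the minimizing selector, discarding the nonnegative exit term and using $r\ge0$, gives, for $k$ large,
\[
W(\tau,x)\ \ge\ \E^{\bar v}_x\big[\widetilde V^n_\gamma(X_\tau)\Ind_{\{\sigma_k>\tau\}}\big]\ >\ 0 .
\]
Also, the positive transition density from Corollary~\ref{cor-minor} is superfluous: $\widetilde V^n_\gamma(X_\tau)>0$ almost surely already makes the expectation strictly positive.
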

\begin{remark}
We remark the key differences in the existing risk-sensitive RVI algorithm and Algorithm~\ref{alg-rvi-mod-gamma}.  From the first part of Proposition~\ref{prop-conv-gamma}, we know that for $t>0$, there exists a map $\widetilde \sS_{\gamma,t}:\cC^2(\RR^d)\cap\cC_{\Upu,\sV}(\RR^d)\rightarrow \cC^{2}(\RR^d)\cap \cC_{\Upu,\sV}(\RR^d)$ that satisfies $\widetilde   V_\gamma (t,x)= (\widetilde \sS_{\gamma, t} V_\gamma ^0)(x)$.  
Just as in the risk neutral case, we do not have the knowledge of  a more explicit form of the map $\widetilde \sS_{\gamma, t}$ which makes the analysis of large $t$ behavior of~\eqref{eq-rvi-org-0} difficult.  

On the other hand, using Proposition~\ref{prop-ext-gamma} and arguing as we did in risk-neutral case, we can express $\widetilde V_\gamma^n$, the iterates of Algorithm~\ref{alg-rvi-mod-gamma} as follows: for $n\geq 1$, \[\widetilde V_\gamma^{n+1}= \sS_{\gamma,\tau} \widetilde V_\gamma^n(x)\]with  \begin{align*}
\sS_{\gamma,\tau} f(x) \doteq \min_{U\in \Uadm}\E^U_x\Big[\exp\Big(\int_0^\tau \big(r(X_s,U_s)-(f(0))^\delta\big)\D s\Big)  f(X_\tau)\Big]\,.
\end{align*}
The above form follows from a standard application of It\^o's formula. Denoting 
\[\sS_{\gamma,\tau}^nf(x)=\sS_{\gamma,\tau}^{n-1} \sS_{\gamma,\tau} f(x), \quad \text{ with} \quad \sS^0_{\gamma,\tau}f(x)=f(x),\]
 we can clearly see that 
  \[
\Big(\frac{V_\gamma^n(\cdot)}{ V_\gamma^n(0)}, \gamma^{-1} \big(V_\gamma^n(0)\big)^\delta\Big) =  \Big(\frac{(\sS^n_{\gamma,\tau} V_\gamma^0)(\cdot)}{ (\sS^n_{\gamma,\tau} V_\gamma^0)(0)},  \gamma^{-1} \big((\sS^n_{\gamma,\tau} V_\gamma^0)(0)\big)^\delta\Big)\,. 
  \]

In contrast to  the existing risk-sensitive RVI algorithm, we obtain the iterates of Algorithm~\ref{alg-rvi-mod-gamma} by repeated solving PDE~\eqref{eq-rvi-mod-gamma} in Step (iii) over the interval $[0,\tau]$.
\end{remark}

\subsection{Second main result} The following is the main result of this paper which is the convergence of $\Big(\frac{\widetilde V^n_\gamma }{\widetilde V^n_\gamma(0)},  \gamma^{-1} \big(\widetilde V^n_\gamma (0)\big)^\delta\Big)$ to $(\widetilde V_\gamma^*,\Lambda_\gamma^*)$.  Since $\widetilde V^n_\gamma,\widetilde V^*_\gamma>0$, we show that $\widetilde V^n_\gamma \big(\widetilde V^n_\gamma(0)\widetilde V^*_\gamma\big)^{-1} \to 1$ (or equivalently,  $ \log \widetilde V_\gamma^n - \log \big(\widetilde V^n_\gamma(0)\widetilde V_\gamma^*\big)\to 0$) in an appropriate norm and $\big(\widetilde V_\gamma^n(0)\big)^\delta$ approaches $\gamma\Lambda_\gamma^*$.  Although Assumption~\ref{a-main-gamma} guarantees the well-posedness  on the HJB equation (which is~\eqref{eq-hjb-gamma}) and characterization of stationary Markov controls in Theorem~\ref{thm-hjb-gamma}, it is not strong enough for the analysis in this paper. Hence, we  enforce the stronger  assumption below to obtain the rate of convergence of Algorithm~\ref{alg-rvi-mod-gamma}.

 \begin{assumption}\label{a-main-gamma-conv} There exist an inf-compact  $\cC^2(\RR^d) $ function $\widetilde \sV\geq 1$ and an inf-compact $\cC(\RR^d)$ function $l\geq 1$  such that the following conditions hold: 
\begin{enumerate}
\item [(i)] There exists some $\bar l>0$ such that for every $x\in \RR^d$ and $u\in \bU$,
\begin{align}\label{a-main-gamma-Lg}
\Lg^u \widetilde \sV(x)\leq \big(\bar l -l(x)\big)\widetilde \sV(x)\,.
\end{align}
\item [(ii)] For some   $0<\theta<1$, $\theta l(\cdot)-\gamma \max_{u\in \bU} r(\cdot,u)$ is inf-compact. 

\item [(iii)] There exists $\lambda >0$ such that $l(\cdot)-\lambda\log\widetilde \sV(\cdot)$ is inf-compact.
\end{enumerate}
\end{assumption}
It is clear that Assumption~\ref{a-main-gamma-conv} implies Assumption~\ref{a-main-gamma}.

\begin{remark}\label{rem-k-inf-c}
From Assumptions~\ref{a-main-gamma-conv}(ii)-(iii), it is clear that there exist constants $k_r$ and $k_\sV$ such that 
\begin{align}\label{eq-inf-c-1} \gamma\big( r(x,u)-\Lambda^*_\gamma\big)\leq \theta l(x)+ k_r, \quad \text{and}\quad \lambda\log \widetilde\sV(x)\leq l(x)+k_\sV\,.\end{align}
\end{remark}
\begin{remark}\label{rem-lyap-mult} From~\eqref{a-main-gamma-Lg} in Assumption~\ref{a-main-gamma-conv}(i), applying It\^o's formula to 
$ \exp\big(\int_0^t (l(X_s)-\bar l)\D s\big) \widetilde \sV(X_t)$ with $v\in \Um$ gives us 
\begin{align}\label{eq-lyap-ineq}
\E_x^v\Big[\exp\big(\int_0^t \big(l(X_s)-\bar l)\D s\big) \widetilde \sV(X_t)\Big]\leq \widetilde \sV(x),\quad \text{for $x\in \RR^d$ and $ t\geq 0$.}
\end{align}
\end{remark}
In the rest of the paper, we set $$ \sV\doteq \log \widetilde \sV,\quad V^n_\gamma\doteq \log \widetilde V^n_\gamma, \quad V^*_\gamma\doteq \log \widetilde V^*_\gamma, \quad \widehat \Lambda^*_\gamma \doteq  \delta^{-1} \log\Big(\gamma \Lambda^*_\gamma\Big)\quad \text{and}\quad \Phi^n_\gamma \doteq  V_\gamma^n-V_\gamma^*- \widehat \Lambda^*_\gamma\,.$$

 \begin{theorem}\label{thm-rvi-gamma}
Suppose Assumptions~\ref{a-regularity} and~\ref{a-main-gamma-conv} hold. Also, suppose that $V_\gamma^0>0$ is continuous and   for some  $0<\beta<1-\theta$, $ \|\Phi^0_\gamma\|_{\beta,\sV}< 1$.
Then, the following holds: for small enough $\beta>0$, $\tau$ large enough and for $\delta>0$ such that $$\max\Big\{\tau\delta \exp\big(\delta \widehat \Lambda^*_\gamma\big)\Big),   \tau\delta \exp\big(\delta \Phi^{0}_\gamma(0)\big)\Big\}<1,$$ there exists $0<\kappa_\gamma\doteq \kappa_\gamma\big(V^0_\gamma\big)<1$ such that  for 
 \begin{align}\label{eq-widekappa}\kappa_{\gamma,\max}\doteq\max\Big\{\kappa_\gamma,\Big(1- \tau\delta \exp\big(\delta \widehat \Lambda^*_\gamma\big)\Big),  \Big(1- \tau\delta \exp\big(\delta \Phi^0_\gamma(0)\big)\Big) \Big\}<\widetilde \kappa_\gamma<1,\end{align}
 we have 
\begin{align}\label{eq-V-gamma-semi-contract}
\vvvert{ \Phi^n_\gamma}_{\Upu,\sV}&\leq  (\kappa_\gamma)^n\vvvert{ \Phi^0_\gamma}_{\Upu,\sV}\\\label{eq-lambda-gamma-semi-contract}
\big|\Phi^n_\gamma(0)\big|&\leq  (\widetilde \kappa_{\gamma})^n\bigg(| \Phi^0_\gamma(0)| +\frac{C_{\gamma,0}}{e\kappa_{\gamma,\max} \log (\widetilde \kappa_\gamma \kappa_{\gamma,\max}^{-1})}  \vvvert{\Phi^0_\gamma}_{\Upu,\sV}\bigg),
\end{align}
for some $C_{\gamma,0}>0$.

\end{theorem}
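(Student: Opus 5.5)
We follow the CEC scheme after passing to additive (logarithmic) variables. We first rewrite the multiplicative recursion $\widetilde V^{n+1}_\gamma=\sS_{\gamma,\tau}\widetilde V^n_\gamma$ and the HJB equation \eqref{eq-hjb-gamma} in terms of $V^n_\gamma=\log\widetilde V^n_\gamma$ and $V^*_\gamma=\log\widetilde V^*_\gamma$. The transformation $V=\log\widetilde V$ in \eqref{eq-rvi-mod-gamma} gives
\[
\partial_t V=\min_u\big[\Lg^uV+\tfrac12|\Sigma\transp\grad V|^2+\gamma r\big]-e^{\delta V^n_\gamma(0)} .
\]
Writing $\tfrac12|\Sigma\transp\grad V|^2=\max_{w}\big[\langle \Sigma w,\grad V\rangle-\tfrac12|w|^2\big]$ turns this into a min--max (control $u$, auxiliary control $w$) Bellman equation for the extended diffusion with drift $b+\Sigma w$ and running cost $\gamma r-\tfrac12|w|^2$. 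The same transformation applies to \eqref{eq-hjb-gamma} with constant $\gamma\Lambda^*_\gamma=e^{\delta\widehat\Lambda^*_\gamma}$.

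Choosing the saddle-point pairs, we use the $u$ from the minimizer $v^*$ of the HJB equation with the optimal $w$ of the iterate, and conversely the minimizer $\bar v$ of the iterate with the optimal $w=\Sigma\transp\grad V^*_\gamma$. Itô's formula then gives two-sided bounds analogous to \eqref{eq-V-conv-11}--\eqref{eq-V-conv-12}:
\[
\tP^{(1)}_\tau\Phi^{n-1}_\gamma-\tau\big(e^{\delta V^{n-1}_\gamma(0)}-e^{\delta\widehat\Lambda^*_\gamma}\big)\le\Phi^n_\gamma\le \tP^{(2)}_\tau\Phi^{n-1}_\gamma-\tau\big(e^{\delta V^{n-1}_\gamma(0)}-e^{\delta\widehat\Lambda^*_\gamma}\big),
\]
where $\tP^{(i)}_\tau$ are Markov kernels of the extended diffusion under these pairs. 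The constant term drops out of the semi-norm. Repeating the argument of \eqref{eq-V-conv-1}--\eqref{eq-V-conv-6} verbatim then gives $\vvvert{\Phi^n_\gamma}_{\beta,\sV}\le\max_i\vvvert{\tP^{(i)}_\tau\Phi^{n-1}_\gamma}_{\beta,\sV}$.

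The main obstacle is applying Proposition~\ref{prop-contract} to $\tP^{(i)}_\tau$, because the drift now contains $\Sigma w$ with $w$ proportional to $\grad V^{n-1}_\gamma$ or $\grad V^*_\gamma$, which are not a priori bounded. To handle this, we introduce the shifted intermediate iterates $\widehat V^n_\gamma=V^n_\gamma-(\text{constant depending on }n)$, normalized so that the constant term above is absorbed. These iterates have the same semi-norm distance to $V^*_\gamma$, and their controls are governed by $\Phi^{n-1}_\gamma$ itself.

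We proceed inductively under the hypothesis $\|\widehat\Phi^{n-1}_\gamma\|_{\beta,\sV}<1$, that is, $|\widehat V^{n-1}_\gamma-V^*_\gamma|\le 1+\beta\log\widetilde\sV$. The extended-diffusion kernel is equivalent, by Girsanov, to a change of measure with density $\exp(\text{cost}-\ldots)\widetilde V(X_\tau)/\widetilde V(x)$. Combining this with \eqref{eq-lyap-ineq}, \eqref{eq-inf-c-1} and $\beta<1-\theta$, we bound $\tP^{(i)}_\tau\sV\le\eta\sV+K$. The factor $\widetilde\sV^{\beta}$ is dominated by $\widetilde\sV$, and the term $\theta l$ coming from $r$ is absorbed by $l$; Assumption~\ref{a-main-gamma-conv}(iii) converts the $l$-gain into decay in $\sV=\log\widetilde\sV$, giving $\eta<1$ once $\tau$ is large. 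This is Lemma~\ref{lem-exp-conv}. For minorization (Lemma~\ref{lem-exp-minor}), on $\{\sV\le R\}$ we compare with the uncontrolled-$w$ kernel via the same density, which is bounded below on compacts, and combine with Corollary~\ref{cor-minor}. Proposition~\ref{prop-contract} with small $\beta$ then gives $\vvvert{\Phi^n_\gamma}_{\beta,\sV}\le\kappa_\gamma\vvvert{\Phi^{n-1}_\gamma}_{\beta,\sV}$. We then check that $\|\widehat\Phi^n_\gamma\|_{\beta,\sV}<1$ persists using the contraction together with the bound on $\Phi^n_\gamma(0)$ below, which closes the induction and yields \eqref{eq-V-gamma-semi-contract}.

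For \eqref{eq-lambda-gamma-semi-contract}, we evaluate the two-sided bound at $x=0$ and use \eqref{eq-est-v-v^*} together with the drift bound for $\tP^{(i)}_\tau\sV$, which yields the constant $C_{\gamma,0}$. This gives $\Phi^n_\gamma(0)\approx\Phi^{n-1}_\gamma(0)-\tau(e^{\delta V^{n-1}_\gamma(0)}-e^{\delta\widehat\Lambda^*_\gamma})\pm C_{\gamma,0}\kappa_\gamma^{n-1}\vvvert{\Phi^0_\gamma}$. By the mean value theorem, $e^{\delta V^{n-1}_\gamma(0)}-e^{\delta\widehat\Lambda^*_\gamma}=\delta e^{\delta\xi}\Phi^{n-1}_\gamma(0)$ with $\xi$ between $\widehat\Lambda^*_\gamma$ and $\widehat\Lambda^*_\gamma+\Phi^{n-1}_\gamma(0)$, so the multiplier lies between the two factors $1-\tau\delta e^{\delta\widehat\Lambda^*_\gamma}$ and $1-\tau\delta e^{\delta\Phi^0_\gamma(0)}$ appearing in \eqref{eq-widekappa}. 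Here we inductively keep $\Phi^{n-1}_\gamma(0)$ between $0$ and $\Phi^0_\gamma(0)$ up to the summable error, and the hypothesis on $\delta\tau$ ensures these multipliers lie in $(0,1)$. Iterating exactly as in Lemma~\ref{lem-L-conv} with $\kappa_{\gamma,\max}$ gives the stated bound.
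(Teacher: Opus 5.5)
Your architecture matches the paper's: the logarithmic transform, the min--max with auxiliary control $w$, the mixed pairs $(\bar v,w^*)$ and $(v^*,\bar w)$, the two-sided kernel sandwich, intermediate iterates driven by $\gamma\Lambda^*_\gamma$, and induction on an a priori bound. But the step that does the real work is missing. That step is Lemma~\ref{lem-w-bound}: an $n$-uniform bound on $\E^{v,w}_x\big[\int_0^\tau e^{\vt s}\|w(s,Z_s)\|^2\,\D s\big]$, obtained from It\^o's formula for $V^*_\gamma(Z_t)$ and $\overline V_\gamma(\tau-t,Z_t)$ together with the growth bounds of Lemmas~\ref{lem-b-*} and~\ref{lem-b-n}. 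It is used in three places:
\begin{itemize}
\item to make $Z$ well defined under the two pairs (Proposition~\ref{prop-Z-ext});
\item to pay for the term $\frac12\|w\|^2$ in $\bar\Lg^{u,w}\sV\le\bar l-l+\frac12\|w\|^2$ (Lemma~\ref{lem-lyap-gamma-mod}) when It\^o's formula is applied to $e^{\vt t}\sV(Z_t)$ in Lemma~\ref{lem-exp-conv};
\item to control the Girsanov factor in Lemma~\ref{lem-exp-minor}.
\end{itemize}

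Your substitute inserts the density $\widetilde V(X_\tau)\,\widetilde V(x)^{-1}\exp\big(\gamma\int_0^\tau(r-\Lambda^*_\gamma)\,\D s-\cdots\big)$ pointwise and invokes \eqref{eq-lyap-ineq}. That only bounds moments such as $\E[\widetilde\sV^{c}(Z_\tau)]$ by a strictly larger power of $\widetilde\sV(x)$: the numerator costs up to $\widetilde\sV^{\theta+\beta}$ and the denominator $\widetilde\sV(x)^{\xi+\beta}$. So $\E[\sV(Z_\tau)]$ comes out exponential in $\sV(x)$. For the mixed pairs the pointwise majorant of the density is not normalized (its $P$-mass is only bounded by a power of $\widetilde\sV(x)$), so no bound $\eta\sV+K$ with $\eta<1$ results.

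To get a linear bound, the density must enter through its logarithm. Let $P$ be the law of the original diffusion under the same $v$ and $Q$ the law of $Z$. The Gibbs inequality $\E_Q[F]\le H(Q\,|\,P)+\log\E_P[e^{F}]$ with $F=\int_0^\tau\big(l(X_s)-\bar l\big)\,\D s+\sV(X_\tau)$ does reproduce the Lyapunov part via \eqref{eq-lyap-ineq}. But then $H(Q\,|\,P)=\frac12\E_Q\big[\int_0^\tau\|w(s,Z_s)\|^2\,\D s\big]$ has to be bounded, and that is exactly the missing energy estimate. You also still need a device like the weight $e^{\vt t}$ to push the coefficient below $1$ for large $\tau$.

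Likewise, the density is not bounded below on compacts for the mixed pairs. For $(v^*,\bar w)$ it carries a factor $\exp\big(-\int_0^\tau g\,\D s\big)$ along the path, where
$g=\big(b(\cdot,v^*)-b(\cdot,\bar v)\big)\cdot\grad\overline V_\gamma+\gamma\big(r(\cdot,v^*)-r(\cdot,\bar v)\big)\ge0$
is the defect of $v^*$ in the iterate's equation. A lower bound would therefore need $n$-uniform gradient estimates on the iterates (for $n=1$ the datum is only continuous). It would also need a minorization for the process killed outside a compact, which Corollary~\ref{cor-minor} does not provide.

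Second, the induction does not close as you propose. Propagating $\|\widehat\Phi^n_\gamma\|_{\beta,\sV}<1$ from the semi-norm contraction plus the estimate on $\Phi^n_\gamma(0)$ does not work, for three reasons:
\begin{itemize}
\item that estimate carries the constant $C_{\gamma,0}/\big(e\kappa_{\gamma,\max}\log(\widetilde\kappa_\gamma\kappa_{\gamma,\max}^{-1})\big)$, which is not small;
\item it concerns $\Phi^n_\gamma$, whereas your shifted iterates differ from $\Phi^n_\gamma$ at the origin by the accumulated shifts, which in general converge to a nonzero constant;
\item it is itself derived from the contraction you are trying to establish.
\end{itemize}
The paper instead normalizes the intermediate iterates at the origin, $\overline V^n_\gamma(0)=0=V^*_\gamma(0)$, so that the semi-norm alone controls growth:
\[
|(\overline V^n_\gamma-V^*_\gamma)(x)|\le\vvvert{\overline V^n_\gamma-V^*_\gamma}_{\beta,\sV}\big(2+\beta\sV(0)+\beta\sV(x)\big)\le\vvvert{\Phi^0_\gamma}_{\beta,\sV}\big(2+\beta\sV(0)+\beta\sV(x)\big).
\]
This bound is independent of $n$, and its slope is below $\beta<1-\theta$ since $\vvvert{\Phi^0_\gamma}_{\beta,\sV}\le\|\Phi^0_\gamma\|_{\beta,\sV}<1$. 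A growth bound of this kind, rather than literally a norm below $1$, is what Lemma~\ref{lem-b-n} and the downstream constants actually use. It is obtained without any reference to $\Phi^n_\gamma(0)$. The scalar recursion for $\Phi^n_\gamma(0)$ is then handled afterwards, essentially as you describe.
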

The proof is deferred to Section~\ref{sec-thm-rvi-gamma}.
\begin{corollary}
Under the conditions of Theorem~\ref{thm-rvi-gamma} and with the constants there, we have 
\begin{align}\label{eq-contraction-2} \|\Phi_\gamma^n\|_{\Upu,\sV}\leq C_\gamma (\widetilde  \kappa_\gamma)^n\|\Phi_\gamma^0\|_{\Upu,\sV}, \end{align}
where, $C_\gamma\doteq  \Big( 3+\Upu\sV(0) +  \frac{C_{\gamma,0}}{e\kappa_{\max} \log (\widetilde \kappa \kappa_{\max}^{-1})}\Big)$.
\end{corollary}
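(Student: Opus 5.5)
This corollary follows from Theorem~\ref{thm-rvi-gamma} by the same argument used for Corollary~\ref{cor-norm-conv}. That argument relied only on two things. The first is the semi-norm contraction \eqref{eq-V-gamma-semi-contract} together with the bound \eqref{eq-lambda-gamma-semi-contract} on $|\Phi^n_\gamma(0)|$. The second is Lemma~\ref{lem-compare}, relating $\|\cdot\|_{\Upu,\sV}$ and $\vvvert{\cdot}_{\Upu,\sV}$. Lemma~\ref{lem-compare} is a purely functional-analytic statement about the weight $1+\Upu\sV$ and does not use the control structure. Here $\sV=\log\widetilde\sV\geq 0$, since $\widetilde \sV\geq 1$, so the non-negativity of $\sV$ used in the earlier proof is still available.

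Concretely, fix $n\geq 1$ and set
\[
c_n\doteq \inf_{x\in\RR^d}\Big(\vvvert{\Phi^n_\gamma}_{\Upu,\sV}\big(1+\Upu\sV(x)\big)-\Phi^n_\gamma(x)\Big),
\]
so that $\|\Phi^n_\gamma+c_n\|_{\Upu,\sV}=\vvvert{\Phi^n_\gamma}_{\Upu,\sV}$ by Lemma~\ref{lem-compare}. In the case $c_n\geq 0$, the triangle inequality and $\|c_n\|_{\Upu,\sV}\leq |c_n|$ give
\[
\|\Phi^n_\gamma\|_{\Upu,\sV}\leq \vvvert{\Phi^n_\gamma}_{\Upu,\sV}+c_n .
\]
Testing the infimum defining $c_n$ at $x=0$ gives
\[
c_n\leq \vvvert{\Phi^n_\gamma}_{\Upu,\sV}\big(1+\Upu\sV(0)\big)+|\Phi^n_\gamma(0)|,
\]
so that
\[
\|\Phi^n_\gamma\|_{\Upu,\sV}\leq \big(2+\Upu\sV(0)\big)\vvvert{\Phi^n_\gamma}_{\Upu,\sV}+|\Phi^n_\gamma(0)|.
\]
Inserting \eqref{eq-V-gamma-semi-contract} and \eqref{eq-lambda-gamma-semi-contract}, and using $\kappa_\gamma\leq\kappa_{\gamma,\max}<\widetilde\kappa_\gamma$, gives a bound of the form $(\widetilde\kappa_\gamma)^n$ times
\[
\big(2+\Upu\sV(0)\big)\vvvert{\Phi^0_\gamma}_{\Upu,\sV}+|\Phi^0_\gamma(0)|+\frac{C_{\gamma,0}}{e\kappa_{\gamma,\max}\log(\widetilde\kappa_\gamma\kappa_{\gamma,\max}^{-1})}\vvvert{\Phi^0_\gamma}_{\Upu,\sV}.
\]
The case $c_n<0$ is symmetric: one uses $|c_n|$, and the test point $x=0$ in the analogous supremum bounds $-c_n$ in the same way.

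It remains to express everything through $\|\Phi^0_\gamma\|_{\Upu,\sV}$. We have $|\Phi^0_\gamma(0)|\leq \|\Phi^0_\gamma\|_{\Upu,\sV}(1+\Upu\sV(0))$, and Lemma~\ref{lem-compare} gives $\vvvert{\Phi^0_\gamma}_{\Upu,\sV}\leq \|\Phi^0_\gamma\|_{\Upu,\sV}$ (taking $c=0$). Collecting the constants yields a constant of the stated shape
\[
C_\gamma = 3+\Upu\sV(0)+\frac{C_{\gamma,0}}{e\kappa_{\gamma,\max}\log(\widetilde\kappa_\gamma\kappa_{\gamma,\max}^{-1})}.
\]
In the statement, $\kappa_{\max},\widetilde\kappa$ are to be read as $\kappa_{\gamma,\max},\widetilde\kappa_\gamma$.

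The only real care point is bookkeeping the factor multiplying $|\Phi^0_\gamma(0)|$. The earlier proof absorbed it by bounding $f(0)\leq\|f\|_{\Upu,\sV}$, which silently drops a factor $1+\Upu\sV(0)$. If the constant $3+\Upu\sV(0)$ is to be matched exactly, I would normalize so that $\sV(0)$ enters only through that leading term. Otherwise I would accept a slightly larger constant, for instance replacing $3$ by $3+\Upu\sV(0)$ once more. I do not expect this to be a genuine obstacle, since the rate $(\widetilde\kappa_\gamma)^n$ is unaffected.
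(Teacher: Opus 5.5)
Your route matches the paper's. The paper omits this proof and defers to Corollary~\ref{cor-norm-conv}, which is exactly what you reproduce, with Theorem~\ref{thm-rvi-gamma} supplying the two inputs and $\sV=\log\widetilde\sV\geq 0$ keeping $\|c_n\|_{\Upu,\sV}\leq |c_n|$ valid.

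\textbf{The case $c_n<0$.} This step is justified incorrectly. There you need an upper bound on $-c_n=\sup_{x}\big(\Phi^n_\gamma(x)-\vvvert{\Phi^n_\gamma}_{\Upu,\sV}(1+\Upu\sV(x))\big)$. Evaluating a supremum at $x=0$ only bounds it from below. The fix is to use the semi-norm itself, $\Phi^n_\gamma(x)\leq \Phi^n_\gamma(0)+\vvvert{\Phi^n_\gamma}_{\Upu,\sV}\big(2+\Upu\sV(x)+\Upu\sV(0)\big)$. Alternatively, you can cover both signs at once:
\[
|c_n|\leq |\Phi^n_\gamma(0)+c_n|+|\Phi^n_\gamma(0)|\leq \big(1+\Upu\sV(0)\big)\|\Phi^n_\gamma+c_n\|_{\Upu,\sV}+|\Phi^n_\gamma(0)|=\big(1+\Upu\sV(0)\big)\vvvert{\Phi^n_\gamma}_{\Upu,\sV}+|\Phi^n_\gamma(0)|.
\]
This gives your intermediate bound $\|\Phi^n_\gamma\|_{\Upu,\sV}\leq (2+\Upu\sV(0))\vvvert{\Phi^n_\gamma}_{\Upu,\sV}+|\Phi^n_\gamma(0)|$ with no case split. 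The paper's ``similarly'' passes over the same point.

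\textbf{The constant.} Your diagnosis is correct. The step $f(0)\leq\|f\|_{\Upu,\sV}$ in the proof of Corollary~\ref{cor-norm-conv} is false in general; only $|f(0)|\leq (1+\Upu\sV(0))\|f\|_{\Upu,\sV}$ holds. So the argument actually delivers $3+2\Upu\sV(0)+C_{\gamma,0}\big/\big(e\kappa_{\gamma,\max}\log(\widetilde\kappa_\gamma\kappa_{\gamma,\max}^{-1})\big)$. This agrees with the stated $C_\gamma$ only when $\sV(0)=0$.

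Drop the idea of normalizing your way to the stated constant. The function $\widetilde\sV\geq 1$ is fixed by Assumption~\ref{a-main-gamma-conv}. Rescaling it so that $\widetilde\sV(0)=1$ can destroy $\widetilde\sV\geq 1$, and hence $\sV\geq 0$, which you use. Shifting $\sV$ also changes the weighted norms, $\Upu$, $\kappa_\gamma$ and the hypothesis $\|\Phi^0_\gamma\|_{\Upu,\sV}<1$. Stating the corollary with the larger constant is the right resolution, and the rate $(\widetilde\kappa_\gamma)^n$ is unaffected.
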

We omit the proof of the corollary as it follows along the same lines as the proof of Corollary~\ref{cor-norm-conv}.
\begin{remark}\label{rem-gamma-constants}
For the sake of keeping the statement of the above theorem short, we only stated key qualitative properties of the constants $\beta$, $\kappa_\gamma$, $\widetilde \kappa_\gamma$ and $C_{\gamma,0}$. However, just as in Theorem~\ref{thm-rvi-0}, much more quantitative description is in fact, possible. Here, we briefly sketch how to obtain this quantitative description. The constants (possibly depending on $V^0_\gamma$) $(\varrho,K_{\sup})$ and  ($ \alpha,\alpha_{\inf})$ are obtained from  from Lemma~\ref{lem-exp-conv}, and Lemma~\ref{lem-exp-minor}, respectively. Then, $\beta=\beta (V^0_\gamma)>0$ and $0<\kappa_\gamma=\kappa_\gamma(V^0_\gamma)<1$ are given by 
\begin{align}\label{eq-prop-const}  \Upu\doteq   \frac{\alpha_{\inf}}{K_{\sup}} ,\quad\text{and}\quad \kappa_\gamma  \doteq  \Big(1-\frac{\alpha( \beta)}{2}\Big)\vee  \Big(\frac{2(1-\varrho( \beta))+\alpha ( \beta) (1+\varrho( \beta))}{2(1-\varrho( \beta))+ 2\alpha( \beta)}\Big)<1\,.\end{align}
 \end{remark}

\medskip

\section{Proof of Theorem~\ref{thm-rvi-gamma}}\label{sec-thm-rvi-gamma}

 The proof is a highly non-trivial adaptation of the proof of  Theorem~\ref{thm-rvi-0} and  is split into two parts:  proof of~\eqref{eq-V-gamma-semi-contract} - this is highly non-trivial,  and proof of~\eqref{eq-lambda-gamma-semi-contract} - this follows along the same lines with minor modifications as the proof of Lemma~\ref{lem-L-conv}, once~\eqref{eq-V-gamma-semi-contract} is established. Hence, we focus only the sketch of the proof of~\eqref{eq-V-gamma-semi-contract} which can be split into five key parts. 
 
\subsubsection*{(i) Additive versions of~\eqref{eq-hjb-gamma} and~\eqref{eq-rvi-mod-gamma} and the Lyapunov condition~\eqref{a-main-gamma-Lg}}  
To begin with, we apply an exponential transformation to \eqref{eq-hjb-gamma} and~\eqref{eq-rvi-mod-gamma}, and obtain their additive versions ~\eqref{eq-hjb-gamma-m} and~\eqref{eq-rvi-mod-gamma-m}. We also obtain the additive version~\eqref{eq-lyap-gamma-m} for the Lyapunov condition \eqref{a-main-gamma-Lg} in Assumption \ref{a-main-gamma-conv}. 
  In this form,~\eqref{eq-hjb-gamma-m} and~\eqref{eq-rvi-mod-gamma-m} resemble~\eqref{eq-hjb-0} and~\eqref{eq-rvi-mod-0}, respectively, with three key differences: (a) we have an additional maximization operation,     (b) we have $r(x,u)-\frac{1}{2}\|w\|^2$ in place of $r(x,u)$,  and (c) we have $\bar \Lg^{u,w}$ in place of $\Lg^{u}$; see~\eqref{eq-gen-ext} for its definition.

\subsubsection*{(ii) Using the diffusion associated with $\bar \Lg^{u,w}$ under Markov controls}
We then consider the  `extended' diffusion $Z$ associated with $\bar \Lg^{u,w}$, for two particular choices of Markov controls $(v=v(t,x),w=w(t,x))$. These particular choices of $(v,w)$ are (a) minimizer  associated with~\eqref{eq-rvi-mod-gamma-m} (denoted by $\widehat v$ with $n$ dependence suppressed) and maximizer  associated with~\eqref{eq-hjb-gamma-m} (denoted by $w^*$), and (b) minimizer  associated with~\eqref{eq-hjb-gamma-m} (denoted by $ v^*$)  and maximizer associated with~\eqref{eq-rvi-mod-gamma-m} for every $n$ (denoted by $\widehat w$ with $n$ dependence suppressed).    
Then an application of It\^o's formula to $V^*_\gamma$ and $ V^n_\gamma$ up to $t=\tau$ with $Z$ under $(v*,\widehat w)$, and subtracting two equations gives 
\begin{align*}
  V_\gamma^n(x)-V_\gamma^*(x)\leq  {\gamma\tau}{}\big(\Lambda_\gamma^*-\exp\big( \delta V_\gamma^n(0)\big)\big)  +  \E_x^{\widehat v, w^*} \Big[ V_\gamma^{n-1}(Z_\tau) -V_\gamma^*(Z_\tau)\Big]\,.
\end{align*}
Similarly,  an application of It\^o's formula to $V^*_\gamma$ and $V^n_\gamma$ up to $t=\tau$ with $Z$ under $(\widehat v, w^*)$, and subtracting two equations gives 
\begin{align*}
  V_\gamma^n(x)-V_\gamma^*(x)\geq  {\gamma\tau}{}\big(\Lambda_\gamma^*-\exp\big( \delta V_\gamma^n(0)\big)\big)  +  \E_x^{v^*, \widehat w} \Big[ V_\gamma^{n-1}(Z_\tau) -V_\gamma^*(Z_\tau)\Big]\,.
\end{align*}

\subsubsection*{(iii)  Difficulty in analyzing iterates $V^n_\gamma$}
Once we establish that the Markov kernel $\sQ^{v,w}(x,\D y)$ associated with  the extended diffusion $Z$ under the control pair  $(v^*,\widehat w)$ or $(\widehat v,w^*)$ satisfies~\eqref{eq-drift}  and~\eqref{eq-minor} of Proposition~\ref{prop-contract}, we can follow the arguments in the proof of Lemma~\ref{lem-V-conv} and derive the one-step contraction of $\vvvert{V_\gamma^n-V_\gamma^*}_{\beta,\sV}$, and consequently, also~\eqref{eq-V-gamma-semi-contract}. Unfortunately, this is a highly non-trivial task for the following reason: from  Lemma~\ref{lem-lyap-gamma-mod} (in the light of the argument in the proof of Corollary~\ref{cor-lyap-drift}),   to prove that $\sQ^{v,w}(x,\D y)$ (for $(v^*,\widehat w)$ or $(\widehat v,w^*)$) satisfies~\eqref{eq-drift}, we require the knowledge of  appropriate  bounds on  $\E_x^{v,w}\big[\int_0^\tau \|w(t,Z_t)\|^2 \D t\big]$ (for $(v^*,\widehat w)$ or $(\widehat v,w^*)$).  However, a priori these bounds are difficult to obtain. 

\subsubsection*{(iv) Resolution of the difficulty in Part (iii)}
To overcome this difficulty, we make use of the following crucial observation: from the definition of  $\vvvert{\cdot}_{\Upu,\sV}$, it is clear that for any real-valued sequence $\{\phi^n:n\geq 0\}$,  
$\vvvert{\Phi^n_\gamma}_{\Upu,\sV}=\vvvert{\Phi^n_\gamma+\phi^n}_{\Upu,\sV}$. This further means that for any $0<\kappa_\gamma<1$,
\begin{align}\label{eq-conv-fn} \vvvert{\Phi^n_\gamma+\phi^n}_{\Upu,\sV}\leq  \kappa_\gamma\vvvert{\Phi_\gamma^{n-1}+\phi^{n-1}}_{\Upu,\sV}\,,
\end{align}
if and only if $ \vvvert{\Phi^n_\gamma}_{\Upu,\sV}\leq  \kappa_\gamma\vvvert{\Phi_\gamma^{n-1}}_{\Upu,\sV}\,.$

We then define  an intermediate family of iterates $\overline V_\gamma^n$ such that $\overline V_\gamma^n-V_\gamma^n$ is constant (only depending on $n$ and possibly unbounded in $n$). These iterates are defined according to~\eqref{eq-widehat} and consequently, $\phi^n$ is given by~\eqref{eq-const-series}.   From the above observation, it suffices to prove that 
 $ \vvvert{\overline V_\gamma^n-V^*_\gamma}_{\Upu,\sV}\leq  \kappa_\gamma\vvvert{\overline V_\gamma^{n-1}-V^*_\gamma}_{\Upu,\sV}\,.$   The reason behind choosing another family of iterates $\overline V_\gamma^n$ is that it is now easier to compute desired upper and lower bounds of $\overline V_\gamma^n$. 
 We then use these  bounds to obtain the upper bound on $\E_x^{v,w}\big[\int_0^\tau \|w(t,Z_t)\|^2 \D t\big]$ (for $(v^*,\bar  w)$ or $(\bar  v,w^*)$). Now $\bar v$ and $\bar  w$ are the associated  minimizer and the maximizer  associated with~\eqref{eq-rvi-mod-gamma-int}, respectively.   To proceed from here, we obtain the bounds on $\overline V^n_\gamma$ under the assumption that  $\|\overline V_\gamma^{n-1}-V^*_\gamma\|_{\widetilde \beta,\sV}<1$, for some $0<\widetilde \beta<1-\theta$. As a consequence, all the bounds on $\E_x^{v,w}\big[\int_0^\tau \|w(t,Z_t)\|^2 \D t\big]$ (for $(v^*,\bar  w)$ or $(\bar  v,w^*)$) depend on $\|\overline V_\gamma^{n-1}-V^*_\gamma\|_{\widetilde \beta,\sV}$. We later choose $\widetilde \beta$ small enough (and denote that particular $\widetilde \beta$ by $\beta$) so that this condition holds for every $n$.

\subsubsection*{(v) Verifying the extended diffusion $Z$ satisfies~\eqref{eq-drift}  and~\eqref{eq-minor} of Proposition~\ref{prop-contract}.} 
We first conclude from the bounds above that the extended diffusion $Z$ under the control pair  $(v^*,\bar  w)$ or $(\bar  v,w^*)$
has a unique strong solution. The drift condition in \eqref{eq-drift} follows from the bounds established for  $\E_x^{v,w}\big[\int_0^\tau \|w(t,Z_t)\|^2 \D t\big]$ (for $(v^*,\bar  w)$ or $(\bar  v,w^*)$) mentioned above. 
The minorization condition in \eqref{eq-minor}  turns out to be much trickier than its counterpart in the case of CEC problem (Corollary~\ref{cor-minor}). 
This is because in the case of $\sQ^{v,w}(x,\D y)$ for $(v^*,\bar  w)$ or $(\bar  v,w^*)$ (or equivalently, $Z$ under   $(v^*,\bar  w)$ or $(\bar  v,w^*)$), to use an existing result \cite[Theorem 1.2]{menozzi2021}, we need to ensure that $b(x,u)+ \Sigma(x)w(t,x)$ has at most linear growth in $x$ (uniformly in $u$), which is a priori not known due to the unavailability of   relevant bounds on  $w(t,x)$. Therefore, we proceed with the proof differently. Instead of treating $Z$ as a diffusion being driven by Brownian motion $W$ in the original probability measure (where $W$ is the Brownian motion), we treat $Z$ as a diffusion driven by $W+\int_0^\cdot w(t,Z_t)\D t$ whose law is
the  same as the Brownian motion  in a new probability measure (given by the Girsanov's theorem).  
This now  helps us to use the existing result which is \cite[Theorem 1.2]{menozzi2021}. We subsequently prove that $\sQ^{v,w}(x,\D y)$ (for $(v^*,\bar  w)$ or $(\bar  v,w^*)$) satisfies~\eqref{eq-minor}.

  From here, following the arguments in the proof of Lemma~\ref{lem-V-conv}, we can derive the one-step contraction of $\vvvert{\overline V_\gamma^n-V_\gamma^*}_{\beta,\sV}$ with $\beta<\beta^*$. Here,  $\beta^*$ and the one-step contraction coefficient depend on constants from Lemmas~\ref{lem-exp-conv} and~\ref{lem-exp-minor}, 
  hence also   on $\|\overline V_\gamma^{n-1}-V^*_\gamma\|_{\widetilde \beta,\sV}$ through $\E_x^{v,w}\big[\int_0^\tau \|w(t,Z_t)\|^2 \D t\big]$ (for $(v^*,\bar  w)$ or $(\bar  v,w^*)$). This is the most important distinction from the  CEC case  - recall that in the CEC case, the analogous constants are independent of the $n$th iterate.  As a consequence, in order for the one-step contraction coefficient to be independent of the iterates, it is necessary to show that  $\|\overline V_\gamma^{n}-V^*_\gamma\|_{ \widetilde\beta,\sV}<1$, for the same $\widetilde\beta$. This follows from observing that the constants  obtained in Lemmas~\ref{lem-exp-conv} and~\ref{lem-exp-minor} are appropriately uniformly bounded, as $\widetilde \beta \to 0$.  This implies that $\beta^*$ is uniformly bounded away from zero and hence, choosing $\widetilde \beta$ small enough ensures that $\|\overline V_\gamma^{n}-V^*_\gamma\|_{ \widetilde\beta,\sV}<1$, for every $n$. This in turn means that the contraction coefficient is independent of the iterates and~\eqref{eq-V-gamma-semi-contract} follows.

\subsection{Additive versions of the HJB equations~\eqref{eq-hjb-gamma} and~\eqref{eq-rvi-mod-gamma}}\label{sec-add-eq-hjb-gamma}
 In what follows, we consider the logarithm of $\widetilde V^*_\gamma$ and the corresponding PDE it satisfies. This PDE is an additive version of~\eqref{eq-hjb-gamma}.   To begin with, we introduce a new generator that we encounter extensively from hereon. For $u\in \bU$ and $w\in \RR^d$, 
\begin{align}\label{eq-gen-ext} \bar\Lg^{u,w} f(x)\doteq \Lg^{u} f (x)+ \big(\Sigma(x) w\big)\cdot \grad f(x)\,.
\end{align} 
Recall $\Lg^u$ from~\eqref{eq-gen}. Next, we introduce an `extended' diffusion associated with  $\bar \Lg^{u,w}$. Let $\Wm$ be the set of maps $w:\RR_+\times \RR^d\rightarrow \RR^d$ that are Borel measurable and $\Wsm\subset \Wm$ be the set of maps $w\in \Wm$ that are of the form $w(t,\cdot)=\widetilde  w(\cdot)$, for some Borel measurable map $\widetilde  w:\RR^d\rightarrow \RR^d$.  The `extended' diffusion is now defined as the solution to the following stochastic equation: for $v\in \Um$ and $w\in \Wm$, 
\begin{align}\label{eq-Z-cont}
Z_t= \int_0^t \Big( b\big(Z_s,v(s,Z_s)\big) + \Sigma(Z_s) w(s,Z_s) \Big) \D s +\int_0^t\Sigma(Z_s) \D W_s \quad \text{ with $Z_0=x$.}\end{align}
The expectation involving the above process $Z$ for $Z_0=x$, $v\in \Um$ and $w\in \Wm$ is denoted by $\E_x^{v,w}$. We omit the dependence of $Z$ on $v$ and $w$ and it is expressed through $\E_x^{v,w}$. 

The following result (taken from \cite[Lemma 4.3]{anugu24ergodicgen} and stated for the special case of $v\in \Um$ and $w\in \Wm$)  gives us the existence of  the process $Z$ under a certain moment condition. Let the set of $\RR^d$--valued continuous functions on $[0,T]$ be denoted by $\frC([0,T],\RR^d)$.
\begin{proposition}\label{prop-Z-ext}
For $v\in \Um$ and $w\in \Wm$,  
there exists a unique $\frC([0,T],\RR^d)$--valued process ${ Z}^M$ that satisfies  
\begin{align*}
 {Z}^M_{t\wedge \tau^M} &=  \int_0^{t\wedge \tau^M}b\big({ Z}^M_s, v(s,Z^M_s)\big)\D s + \int_0^{t\wedge \tau^M}\Sigma({Z}^M_s)  {w}( s,Z^M_s)\D s+ \int_0^{t\wedge \tau^M}\Sigma({ Z}^M_s) \D { W}_s,
\end{align*}
where $ \tau^M\doteq \inf\big\{t\geq 0: \int_0^t \|w( s,Z^M_s)\|^2 \D s>M\big\}.$ 
Additionally, for $T>0$, if  $$\sup_{M>0}\E\Big[\int_0^T\|w(t, Z^M_t)\|^2 \D t\Big]<\infty,$$ 
then \eqref{eq-Z-cont} admits a unique strong solution on $[0,T]$.
\end{proposition}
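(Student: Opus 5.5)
The plan is a localization argument. The truncated equation is solved by a standard truncation of the unbounded drift. Uniqueness and pathwise consistency across $M$ then let the truncated solutions be glued, and the moment bound forces $\tau^M\to\infty$ almost surely on $[0,T]$.

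\textbf{Step 1: the stopped process.} Fix $M>0$. Since $w$ is only Borel measurable, the term $\Sigma(z)w(s,z)$ need not be locally Lipschitz, so strong existence cannot come from a Lipschitz argument alone. I would use Girsanov's theorem instead. Consider the process with $w$ dropped (drift $b(\cdot,v(s,\cdot))$ only). Under Assumption~\ref{a-regularity}, with its linear growth, bounded nondegenerate $\Sigma$ and measurable-in-control drift, this process has a unique strong solution by Veretennikov/Zvonkin-type results for SDEs with measurable drift and nondegenerate Lipschitz diffusion. On it, define the stopping time $\tau^M$ through the candidate drift $w(s,Z_s)$. The stopped density
\begin{align*}
\exp\Big(\int_0^{t\wedge\tau^M} w(s,Z_s)\cdot \D W_s-\tfrac12\int_0^{t\wedge\tau^M}\|w(s,Z_s)\|^2\D s\Big)
\end{align*}
is a true martingale by Novikov's criterion, since the quadratic variation is at most $M$. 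This gives a weak solution of the stopped equation. Pathwise uniqueness for the stopped equation holds because, before $\tau^M$, the Girsanov change of drift reduces it to the $w$-free equation, where strong uniqueness is available. Yamada--Watanabe then yields a unique strong solution $Z^M$. Alternatively, I would cite the reference \cite[Lemma 4.3]{anugu24ergodicgen} directly for this part, as the paper does.

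\textbf{Step 2: consistency.} For $M<M'$, the processes $Z^M$ and $Z^{M'}$ coincide on $[0,\tau^M]$ by uniqueness. Consequently $\tau^M$ is nondecreasing in $M$, and defining $Z_t\doteq Z^M_t$ for $t\le\tau^M$ is unambiguous.

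\textbf{Step 3: non-explosion of the stopping times.} This is the step that uses the moment hypothesis and is the main point. On $\{\tau^M< T\}$, continuity of the integral gives $\int_0^{T}\|w(t,Z^M_t)\|^2\D t\ge M$, where $Z^M$ is understood as extended past $\tau^M$ in the construction. Markov's inequality then gives
\begin{align*}
\PP(\tau^M<T)\le M^{-1}\sup_{M'}\E\Big[\int_0^T\|w(t,Z^{M'}_t)\|^2\D t\Big]\to 0 .
\end{align*}
Since $\tau^M$ is monotone, $\lim_M\tau^M\ge T$ almost surely. Hence $Z$ is defined on all of $[0,T]$ and satisfies \eqref{eq-Z-cont} there.

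\textbf{Step 4: uniqueness.} Any other strong solution $Z'$ of \eqref{eq-Z-cont}, stopped at its own $\tau^M$, solves the stopped equation. It therefore equals $Z^M$ up to $\tau^M$, and letting $M\to\infty$ gives $Z'=Z$ on $[0,T]$.

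The obstacle I expect is Step 1: securing pathwise uniqueness with a merely measurable drift. The paper avoids this by citing \cite{anugu24ergodicgen}, and I would do the same, reserving the Girsanov/Yamada--Watanabe argument as a justification.
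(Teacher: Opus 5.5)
The paper gives no argument of its own here: the whole proposition, including the second assertion, is quoted from \cite[Lemma 4.3]{anugu24ergodicgen}. Your Steps 2--4 therefore supply what the paper outsources, and they are the right, standard localization. The inclusion $\{\tau^M<T\}\subseteq\{\int_0^T\|w(t,Z^M_t)\|^2\D t>M\}$ follows from the definition of $\tau^M$ as an infimum, and Markov's inequality then finishes the argument.

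A few precisions:
\begin{itemize}
\item What you need is not $\lim_M\tau^M\ge T$ but that, almost surely, $\tau^M\ge T$ for some finite $M$. This follows directly from $\PP\big(\bigcap_M\{\tau^M<T\}\big)\le\inf_M\PP(\tau^M<T)=0$.
\item Your phrase ``extended past $\tau^M$'' is doing real work. As literally stated, the proposition leaves $Z^M$ unconstrained after $\tau^M$. Yet $\tau^M$ itself depends on the path just after $\tau^M$, and so does the moment hypothesis. Steps 2 and 4 must therefore be read for the equation with drift $b+\Sigma w\Ind_{\{s\le\tau^M\}}$ on all of $[0,T]$, which is what your Girsanov construction produces.
\item Step 4 only covers competitors $Z'$ with $\int_0^T\|w(s,Z'_s)\|^2\D s<\infty$ almost surely. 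The reason is that $\tau^M(Z')\ge T$ holds if and only if $\int_0^T\|w(s,Z'_s)\|^2\D s\le M$.
\end{itemize}

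The genuine flaw is the self-contained justification you reserve for Step 1. Girsanov's theorem transfers laws, not pathwise properties. Suppose $Z^1$ and $Z^2$ solve the stopped equation with the same $W$. The change of measure removing $w(s,Z^1_s)$ leaves $Z^2$ with the residual drift $\Sigma(Z^2_s)\big(w(s,Z^2_s)-w(s,Z^1_s)\big)$. No single change of measure reduces both to the $w$-free equation, so pathwise uniqueness of the latter is not inherited.

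What your argument does prove is weak existence and uniqueness in law. Novikov applies since $\int_0^{\tau^M}\|w\|^2\D s\le M$, and the new Brownian motion is a functional of $Z$ because $\Sigma$ is invertible. Without pathwise uniqueness, however, Yamada--Watanabe cannot be invoked. Weak existence plus uniqueness in law does not give a strong solution, as Tanaka's equation shows.

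For a merely Borel $w\in\Wm$, controlled only through $\int_0^{\tau^M}\|w(s,Z_s)\|^2\D s\le M$ along the path, the standard strong well-posedness results (Veretennikov, Krylov--R\"ockner) do not apply. They need bounds or integrability of $w$ as a function of $(t,x)$. So Step 1 really rests on the citation, as in the paper. The alternative is extra regularity of $w$, such as local boundedness, which holds for $w^*=\Sigma\transp\grad V^*_\gamma$; then you can localize by exit times from balls instead.

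Note finally that everything the paper later does with $Z$ depends only on the law of $Z$. This includes the kernels $\sQ^{v,w}_\tau$, It\^o's formula in expectation, and the Girsanov comparison in Lemma~\ref{lem-exp-minor}. So the weak well-posedness your argument does establish would already suffice for those uses.
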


By simple substitution, we obtain the aforementioned additive version of~\eqref{eq-hjb-gamma} below. 

\begin{lemma}\label{lem-eq-mod-gamma} Define $V^*_\gamma\doteq \log \widetilde V^*_\gamma$, where $\widetilde V^*_\gamma$ satisfies~\eqref{eq-hjb-gamma}. Then, $V^*_\gamma$ satisfies 
\begin{align}\label{eq-hjb-gamma-m}
\min_{u\in \bU}\max_{w\in \RR^d}\Big[\bar \Lg^{u,w}V^*_\gamma(x) +{\gamma}{} r(x,u) -\frac{1}{2}\|w\|^2\Big]&={\gamma}{} \Lambda_\gamma^*. 
\end{align}

\end{lemma}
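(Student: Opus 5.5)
The plan is to substitute $\widetilde V^*_\gamma = e^{V^*_\gamma}$ into \eqref{eq-hjb-gamma} and then rewrite the resulting quadratic gradient term as a maximum over $w$. Since $\widetilde V^*_\gamma\in\cC^2(\RR^d)$ is strictly positive (by Theorem~\ref{thm-hjb-gamma}; positivity follows from the stochastic representation in part (ii), or from the Harnack inequality), $V^*_\gamma=\log\widetilde V^*_\gamma$ is well defined and lies in $\cC^2(\RR^d)$. Direct differentiation gives
\begin{align*}
\grad \widetilde V^*_\gamma = e^{V^*_\gamma}\grad V^*_\gamma,\qquad \partial_{ij}\widetilde V^*_\gamma = e^{V^*_\gamma}\big(\partial_{ij}V^*_\gamma + \partial_i V^*_\gamma\,\partial_j V^*_\gamma\big).
\end{align*}
Hence
\begin{align*}
\Lg^u\widetilde V^*_\gamma = e^{V^*_\gamma}\Big(\Lg^u V^*_\gamma + \tfrac12 (\grad V^*_\gamma)\transp A\,\grad V^*_\gamma\Big) = e^{V^*_\gamma}\Big(\Lg^u V^*_\gamma + \tfrac12\|\Sigma\transp\grad V^*_\gamma\|^2\Big).
\end{align*}

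Next, I substitute this into \eqref{eq-hjb-gamma} and divide by the positive factor $e^{V^*_\gamma(x)}$. The minimum over $u$ is unaffected, because the factor does not depend on $u$. This gives
\begin{align*}
\min_{u\in\bU}\Big[\Lg^u V^*_\gamma(x)+\tfrac12\|\Sigma(x)\transp\grad V^*_\gamma(x)\|^2+\gamma r(x,u)\Big]=\gamma\Lambda^*_\gamma.
\end{align*}

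The remaining step, which is the only nontrivial one, is the variational identity
\begin{align*}
\tfrac12\|p\|^2=\max_{w\in\RR^d}\big[p\cdot w-\tfrac12\|w\|^2\big],
\end{align*}
where the maximum is attained at $w=p$; this follows by completing the square, $p\cdot w-\frac12\|w\|^2=\frac12\|p\|^2-\frac12\|w-p\|^2$. I apply it with $p=\Sigma(x)\transp\grad V^*_\gamma(x)$. Since $p\cdot w=(\Sigma(x)w)\cdot\grad V^*_\gamma(x)$, the bracket becomes
\begin{align*}
\max_{w\in\RR^d}\big[\bar\Lg^{u,w}V^*_\gamma(x)+\gamma r(x,u)-\tfrac12\|w\|^2\big],
\end{align*}
by the definition \eqref{eq-gen-ext}. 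The terms $\Lg^u V^*_\gamma(x)$ and $\gamma r(x,u)$ do not depend on $w$, so they can be moved inside the maximum. This yields \eqref{eq-hjb-gamma-m}.

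There is no real obstacle here. The only points to check are the positivity of $\widetilde V^*_\gamma$, so that the logarithm is defined, and the observation that the maximizer $w^*(x)=\Sigma(x)\transp\grad V^*_\gamma(x)$ does not depend on $u$, so the order of $\min$ and $\max$ causes no difficulty.
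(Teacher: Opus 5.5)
Your proposal is correct and follows the paper's route: the paper obtains this lemma ``by simple substitution'' of $\widetilde V^*_\gamma=e^{V^*_\gamma}$ into \eqref{eq-hjb-gamma}, using the same identity $\Lg^u e^{\sV}=e^{\sV}\big(\Lg^u\sV+\tfrac12\|\Sigma\transp\grad\sV\|^2\big)$ that appears in the proof of Lemma~\ref{lem-lyap-gamma-mod}, followed by your completing-the-square step with maximizer $w^*=\Sigma\transp\grad V^*_\gamma$. One small caveat is that the positivity of $\widetilde V^*_\gamma$ really comes from the cited existence result, which provides a positive eigenfunction and which the paper tacitly uses when taking logarithms; neither the representation in Theorem~\ref{thm-hjb-gamma}(ii), which only transfers values on $\partial\frB$ to $\frB^c$, nor Harnack, which needs nonnegativity first, establishes it on its own.
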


We also obtain the following additive version of the optimality characterization  of  \eqref{eq-minimizer} in  Theorem \ref{thm-hjb-gamma}.

\begin{corollary}\label{cor-eq-min} $v^*\in \Usm$ satisfies 
\begin{align}\label{eq-min-3}  \min_{u\in \bU}\max_{w\in \RR^d}\Big[\bar \Lg^{u,w} V^*_\gamma(x)+ {\gamma}{} r(x,u)-\frac{1}{2}\|w\|^2\Big]=  \max_{w\in \RR^d}\Big[\bar \Lg^{v^*,w} V^*_\gamma(x)+ {\gamma}{} r\big(x,v^*(x)\big)-\frac{1}{2}\|w\|^2\Big]\end{align}
if and only if it satisfies 
\begin{align*}\min_{u\in \bU}\big[\Lg^u\widetilde V^*_\gamma(x)+ \gamma r(x,u)\widetilde V^*_\gamma(x)\big]= \Lg^{v^*}\widetilde V^*_\gamma(x)+ \gamma r\big(x,v^*(x)\big)\widetilde V^*_\gamma(x)\,.\end{align*}

\end{corollary}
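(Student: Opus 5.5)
The plan is to reduce everything to a pointwise algebraic identity relating the inner maximum in \eqref{eq-min-3} to the multiplicative Hamiltonian divided by $\widetilde V^*_\gamma$. Once that identity holds for every $u\in\bU$, the minimizers over $u$ of the two expressions coincide. This is the same substitution that gives Lemma~\ref{lem-eq-mod-gamma}, now kept at the level of each fixed $u$ rather than only at the level of the minimum. Throughout, $\widetilde V^*_\gamma>0$, because $V^*_\gamma=\log\widetilde V^*_\gamma$ is well defined, and $\widetilde V^*_\gamma\in\cC^2(\RR^d)$ by Theorem~\ref{thm-hjb-gamma}.

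First I would compute derivatives of $V^*_\gamma=\log\widetilde V^*_\gamma$. We have $\grad V^*_\gamma=\grad\widetilde V^*_\gamma/\widetilde V^*_\gamma$ and $\partial_{ij}V^*_\gamma=\partial_{ij}\widetilde V^*_\gamma/\widetilde V^*_\gamma-\partial_iV^*_\gamma\,\partial_jV^*_\gamma$. Substituting into \eqref{eq-gen} gives, for every $u\in\bU$,
\begin{align*}
\Lg^uV^*_\gamma(x)=\frac{\Lg^u\widetilde V^*_\gamma(x)}{\widetilde V^*_\gamma(x)}-\frac12\big\|\Sigma(x)\transp\grad V^*_\gamma(x)\big\|^2 .
\end{align*}

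Next, for fixed $x$ and $u$, I would evaluate the maximum over $w\in\RR^d$ of $(\Sigma(x)w)\cdot\grad V^*_\gamma(x)-\frac12\|w\|^2$. This is a strictly concave quadratic in $w$. Completing the square shows that the maximum is attained at $w=\Sigma(x)\transp\grad V^*_\gamma(x)$ and equals $\frac12\|\Sigma(x)\transp\grad V^*_\gamma(x)\|^2$. The quadratic term in $w$ does not depend on $u$, so combining this with the previous display and \eqref{eq-gen-ext} yields, for all $x\in\RR^d$ and $u\in\bU$,
\begin{align*}
\max_{w\in\RR^d}\Big[\bar\Lg^{u,w}V^*_\gamma(x)+\gamma r(x,u)-\frac12\|w\|^2\Big]=\frac{\Lg^u\widetilde V^*_\gamma(x)+\gamma r(x,u)\widetilde V^*_\gamma(x)}{\widetilde V^*_\gamma(x)} .
\end{align*}

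Finally, since $\widetilde V^*_\gamma(x)>0$, multiplying by $\widetilde V^*_\gamma(x)$ is an order-preserving bijection on $\RR$ at each fixed $x$. Hence $u\mapsto$ left side and $u\mapsto$ numerator on the right have the same minimizers over $\bU$, and the minimum of one is the minimum of the other divided by $\widetilde V^*_\gamma(x)$. Applying this with $u=v^*(x)$ gives both directions of the equivalence at each $x$. Because the identity holds pointwise for every $x$, the equivalence also holds in the a.e.\ sense used in \eqref{eq-minimizer}.

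I do not expect a real obstacle. The only points needing care are the sign bookkeeping in the Hessian of $\log$, which produces exactly the $-\frac12\|\Sigma\transp\grad V\|^2$ term cancelled by the maximum over $w$, and the strict positivity of $\widetilde V^*_\gamma$, which is what makes the division legitimate.
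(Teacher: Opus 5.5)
Your proof is correct and follows the paper's route. The paper obtains this corollary by the same logarithmic substitution it uses for Lemma~\ref{lem-eq-mod-gamma}, without writing it out, and your per-$u$ identity $\max_{w}\big[\bar\Lg^{u,w}V^*_\gamma+\gamma r(\cdot,u)-\frac12\|w\|^2\big]=\big(\Lg^u\widetilde V^*_\gamma+\gamma r(\cdot,u)\widetilde V^*_\gamma\big)/\widetilde V^*_\gamma$, with maximizer $w=\Sigma\transp\grad V^*_\gamma$, together with positivity of $\widetilde V^*_\gamma$, is exactly that substitution made explicit.
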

From hereon, any generic $v\in \Usm$ that satisfies~\eqref{eq-min-3} is denoted by $ v^*$ and we set $ w^*(\cdot)\doteq  \Sigma(\cdot)\transp\grad  V^*_\gamma(\cdot)$ that is the unique $w\in \Wsm$ that  satisfies 
\begin{align}\label{eq-w-*}  \min_{u\in \bU}\max_{w\in \RR^d}\Big[\bar \Lg^{u,w} V^*_\gamma(x)+{\gamma}{} r(x,u)-\frac{1}{2}\|w\|^2\Big]=  \min_{u\in \bU}\Big[\bar \Lg^{u, w^*} V_\gamma(x)+ {\gamma}{} r(x,u)-\frac{1}{2}\| w^*(x)\|^2\Big]\,.\end{align}

\begin{lemma}\label{lem-eq-mod-gamma-rvi} Let   $V_\gamma\doteq  \log \widetilde V_\gamma$ with $V^n_\gamma=\log \widetilde V^n_\gamma$. Then, for $n\geq 1$, $V_\gamma$  satisfies 
\begin{align}\label{eq-rvi-mod-gamma-m}
\partial_t  V_\gamma(t,x)= \min_{u\in \bU}\max_{w\in \RR^d}\Big[\bar \Lg^{u,w} V_\gamma(t,x)+{\gamma}{} r(x,u)-\frac{1}{2}\|w\|^2\Big]- \exp\big( \delta V_\gamma^n(0)\big),
\end{align}
for $0\leq t\leq \tau$  with  $ V_\gamma(0,x)=  V_\gamma^n(x)$. Moreover,   $V_\gamma^{n+1} (x)= V_\gamma(\tau,x)$. 
\end{lemma}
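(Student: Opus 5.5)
The plan is a direct exponential (Hopf--Cole type) substitution, with positivity of $\widetilde V_\gamma$ justifying the logarithm. First, by Proposition~\ref{prop-ext-gamma} and induction on $n$ (starting from $\widetilde V^0_\gamma>0$), every $\widetilde V^n_\gamma$ is strictly positive. I also need positivity of the whole solution $\widetilde V_\gamma(t,\cdot)$ for $0\le t\le\tau$, not just at $t=\tau$. For this I would invoke the It\^o/Feynman--Kac representation
\[
\widetilde V_\gamma(t,x)=\min_{U\in\Uadm}\E^U_x\Big[\exp\Big(\int_0^t\big(\gamma r(X_s,U_s)-(\widetilde V^n_\gamma(0))^\delta\big)\D s\Big)\widetilde V^n_\gamma(X_t)\Big],
\]
obtained exactly as~\eqref{eq-vn-min} was obtained in the CEC case. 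Since $\widetilde V^n_\gamma>0$ is continuous and $X_t$ has a positive density (as in the proof of Corollary~\ref{cor-minor}), the right-hand side is positive for every admissible control. Hence the infimum is positive as well, using local uniformity of the lower bound; alternatively, the strong parabolic maximum principle applied to the linear equation along a measurable selector gives the same conclusion. Consequently $V_\gamma\doteq\log\widetilde V_\gamma$ is well defined and lies in $\cC^{1,2}(\RR^d_\tau)$.

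Next I would compute the derivatives with $\widetilde V_\gamma=e^{V_\gamma}$:
\[
\partial_t\widetilde V_\gamma=e^{V_\gamma}\partial_tV_\gamma,\qquad \grad\widetilde V_\gamma=e^{V_\gamma}\grad V_\gamma,\qquad \partial_{ij}\widetilde V_\gamma=e^{V_\gamma}\big(\partial_{ij}V_\gamma+\partial_iV_\gamma\,\partial_jV_\gamma\big).
\]
These give
\[
\Lg^u\widetilde V_\gamma=e^{V_\gamma}\Big(\Lg^uV_\gamma+\tfrac12\|\Sigma\transp\grad V_\gamma\|^2\Big).
\]
I would substitute this into~\eqref{eq-rvi-mod-gamma}, use $(\widetilde V^n_\gamma(0))^\delta=\exp(\delta V^n_\gamma(0))$, and divide by $e^{V_\gamma}>0$. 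The minimum over $u$ passes through the positive factor, and the quadratic term does not depend on $u$. This yields
\[
\partial_tV_\gamma=\min_{u\in\bU}\Big[\Lg^uV_\gamma+\gamma r(x,u)\Big]+\tfrac12\|\Sigma\transp\grad V_\gamma\|^2-\exp\big(\delta V^n_\gamma(0)\big).
\]

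Finally I would use the elementary variational identity
\[
\tfrac12\|a\|^2=\max_{w\in\RR^d}\big[a\cdot w-\tfrac12\|w\|^2\big],
\]
attained uniquely at $w=a$, with $a=\Sigma(x)\transp\grad V_\gamma(t,x)$. Since $a\cdot w=(\Sigma w)\cdot\grad V_\gamma$, this gives
\[
\tfrac12\|\Sigma\transp\grad V_\gamma\|^2=\max_{w}\big[(\Sigma w)\cdot\grad V_\gamma-\tfrac12\|w\|^2\big].
\]
This term is independent of $u$, so it can be moved inside the minimum. By~\eqref{eq-gen-ext}, this produces exactly $\min_u\max_w[\bar\Lg^{u,w}V_\gamma+\gamma r-\tfrac12\|w\|^2]$, which is~\eqref{eq-rvi-mod-gamma-m}. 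The initial condition $V_\gamma(0,\cdot)=\log\widetilde V^n_\gamma=V^n_\gamma$ and the identity $V^{n+1}_\gamma=\log\widetilde V_\gamma(\tau,\cdot)=V_\gamma(\tau,\cdot)$ follow from Steps (iii)--(iv) of Algorithm~\ref{alg-rvi-mod-gamma}.

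The only step that is not pure algebra is the positivity of $\widetilde V_\gamma(t,\cdot)$ on all of $[0,\tau]$, and I expect this to be the main obstacle. I would handle it via the representation above, which is the same argument that underlies the second claim of Proposition~\ref{prop-ext-gamma}.
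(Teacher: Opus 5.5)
Your proposal is correct and is the same argument the paper has in mind: the paper offers nothing beyond ``simple substitution'', namely the chain-rule identity $\Lg^u e^{V}=e^{V}\big(\Lg^uV+\tfrac12\|\Sigma\transp\grad V\|^2\big)$ (the same computation used in the proof of Lemma~\ref{lem-lyap-gamma-mod}), together with $\tfrac12\|a\|^2=\max_{w}\big[a\cdot w-\tfrac12\|w\|^2\big]$ and $(\widetilde V^n_\gamma(0))^\delta=\exp(\delta V^n_\gamma(0))$. The paper takes positivity of $\widetilde V_\gamma(t,\cdot)$ on all of $[0,\tau]$ for granted, and your extra step is easier than you fear: the minimum in your representation is attained by the Markov selector from the PDE and the integrand is a.s.\ strictly positive, so you need neither a density bound nor uniformity over $\Uadm$.
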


\subsection{Additive versions of the Lyapunov condition~\eqref{a-main-gamma-Lg} in Assumption~\ref{a-main-gamma-conv} }\label{sec-add-a-main-gamma}

For further analysis, we also use a `transformed version' of  Assumption~\ref{a-main-gamma-conv} which we give below.
\begin{lemma} \label{lem-lyap-gamma-mod}Suppose  Assumption~\ref{a-main-gamma-conv}(i) holds. Then, for any $u\in \bU$ and $w\in \RR^d$, we have 
\begin{align}\label{eq-lyap-gamma-m}
\bar \Lg^{u,w} \sV(x)\leq \bar l-l(x)+\frac{1}{2}\|w\|^2,\quad \text{for $x\in \RR^d$.} 
\end{align}  
\end{lemma}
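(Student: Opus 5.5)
This is a direct computation from $\sV=\log\widetilde\sV$ and Assumption~\ref{a-main-gamma-conv}(i); no stochastic argument is needed. Since $\widetilde\sV\geq 1$ is $\cC^2$, $\sV$ is $\cC^2$, and $\widetilde\sV = e^{\sV}$. Differentiating,
\begin{align*}
\grad\widetilde\sV = e^{\sV}\grad\sV,\qquad \partial_{ij}\widetilde\sV = e^{\sV}\big(\partial_{ij}\sV + \partial_i\sV\,\partial_j\sV\big).
\end{align*}
Substituting into the generator~\eqref{eq-gen} gives the exponential-transform identity
\begin{align*}
\frac{\Lg^u\widetilde\sV(x)}{\widetilde\sV(x)} = \Lg^u\sV(x) + \frac12 \big(\grad\sV(x)\big)\transp A(x)\grad\sV(x) = \Lg^u\sV(x) + \frac12\big\|\Sigma(x)\transp\grad\sV(x)\big\|^2 .
\end{align*}
Dividing~\eqref{a-main-gamma-Lg} by $\widetilde\sV(x)>0$ then yields
\begin{align*}
\Lg^u\sV(x) + \frac12\|\Sigma(x)\transp\grad\sV(x)\|^2 \leq \bar l - l(x).
\end{align*}

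Next I would add the extra drift term from~\eqref{eq-gen-ext}. By definition, $\bar\Lg^{u,w}\sV(x) = \Lg^u\sV(x) + w\cdot \Sigma(x)\transp\grad\sV(x)$. Write $p\doteq\Sigma(x)\transp\grad\sV(x)$. Young's inequality (equivalently $\|w-p\|^2\geq 0$) gives $w\cdot p \leq \frac12\|p\|^2+\frac12\|w\|^2$. Hence
\begin{align*}
\bar\Lg^{u,w}\sV(x) \leq \Lg^u\sV(x)+\frac12\|p\|^2+\frac12\|w\|^2 \leq \bar l - l(x) + \frac12\|w\|^2,
\end{align*}
which is~\eqref{eq-lyap-gamma-m}. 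This holds for every $x\in\RR^d$, $u\in\bU$ and $w\in\RR^d$.

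There is no real obstacle here. The only point requiring care is the correct second-order chain rule, in particular the identification $\nabla\sV\transp A\nabla\sV=\|\Sigma\transp\nabla\sV\|^2$ with $A=\Sigma\Sigma\transp$. The same computation is the variational identity $\max_w\big[w\cdot p-\frac12\|w\|^2\big]=\frac12\|p\|^2$, which also underlies Lemmas~\ref{lem-eq-mod-gamma} and~\ref{lem-eq-mod-gamma-rvi}.
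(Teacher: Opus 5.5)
Your proposal is correct and follows the paper's own route: the exponential-transform identity $\Lg^u\widetilde\sV = e^{\sV}\big(\Lg^u\sV + \tfrac12\|\Sigma\transp\grad\sV\|^2\big)$, then dividing Assumption~\ref{a-main-gamma-conv}(i) by $\widetilde\sV>0$, then Young's inequality on the extra drift term $w\cdot\Sigma\transp\grad\sV$. Your explicit second-order chain rule and the identity $\grad\sV\transp A\grad\sV=\|\Sigma\transp\grad\sV\|^2$ just spell out a step the paper states without derivation.
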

\begin{proof}
Since $\widetilde \sV= \exp(\sV)$, we have
\begin{align*}
\Lg^u\widetilde \sV(x)= \Lg^u\exp\big(\sV(x)\big)= \exp\big( \sV(x)\big) \Lg^u \sV(x) + \frac{\exp\big( \sV(x)\big)}{2} \|\Sigma(x)\transp \grad \sV(x)\|^2 \,.
\end{align*}
From Assumption~\ref{a-main-gamma-conv}, we have 
\begin{align*}
 \exp\big( \sV(x)\big) \Lg^u \sV(x)& + \frac{\exp\big( \sV(x)\big)}{2} \|\Sigma(x)\transp \grad \sV(x)\|^2  \leq \big(\bar l-l(x)\big)\exp\big( \sV(x)\big)\\
\implies  \Lg^u \sV(x) & \le   \big(\bar l-l(x)\big) - \frac{ 1}{2} \|\Sigma(x)\transp \grad \sV(x)\|^2. 
\end{align*}
Hence, from the definition of $\bar \Lg^{u,w}$, we obtain 
\begin{align*}
 \bar \Lg^{u,w}\sV(x) & =\Lg^u \sV(x)+  \big(\Sigma(x) w\big)\cdot \grad \sV(x)  \\
 &  \leq \bar l-l(x) - \frac{1}{2} \|\Sigma(x)\transp \grad \sV(x)\|^2 +\big(\Sigma(x)\transp \grad \sV(x)\big)\cdot w\\
   &\leq \bar l-l(x)+ \frac{1}{2} \|w\|^2 \,.
 \end{align*}
In the above, to get the second line, we use~\eqref{a-main-gamma-Lg} and  to get the last line, we apply Young's inequality:  for $z\in \RR^d$, $|w\cdot z| \leq \frac{1}{2} \|z\|^2 +\frac{1}{2} \|w\|^2$ with $z=\Sigma(x)\transp \grad \sV(x) $. This gives us the desired result.
\end{proof}
\begin{lemma}\label{lem-lyap-rev} Suppose Assumption~\ref{a-main-gamma-conv}(i) holds. Then, for $\xi>0$, $u\in \bU$, we have
\begin{align*}
\Lg^{u}\big(\widetilde \sV(x)\big)^{-\xi }  \geq \xi \big(l(x)-\bar l\big)\big(\widetilde \sV(x)\big)^{-\xi},\quad \text{for $x\in \RR^d$.}
\end{align*}  
\end{lemma}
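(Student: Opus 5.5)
The plan is a direct chain-rule computation for the $\cC^2$ function $f\doteq \widetilde\sV^{-\xi}$, followed by one application of \eqref{a-main-gamma-Lg}. Since $\widetilde \sV\geq 1$ is $\cC^2(\RR^d)$, the function $f$ is well defined and also $\cC^2(\RR^d)$, so $\Lg^u f$ makes sense pointwise.

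First I compute the derivatives of $f$. The gradient is $\grad f = -\xi\,\widetilde\sV^{-\xi-1}\grad\widetilde\sV$. The Hessian is
\begin{align*}
D^2 f = -\xi\,\widetilde\sV^{-\xi-1}D^2\widetilde\sV + \xi(\xi+1)\,\widetilde\sV^{-\xi-2}\,\grad\widetilde\sV\,(\grad\widetilde\sV)\transp .
\end{align*}
Substituting these into \eqref{eq-gen}, with $A=\Sigma\Sigma\transp$, gives
\begin{align*}
\Lg^u f(x) = -\xi\,\widetilde\sV(x)^{-\xi-1}\,\Lg^u\widetilde\sV(x) + \frac{\xi(\xi+1)}{2}\,\widetilde\sV(x)^{-\xi-2}\,\big\|\Sigma(x)\transp\grad\widetilde\sV(x)\big\|^2 .
\end{align*}

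Next I bound the two terms. The second term is nonnegative because $\xi>0$ and $\widetilde\sV>0$, so I drop it. For the first term, Assumption~\ref{a-main-gamma-conv}(i) gives $-\Lg^u\widetilde\sV(x)\geq (l(x)-\bar l)\widetilde\sV(x)$. Multiplying this by the positive factor $\xi\,\widetilde\sV(x)^{-\xi-1}$ preserves the inequality and yields
\begin{align*}
-\xi\,\widetilde\sV(x)^{-\xi-1}\Lg^u\widetilde\sV(x)\geq \xi\big(l(x)-\bar l\big)\widetilde\sV(x)^{-\xi}.
\end{align*}
Combining the two bounds gives the claimed inequality for every $u\in\bU$ and $x\in\RR^d$.

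There is no real obstacle here. The only points needing care are the sign bookkeeping, since multiplying by the negative factor $-\xi\,\widetilde\sV^{-\xi-1}$ reverses \eqref{a-main-gamma-Lg}, and confirming that the quadratic gradient term has a favorable sign. Both follow from $\xi>0$ and $\widetilde\sV\geq 1$.
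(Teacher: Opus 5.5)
Your proof is correct and matches the paper's argument: expand $\Lg^u\widetilde\sV^{-\xi}$ by the chain rule, drop the nonnegative quadratic gradient term, and multiply \eqref{a-main-gamma-Lg} by the positive factor $\xi\widetilde\sV^{-\xi-1}$. Your coefficient $\frac{\xi(\xi+1)}{2}$ on the gradient term is the right one; the paper omits the $\frac12$, which is harmless because that term is discarded.
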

\begin{proof}
We have
\begin{align*}
  \Lg^{u} \big(\widetilde \sV(x)\big)^{-\xi} & = -\frac{\xi}{\big(\widetilde \sV(x)\big)^{1+\xi}} \Lg^{u} \widetilde \sV(x) + \frac{\xi(1+\xi)}{\big(\widetilde \sV(x)\big)^{2+\xi}} \|\Sigma(x)\transp \grad \widetilde \sV(x)\|^2 \\
  &\geq -\frac{\xi}{\big(\widetilde \sV(x)\big)^{1+\xi}} \Lg^{u} \widetilde \sV(x)\\
&\geq  -\frac{\xi}{\big(\widetilde \sV(x)\big)^{\xi}} \big(\bar l-l(x)\big) \,. 
\end{align*}
In the third line, we use Assumption~\ref{a-main-gamma-conv}(i). This proves the result.
\end{proof}
From the above lemma, we have the following immediate corollary.
\begin{corollary}\label{cor-lyap-rev}Suppose Assumption~\ref{a-main-gamma-conv}(i) holds. Then,  for every $\tau>0$, $\xi> 0$ and $v\in \Um$, the following hold:
\begin{enumerate}
\item [(i)]
$$\E_x^{v}\Big[\big(\widetilde \sV(X_\tau)\big)^{-\xi}\Big]\geq e^{-\bar l\xi\tau}\big(\widetilde \sV(x)\big)^{-\xi} \,. $$
\item [(ii)] For $L>0$ and an open ball $\frB$ such that $\inf_{y\in \frB^c} \big(l(y)- \bar l-L\xi^{-1}\big)\geq 0$, we have
\begin{align*}
\E_x^{v}\Big[e^{-L\widehat \tau_\frB} \Big]\geq \big(\widetilde \sV(x)\big)^{-\xi}, \text{ for $x\in \frB^c$.}
\end{align*}
Recall that $\widehat \tau_\frB$ is the first hitting time of set $\frB$, by the process $X$.
\end{enumerate}
\end{corollary}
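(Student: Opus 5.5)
For part (i) I will combine Lemma~\ref{lem-lyap-rev} with It\^o's formula and use $f\doteq(\widetilde\sV)^{-\xi}$ as a test function. Two facts drive everything. First, $\widetilde\sV\geq 1$, so $0<f\leq 1$ and $f$ is bounded. Second, Lemma~\ref{lem-lyap-rev} holds for every $u\in\bU$, so it also holds along any $v\in\Um$, including time-dependent ones, giving $\Lg^{v(t,x)}f(x)\geq \xi\big(l(x)-\bar l\big)f(x)$. Since $l\geq 1>0$ and $f>0$, this implies $\Lg^{v(t,x)}f(x)\geq -\xi\bar l f(x)$. Applying It\^o's formula to $e^{\xi\bar l t}f(X_t)$, the $\D t$ term is $e^{\xi\bar l t}\big(\xi\bar l f+\Lg^{v}f\big)(X_t)\geq 0$. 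I will stop at $\tau\wedge\tau_R$, where $\tau_R$ is the exit time from the ball $B_R$, so that the stochastic integral is a true martingale. This yields $\E^v_x\big[e^{\xi\bar l(\tau\wedge\tau_R)}f(X_{\tau\wedge\tau_R})\big]\geq f(x)$. The integrand is bounded by $e^{\xi\bar l\tau}$, and $\tau_R\to\infty$ almost surely because $X$ is non-explosive under Assumption~\ref{a-regularity}. Dominated convergence as $R\to\infty$ then gives $e^{\xi\bar l\tau}\E^v_x[f(X_\tau)]\geq f(x)$, which is (i).

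For part (ii) I will use the same $f$, now with the discount $e^{-Lt}$, and apply It\^o's formula to $e^{-Lt}f(X_t)$ up to $\widehat\tau_\frB\wedge\tau_R\wedge T$. Before $\widehat\tau_\frB$ the process is in $\frB^c$, and there the $\D t$ term satisfies
\begin{align*}
e^{-Lt}\big(\Lg^{v}f-Lf\big)(X_t)\geq e^{-Lt}f(X_t)\,\xi\Big(l(X_t)-\bar l-L\xi^{-1}\Big)\geq 0,
\end{align*}
by Lemma~\ref{lem-lyap-rev} and the hypothesis $\inf_{\frB^c}\big(l-\bar l-L\xi^{-1}\big)\geq 0$. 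Hence, for $x\in\frB^c$,
\begin{align*}
f(x)\leq \E^v_x\Big[e^{-L(\widehat\tau_\frB\wedge\tau_R\wedge T)}f\big(X_{\widehat\tau_\frB\wedge\tau_R\wedge T}\big)\Big].
\end{align*}
Since $f$ is bounded, dominated convergence lets me send $R\to\infty$. Then I use $f\leq 1$ and split according to whether $\widehat\tau_\frB\leq T$, obtaining $f(x)\leq \E^v_x\big[e^{-L\widehat\tau_\frB}\Ind_{\{\widehat\tau_\frB\leq T\}}\big]+e^{-LT}$. Because $L>0$, letting $T\to\infty$ by monotone convergence gives $\E^v_x\big[e^{-L\widehat\tau_\frB}\big]\geq(\widetilde\sV(x))^{-\xi}$, where $e^{-L\widehat\tau_\frB}$ is read as $0$ on $\{\widehat\tau_\frB=\infty\}$.

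I expect the only delicate point to be the localization and the passage to the limit. Both are harmless here, because $f\leq 1$ and the exponential weights are bounded on the relevant time ranges. The sign conditions themselves come directly from Lemma~\ref{lem-lyap-rev}, from $l>0$ in (i), and from the hypothesis on $\frB^c$ in (ii).
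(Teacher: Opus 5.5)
Your proof is correct and follows the paper's argument. The paper applies It\^o's formula to $e^{\bar l\xi t}(\widetilde\sV(X_t))^{-\xi}$ and to $e^{-Lt}(\widetilde\sV(X_t))^{-\xi}$ (the latter stopped at $\widehat\tau_\frB\wedge T$), takes the sign of the $\D t$ term from Lemma~\ref{lem-lyap-rev}, and then uses $(\widetilde\sV)^{-\xi}\le 1$ and lets $T\to\infty$; your localization at $\tau_R$, via non-explosion and boundedness of $(\widetilde\sV)^{-\xi}$, simply makes explicit the martingale step the paper leaves implicit.
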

\begin{proof} Fix $v\in \Um$. To prove part (i), we use Lemma~\ref{lem-lyap-rev} and apply It\^o's formula to $e^{\bar l\xi t}\big(\sV(X_t)\big)^{-\xi}$, up to $t=\tau$ to get
\begin{align*}
\E_x^{v}\Big[e^{\bar l \xi \tau}\big(\widetilde \sV(X_\tau)\big)^{-\xi}\Big]&=  \big(\widetilde \sV(x)\big)^{-\xi}+ \E_x^{v}\Big[\int_0^\tau  \big(e^{\bar l \xi t}\Lg^{u} \big(\widetilde \sV(X_t)\big)^{-\xi}+\bar l \xi e^{\bar l {\xi} t} \big(\widetilde \sV(X_t)\big)^{-\xi}\big)\D t\Big]\\
&\geq \big(\widetilde \sV(x)\big)^{-\xi }  +\E_x^{v}\Big[\int_0^\tau e^{\bar l\xi t} \frac{\big(-\bar l\xi +\xi l(X_t)+\bar l \xi \big)}{\big(\widetilde \sV(X_t)\big)^{\xi}}  \D t\Big]\\
&\geq \big(\widetilde \sV(x)\big)^{-\xi}  \,.
\end{align*}
In the above, to get the second line, we use Lemma~\ref{lem-lyap-rev} and to get the last line, we use the non-negativity of $l$ and $\widetilde \sV$, and the fact that $\xi> 0$.

To prove part (ii), we apply It\^o's formula to $e^{-Lt}\big(\widetilde \sV(X_t)\big)^{-\xi }$, up to $t=\widehat \tau_\frB\wedge T$ with $x\in \frB^c$ to get
\begin{align*}
\E_x^{v}\Big[e^{-L( \widehat \tau_\frB\wedge T)}\big(\widetilde \sV(X_ {\widehat \tau_\frB\wedge T})\big)^{-\xi }\Big]&=  \big(\widetilde \sV(x)\big)^{-\xi }+ \E_x^{v}\Big[\int_0^{ \widehat \tau_\frB\wedge T}  \big(e^{-L t}\Lg^{v} \big(\widetilde \sV(X_t)\big)^{-\xi }-L e^{-Lt} \big(\widetilde \sV(X_t)\big)^{-\xi }\big)\D t\Big]\\
&\geq \big(\widetilde \sV(x)\big)^{-\xi }  +\E_x^{v}\Big[\int_0^{ \widehat \tau_\frB} e^{-L t} \frac{\xi  \big(-\bar l + l(X_t)-L\xi ^{-1}\big)}{\big(\widetilde \sV(X_t)\big)^{\xi}}  \D t\Big]\\
&\geq \big(\widetilde \sV(x)\big)^{-\xi }  \,.
\end{align*}
We obtain the last line from the choice of $\frB$ and $L>0$. From above, using the fact that $\big(\widetilde \sV(x)\big)^{-\xi }\leq 1$, we get
\begin{align}
\E_x^{v}\Big[e^{-L( \widehat \tau_\frB\wedge T)}\Big]\geq \E_x^{v}\Big[e^{-L( \widehat \tau_\frB\wedge T)}\big(\widetilde \sV(X_ {\widehat \tau_\frB\wedge T})\big)^{-\xi }\Big]\geq \big(\widetilde \sV(x)\big)^{-\xi }\,.
\end{align}
Now taking $T\to\infty$, we obtain the desired result.
\end{proof}

 \subsection{An intermediate family of iterates}
In this section, we  introduce the intermediate  family of iterates as mentioned earlier.  Let the new family of iterates $\{ {\overline V}_\gamma^n: n\geq 1\}$ be defined as follows:  let $\widetilde {\overline V}_\gamma^0 \doteq   \widetilde V_\gamma^0$ and $ \widetilde {\overline V}_\gamma$ be the solution to 
\begin{align}\label{eq-widehat}
\partial_t \widetilde {\overline V}_\gamma (t,x)= \min_{u\in \bU}\big[\Lg^u \widetilde {\overline V} _\gamma(t,x)+ \gamma r(x,u)\widetilde {\overline V}_\gamma (t,x)\big]- \gamma \Lambda^*_{\gamma}  \widetilde {\overline V}_\gamma (t,x),\quad   \widetilde {\overline V}_\gamma (0,x)= \widetilde {\overline V}_\gamma^n(x)
\end{align}
and  $\widetilde {\overline V}_\gamma^{n+1}(x)=\widetilde {\overline V}_\gamma(\tau, x)$. Set $\overline V_\gamma^n(x)\doteq \log \big(\widetilde {\overline V}_\gamma^n(x)/\widetilde {\overline V}_\gamma^n(0)\big)$. Clearly, $\overline V^n_\gamma(0)=0$ for $n\geq 1$, and 
  from the above definition, we see that $ \overline  V_\gamma^n= V_\gamma^n+\phi^n$ with 
\begin{align}\label{eq-const-series} \phi^n\doteq  \sum_{i=1}^{n-1} \big(V_\gamma^n(0)- \Lambda^*_{\gamma}\big)\,.\end{align}

Note that we have replaced $\big( \widetilde V^n_\gamma (0)\big)^\delta$ in \eqref{eq-rvi-mod-gamma}, by $\gamma \Lambda^*_{\gamma}$ in \eqref{eq-widehat} for the iterates $\{ {\overline V}_\gamma^n:n\geq 1\}$. Due to this, the estimates in Lemmas~\ref{lem-b-n} and~\ref{lem-w-bound} do not depend on  $\widetilde V^n_{\gamma}(0)$ (and thereby $V^n_\gamma(0)$) explicitly. This ensures that a priori bounds on $\widetilde V^n_{\gamma}(0)$ (and thereby $V^n_\gamma(0)$) are not required in the proofs of  Lemmas~\ref{lem-b-n} and~\ref{lem-w-bound}. This consequently, simplifies the verification of  conditions~\eqref{eq-drift} and~\eqref{eq-minor} through Lemmas~\ref{lem-exp-conv} and~\ref{lem-exp-minor}, respectively.

\begin{remark}\label{rem-choice-cont}
Just as in Lemma~\ref{lem-eq-mod-gamma-rvi}, we also have the following: $\overline V_\gamma\doteq  \log \widetilde {\overline V}_\gamma$ with $\overline V^n_\gamma=\log \widetilde {\overline V}^n_\gamma$, for $n\geq 1$, satisfies 
\begin{align}\label{eq-rvi-mod-gamma-int}
\partial_t  \overline V_\gamma(t,x)= \min_{u\in \bU}\max_{w\in \RR^d}\Big[\bar \Lg^{u,w} \overline V_\gamma(t,x)+ {\gamma}{} r(x,u)-\frac{1}{2}\|w\|^2\Big]- {\gamma}{}  \Lambda^*_{\gamma} ,
\end{align}
 for $0\leq t\leq \tau$ with $ \overline V_\gamma(0,x)=  \overline V_\gamma^n(x)$. Moreover,  $\overline V_\gamma^{n+1} (x)= \overline V_\gamma(\tau,x)$.
  It is clear that $ \bar v\in \Um$ satisfies 
\begin{align}\label{eq-min-1}  \min_{u\in \bU}\max_{w\in \RR^d}\Big[\bar \Lg^{u,w} \overline V_\gamma(t,x)+ {\gamma}{} r(x,u)-\frac{1}{2}\|w\|^2\Big]=  \max_{w\in \RR^d}\Big[\bar \Lg^{ \bar v,w} \overline V_\gamma(t,x)+ {\gamma}{} r\big(x, \bar v (t,x)\big)-\frac{1}{2}\|w\|^2\Big]\end{align}
if and only if it satisfies 
\begin{align*}\min_{u\in \bU}\big[\Lg^u\widetilde {\overline V}_\gamma(t,x)+ \gamma r(x,u)\widetilde {\overline V}_\gamma(t,x)\big]= \Lg^{\bar v}\widetilde {\overline V}_\gamma(t,x)+ \gamma r\big(\bar v(t,x)\big)\widetilde {\overline V}_\gamma(t,x)\,.\end{align*}
Moreover, $  \bar w (t, \cdot)\doteq  \Sigma(\cdot)\transp\grad  \overline V_\gamma(t,\cdot)$ is the unique $w\in \Wm$ that  satisfies 
\begin{align*}  \min_{u\in \bU}\max_{w\in \RR^d}\Big[\bar \Lg^{u,w} \overline V_\gamma(t,x)+ {\gamma}{} r(x,u)-\frac{1}{2}\|w\|^2\Big]=  \min_{u\in \bU}\Big[\bar \Lg^{u, \bar w} \overline V_\gamma(t,x)+ {\gamma}{} r(x,u)-\frac{1}{2}\| \bar w(t,x)\|^2\Big]\,.\end{align*}
From hereon, any generic $v\in \Um$ that satisfies~\eqref{eq-min-1}  is denoted by $ \bar v$ and  $ \bar  w(t,x) \doteq \Sigma(x)\transp\grad  \overline V_\gamma(t,x)\,.$  Even though $\bar v$ and $\bar w$ depend on $n$ (\emph{via.} $\overline V_\gamma (0,x)=\overline V_\gamma^n(x)$), we suppressed the dependence as we always fix $n$ whenever $\bar v$ and $ \bar w$ are involved. 

\end{remark}

\subsection{Key a priori bounds on $V^*_\gamma$ and $\overline V^n_\gamma$}
We give important a priori estimates involving $V_\gamma^* =\log {\widetilde V}^*_{\gamma}$ and $\overline V_\gamma^n = \log \widetilde {\overline V}^n_\gamma$.

\begin{lemma}\label{lem-b-*} Under Assumptions~\ref{a-regularity} and~\ref{a-main-gamma-conv}, the following estimates hold: for every  $\xi >0$, there exist constants $\overline C^*_{\gamma,\theta} , \underline C^*_{\gamma,\xi }>0$  such that
\begin{align}\label{eq-ub-*}
V_\gamma^*(x)&\leq  \theta  \sV(x)+\overline C^*_{\gamma,\theta}\,, \\\label{eq-lb-*}
V_\gamma^*(x)&\geq -\xi \sV(x) -\underline C^*_{\gamma,\xi } \,.
\end{align}

\end{lemma}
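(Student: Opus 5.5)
Both bounds will come from the stochastic representation of $\widetilde V^*_\gamma$ in Theorem~\ref{thm-hjb-gamma}(ii). We work outside a ball and use the Lyapunov function $\widetilde\sV=e^{\sV}$; since $\widetilde V^*_\gamma(0)=1$ and $\widetilde V^*_\gamma$ is continuous and positive, on any fixed closed ball both bounds hold trivially after enlarging the constants.

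\emph{Upper bound \eqref{eq-ub-*}.} Fix an optimal $v^*$ and an open ball $\frB$. By \eqref{eq-inf-c-1},
\[
\gamma\big(r(x,u)-\Lambda^*_\gamma\big)\le \theta l(x)+k_r .
\]
Choose $\frB$ large enough that $\theta l(y)+k_r\le \theta(l(y)-\bar l)$ on $\frB^c$; this is possible because $l$ is inf-compact and $\theta<1$. Then Theorem~\ref{thm-hjb-gamma}(ii) gives
\[
\widetilde V^*_\gamma(x)\le \Big(\sup_{\bar\frB}\widetilde V^*_\gamma\Big)\,\E_x^{v^*}\Big[e^{\theta\int_0^{\widehat\tau_\frB}(l(X_s)-\bar l)\D s}\Big].
\]
The exponent in \eqref{eq-ub-*} as displayed is $\gamma(r-\Lambda^*)$ rather than $\gamma\int(r-\Lambda^*)$; I will keep the paper's scaling, absorbing $\gamma$ into the constants. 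To bound the expectation, apply Jensen's inequality with the concave map $y\mapsto y^\theta$. Applying \eqref{eq-lyap-ineq} up to the stopping time $\widehat\tau_\frB\wedge T$ gives
\[
\E_x\Big[e^{\int_0^{\widehat\tau_\frB\wedge T}(l-\bar l)}\,\widetilde\sV(X_{\widehat\tau_\frB\wedge T})\Big]\le\widetilde\sV(x).
\]
Since $\widetilde\sV\ge1$, Fatou's lemma as $T\to\infty$ yields $\E_x\big[e^{\int_0^{\widehat\tau_\frB}(l-\bar l)}\big]\le\widetilde\sV(x)$. Hence the expectation is at most $\widetilde\sV(x)^\theta$. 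Taking logarithms gives $V^*_\gamma\le\theta\sV+\overline C^*_{\gamma,\theta}$.

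\emph{Lower bound \eqref{eq-lb-*}.} Since $r\ge0$, the running exponent satisfies $\gamma(r-\Lambda^*_\gamma)\ge-\gamma\Lambda^*_\gamma=:-L$. Representation (ii) then gives
\[
\widetilde V^*_\gamma(x)\ge\Big(\inf_{\partial\frB}\widetilde V^*_\gamma\Big)\,\E_x^{v^*}\big[e^{-L\widehat\tau_\frB}\big].
\]
Given $\xi>0$, choose the ball $\frB$ so large that $l-\bar l-L\xi^{-1}\ge0$ on $\frB^c$; this is again possible since $l$ is inf-compact. Corollary~\ref{cor-lyap-rev}(ii) then gives $\E_x^{v^*}[e^{-L\widehat\tau_\frB}]\ge\widetilde\sV(x)^{-\xi}$. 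The infimum of $\widetilde V^*_\gamma$ over $\partial\frB$ is positive. Taking logarithms gives $V^*_\gamma\ge-\xi\sV-\underline C^*_{\gamma,\xi}$, and the bounds inside $\frB$ follow by continuity.

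\emph{Main obstacle.} The delicate step is justifying the optional stopping and limit in the upper bound. The localized identity from Theorem~\ref{thm-hjb-gamma}(ii) must be combined with the supermartingale inequality \eqref{eq-lyap-ineq} at unbounded stopping times, which calls for a truncation-plus-Fatou argument. The Jensen step is also only valid once the exponent is dominated by $\theta(l-\bar l)$, and this domination is exactly why the ball must be chosen depending on $\theta$ and $k_r$.
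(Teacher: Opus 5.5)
Your overall route is the paper's: the hitting-time representation of Theorem~\ref{thm-hjb-gamma}(ii), then Jensen with $y\mapsto y^\theta$ and the multiplicative Lyapunov supermartingale for the upper bound, and $r\ge 0$ with Corollary~\ref{cor-lyap-rev}(ii) for the lower bound. The lower bound is fine, as are your truncation-plus-Fatou step and your treatment of the bounded region.

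The gap is the choice of the ball in the upper bound. The condition $\theta l(y)+k_r\le\theta\big(l(y)-\bar l\big)$ on $\frB^c$ is equivalent to $k_r\le-\theta\bar l$. This does not involve $y$, so no enlargement of $\frB$ can produce it, and the inf-compactness of $l$ and $\theta<1$ play no role. Nothing forces the constant $k_r$ in \eqref{eq-inf-c-1} to lie below $-\theta\bar l$. Passing to that global bound discards exactly the information you need, namely that $\theta l-\gamma\max_{u}r(\cdot,u)\to+\infty$.

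The fix is to use Assumption~\ref{a-main-gamma-conv}(ii) directly. Since $\theta l-\gamma\max_{u\in\bU}r(\cdot,u)$ is inf-compact, there is an open ball outside of which $\theta l(y)-\gamma\max_{u}r(y,u)\ge\theta\bar l-\gamma\Lambda^*_\gamma$. Equivalently, $\gamma\big(r(y,u)-\Lambda^*_\gamma\big)\le\theta\big(l(y)-\bar l\big)$ for all $u\in\bU$ and $y\in\frB^c$. If you insist on working from \eqref{eq-inf-c-1}, you can only get domination by $\theta'(l-\bar l)$ for some $\theta'\in(\theta,1)$, which yields the weaker bound $\theta'\sV+C$. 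With the ball chosen from the assumption itself, the rest of your argument goes through unchanged.

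Aiming for domination by $\theta(l-\bar l)$ is the right target. The paper's write-up only dominates the exponent by $\theta l$. It then invokes \eqref{eq-lyap-ineq}, which controls $e^{\int(l-\bar l)}$ rather than $e^{\int l}$, and applies it at the unbounded time $\widehat\tau_\frB$ without comment. Once the ball is corrected, your version repairs both points.

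Your remark about ``absorbing $\gamma$ into the constants'' is unnecessary. The representation already carries $\gamma\int_0^{\widehat\tau_\frB}(r-\Lambda^*_\gamma)$, which is exactly what the domination controls.
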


\begin{proof}
The proof of the first part  uses Theorem~\ref{thm-hjb-gamma}(ii). From Assumption~\ref{a-main-gamma-conv}(ii), we choose an open ball $\frB\subset \RR^d$ such that for $x\in \frB^c$, 
\begin{align}\label{eq-choice-B} \inf_{x\in \frB^c}\big\{\gamma\max_{u\in \bU} r(x,u)-\gamma \Lambda_\gamma^* - \theta  l(x) \big\}\leq 0\,.\end{align}

Applying Theorem~\ref{thm-hjb-gamma}(ii) with open ball $\frB$ (and $\overline \frB$ denoting its closure) as chosen above, $x\in \frB^c$ and $v^*\in \Usm$ that satisfies~\eqref{eq-minimizer}, we get
\begin{align*}
\widetilde V_\gamma^*(x)&= \E_x^{v^*}\Big[\exp\Big( \int_0^{\widehat \tau_\frB} \gamma \big( r(X_t, v^*(X_t))-\Lambda_\gamma^*\big)\D t\Big)\widetilde V_\gamma^*(X_{\widehat \tau_\frB})\Big]\\
&\leq \E_x^{ v^*}\Big[\exp\Big(\theta  \int_0^{\widehat \tau_\frB} l(X_t)\D t\Big) \widetilde V_\gamma^*(X_{\widehat \tau_\frB})\Big] \\
&\leq \sup_{y\in \overline \frB} \frac{\widetilde V_\gamma^{*}(y)}{\widetilde \sV(y)^{\theta}} \E_x^{ v^*}\Big[\exp\Big(\theta  \int_0^{\widehat \tau_\frB} l(X_t)\D t\Big) \widetilde \sV(X_{\widehat \tau_\frB})^{\theta }\Big]\\
&= \sup_{y\in \overline \frB} \frac{\widetilde V_\gamma^{*}(y)}{\widetilde \sV(y)^{\theta}} \E_x^{v^*}\Big[\Big(\exp\Big(  \int_0^{\widehat \tau_\frB} l(X_t)\D t\Big) \widetilde \sV(X_{\widehat \tau_\frB})\Big)^{\theta}\Big] \\
&\leq \sup_{y\in \overline  \frB} \frac{\widetilde V_\gamma^{*}(y)}{\widetilde \sV(y)^{\theta}}\Big(\E_x^{v^*}\Big[\exp\Big( \int_0^{\widehat \tau_\frB}l(X_t)\D t\Big) \widetilde \sV(X_{\widehat \tau_\frB})\Big]\Big)^{\theta} \\
&\leq\sup_{y\in \overline \frB} \frac{\widetilde V_\gamma^{*}(y)}{\widetilde \sV(y)^{\theta}} \widetilde \sV(x)^{\theta} \,.
\end{align*}
Recall $\theta$ from Assumption~\ref{a-main-gamma-conv}(ii). In the above, to get the second line, we use~\eqref{eq-choice-B}; to get the fifth line, we use Jensen's inequality and to get the last line, we use Remark~\ref{rem-lyap-mult}.

Taking logarithm on both sides, we get
\begin{align}\nonumber
 V_\gamma^*(x)&\leq \log\Big( \sup_{y\in \overline \frB} \frac{\widetilde V_\gamma^{*}(y)}{\widetilde \sV(y)^{\theta}}\Big) +\theta \sV(x) = \sup_{y\in \overline \frB}\big(V_\gamma^*(y)- \theta \sV(y)\big) +\theta \sV(x)\,.
\end{align} 
To get the above equality, we use the definitions of $V_\gamma^*$ and $\sV$. This proves~\eqref{eq-ub-*} with $\overline C^*_{\gamma,\theta}\doteq  \sup_{y\in\overline  \frB}\big(V_\gamma^*(y)- \theta\sV(y)\big)$.

Next, we move on to prove~\eqref{eq-lb-*}. Here, we fix $\xi>0$ and choose an open ball $\frB$ such that $\inf_{y\in \frB^c}\big(l(y)-\bar l-\gamma \Lambda_\gamma^* \xi^{-1}\big)\geq 0$. To that end, for any $x\in \frB^c$, we have
\begin{align*}\widetilde V_\gamma^*(x) &= \E_x^{v^*}\Big[\exp\Big( \int_0^{\widehat \tau_\frB}\gamma \big(r(X_t,\bar v(t,X_t))-\Lambda_\gamma^* \big)\D t\Big) \widetilde V_\gamma^*(X_{\widehat \tau_\frB})\Big] \\
&\geq\E_x^{\bar v}\Big[ e^{-\gamma \widehat \tau_\frB\Lambda_\gamma^*}\widetilde V_\gamma^{*}(X_{\widehat \tau_\frB})\Big]\\
&\geq  \big(\inf_{y\in \overline \frB} \widetilde V_\gamma^{*}(y)\big)\E_x^{ v^*}\Big[e^{-\gamma \widehat \tau_\frB\Lambda_\gamma^*}\Big]\\
&\geq  \big(\inf_{y\in \overline \frB} \widetilde V_\gamma^{*}(y)\big) \big(\widetilde \sV(x)\big)^{-\xi}\,.  \end{align*} 
Here, $\overline \frB$ again denotes the closure of $\frB$. To get the second line, we use the non-negativity of $r(\cdot,\cdot)$ and to get the last line, we use Corollary~\ref{cor-lyap-rev}(ii) with $K=\gamma\Lambda_\gamma^*$. From above, using the definition of $V_\gamma^*$ and $\sV$,  we have $V_\gamma^*(x)\geq \inf_{y\in \overline  \frB}V_\gamma^{*}(y)  - \xi \sV(x)$. This proves~\eqref{eq-lb-*} (and completes the proof) with $\underline C^*_{\gamma,\xi}\doteq  \inf_{y\in \overline \frB}V_\gamma^{*}(y) $ with dependence on $\xi$ through $\overline \frB$.  
\end{proof}

\begin{lemma}\label{lem-b-n} Suppose Assumptions~\ref{a-regularity} and~\ref{a-main-gamma-conv} hold and for a fixed $n\geq 1 $, suppose that $\widetilde {\overline V}_\gamma$ is the solution to~\eqref{eq-widehat} such that $\widetilde {\overline V}_\gamma(0,x)=\widetilde {\overline V}_\gamma^{n-1}(x)$.  Also, suppose that $\|\overline V_\gamma^{n-1}-V^*_\gamma\|_{\widetilde \beta,\sV}<1$, for some $0<\widetilde \beta<1-\theta$. Then, the following estimates hold: for some $ C_{\sup},  C_{\inf}>0$ (independent of $n$),
\begin{align}\label{eq-ub-n}
\overline V_\gamma(t,x)&\leq  (\theta +\widetilde \beta) \sV(x)+\overline C^*_{\gamma,\theta} + {C_{\sup}}{} \,, \\\label{eq-lb-n}
\overline V_\gamma^{n}(x)&\geq -(\xi+\widetilde \beta)\sV(x) -\underline C^*_{\gamma,\xi}- {C_{\inf}}{},
\end{align}
for $0\leq t\leq \tau$. In particular, the estimate in~\eqref{eq-ub-n} holds for $\overline V_\gamma^{n}=\overline V_\gamma(\tau,x)$.
\end{lemma}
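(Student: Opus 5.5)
The plan is to work with the multiplicative (linear in $\widetilde{\overline V}_\gamma$) form \eqref{eq-widehat} and its Feynman--Kac representation, rather than the additive PDE. Applying It\^o's formula to $e^{\int_0^s\gamma(r-\Lambda^*_\gamma)}\widetilde{\overline V}_\gamma(t-s,X_s)$, exactly as for \eqref{def-Sn}, gives for $0\le t\le\tau$
\begin{align*}
\widetilde{\overline V}_\gamma(t,x)=\min_{U\in\Uadm}\E^U_x\Big[\exp\Big(\gamma\int_0^t\big(r(X_s,U_s)-\Lambda^*_\gamma\big)\D s\Big)\widetilde{\overline V}^{\,n-1}_\gamma(X_t)\Big].
\end{align*}
The hypothesis $\|\overline V^{n-1}_\gamma-V^*_\gamma\|_{\widetilde\beta,\sV}<1$ yields the two-sided pointwise bound
\[
V^*_\gamma-1-\widetilde\beta\sV\le \overline V^{n-1}_\gamma\le V^*_\gamma+1+\widetilde\beta\sV .
\]
Combined with Lemma~\ref{lem-b-*}, this gives $\widetilde{\overline V}^{\,n-1}_\gamma\le e^{1+\overline C^*_{\gamma,\theta}}\widetilde\sV^{\theta+\widetilde\beta}$ and $\widetilde{\overline V}^{\,n-1}_\gamma\ge e^{-1-\underline C^*_{\gamma,\xi}}\widetilde\sV^{-(\xi+\widetilde\beta)}$. (Since $\overline V^{n}_\gamma$ is normalized by its value at $0$, I will also have to control $\widetilde{\overline V}_\gamma(\tau,0)$ from above and below. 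Both come from the same two estimates below evaluated at $x=0$, so they contribute only constants to $C_{\sup},C_{\inf}$.)

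\emph{Upper bound \eqref{eq-ub-n}.} I plug the control $v^*$ into the min. Writing $\widetilde{\overline V}^{\,n-1}_\gamma=\widetilde V^*_\gamma\cdot e^{\overline V^{n-1}_\gamma-V^*_\gamma}$, I get
\[
\widetilde{\overline V}_\gamma(t,x)\le e\,\E^{v^*}_x\Big[e^{\gamma\int_0^t(r-\Lambda^*_\gamma)}\widetilde V^*_\gamma(X_t)\widetilde\sV(X_t)^{\widetilde\beta}\Big].
\]
On the region where $r$ is large, I bound $\gamma(r-\Lambda^*_\gamma)\le\theta l+k_r$ using Remark~\ref{rem-k-inf-c}, and $\widetilde V^*_\gamma\le C\widetilde\sV^{\theta}$. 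The integrand is then at most $e^{k_rt}\big(e^{\int_0^t l}\widetilde\sV(X_t)\big)^{\theta+\widetilde\beta}e^{-\widetilde\beta\int_0^t l}$. Dropping the last factor (as $l\ge1$) and applying Jensen with exponent $\theta+\widetilde\beta<1$ together with \eqref{eq-lyap-ineq} gives $\le Ce^{(k_r+\bar l)\tau}\widetilde\sV(x)^{\theta+\widetilde\beta}$. The constant depends only on $\tau$, $k_r$, $\bar l$ and $\overline C^*_{\gamma,\theta}$, not on $n$, and taking logarithms yields \eqref{eq-ub-n}.

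\emph{Lower bound \eqref{eq-lb-n}.} Since $r\ge0$, for any admissible $U$ I have
\[
\widetilde{\overline V}_\gamma(\tau,x)\ge e^{-\gamma\Lambda^*_\gamma\tau}\E^U_x\big[\widetilde{\overline V}^{\,n-1}_\gamma(X_\tau)\big]\ge c\,\E^U_x\big[\widetilde\sV(X_\tau)^{-(\xi+\widetilde\beta)}\big].
\]
By Corollary~\ref{cor-lyap-rev}(i), whose bound is uniform over $v\in\Um$ and extends to admissible $U$ by the same It\^o argument, this is $\ge c\,e^{-\bar l(\xi+\widetilde\beta)\tau}\widetilde\sV(x)^{-(\xi+\widetilde\beta)}$. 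Taking logarithms gives the estimate with $C_{\inf}$ independent of $n$.

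I expect the main obstacle to be the upper bound. Assumption~\ref{a-main-gamma-conv}(ii) controls $\gamma r$ only by $\theta l$ up to a constant, and the extra weight $\widetilde\sV^{\widetilde\beta}$ coming from the hypothesis must be absorbed without exceeding exponent $1$ in Jensen; this is precisely where $\widetilde\beta<1-\theta$ is needed. A further subtlety is doing the bound uniformly in $t\in[0,\tau]$ rather than along a hitting time. If the direct Jensen route gives trouble with the combination of exponents, I would instead use the hitting-time representation as in Lemma~\ref{lem-b-*}, and split on $\{\widehat\tau_\frB\le t\}$.
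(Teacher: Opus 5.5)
Your proof is correct and follows the paper's own argument. For \eqref{eq-ub-n} you use the Feynman--Kac form of \eqref{eq-widehat}, the bound $\gamma(r-\Lambda^*_\gamma)\le\theta l+k_r$, Jensen at exponent $\theta+\widetilde\beta<1$ and \eqref{eq-lyap-ineq}. For \eqref{eq-lb-n} you use $r\ge0$ together with Corollary~\ref{cor-lyap-rev}(i). You are slightly more careful than the paper in two places: you keep the additive $1$ coming from $\|\overline V^{n-1}_\gamma-V^*_\gamma\|_{\widetilde\beta,\sV}<1$, and you account for the normalization at $0$. The degree-one homogeneity of \eqref{eq-widehat} is what justifies taking $\widetilde{\overline V}^{\,n-1}_\gamma=e^{\overline V^{n-1}_\gamma}$.

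The one slip is in the lower bound. The inequality $\widetilde{\overline V}_\gamma(\tau,x)\ge e^{-\gamma\Lambda^*_\gamma\tau}\E^U_x\big[\widetilde{\overline V}^{\,n-1}_\gamma(X_\tau)\big]$ holds for the minimizing control (equivalently, with $\inf_U$ on the right), not for every admissible $U$. Since your Lyapunov lower bound is uniform in the control, this does not affect the conclusion.
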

\begin{proof} From the hypothesis on $\overline V^n_\gamma$, the definition of $\|\cdot\|_{\widetilde \beta,\sV}$ and the upper bound on $V^*_\gamma$ (from~\eqref{eq-ub-*}), we have 
$$ \overline V^{n-1}_\gamma(x) \leq (\theta+\widetilde \beta) \sV(x)+\overline C^*_{\gamma,\theta}\,. $$

To prove part (i), we consider  $\widetilde {\overline V}_\gamma$ which is the solution to~\eqref{eq-widehat} such that $\widetilde {\overline V}_\gamma(0,x)=\widetilde {\overline V}_\gamma^{n-1}(x)$. It is clear that  $\widetilde{\overline V}_\gamma(t,x)$, for $0\leq t\leq \tau$ satisfies
$$\widetilde {\overline V}_\gamma(\tau-t,x)\leq  \E_x^{v}\Big[\exp\Big(\int_0^{t} \gamma \big( r(X_s, v(X_s))- \Lambda^*_{\gamma}\big)\D s\Big)\widetilde {\overline V}_\gamma^{n-1}(X_t) \Big],$$
for every $v\in \Usm$.  Dividing the above display by $\big({\widetilde \sV}(x)\big)^{\theta+\widetilde \beta}$ on both sides, we have 
\begin{align*}
\frac{\widetilde {\overline V}(\tau-t,x)}{\big({\widetilde \sV}(x)\big)^{\theta+\widetilde \beta}}&\leq \frac{1}{\big(\widetilde \sV(x)\big)^{\theta+\widetilde \beta}}\E_x^{v}\Big[\exp\Big( \int_0^{t}  \big( \gamma  r(X_s, v(X_s))-\gamma\Lambda^*_{\gamma}\big)\D s \Big)\widetilde {\overline V}_\gamma^{n-1}(X_t) \Big]\\
&\leq \frac{1}{\big({\widetilde \sV}(x)\big)^{\theta+\widetilde \beta}}\E_x^{v}\Big[\exp\Big( \int_0^{t}  \big( \theta l(X_s) + k_r\big)\D s \Big)\widetilde {\overline V}_\gamma^{n-1}(X_t) \Big]\\
&\leq \frac{e^{k_r t +{\bar l t(\theta+\widetilde \beta)}{} }}{\big({\widetilde \sV}(x)\big)^{\theta+\widetilde \beta}} \sup_{y\in \RR^d} \frac{\widetilde {\overline V}_\gamma^{n-1}(y)}{\big({\widetilde \sV}(y)\big)^{\theta+\widetilde \beta}}\E_x^{v}\Big[\exp\Big((\theta+\widetilde \beta) \int_0^{t} \big(l(X_s)-\bar l\big)\D s \Big)\big(\widetilde { \sV}(X_t)\big)^{\theta+\widetilde \beta} \Big]\\
&\leq  \frac{e^{k_r\tau +{\bar l \tau (\theta+\widetilde \beta) }{} }}{\big({\widetilde \sV}(x)\big)^{\theta+\widetilde \beta}}  \sup_{y\in \RR^d} \frac{\widetilde {\overline V}_\gamma^{n-1}(y)}{\big({\widetilde \sV}(y)\big)^{\theta+\widetilde \beta}}\E_x^{v}\Big[\exp\Big( \int_0^{t} \big(l(X_s)-\bar l\big)\D s \Big)\widetilde \sV(X_t) \Big]^{(\theta+\widetilde \beta) } \\
&\leq e^{k_r \tau +{\bar l \tau (\theta+\widetilde \beta)}{} }  \sup_{y\in \RR^d} \frac{\widetilde {\overline V}_\gamma^{n-1}(y)}{\big({\widetilde \sV}(y)\big)^{\theta+\widetilde \beta}}\,.
\end{align*}
In the above, to get the second line, we use~\eqref{eq-inf-c-1}; to get the third line, we multiply and divide by $\big({\widetilde \sV}(X_t)\big)^{\theta+\widetilde \beta}$ inside the expectation and bound the expectation; to get the fourth line, we use Jensen's inequality and to get the last line, we use~\eqref{eq-lyap-ineq}. Taking the logarithm on both sides and using the definitions of $\overline V_\gamma$ and $\sV$, we get~\eqref{eq-ub-n} with $C_{\sup}\doteq k_r \tau +{\bar l \tau (\theta+\widetilde \beta)}$.  

To prove~\eqref{eq-lb-n}, we  choose $v\in \Um$ which is a minimizer for~\eqref{eq-widehat}. Then, we have 
$$ \widetilde {\overline V}^{n+1}_\gamma(x)=  \E_x^{v}\Big[\exp\Big(\int_0^{\tau} \gamma \big( r(X_t, v(X_t))- \Lambda^*_{\gamma}\big)\D t\Big)\widetilde {\overline V}_\gamma^{n-1}(X_\tau) \Big]\,.$$
From here, using Corollary~\ref{cor-lyap-rev}(i) and arguing as above, we can obtain~\eqref{eq-lb-n}. This completes the proof.
\end{proof}

\subsection{Verifying the extended diffusion $Z$  satisfies~\eqref{eq-drift}  and~\eqref{eq-minor} of Proposition~\ref{prop-contract} }\label{sec-add-exp-ergodicity}

We first  establish the following boundedness result associated with the auxiliary controls in the extended diffusion $Z$, for $(v,w)= (\bar v,w^*)$ or $(v^*, \bar w)$  for which we use the bounds from Lemmas~\ref{lem-b-*} and~\ref{lem-b-n}.  Recall $v^*$ and $\bar v$ from~\eqref{eq-min-3} and~\eqref{eq-min-1}, respectively. Also, recall that \begin{align}\label{def-w}w^*(\cdot)=   \Sigma(\cdot)\transp\grad  V^*_\gamma(\cdot) \quad \text{ and } \quad  \bar w (t, \cdot)= \Sigma(\cdot)\transp\grad  \overline V_\gamma(t,\cdot)\end{align}

\begin{lemma}\label{lem-w-bound} Suppose Assumptions~\ref{a-regularity} and~\ref{a-main-gamma-conv} hold.  Let   $\theta$ as in Assumption~\ref{a-main-gamma-conv} and $\|\overline V_\gamma^{n-1}-V^*_\gamma\|_{\widetilde \beta,\sV}<1$, for some $0<\widetilde \beta<1-\theta$.  Then, for $n\geq 1$, $0\leq t\leq \tau$, $\vt>0$, $\xi>0$ and $(v,w)= (\bar v,w^*)$ or $(v^*, \bar w)$, we have 
\begin{align*} 
\frac{1}{2}\E_x^{v, w}\Big[\int_0^t e^{\vt s} \|w(s,Z_s)\|^2\D s\Big]&\leq  {\gamma}  \E_x^{v,w}\Big[\int_0^t e^{\vt s} \Big(r\big(Z_s,v(s,Z_s)\big)-\Lambda^*_\gamma +\vt (\theta+\widetilde \beta) \sV(Z_s)\Big)\D s\Big] \\
& \quad +(  \theta +\widetilde \beta) e^{\vt t} \E_x^{v, w}\big[\sV(Z_t)\big] + (\xi+\widetilde \beta) \sV(x) + H(\vt)\,.
\end{align*}
Here, \begin{align*}H (\vt)&\doteq   \big(C_{\sup}+\overline C^*_{\gamma,\theta}\big) + \big(C_{\inf}+ \underline C^*_{\gamma,\xi}\big)\,.\end{align*}
\end{lemma}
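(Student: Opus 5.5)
The plan is to apply It\^o's formula to $e^{\vt s}$ times the relevant value function along the extended diffusion $Z$, and to exploit the fact that the chosen $w$ is the maximizer in the HJB-type equation. For that maximizer the Hamiltonian contains $+\tfrac12\|w\|^2$ exactly, which then appears with a favourable sign. Take $(v,w)=(v^*,\bar w)$ first. Set $F(s,x)\doteq\overline V_\gamma(t-s,x)$. Since $\bar w=\Sigma\transp\grad\overline V_\gamma$ is the maximizer in~\eqref{eq-rvi-mod-gamma-int}, one has $\max_{w'}[\bar\Lg^{u,w'}\overline V_\gamma-\tfrac12\|w'\|^2]=\Lg^u\overline V_\gamma+\tfrac12\|\bar w\|^2$. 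Because $v^*$ need not be the minimizer for~\eqref{eq-rvi-mod-gamma-int}, the min over $u$ gives
\[
\partial_t\overline V_\gamma\le \bar\Lg^{v^*,\bar w}\overline V_\gamma+\gamma r(\cdot,v^*)-\tfrac12\|\bar w\|^2-\gamma\Lambda^*_\gamma .
\]
Applying It\^o to $e^{\vt s}F(s,Z_s)$ on $[0,t]$ (first localized with $\tau^M$ from Proposition~\ref{prop-Z-ext}, then $M\to\infty$ by monotone convergence for the $\|w\|^2$ term and Fatou for the rest) yields
\[
\tfrac12\E\int_0^t e^{\vt s}\|\bar w\|^2\D s\le \gamma\E\int_0^t e^{\vt s}\big(r-\Lambda^*_\gamma\big)\D s+\vt\E\int_0^t e^{\vt s}\overline V_\gamma(t-s,Z_s)\D s+\overline V_\gamma(t,x)-e^{\vt t}\E\big[\overline V_\gamma(0,Z_t)\big].
\]

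The remaining terms are then bounded with the a priori estimates. The upper bound~\eqref{eq-ub-n} of Lemma~\ref{lem-b-n} handles $\vt\,\overline V_\gamma(t-s,Z_s)\le\vt(\theta+\widetilde\beta)\sV(Z_s)+\vt(\overline C^*_{\gamma,\theta}+C_{\sup})$. It also gives $\overline V_\gamma(t,x)\le(\theta+\widetilde\beta)\sV(x)+\overline C^*_{\gamma,\theta}+C_{\sup}$. For $-e^{\vt t}\overline V_\gamma(0,Z_t)=-e^{\vt t}\overline V_\gamma^{n-1}(Z_t)$ I will use a lower bound of the type~\eqref{eq-lb-n}, namely $\overline V^{n-1}_\gamma\ge-(\xi+\widetilde\beta)\sV-\underline C^*_{\gamma,\xi}-C_{\inf}$. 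This bound follows from~\eqref{eq-lb-*} together with the hypothesis $\|\overline V^{n-1}_\gamma-V^*_\gamma\|_{\widetilde\beta,\sV}<1$. The constants then collect into $H(\vt)$, absorbing $\vt$- and $t$-dependent factors from the $e^{\vt s}$ integration into the $\vt$-dependence of $H$. The $\sV$-terms are arranged as in the statement: the terminal-time term $(\theta+\widetilde\beta)e^{\vt t}\E\sV(Z_t)$ or $(\xi+\widetilde\beta)$ coefficient, and the initial $(\xi+\widetilde\beta)\sV(x)$ or $(\theta+\widetilde\beta)\sV(x)$.

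For $(v,w)=(\bar v,w^*)$ the argument is symmetric and uses $V^*_\gamma$ in place of $\overline V_\gamma$. Here $w^*$ is the maximizer in~\eqref{eq-hjb-gamma-m}, and $\bar v$ is suboptimal for the min there. This gives $0\le\bar\Lg^{\bar v,w^*}V^*_\gamma+\gamma r(\cdot,\bar v)-\tfrac12\|w^*\|^2-\gamma\Lambda^*_\gamma$. It\^o on $e^{\vt s}V^*_\gamma(Z_s)$ then yields the same inequality with $V^*_\gamma$ bounded by~\eqref{eq-ub-*} from above and~\eqref{eq-lb-*} from below. Lemma~\ref{lem-b-*} gives constants no worse than those of Lemma~\ref{lem-b-n}, so the same $H(\vt)$ works.

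The main obstacle is the justification of It\^o's formula and the limit $M\to\infty$ before $Z$ is known to exist globally. I will work with $Z^M$ stopped at $\tau^M$, where all integrals are finite. The stochastic integral is a true martingale after further localizing on exit from balls. I then pass to the limit, using $\sV\ge0$ and that the left side is monotone in $M$, while the right side is controlled via~\eqref{eq-lyap-gamma-m}. Sign bookkeeping for the time-reversed $\overline V_\gamma(t-s,\cdot)$ term also needs care, since $\partial_s F=-\partial_t\overline V_\gamma$, which produces the inequality in the right direction.
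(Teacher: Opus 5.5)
Your strategy matches the paper's: It\^o's formula for $e^{\vt s}\overline V_\gamma(t-s,Z_s)$ under $(v^*,\bar w)$, using that $\bar w$ is the maximizer while $v^*$ is only suboptimal in \eqref{eq-rvi-mod-gamma-int} (your PDE inequality is correct), then the a priori bounds of Lemmas~\ref{lem-b-*} and~\ref{lem-b-n}, and the symmetric argument with $V^*_\gamma$ for $(\bar v,w^*)$. The gap is in the inequality you extract from It\^o's formula: its boundary terms have the wrong sign. It\^o's formula gives
\begin{align*}
e^{\vt t}\E\big[\overline V^{n-1}_\gamma(Z_t)\big]-\overline V_\gamma(t,x)=\E\Big[\int_0^t e^{\vt s}\big(\vt\overline V_\gamma-\partial_t\overline V_\gamma+\bar\Lg^{v^*,\bar w}\overline V_\gamma\big)(t-s,Z_s)\D s\Big].
\end{align*}
Inserting your PDE inequality yields
\begin{align*}
\frac{1}{2}\E\Big[\int_0^t e^{\vt s}\|\bar w\|^2\D s\Big]&\leq \gamma\E\Big[\int_0^t e^{\vt s}\big(r-\Lambda^*_\gamma\big)\D s\Big]-\vt\,\E\Big[\int_0^t e^{\vt s}\,\overline V_\gamma(t-s,Z_s)\D s\Big]\\
&\quad+e^{\vt t}\E\big[\overline V^{n-1}_\gamma(Z_t)\big]-\overline V_\gamma(t,x).
\end{align*}
So the boundary terms are terminal value minus initial value, as in the paper. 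Your version, with $\overline V_\gamma(t,x)-e^{\vt t}\E[\overline V_\gamma(0,Z_t)]$, is false. Take $b=0$, $\Sigma=I$, $r\equiv0$, no constant term and $\vt=0$. Then the above is an identity, $\frac12\E\int_0^t\|\bar w\|^2\D s=\E[\overline V^{n-1}_\gamma(Z_t)]-\overline V_\gamma(t,x)$, and your inequality would force $\bar w\equiv0$.

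As a consequence you bound the wrong quantities. You put \eqref{eq-ub-n} on $\overline V_\gamma(t,x)$ and a lower bound on $\overline V^{n-1}_\gamma(Z_t)$. This produces $(\theta+\widetilde\beta)\sV(x)$ and $(\xi+\widetilde\beta)e^{\vt t}\E[\sV(Z_t)]$, which are the coefficients of the statement interchanged. Your closing remark that the $\sV$-terms can be ``arranged'' either way does not repair this. With the correct signs, the upper bound \eqref{eq-ub-n} at time $0$ goes on the terminal term and the lower bound on $-\overline V_\gamma(t,x)$. That is exactly where $(\theta+\widetilde\beta)e^{\vt t}\E[\sV(Z_t)]$ comes from (the coefficient $<1$ absorbed in Lemma~\ref{lem-exp-conv}), and likewise $(\xi+\widetilde\beta)\sV(x)$.

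With the weight $e^{\vt s}$, the $\vt$-term also enters with a minus sign; your display, like the paper's, has a plus. So it must be controlled by a \emph{lower} bound on $\overline V_\gamma(t-s,\cdot)$ at intermediate times. This bound follows from Corollary~\ref{cor-lyap-rev}(i) exactly as \eqref{eq-lb-n} does. You need it anyway for $\overline V_\gamma(t,x)$ with $t<\tau$, since \eqref{eq-lb-n} is stated only at $t=\tau$. It gives $\vt(\xi+\widetilde\beta)\sV(Z_s)$ plus a constant, which is dominated by the stated $\vt(\theta+\widetilde\beta)\sV(Z_s)$ when $\xi\le\theta$.

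The same corrections apply to $(\bar v,w^*)$: use \eqref{eq-ub-*} on $e^{\vt t}\E[V^*_\gamma(Z_t)]$, and use \eqref{eq-lb-*} on $-V^*_\gamma(x)$ and on the $\vt$-term.
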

\begin{proof} Let $(v,w)=(v^*, \bar w)$. We apply It\^o's formula to $  e^{\vt t}\overline V_\gamma(t -u,Z_u) $ with $(v,w)=(v^*, \bar w)$ and $\overline V_\gamma(0,x)= \overline V_\gamma^{n-1}(x)$ up to $u=t$ to obtain 
\begin{align*}
\overline V_\gamma^n(x)  &\leq  \E_x^{v^*,\bar w}\Big[ \int_0^t  e^{\vt s}\Big( {\gamma} r\big(Z_s,v^*(Z_s)\big) - \gamma \Lambda^*_{\gamma} -\frac{1}{2}\| \bar w(s,Z_s)\|^2 +\vt \overline V_\gamma (t-s, Z_s)\Big)\D s \Big] +  \E_x^{ v^*,\bar w}\Big[ e^{\vt t}   \overline V_\gamma^{n-1}(Z_t)\Big] \\
&= \E_x^{v^*,\bar w}\Big[ \int_0^t e^{\vt s} \Big({\gamma}r\big(Z_s,v^*(Z_s)\big) - {\gamma}\Lambda^*_{\gamma}   -\frac{1}{2}\|\bar w(s,Z_s)\|^2 +\vt \overline V_\gamma (t-s,Z_s) \Big)\D t \Big] + \E_x^{ v^*,\bar w}\Big[ e^{\vt t}  \overline V_\gamma^{n-1}(Z_t)\Big] \,.
\end{align*}
From here, rearranging the last line above, we get 
\begin{align}\nonumber
&\frac{1}{2 } \E_x^{v^*,\bar w}\Big[\int_0^t e^{\vt s}\|\bar w(s,Z_s)\|^2\D s \Big]\\\nonumber
&\leq \E_x^{v^*,\bar w}\Big[ \int_0^t  e^{\vt s}\Big({\gamma} r\big(Z_s,v^*(Z_s)\big) - \gamma \Lambda^*_{\gamma} +\vt \overline V_\gamma(t-s,Z_s)   \Big)\D s \Big]   + \E_x^{ v^*,\bar w}\Big[ e^{\vt t}  \overline V_\gamma^{n-1}(Z_t)\Big]- \overline V_\gamma^n(x)\\\nonumber
&\leq \E_x^{v^*,\bar w}\Big[ \int_0^t e^{\vt s}\Big({\gamma} r\big(Z_s,v^*(Z_s)\big) - \gamma \Lambda^*_{\gamma}+\vt (\theta+\widetilde \beta) \sV(Z_s)\Big) \D s \Big] \\\label{eq-vw-1}
&\qquad\qquad +  (\theta +\widetilde \beta) e^{\vt t}\E_x^{ v^*,\bar w}\Big[   \sV(Z_t)\Big]+ (\xi+\widetilde \beta) \sV(x) + \big( \underline C^*_{\gamma,\xi}+ { C_{\inf}}{}\big)
 + \big(\overline C^*_{\gamma,\theta} +{ C_{\sup}}{}\big)  \,.
\end{align}
In the above, to get the second inequality, we use~\eqref{eq-ub-n} to bound  $\E_x^{ v^*,\bar w}\Big[   \overline V_\gamma^{n-1}(Z_t)\Big]$ and $\overline V_\gamma (t-s,Z_s)$ from above and~\eqref{eq-lb-n} to bound $\overline V_\gamma^n(x)$ from below.
 
Similarly, letting $(v,w)=(\bar v,w^*)$, 
applying It\^o's formula to $ V_\gamma^*(Z_t)$ and arguing as above,  we have 
\begin{align*}
\frac{1}{2}\E_x^{\bar v,w^*}\Big[\int_0^t e^{\vt s}\|w^*(Z_s)\|^2\D s \Big]
&\leq \E_x^{\bar v,w^*}\Big[ \int_0^t e^{\vt s}\Big({\gamma} r\big(Z_s,\bar v(Z_s)\big) - \gamma \Lambda^*_{\gamma}+\vt  V^*_\gamma (Z_s)\Big) \D s \Big]  \\
&\qquad +  \theta  e^{\vt t}\E_x^{ \bar v, w^*}\Big[   \sV(Z_t)\Big]+ \xi \sV(x) + \overline C_{\gamma,\theta}^*+\underline {C}_{\gamma,\xi}^*\,.
\end{align*}
In the above, the constants $\overline C^*_{\gamma,\theta}$ and $\underline C^*_{\gamma,\xi}$ are from~\eqref{eq-ub-*} and~\eqref{eq-lb-*}, respectively. From~\eqref{eq-vw-1} and the above display, we obtain the result.
\end{proof}

\begin{corollary}Suppose  Assumptions~\ref{a-regularity} and~\ref{a-main-gamma-conv} hold. Then,   for $(v,w)= (\bar v,w^*)$ or $(v^*, \bar w)$, and for every $T>0$, there exists a unique strong solution $Z$ to~\eqref{eq-Z-cont} on $[0,T]$.
\end{corollary}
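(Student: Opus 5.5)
The plan is to invoke the second part of Proposition~\ref{prop-Z-ext}. That proposition says it suffices to show, for each $T>0$,
\[
\sup_{M>0}\E\Big[\int_0^T \|w(t,Z^M_t)\|^2\,\D t\Big]<\infty,
\]
where $Z^M$ is the localized process stopped at $\tau^M$, and $(v,w)=(\bar v,w^*)$ or $(v^*,\bar w)$. Uniqueness is then automatic, since $\bar v,v^*$ are measurable and $w^*$, $\bar w$ are locally Lipschitz (because $V^*_\gamma\in\cC^2$ and $\overline V_\gamma\in \cC^{1,2}$). So the whole task is to produce a moment bound on $w$ that is uniform in $M$.

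\textbf{Step 1: moment bound along the localized process.} I will repeat the computation in the proof of Lemma~\ref{lem-w-bound} with $\vt=0$, but along $Z^M$ and up to $t\wedge\tau^M$. Applying It\^o's formula to $V^*_\gamma(Z^M_s)$, respectively $\overline V_\gamma(t-s,Z^M_s)$, up to $t\wedge\tau^M$ is legitimate. The reason is that on $[0,\tau^M]$ we have $\int\|w\|^2\le M$, so the drift term $\Sigma w$ is square integrable and the stochastic integrals are martingales after a further localization by exit times of balls, which can be removed by Fatou's lemma. Using the bounds \eqref{eq-ub-*}, \eqref{eq-lb-*}, \eqref{eq-ub-n} and \eqref{eq-lb-n} exactly as in \eqref{eq-vw-1}, I expect to obtain
\[
\tfrac12\E\Big[\int_0^{t\wedge\tau^M}\|w\|^2\Big]\le \gamma\E\Big[\int_0^{t\wedge\tau^M}\big(r-\Lambda^*_\gamma\big)\Big]+(\theta+\widetilde\beta)\E\big[\sV(Z^M_{t\wedge\tau^M})\big]+(\xi+\widetilde\beta)\sV(x)+H.
\]

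\textbf{Step 2: close the estimate with the Lyapunov function.} Applying It\^o's formula to $\sV(Z^M)$ and using Lemma~\ref{lem-lyap-gamma-mod} gives
\[
\E\big[\sV(Z^M_{t\wedge\tau^M})\big]+\E\Big[\int_0^{t\wedge\tau^M}\big(l-\bar l\big)\Big]\le \sV(x)+\tfrac12\E\Big[\int_0^{t\wedge\tau^M}\|w\|^2\Big].
\]
Multiplying this by $(\theta+\widetilde\beta)<1$ and substituting into Step 1 yields
\[
\tfrac{1-\theta-\widetilde\beta}{2}\,\E\Big[\int\|w\|^2\Big]\le \E\Big[\int\big(\gamma(r-\Lambda^*_\gamma)-(\theta+\widetilde\beta)(l-\bar l)\big)\Big]+C(1+\sV(x)).
\]
By \eqref{eq-inf-c-1}, $\gamma(r-\Lambda^*_\gamma)\le \theta l+k_r$, so the integrand is bounded above by $-\widetilde\beta\, l+k_r+\bar l\le k_r+\bar l$, since $l\ge1$. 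Hence the right-hand side is at most $C(1+T)+C\sV(x)$, independently of $M$.

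\textbf{Main obstacle.} The hardest point is making sure that the coefficient of $\E[\int\|w\|^2]$ stays positive, which is exactly where $\widetilde\beta<1-\theta$ is used. A second delicate point is whether the term $\E[\sV(Z^M_{t\wedge\tau^M})]$ produced in Step 1 can be controlled in the same way; I would bound it through Step 2 and absorb it using $\sV\ge0$. Letting $t=T$ and taking the supremum over $M$ then verifies the hypothesis of Proposition~\ref{prop-Z-ext}.
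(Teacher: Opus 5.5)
Your argument follows the paper's route (Lemma~\ref{lem-w-bound} with $\vt=0$ fed into Proposition~\ref{prop-Z-ext}) and is correct. It also supplies two things the paper's one-line proof leaves implicit: you run the estimate along the localized process $Z^M$, which matters because Lemma~\ref{lem-w-bound} as stated presupposes that $Z$ exists; and you absorb $(\theta+\widetilde\beta)\E[\sV(Z_t)]$ and $\gamma\E\int(r-\Lambda^*_\gamma)$ through Lemma~\ref{lem-lyap-gamma-mod} and \eqref{eq-inf-c-1}, which is exactly where $\widetilde\beta<1-\theta$ is needed.

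Two minor corrections. First, combine Steps 1 and 2 while still stopped at the ball exit times, and only then let the radius grow, using monotone convergence on $\E\int\|w\|^2$; removing the localization in Step 1 alone does not work, since Fatou's lemma goes the wrong way for $\E[\sV(Z^M_{t\wedge\tau^M})]$ on the right-hand side. Second, uniqueness comes from Proposition~\ref{prop-Z-ext} itself, not from local Lipschitz continuity of $w$, because $x\mapsto b(x,v(t,x))$ is only measurable.
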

\begin{proof} The corollary immediately follows from Lemma~\ref{lem-w-bound} and Proposition~\ref{prop-Z-ext}.
\end{proof}

We now verify  that under $(v,w)= (\bar v,w^*)$ or $(v^*, \bar w)$ the extended diffusion $Z$  in~\eqref{eq-Z-cont} 
satisfies the drift condition~\eqref{eq-drift} of Proposition~\ref{prop-contract}.  For any $v\in \Um$ and $w\in \Wm$, let  $$\sQ^{v,w}_\tau(x,A)\doteq \E_x^{v,w}\big[\Ind_{A}(Z_\tau)\big],$$
for $x\in \RR^d$ and Borel measurable set $A\subset \RR^d$. For a Borel measurable function $f:\RR^d\rightarrow \RR$, $\sQ_\tau^{v,w}(f)(x)\doteq \int_{\RR^d}f(y) \sQ_\tau^{v,w}(x,\D y)\,. $

\begin{lemma}\label{lem-exp-conv}Suppose the  hypothesis of Lemma~\ref{lem-w-bound} holds and $(v,w)= (\bar v,w^*)$ or $(v^*, \bar w)$.
 Then, for  every $\xi>0$, we have
\begin{align}\label{eq-fl-cond-gamma} \sQ_\tau ^{v,w} \big(\sV\big)(x) \leq \varrho(\widetilde \beta) \sV(x) + K(\widetilde \beta),\end{align}
where, $\vt \doteq \frac{(1-\theta) \lambda}{1+\theta+\widetilde \beta}$, \begin{align}\label{eq-constants}\varrho(\widetilde \beta)\doteq  \frac{e^{-\vt \tau }(1+\xi +\widetilde \beta )}{1-\theta-\widetilde \beta}, \text{ and }  K(\widetilde \beta) \doteq   \frac{e^{-\vt\tau}}{1-\theta-\widetilde \beta} \Big(\big({\bar l}{}+{k_r}{} +  {(1-\theta) k_\sV}{}\big) \frac{(e^{\vt \tau}-1)}{\vt} + H(\vt )\Big)\,.\end{align}
Moreover, \begin{enumerate}
\item [(i)] For $\tau >\widetilde \tau(\widetilde\beta) \frac{1}{\vt} \log \Big(\frac{1-\theta-\widetilde \beta}{1+\xi +\widetilde \beta }\Big)$, we have $0<\varrho(\widetilde \beta)<1$. 
\item [(ii)] $K_{\sup}\doteq \limsup_{\widetilde \beta\downarrow 0} K(\widetilde \beta)<\infty$. 
\end{enumerate}
\end{lemma}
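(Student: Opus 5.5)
We will use Itô's formula for $e^{\vt t}\sV(Z_t)$ along the extended diffusion, then control the resulting $\|w\|^2$ term with Lemma~\ref{lem-w-bound}. Fix $(v,w)$ equal to $(\bar v,w^*)$ or $(v^*,\bar w)$; the corollary after Lemma~\ref{lem-w-bound} guarantees that $Z$ is well defined on $[0,\tau]$.

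First apply Itô's formula to $e^{\vt t}\sV(Z_t)$ on $[0,\tau]$, localizing if necessary. By Lemma~\ref{lem-lyap-gamma-mod} this gives
\begin{align*}
e^{\vt\tau}\E_x^{v,w}\big[\sV(Z_\tau)\big]\leq \sV(x)+\E_x^{v,w}\Big[\int_0^\tau e^{\vt s}\Big(\bar l-l(Z_s)+\vt\sV(Z_s)+\tfrac12\|w(s,Z_s)\|^2\Big)\D s\Big].
\end{align*}

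Next, bound the term $\frac12\E[\int e^{\vt s}\|w\|^2]$ by Lemma~\ref{lem-w-bound} with $t=\tau$. Collecting the $e^{\vt\tau}\E[\sV(Z_\tau)]$ terms, the left side becomes $(1-\theta-\widetilde\beta)e^{\vt\tau}\E_x^{v,w}[\sV(Z_\tau)]$. This coefficient is positive because $\widetilde\beta<1-\theta$. The right side becomes $(1+\xi+\widetilde\beta)\sV(x)+H(\vt)$ plus the integral of
\[
e^{\vt s}\Big(\bar l-l(Z_s)+\gamma\big(r-\Lambda^*_\gamma\big)+\vt(1+\theta+\widetilde\beta)\sV(Z_s)\Big).
\]

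The key step is to show this integrand is bounded above by a constant times $e^{\vt s}$. From \eqref{eq-inf-c-1} we have $\gamma(r-\Lambda^*_\gamma)\le\theta l+k_r$. With the choice $\vt(1+\theta+\widetilde\beta)=(1-\theta)\lambda$ we also have $\vt(1+\theta+\widetilde\beta)\sV=(1-\theta)\lambda\sV\le(1-\theta)(l+k_\sV)$. Summing, the $l$ terms give $-l+\theta l+(1-\theta)l=0$, so the integrand is at most $(\bar l+k_r+(1-\theta)k_\sV)e^{\vt s}$. This exact cancellation is why $\vt$ is defined as it is. Integrating gives $(\bar l+k_r+(1-\theta)k_\sV)(e^{\vt\tau}-1)/\vt$. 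Dividing by $(1-\theta-\widetilde\beta)e^{\vt\tau}$ yields \eqref{eq-fl-cond-gamma} with exactly the stated $\varrho(\widetilde\beta)$ and $K(\widetilde\beta)$.

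For (i), $\varrho<1$ is equivalent to $e^{\vt\tau}>(1+\xi+\widetilde\beta)/(1-\theta-\widetilde\beta)$. So $\tau$ larger than $\vt^{-1}\log\big((1+\xi+\widetilde\beta)/(1-\theta-\widetilde\beta)\big)$ suffices. The stated threshold presumably intends this ratio, since its printed form appears inverted. Positivity of $\varrho$ is immediate.

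For (ii), as $\widetilde\beta\downarrow0$ we have $\vt\to(1-\theta)\lambda/(1+\theta)>0$ and $1-\theta-\widetilde\beta\to1-\theta>0$. The constants $C_{\sup}=k_r\tau+\bar l\tau(\theta+\widetilde\beta)$, $C_{\inf}$, $\overline C^*_{\gamma,\theta}$ and $\underline C^*_{\gamma,\xi}$ stay bounded, because Lemmas~\ref{lem-b-*} and~\ref{lem-b-n} give them uniformly for $\widetilde\beta$ in a bounded range. Hence $H$ and $K$ have a finite $\limsup$.

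The main obstacle is justifying Itô's formula and the finiteness of the expectations for a $Z$ whose drift $\Sigma w$ may be unbounded. I would handle this with the stopping times $\tau^M$ of Proposition~\ref{prop-Z-ext}, combined with exit times from balls. On the stopped process the argument above goes through verbatim, including Lemma~\ref{lem-w-bound} applied in stopped form. Then let $M\to\infty$, using Fatou's lemma on the left side, which is valid since $\sV\ge0$, and monotone convergence for the bounded-above integrands.
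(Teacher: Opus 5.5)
Your proposal is correct and follows essentially the same route as the paper's proof. The paper also applies It\^o's formula to $e^{\vt t}\sV(Z_t)$ with Lemma~\ref{lem-lyap-gamma-mod}, absorbs $\tfrac12\|w\|^2$ via Lemma~\ref{lem-w-bound}, moves $(\theta+\widetilde\beta)e^{\vt\tau}\E_x^{v,w}[\sV(Z_\tau)]$ to the left, and uses \eqref{eq-inf-c-1} with the choice of $\vt$ to cancel the $l$ and $\sV$ terms; you cancel the $l$'s where the paper cancels the $\sV$'s, which is the same bookkeeping. Your threshold $\tau>\vt^{-1}\log\big((1+\xi+\widetilde\beta)/(1-\theta-\widetilde\beta)\big)$ in (i) is the correct reading of the misprinted statement, and your localization argument and boundedness check for (ii) fill in details that the paper omits.
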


\begin{proof} We fix $(v,w)$ as per the hypothesis of the lemma.  To begin with, recall~\eqref{eq-lyap-gamma-m} and constants $k_r$ and $k_\sV$ from Remark~\ref{rem-k-inf-c}. We have
\begin{align}\nonumber
\bar \Lg^{u,w} \sV(x)&\leq \bar l -l(x) +\frac{1}{2}\|w\|^2 \\\nonumber
&= \bar l-\theta l(x) +\frac{1}{2}\|w\|^2 - {(1-\theta)}{}l(x)\\ \label{eq-inf-c-2}
&\leq  \bar l-\theta l(x) +\frac{1}{2}\|w\|^2 - (1-\theta) \lambda\sV(x) +{(1-\theta) k_\sV}{}\,. 
\end{align}
To get the third line, we use~\eqref{eq-inf-c-1} and the definition of $\sV$. 
  Applying It\^o's formula to $e^{\vt t}\sV(Z_t)$ up to $t$ with $v\in \Um$ and $w\in \Wm$, we obtain
\begin{align*}
&\E_x^{v,w}\Big[e^{\vt t}\sV(Z_t)\Big]\\
&= \sV(x) + \E_x^{v,w}\Big[\int_0^t e^{\vt s} \big( \bar \Lg^{v, w} \sV(Z_s)  + \vt \sV(Z_s)\big) \D s \Big]\\
&\leq \sV(x) + \E_x^{v, w}\Big[\int_0^te^{\vt s}  \big( {\bar l}{}-{\theta l(Z_s)}{}+\frac{1}{2}\| w(s,Z_s)\|^2 + {(1-\theta) k_\sV}{} -(1-\theta)\lambda\sV(Z_s) +\vt \sV(Z_s)\big)\D t \Big]\\
&\leq \sV(x) + \E_x^{v, w}\Big[\int_0^t e^{\vt s} \big({\bar l}{}-{\theta l(Z_s)}{} + {(1-\theta) k_\sV}{}  -(1-\theta)\lambda\sV(Z_s)  +\vt \sV(Z_s)  \big) \D s \Big]\\
&\quad+{\gamma}  \E_x^{v,w}\Big[\int_0^t  e^{\vt s}\Big(r\big(Z_s,v(s,Z_s)\big)-\Lambda^*_\gamma +\vt (\theta+\widetilde \beta) \sV(Z_s)\Big)\D s\Big] +(  \theta +\widetilde \beta) e^{\vt t} \E_x^{v, w}\big[\sV(Z_t)\big]  \\
& \quad + (\xi+\widetilde \beta) \sV(x) + H(\vt)\\
&\leq \sV(x) + \E_x^{v, w}\Big[\int_0^t  e^{\vt s}\big({\bar l}{} + k_r  + {(1-\theta) k_\sV}{}   \big) \D s \Big]\\
&\quad+(  \theta +\widetilde \beta) e^{\vt t} \E_x^{v, w}\big[\sV(Z_t)\big] + (\xi+\widetilde \beta) \sV(x) + H(\vt) \,. 
\end{align*}
In the above, to get the second inequality, we use Lemma~\ref{lem-w-bound} and~\eqref{eq-inf-c-1} and to get the third inequality, we use the definition of $\vt$.
Rearranging, we get
 \begin{align*}
 \E_x^{v, w}\Big[\sV(Z_t)\Big]&\leq  \frac{e^{-\vt t}(1+\xi+\widetilde \beta)}{(1-\theta-\widetilde \beta)}\sV(x) + \frac{e^{-\vt t}}{(1-\theta-\widetilde \beta)}\E_x^{v, w}\Big[\int_0^t  e^{\vt s}\big({\bar l}{}+k_r  + (1-\theta) k_\sV\big)\D t \Big]\\
&\quad+\frac{e^{-\vt t}}{(1-\theta-\widetilde \beta)} {H(\vt )}{} \,.
 \end{align*}
 
This proves~\eqref{eq-fl-cond-gamma} with $t=\tau$ . As the rest of proof is straightforward, we omit it.
\end{proof}

We next verify that under $(v,w)= (\bar v,w^*)$ or $(v^*, \bar w)$ the extended diffusion $Z$  in~\eqref{eq-Z-cont} satisfies the minorization condition~\eqref{eq-minor} of Proposition~\ref{prop-contract}. 

\begin{lemma}\label{lem-exp-minor}Suppose the  hypothesis of Lemma~\ref{lem-w-bound} holds and $(v,w)= (\bar v,w^*)$ or $(v^*, \bar w)$. 
 Define $\sV_R\doteq \{x\in \RR^d: \sV(x)\leq R\}$. Then, for every  $R>0$ and for a large enough compact set $\mathfrak{K}=\mathfrak{K}(R)\subset \RR^d$, there exists a constant $\alpha = \alpha(\widetilde \beta) >0$ such that 
\begin{align} \label{eq-p-minor-gamma}\inf_{x\in \sV_R} \sQ_\tau ^{v,w}(x,A)\geq \alpha \nu_{\mathfrak{K}}(A),\end{align}
where $\nu_{\mathfrak{K}}(\cdot)\doteq \frac{m(\cdot \cap \mathfrak{K})}{m(\mathfrak{K})}$ and $m$ is the Lebesgue measure on $\RR^d$. Moreover, the following hold: 
\begin{enumerate} \item [(i)]The above constants are independent of either $(\bar v,w^*)$ or $(v^*, \bar w)$, and  $\sQ^{v,w}_\tau$ satisfies~\eqref{eq-minor} of Proposition~\ref{prop-contract}.
\item [(ii)]$\alpha_{\inf}\doteq \liminf_{\widetilde \beta\downarrow 0}\alpha(\widetilde \beta)>0$.
\end{enumerate}
\end{lemma}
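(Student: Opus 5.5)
The idea is to reduce the minorization for the extended diffusion $Z$ to that of the original diffusion $X$ (Corollary~\ref{cor-minor}) by a change of measure. Girsanov's theorem removes the drift $\Sigma w$, and H\"older's inequality then converts the lower bound for $X$ into one for $Z$. The only quantitative input is a bound on $\E_x^{v,w}\big[\int_0^\tau\|w(s,Z_s)\|^2\D s\big]$, uniform for $x\in\sV_R$; Lemma~\ref{lem-w-bound} supplies it.

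\textbf{Step 1 (change of measure).} Fix $(v,w)$ equal to $(\bar v,w^*)$ or $(v^*,\bar w)$, and let $Z$ be the strong solution from the preceding corollary. Set $\widetilde W_t\doteq W_t+\int_0^t w(s,Z_s)\D s$ and
\[
M_\tau\doteq \exp\Big(-\int_0^\tau w(s,Z_s)\cdot\D W_s-\tfrac12\int_0^\tau\|w(s,Z_s)\|^2\D s\Big).
\]
I will work with the localization at $\tau^M$ from Proposition~\ref{prop-Z-ext}, so that $M_{\tau\wedge\tau^M}$ is a true martingale (Novikov holds on $\{\int\|w\|^2\le M\}$). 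Under $\D\widetilde\PP=M\,\D\PP$, the process $Z$ solves \eqref{eqn-X} with Markov control $v$ driven by the Brownian motion $\widetilde W$. Its law under $\widetilde\PP$ is therefore that of $X$ under $v\in\Um$, whose coefficients satisfy the linear-growth hypothesis needed for \cite[Theorem 1.2]{menozzi2021}. Hence Corollary~\ref{cor-minor} gives $\widetilde\PP(Z_\tau\in A)\ge \alpha_{R,\tau}m(A)$ for $A\subset\mathfrak K$ and $x\in\sV_R$, with $\alpha_{R,\tau}$ independent of $v$.

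\textbf{Step 2 (back to $\PP$).} Write $\widetilde\PP(Z_\tau\in A)=\E[M_\tau\Ind_A(Z_\tau)]$. By Cauchy--Schwarz,
\[
\widetilde\PP(Z_\tau\in A)\le \big(\E[M_\tau^2]\big)^{1/2}\,\PP(Z_\tau\in A)^{1/2}.
\]
It is cleaner to go the other direction and bound $\PP(Z_\tau\in A)=\widetilde\E[M_\tau^{-1}\Ind_A]$ from below by Jensen. Indeed,
\[
\PP(Z_\tau\in A)\ge \widetilde\PP(A)\exp\Big(-\widetilde\E\big[\log M_\tau\mid A\big]\Big)\ge \widetilde\PP(A)\exp\Big(-\frac{\widetilde\E[|\log M_\tau|]}{\widetilde\PP(A)}\Big).
\]
Both routes reduce the problem to controlling $\E\big[\int_0^\tau\|w\|^2\big]$ (via $\E[|\log M|]\le \tfrac12\E\int\|w\|^2+(\E\int\|w\|^2)^{1/2}$ by It\^o's isometry, or, in the Jensen route, the symmetric quantity under $\PP$, i.e.\ the relative entropy $\tfrac12\E\int\|w\|^2$). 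Using $\PP$-relative entropy directly, I will use the inequality $\PP(A)\ge \widetilde\PP(A)\exp\big(-(H(\PP|\widetilde\PP)+e^{-1})/\widetilde\PP(A)\big)$. Here $H(\PP|\widetilde\PP)=\tfrac12\E^{v,w}_x\int_0^\tau\|w\|^2$.

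\textbf{Step 3 (uniform bound on the energy).} Apply Lemma~\ref{lem-w-bound} with $\vt=0$ (or any fixed $\vt$) and $t=\tau$. Its right-hand side involves $\E_x^{v,w}\int r$, $\E_x^{v,w}[\sV(Z_\tau)]$, $\sV(x)\le R$, and $H(\vt)$. The term $\E\sV(Z_\tau)$ is bounded by Lemma~\ref{lem-exp-conv} via $\varrho R+K(\widetilde\beta)$. The integral of $r$ is handled through \eqref{eq-inf-c-1}, bounding $\gamma r$ by $\theta l+k_r$; the $\theta l$ term is absorbed as in the proof of Lemma~\ref{lem-exp-conv}, since the drift inequality \eqref{eq-lyap-gamma-m} integrated gives $\E\int (l-\tfrac12\|w\|^2)\le \sV(x)+\bar l\tau$, and $\theta<1$ absorbs the $\tfrac{\theta}{2}\|w\|^2$ contribution. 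The result is $\sup_{x\in\sV_R}\E^{v,w}_x\int_0^\tau\|w\|^2\le E(R,\widetilde\beta)$, with $E$ bounded as $\widetilde\beta\downarrow0$ because $H$, $C_{\sup}$, $C_{\inf}$ and $K(\widetilde\beta)$ all are. I expect this absorption of the $r$ and $l$ terms to be the main obstacle, since they must be matched carefully with the $\tfrac12\|w\|^2$ gain.

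\textbf{Step 4 (conclusion).} The entropy inequality only gives a bound for sets with $\widetilde\PP(A)$ not tiny. To obtain a genuine minorization, I will instead apply Step 2 to the density. For any $A\subset\mathfrak K$, let $\widehat p$ be the density of $Z_\tau$ under $\PP$. Applying the entropy inequality to $B=A\cap\{\widehat p\ge \tfrac12\alpha_{R,\tau}\}$ gives the bound for $B$, and hence for $A$, with the leftover part of $A$ controlled by its $\widetilde\PP$-mass. Alternatively, I can use the H\"older bound: $\widetilde\PP(A)\le (\E M^2)^{1/2}\PP(A)^{1/2}$, combined with $\E M^p$ bounded through the exponential moment of the energy. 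Either route yields $\PP(Z_\tau\in A)\ge c(E)\,m(A)$. This gives \eqref{eq-p-minor-gamma} with $\alpha=c(E)m(\mathfrak K)$ and $\mathfrak K$ chosen as in Corollary~\ref{cor-minor}. Since $E$ stays bounded as $\widetilde\beta\downarrow0$, we get $\alpha_{\inf}>0$, and the constants do not depend on which of the two control pairs is used.
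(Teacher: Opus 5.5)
\textbf{There is a genuine gap, in Steps~2 and~4.} Your route is the paper's route: Girsanov to strip off $\Sigma w$, Corollary~\ref{cor-minor} for the $w$-free diffusion, then a Jensen/entropy comparison whose only input is the energy bound of Lemma~\ref{lem-w-bound}. Your Step~3, absorbing $\theta l$ into $\tfrac12\|w\|^2$ via \eqref{eq-lyap-gamma-m} and $\theta<1$, is reasonable.

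The first problem is that the entropy is taken in the wrong direction. With $\D\widetilde{\PP}=M_\tau\,\D\PP$, Lemma~\ref{lem-w-bound} controls $\tfrac12\E^{v,w}_x\int_0^\tau\|w(s,Z_s)\|^2\D s=H(\PP\,|\,\widetilde{\PP})$. Your Jensen bound $\PP(Z_\tau\in A)\ge\widetilde{\PP}(Z_\tau\in A)\exp\big(-\widetilde{\E}[\log M_\tau\mid Z_\tau\in A]\big)$ instead needs $\widetilde{\E}[(\log M_\tau)^+]\le H(\widetilde{\PP}\,|\,\PP)+e^{-1}$. That is the energy of $w$ along the $w$-free diffusion, and nothing in the paper bounds it. The inequality you actually state, with $H(\PP\,|\,\widetilde{\PP})$, is false in general. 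Replacing $\PP$ by $\widetilde{\PP}(\cdot\mid Z_\tau\notin A)$ gives relative entropy $-\log\big(1-\widetilde{\PP}(Z_\tau\in A)\big)<\infty$ but probability $0$ for $\{Z_\tau\in A\}$. The H\"older alternative needs $\E[M_\tau^2]<\infty$, i.e.\ exponential moments of the energy, which Lemma~\ref{lem-w-bound} does not provide. Even then it only yields $\PP(Z_\tau\in A)\ge\widetilde{\PP}(Z_\tau\in A)^2/\E[M_\tau^2]$.

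The second problem is more fundamental. Every such bound is superlinear in $\widetilde{\PP}(Z_\tau\in A)$ (quadratic, or of the form $p\,e^{-C/p}$), so none gives $\PP(Z_\tau\in A)\ge c\,m(A)$ for small $A\subset\mathfrak K$. Your fix in Step~4 is circular. On $B=A\cap\{\widehat p\ge\tfrac12\alpha_{R,\tau}\}$ the bound holds by definition, while $A\setminus B$ is exactly where a lower bound is needed, and its $\widetilde{\PP}$-mass says nothing about its $\PP$-mass.

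In fact no argument using only a bound on $\E\int_0^\tau\|w\|^2$ can succeed. Conditioning the $w$-free diffusion on $\{X_\tau\notin B_\epsilon(y_0)\}$ is a Doob $h$-transform, i.e.\ a Markov drift $\Sigma w$ with $w=\Sigma\transp\grad\log h$. Its energy is $2\log\big(1/(1-\tP^v_\tau(x,B_\epsilon(y_0)))\big)$, which tends to $0$ as $\epsilon\to0$, yet the density of $Z_\tau$ vanishes on $B_\epsilon(y_0)\subset\mathfrak K$.

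What is missing is pointwise information on $w$. One option is local bounds on $\grad V^*_\gamma$ and $\grad\overline V_\gamma(t,\cdot)$ on compacts, uniform in $n$ and $\widetilde\beta$, e.g.\ from interior gradient estimates fed by Lemmas~\ref{lem-b-*} and~\ref{lem-b-n}. After localization the drift is then bounded and a heat-kernel lower bound applies directly. Alternatively, you could verify only the overlap condition $\|\sQ^{v,w}_\tau(x,\cdot)-\sQ^{v,w}_\tau(y,\cdot)\|_{\mathrm{TV}}\le 2(1-\alpha)$ for $x,y\in\sV_R$. That suffices for the argument of \cite[Theorem~3.1]{hairer2011} and does follow from the corrected Jensen bound, but only once $H(\widetilde{\PP}\,|\,\PP)$ is controlled.

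The paper's own proof does not close this gap either. Its Jensen step uses the energy under $\E^{v,w}_x$, so it has the same direction issue. Its inequality $e\,\sQ^{v,w}_\tau(x,A)\ge\widehat\Pi(R)\,\tP^v_\tau(x,A)$ is derived only for sets with $\sup_{x\in\sV_R}\sQ^{v,w}_\tau(x,A^c)\le\widetilde\Pi(R)$, which fails for small $A\subset\mathfrak K$. The extension to all Borel sets is asserted there, not proved.
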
 
\begin{proof} To begin with, recall $Z$ which is the unique strong solution to~\eqref{eq-Z-cont} for $(v,w)= (\bar v,w^*)$ or $(v^*, \bar w)$. 
Let $f:\RR^d\rightarrow \RR^+$ be a non-negative Borel measurable function and also, let the pair $(v,w)$ be as per the hypothesis of the lemma. 
 Define the probability measure $\widetilde \PP$ as follows:  let 
\begin{align}
\Pi_t = \exp\Big( \int_0^t w(s,Z_s) \D W_s - \frac{1}{2} \int_0^t \|w(s,Z_s)\|^2 \D s\Big)
\end{align}
be a $\cF_t$-adapted process and for a $B\in \cF$,  $\widetilde \PP(B)\doteq \int_B \Pi_t (\omega) \D \PP(\omega)$. Then, from Girsanov's theorem \cite[Theorem 5.1]{karatzas_brownian_1998}, we have
\begin{align*}
\big(\sQ_\tau^{v,w} e^{f}\big)(x)&= \E^{v,w}_x\Big[ e^{f(X_\tau)} \Pi_\tau \Big]\\
&= \exp\Big(\log \E^{v,w}_x\Big[ e^{f(X_\tau) + \int_0^\tau w(s,Z_s) \D W_s - \frac{1}{2} \int_0^\tau \|w(s,Z_s)\|^2 \D s}  \Big]\Big)\\
&\geq \exp\Big(\E^{v,w}_x\Big[f(X_\tau) + \int_0^\tau w(s,Z_s) \D W_s - \frac{1}{2} \int_0^\tau \|w(s,Z_s)\|^2 \D s  \Big] \Big)\\
&\geq \exp\Big(\E^{v,w}_x\Big[f(X_\tau) - \frac{1}{2} \int_0^\tau \|w(s,Z_s)\|^2 \D s  \Big] \Big).
\end{align*} 
In the above, to get the first inequality, we use Jensen's inequality. Letting 
\[\widehat \Pi(R)\doteq \inf_{x\in \sV_R} \exp\Big(- \frac{1}{2}\E^{v,w}_x\Big[ \int_0^\tau \|w(s,Z_s)\|^2 \D s  \Big] \Big),\] the above display becomes
\begin{align*}
\big(\sQ_\tau^{v,w} e^{f}\big)(x) \geq  \widehat \Pi(R) \exp\Big(\big(\tP_\tau ^v f\big)(x)\Big).
\end{align*}
Now we choose $f(x)= \Ind _A(x)$ for a Borel set $A$ such that $\sup_{x\in \sV_R}\sQ^{v,w}_\tau(x,A^c)\leq  \widetilde \Pi(R)$. From here, adding  $-\widetilde \Pi(R)$ on both sides of the above display (for $x\in B_R$) gives us
\begin{align*}
e \sQ_\tau^{v,w} (x,A) +\sQ_\tau^{v,w} (x, A^c)-\widetilde \Pi(R) &\geq  \widehat \Pi(R) \exp\Big(\tP_\tau ^v(x,A)\Big)-\widetilde \Pi(R)\\
\implies e \sQ_\tau^{v,w} (x,A) &\geq \widehat \Pi(R) \tP_\tau ^v(x,A)\,.
\end{align*}
To get the second line, we use the fact that $\sup_{x\in \sV_R}\sQ^{v,w}_\tau(x,A^c)\leq  \widetilde \Pi(R)$ on the left hand side and the fact that $1+x\leq e^x$ for $x\in \RR$ on the right hand side. To summarize, we have shown that for a Borel set $A\subset \RR^d$ such that $\sup_{x\in \sV_R}\sQ^{v,w}_\tau(x,A^c)\leq  \widetilde \Pi(R)$, we have
$$ \inf_{x\in \sV_R}\sQ_\tau^{v,w} (x,A) \geq \frac{\widehat \Pi(R)}{e} \inf_{x\in \sV_R}\tP_\tau ^v(x,A)\geq  \frac{\widehat \Pi(R)\alpha_{R,\tau} m(\mathfrak{K})}{e} \nu_{\mathfrak{K}}(A), $$
for every compact set $\mathfrak{K}\subset \RR^d$. To get the second inequality, we use  Corollary~\ref{cor-minor} as it holds under Assumption~\ref{a-regularity}. Recall that $\nu_{\mathfrak{K}}(\cdot)= \frac{m(\cdot \cap \mathfrak{K})}{m(\mathfrak{K})}$. By choosing $\mathfrak{K}$ large enough such that $ \sup_{x\in \sV_R}\sQ_\tau^{v,w} (x,\mathfrak{K}^c)< \widetilde \Pi(R) $, we can ensure that 
$$  \inf_{x\in \sV_R}\sQ_\tau^{v,w} (x,A) \geq   \frac{\widehat \Pi(R)}{e} \alpha_{R,\tau} m(\mathfrak{K})\nu_{\mathfrak{K}}(A),$$
for every Borel set $A\subset \RR^d$.  Therefore, the proof of~\eqref{eq-p-minor-gamma}  is complete with $\alpha (\widetilde \beta)\doteq   \frac{\widehat \Pi(R)}{e} \alpha_{R,\tau} m(\mathfrak{K})  $. The rest of the proof is straightforward and is hence, omitted.
\end{proof}

\subsection{ Convergence of $\Phi_\gamma^n$ to $0$ in $\vvvert{\cdot}_{\Upu,\sV}$}\label{sec-V-gamma-semi-contract} In this section, for a small enough $\Upu>0$,  we analyze the convergence of $\Phi_\gamma^n$ to $0$ in the norm $\vvvert{\cdot}_{\Upu,\sV}$. 
\begin{lemma}\label{lem-V-conv-gamma}
Suppose Assumptions~\ref{a-regularity} and~\ref{a-main-gamma-conv} hold and $\|\Phi^0\|_{\beta,\sV}\leq 1$.  Then, for small enough $\Upu>0$  and for $\tau>\widetilde \tau( \beta)$ (see Lemma~\ref{lem-exp-conv} for its definition)  and $0<\kappa_\gamma
<1$ (independent of $n$) ,  
$$ \vvvert{\Phi^n_\gamma}_{\Upu,\sV}\leq  \kappa_\gamma\vvvert{\Phi^{n-1}_\gamma}_{\Upu,\sV}\,.$$
In particular,~\eqref{eq-V-gamma-semi-contract} holds with $\kappa_\gamma$: $\vvvert{ \Phi^n_\gamma}_{\Upu,\sV}\leq  (\kappa_\gamma)^n\vvvert{ \Phi^0_\gamma}_{\Upu,\sV}$.
\end{lemma}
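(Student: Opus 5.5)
Since $\overline V^n_\gamma = V^n_\gamma+\phi^n$ and $\vvvert{\cdot}_{\Upu,\sV}$ ignores additive constants, I will prove the contraction for $\overline\Phi^n\doteq\overline V^n_\gamma - V^*_\gamma$. Indeed $\vvvert{\Phi^n_\gamma}_{\Upu,\sV}=\vvvert{\overline\Phi^n}_{\Upu,\sV}$ for every $n$, so \eqref{eq-conv-fn} transfers the conclusion back. The argument mirrors the proof of Lemma~\ref{lem-V-conv}, with the Markov kernels $\tP^{v}_\tau$ replaced by $\sQ^{v,w}_\tau$ of the extended diffusion. The one new feature is an induction hypothesis $\|\overline V^{n-1}_\gamma - V^*_\gamma\|_{\widetilde\beta,\sV}<1$, which is needed so that Lemmas~\ref{lem-w-bound}, \ref{lem-exp-conv} and~\ref{lem-exp-minor} apply at step $n$.

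\emph{Step 1 (sandwich).} Fix $n$ and assume the induction hypothesis. I apply It\^o's formula to $\overline V_\gamma(\tau-t,Z_t)$ and to $V^*_\gamma(Z_t)$ using \eqref{eq-rvi-mod-gamma-int} and \eqref{eq-hjb-gamma-m}. Under $(\bar v,w^*)$, $\bar v$ is optimal for the min in the iterate equation while $w^*$ is suboptimal for its max; in the HJB, $w^*$ is optimal and $\bar v$ suboptimal for the min. This gives $\overline\Phi^n(x)\ge (\sQ^{\bar v,w^*}_\tau \overline\Phi^{n-1})(x)$. Symmetrically, $(v^*,\bar w)$ gives $\overline\Phi^n(x)\le (\sQ^{v^*,\bar w}_\tau\overline\Phi^{n-1})(x)$. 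The constant $\gamma\Lambda^*_\gamma$ cancels exactly, because \eqref{eq-widehat} uses $\gamma\Lambda^*_\gamma$ rather than $(\widetilde V^n_\gamma(0))^\delta$; this is the point of introducing the intermediate iterates. The stochastic integrals are true martingales (after localization) thanks to the $L^2$ bounds on $w$ from Lemma~\ref{lem-w-bound} and Proposition~\ref{prop-Z-ext}. Repeating verbatim the computation \eqref{eq-V-conv-1}--\eqref{eq-V-conv-6}, which uses only these two one-sided bounds and Lemma~\ref{lem-compare}, yields
\begin{align*}
\vvvert{\overline\Phi^n}_{\Upu,\sV}\le \max\Big\{\vvvert{\sQ^{\bar v,w^*}_\tau\overline\Phi^{n-1}}_{\Upu,\sV},\ \vvvert{\sQ^{v^*,\bar w}_\tau\overline\Phi^{n-1}}_{\Upu,\sV}\Big\}.
\end{align*}

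\emph{Step 2 (kernel contraction).} Lemma~\ref{lem-exp-conv} supplies the drift bound \eqref{eq-drift} with $\varrho(\widetilde\beta)<1$ for $\tau>\widetilde\tau(\widetilde\beta)$ and constant $K(\widetilde\beta)$. Lemma~\ref{lem-exp-minor} supplies the minorization \eqref{eq-minor} on a sublevel set $\sV_R$ with constant $\alpha(\widetilde\beta)$, and the same constants serve both control pairs. Proposition~\ref{prop-contract} then gives $\vvvert{\sQ^{v,w}_\tau f}_{\Upu,\sV}\le\kappa_\gamma\vvvert{f}_{\Upu,\sV}$, with $\Upu$ and $\kappa_\gamma$ as in \eqref{eq-prop-const}. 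The key requirement is that these constants do not depend on $n$. Since $K_{\sup}<\infty$ and $\alpha_{\inf}>0$ as $\widetilde\beta\downarrow0$, I fix a single small $\widetilde\beta=\beta$ together with $R$, $\tau$ and $\xi$. The resulting $\varrho$, $K$, $\alpha$ are then uniform in $n$, and choosing $\Upu\le\alpha_{\inf}/K_{\sup}$ (small) gives an $n$-independent $\kappa_\gamma<1$.

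\emph{Step 3 (propagating the hypothesis).} This is the main obstacle. I must show $\|\overline V^{n}_\gamma-V^*_\gamma\|_{\beta,\sV}<1$ given the contraction up to step $n$. Since $\overline V^n_\gamma(0)=0=V^*_\gamma(0)$, the difference $\overline\Phi^n$ vanishes at $0$. Taking $y=0$ in the definition of the semi-norm,
\[|\overline\Phi^n(x)|\le(2+\beta\sV(x)+\beta\sV(0))\,\vvvert{\overline\Phi^n}_{\beta,\sV}\le(2+\beta\sV(x)+\beta\sV(0))(\kappa_\gamma)^n\vvvert{\overline\Phi^0}_{\beta,\sV}.\]
Hence $\|\overline\Phi^n\|_{\beta,\sV}\le C(\kappa_\gamma)^n\vvvert{\overline\Phi^0}_{\beta,\sV}$. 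I would bound $\vvvert{\overline\Phi^0}_{\beta,\sV}\le\|\overline\Phi^0\|_{\beta,\sV}$ through the assumption $\|\Phi^0\|_{\beta,\sV}<1$, after renormalizing by the constant shift at $0$ (this is where the shrinking of $\beta$ is used). If the constant $C=2+\beta\sV(0)$ spoils the bound ``$<1$'', I would compare in a larger weight: run the induction hypothesis with a weight $\widetilde\beta$ that is slightly larger than the contraction weight, absorbing the factor $2$. Alternatively I would enlarge $\tau$ so that $\kappa_\gamma$ is small enough that $C\kappa_\gamma\le1$. Once the hypothesis propagates, induction on $n$ gives the one-step contraction for all $n$, and therefore $\vvvert{\Phi^n_\gamma}_{\Upu,\sV}\le(\kappa_\gamma)^n\vvvert{\Phi^0_\gamma}_{\Upu,\sV}$.
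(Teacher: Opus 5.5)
Your Steps~1 and~2 follow the paper's argument. You pass to $\overline V^n_\gamma$, get the sandwich \eqref{eq-mk-exp-ub}--\eqref{eq-mk-exp-lb} from the pairs $(\bar v,w^*)$ and $(v^*,\bar w)$, rerun Lemma~\ref{lem-V-conv}, and use $\alpha_{\inf}>0$, $K_{\sup}<\infty$ to make $\Upu$ and $\kappa_\gamma$ independent of $n$. The gap is in Step~3: you identify the right obstruction, but neither remedy removes it. You are right that $\overline\Phi^n(0)=0$ only gives $\|\overline\Phi^n\|_{\Upu,\sV}\le(2+\Upu\sV(0))\vvvert{\overline\Phi^n}_{\Upu,\sV}$. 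The paper's proof instead asserts $\vvvert{\overline V^{n+1}_\gamma-V^*_\gamma}_{\beta,\sV}=\|\overline V^{n+1}_\gamma-V^*_\gamma\|_{\beta,\sV}$ at this point, which does not hold in general.

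\begin{itemize}
\item \textbf{Enlarging $\tau$.} This cannot give $(2+\Upu\sV(0))\kappa_\gamma\le 1$, because by \eqref{eq-prop-const} $\kappa_\gamma\ge 1-\alpha/2>1/2$ for every $\tau$.
\item \textbf{A slightly larger weight $\widetilde\beta$.} This does not absorb the factor, because the loss sits where $\sV$ is small. If $\sV(x_0)=0$ (that is, $\widetilde\sV(x_0)=1$), then $(2+\Upu\sV(x_0)+\Upu\sV(0))/(1+\widetilde\beta\sV(x_0))=2+\Upu\sV(0)$ for every $\widetilde\beta$.
\item \textbf{The base case.} It has the same defect. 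With $\overline V^0_\gamma(0)=0$ one has $\overline\Phi^0=\Phi^0_\gamma-\Phi^0_\gamma(0)$, and $\|\Phi^0_\gamma\|_{\Upu,\sV}\le 1$ yields only $\|\overline\Phi^0\|_{\Upu,\sV}\le 2+\Upu\sV(0)$, not $<1$.
\end{itemize}

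The missing idea is that the threshold $1$ in the hypothesis of Lemmas~\ref{lem-b-n}--\ref{lem-exp-minor} is arbitrary. It enters only through $|\overline V^{n-1}_\gamma-V^*_\gamma|\le 1+\widetilde\beta\sV$. Suppose instead $\|\overline V^{n-1}_\gamma-V^*_\gamma\|_{\widetilde\beta,\sV}\le M$ with $\theta+M\widetilde\beta<1$. Then the same proofs run with the following changes:
\begin{itemize}
\item $\theta+M\widetilde\beta$ and $\xi+M\widetilde\beta$ replace $\theta+\widetilde\beta$ and $\xi+\widetilde\beta$;
\item $M$ is added to $C_{\sup}$ and $C_{\inf}$, hence to $H$, to $K$, and to the bound on $\E\big[\int_0^\tau\|w(t,Z_t)\|^2\D t\big]$ that controls $\alpha$.
\end{itemize}
For fixed $M$ one still has $K_{\sup}<\infty$ and $\alpha_{\inf}>0$ as $\widetilde\beta\downarrow 0$.

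So fix $M\doteq 2+\sV(0)$ first. Then choose $\Upu=\widetilde\beta\le 1$ small in terms of $K_{\sup}(M)$ and $\alpha_{\inf}(M)$. The induction now closes with no smallness of $\kappa_\gamma$: $\|\overline\Phi^n\|_{\Upu,\sV}\le(2+\Upu\sV(0))(\kappa_\gamma)^n\vvvert{\overline\Phi^0}_{\Upu,\sV}\le M$ for all $n\ge 0$. This uses $\vvvert{\overline\Phi^0}_{\Upu,\sV}=\vvvert{\Phi^0_\gamma}_{\Upu,\sV}\le\|\Phi^0_\gamma\|_{\Upu,\sV}\le 1$. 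Contrary to your remark, that last step needs no shrinking of $\Upu$.
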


\begin{proof}
To begin with, recall the two pairs $(\bar v,w^*)$ and $(v^*,\bar w)$ where $v^*$ and $\bar v$ are defined via~\eqref{eq-min-3} and~\eqref{eq-min-1}, respectively, and $w^*$ and $\bar w$ via~\eqref{def-w}.  We note that for either of these pairs, the  statements of Lemmas~\ref{lem-exp-conv}--\ref{lem-exp-minor} hold which we will use below to invoke Proposition~\ref{prop-contract}.   For the pair $(\bar v,w^*)$, using~\eqref{eq-hjb-gamma-m}, we get  
\begin{align*}
\bar \Lg^{\bar v,w^*}V_\gamma^*(x) +{\gamma} r(x,\bar v(t,x)) -\frac{1}{2}\|w^*(x)\|^2&\geq {\gamma} \Lambda_\gamma^* \,,
\end{align*}
and using~\eqref{eq-rvi-mod-gamma-int}, we get
\begin{align*}
\partial_t  \overline V_\gamma(t,x)\geq \bar \Lg^{ \bar v,w^*} \overline V_\gamma(t,x)+ {\gamma} r(x, \bar v(t,x))-\frac{1}{2}\|w^*(x)\|^2- {\gamma} \Lambda^*_{\gamma}\,. 
\end{align*}
Now applying It\^o's formula to $V_\gamma^*(Z_t)$ and $\overline V_\gamma(\tau-t,Z_t)$ under $(v,w)= (\bar v,w^*)$ up to $t=\tau$, we have
\begin{align*}
V_\gamma^*(x)\leq \E_x^{\bar v,w^*}\Big[\int_0^\tau \Big({\gamma}r(Z_t, \bar v(t,Z_t)\big) - \frac{1}{2}\|w^*(Z_t)\|^2- {\gamma}\Lambda_\gamma^*\Big)\D t\Big] + \E_x^{\bar v,w^*} \Big[V_\gamma^*(Z_\tau)\Big] \,,
\end{align*} 
and
\begin{align*}
\overline V_\gamma^n(x)\geq \E_x^{\bar v,w^*}\Big[\int_0^\tau \Big({\gamma}r(Z_t, \bar v(t,Z_t)\big) - \frac{1}{2}\|w^*(Z_t)\|^2- {\gamma} \Lambda^*_{\gamma}\big)\D t\Big] + \E_x^{\bar v,w^*} \Big[\overline V_\gamma^{n-1}(Z_\tau)\Big]\,. 
\end{align*} 
From the above two displays, we have
\begin{align}\label{eq-mk-exp-ub} \overline V_\gamma^n(x)-V_\gamma^*(x)\geq   \E_x^{\bar v,w^*} \Big[\overline V_\gamma^{n-1}(Z_\tau) -V_\gamma^*(Z_\tau)\Big]=\sQ_\tau^{\bar v,w^*}\big(V_\gamma^{n-1} -V_\gamma^*\big)(x) \,.\end{align}
The equality above follows from the definition of $\sQ_\tau^{\bar v,w^*}$. Similarly, for the pair $( v^*,\bar w)$, using~\eqref{eq-hjb-gamma-m}, we get  
\begin{align*}
\bar \Lg^{v^*,\bar w}V_\gamma^*(x) +{\gamma} r\big(x,v^*(x)\big) -\frac{1}{2}\|\bar w(t,x)\|^2&\leq {\gamma} \Lambda_\gamma^*\,,
\end{align*}
and using~\eqref{eq-rvi-mod-gamma-int}, we get
\begin{align*}
\partial_t  \overline V_\gamma(t,x)\leq \bar \Lg^{v^*,\bar w} \overline V_\gamma(t,x)+ {\gamma} r\big(x,v^*(x)\big)-\frac{1}{2}\|\bar w(t,x)\|^2- {\gamma}\Lambda^*_{\gamma}\,. 
\end{align*}
Now applying It\^o's formula to $V_\gamma^*(Z_t)$ and $\overline V_\gamma(\tau-t,Z_t)$ under $(v,w)=(v^*,\bar w)$ up to $t=\tau$, we have
\begin{align*}
V_\gamma^*(x)\geq \E_x^{ v^*,\bar w}\Big[\int_0^\tau \Big({\gamma}r\big(Z_t, v^*(Z_t)\big) - \frac{1}{2}\|\bar w(t, Z_t)\|^2- {\gamma}\Lambda_\gamma^*\Big)\D t\Big] + \E_x^{v^*,\bar w} \Big[V_\gamma^*(Z_\tau)\Big]\,,
\end{align*} 
and 
\begin{align*}
\overline V_\gamma^n(x)\leq \E_x^{ v^*,\bar w}\Big[\int_0^\tau \Big({\gamma}r\big(Z_t, v^*(Z_t)\big) - \frac{1}{2}\|\bar w(t, Z_t)\|^2- {\gamma}\Lambda^*_{\gamma} \Big)\D t\Big] + \E_x^{ v^*,\bar w} \Big[\overline V_\gamma^{n-1}(Z_\tau)\Big]\,. 
\end{align*} 
From the above two displays, we have 
\begin{align}\label{eq-mk-exp-lb} \overline V_\gamma^n(x)-V_\gamma^*(x)\leq   \E_x^{v^*,\bar w} \Big[\overline V_\gamma^{n-1}(Z_\tau) -V_\gamma^*(Z_\tau)\Big]=\sQ_\tau^{ v^*,\bar w}\big(V_\gamma^{n-1} -V_\gamma^*\big)(x)\,.\end{align}
The equality above follows from the definition of $\sQ_\tau^{ v^*,\bar w}$.
Using Lemmas~\ref{lem-exp-conv} and~\ref{lem-exp-minor},  following the arguments of in the proof of Lemma~\ref{lem-V-conv} and applying Proposition~\ref{prop-contract} (in conjunction with Remark~\ref{rem-special-beta}), we get
\begin{align}\label{eq-singlestep-contract} \vvvert{\overline V_\gamma^n-V_\gamma^*}_{\Upu,\sV}\leq \kappa_\gamma(\widetilde \beta) \vvvert{\overline V_\gamma^{n-1}-V_\gamma^*}_{\Upu,\sV}\,.\end{align}
Here,  $0<\beta\leq  \frac{\alpha(\widetilde \beta)}{2K(\widetilde \beta)}$  and  $$\kappa_\gamma (\widetilde\beta) \doteq  \Big(1-\frac{\alpha(\widetilde \beta)}{2}\Big)\vee  \Big(\frac{2(1-\varrho(\widetilde \beta))+\alpha (\widetilde \beta) (1+\varrho(\widetilde \beta))}{2(1-\varrho(\widetilde \beta))+ 2\alpha(\widetilde \beta)}\Big)\,.$$ In particular, as $\alpha_{\inf}>0$ and $K_{\sup}<\infty$, ~\eqref{eq-singlestep-contract} also holds for  $0<\beta< \frac{\alpha_{\inf}} {K_{\sup}} $ (independent of $\widetilde \beta$).   Since  $\overline V^{n+1}_\gamma(0)=0$, we have  $$\vvvert{\overline V^{n+1}_\gamma-V^*_\gamma}_{\beta,\sV}=\|\overline V^{n+1}_\gamma-V^*_\gamma\|_{\beta,\sV}\leq 1\,.$$
Therefore, if $\|\Phi^0_\gamma\|_{\beta,\sV}\leq 1$  for $\beta \leq \frac{\alpha_{\inf}} {K_{\sup}} $, then we can iterate the argument $n$ times and obtain the result.
\end{proof}

\begin{remark}
From the repeated application of Lemma~\ref{lem-V-conv-gamma} and the definition of $\vvvert{\cdot}_{\beta,\sV}$, we immediately obtain 
\begin{align}\label{eq-est-v-v^*-gamma}
 \sup_{0\neq x\in \RR^d}\frac{|\Phi_\gamma^n(x)- \Phi_\gamma^n(0)|}{2+\Upu \sV(x)+\Upu \sV(0)}\leq \sup_{x\neq y} \frac{|\Phi_\gamma^n(x)- \Phi_\gamma^n(y)|}{2+\Upu \sV(x)+\Upu \sV(y)}=\vvvert{\Phi_\gamma^n}_{\Upu,\sV}\leq (\kappa_\gamma)^n \vvvert{\Phi_\gamma^0}_{\Upu,\sV}\,.
\end{align}

\end{remark}

\subsection{ Convergence of $\Phi_\gamma^n (0)$ to $0$}\label{sec-lambda-gamma-semi-contract}
In the following,  $\widehat v\in \Um$ denotes the minimizer of  that satisfies~\eqref{eq-rvi-mod-gamma-m} and $\widehat w(t,x)\doteq \Sigma(x)\transp\grad  V_\gamma(t,x)$ when $V_\gamma(0,x)=V_\gamma^{n-1}(x)$. We again suppress the dependence as we always fix $n$ whenever $\widehat v$ and $ \widehat w$ are involved. 
\begin{lemma}\label{lem-lambda-gamma-semi-contract}Suppose Assumptions~\ref{a-regularity} and~\ref{a-main-gamma-conv} hold. Then, for $\widetilde \kappa_\gamma $ in~\eqref{eq-widekappa} and $C_{\gamma,0}\doteq 2+\Upu\varrho\sV(0)+\Upu K$,~\eqref{eq-lambda-gamma-semi-contract} holds.

\end{lemma}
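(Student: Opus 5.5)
The plan mirrors the proof of Lemma~\ref{lem-L-conv}, run on the logarithmic iterates $V^n_\gamma$ and the extended-diffusion kernels. The one new feature is that the ``relative'' term is now $\exp(\delta V^{n-1}_\gamma(0))$ rather than a linear function of $V^{n-1}_\gamma(0)$, so it has to be linearized.

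\textbf{Step 1: two-sided sandwich.} Fix $n$. Take $\widehat v$ to be the minimizer of~\eqref{eq-rvi-mod-gamma-m} and $\widehat w=\Sigma\transp\grad V_\gamma$. Applying It\^o's formula to $V_\gamma(\tau-t,Z_t)$ and to $V^*_\gamma(Z_t)$ under $(v^*,\widehat w)$ and under $(\widehat v,w^*)$, exactly as in the derivation of~\eqref{eq-mk-exp-ub}--\eqref{eq-mk-exp-lb}, I expect
\begin{align*}
\sQ_\tau^{\widehat v,w^*}\big(\Phi^{n-1}_\gamma\big)(x)-\tau\big(e^{\delta V^{n-1}_\gamma(0)}-\gamma\Lambda^*_\gamma\big)\le \Phi^n_\gamma(x)\le \sQ_\tau^{v^*,\widehat w}\big(\Phi^{n-1}_\gamma\big)(x)-\tau\big(e^{\delta V^{n-1}_\gamma(0)}-\gamma\Lambda^*_\gamma\big).
\end{align*}
Here the constant $\widehat\Lambda^*_\gamma$ cancels inside $\Phi$. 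Then use~\eqref{eq-est-v-v^*-gamma} to write
\begin{align*}
\Phi^{n-1}_\gamma(y)=\Phi^{n-1}_\gamma(0)\pm(\kappa_\gamma)^{n-1}\big(2+\Upu\sV(y)+\Upu\sV(0)\big)\vvvert{\Phi^0_\gamma}_{\Upu,\sV}.
\end{align*}
Integrate this against the two kernels and evaluate at $x=0$. The kernel terms are bounded via the drift bound~\eqref{eq-fl-cond-gamma}, which gives $\sQ_\tau\sV(0)\le\varrho\sV(0)+K$. This produces the constant $C_{\gamma,0}=2+\Upu\varrho\sV(0)+\Upu K$ and explains its form.

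\textbf{Step 2: linearize the exponential.} Since $e^{\delta V^{n-1}_\gamma(0)}-\gamma\Lambda^*_\gamma=e^{\delta\widehat\Lambda^*_\gamma}\big(e^{\delta\Phi^{n-1}_\gamma(0)}-1\big)$, the mean value theorem gives
\begin{align*}
\Phi^{n-1}_\gamma(0)-\tau(\cdots)=\big(1-\tau\delta e^{\delta\widehat\Lambda^*_\gamma}e^{\delta\zeta_{n-1}}\big)\Phi^{n-1}_\gamma(0)
\end{align*}
for some $\zeta_{n-1}$ between $0$ and $\Phi^{n-1}_\gamma(0)$. With $q_n:=\tau\delta e^{\delta\widehat\Lambda^*_\gamma}e^{\delta\zeta_{n-1}}>0$ we then get $\Phi^n_\gamma(0)=(1-q_n)\Phi^{n-1}_\gamma(0)+\epsilon_n$, where $|\epsilon_n|\le C_{\gamma,0}(\kappa_\gamma)^{n-1}\vvvert{\Phi^0_\gamma}_{\Upu,\sV}$.

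\textbf{Step 3 (main obstacle): a uniform contraction factor $|1-q_n|\le\kappa_{\gamma,\max}$.} This requires controlling $\zeta_{n-1}$ a priori, i.e.\ showing that $\Phi^{k}_\gamma(0)$ stays within a range in which $q_k\in(0,1)$, bounded away from $0$ and $1$. I would argue by induction. The inductive hypothesis is that $\Phi^{k}_\gamma(0)$ lies between $\min\{0,\Phi^0_\gamma(0)\}$ and $\max\{0,\Phi^0_\gamma(0)\}$, enlarged by an error budget that is summable because $\sum_k C_{\gamma,0}(\kappa_\gamma)^k$ is finite. The hypothesis $\tau\delta\exp(\delta\widehat\Lambda^*_\gamma)<1$ and $\tau\delta\exp(\delta\Phi^0_\gamma(0))<1$ is exactly what keeps $q_k<1$ at both ends of this range. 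On the lower side, $q_k\ge\tau\delta\min\{e^{\delta\widehat\Lambda^*_\gamma},e^{\delta\Phi^0_\gamma(0)}\}$ up to the error, which yields $1-q_k\le\kappa_{\gamma,\max}$.

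If the additive error budget threatens to break the hypothesis, I would absorb it by taking $\widetilde\kappa_\gamma$ strictly larger than $\kappa_{\gamma,\max}$, or by using the smallness $\|\Phi^0_\gamma\|_{\beta,\sV}<1$ to bound $\vvvert{\Phi^0_\gamma}$. This step is where care is needed, and the exact interplay between the stated hypotheses and the error budget is the part I am least certain closes cleanly.

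\textbf{Step 4: unroll the recursion.} Once $|1-q_n|\le\kappa_{\gamma,\max}$ holds uniformly, iterate exactly as in Lemma~\ref{lem-L-conv}:
\begin{align*}
|\Phi^n_\gamma(0)|\le(\kappa_{\gamma,\max})^n|\Phi^0_\gamma(0)|+n(\kappa_{\gamma,\max})^{n-1}C_{\gamma,0}\vvvert{\Phi^0_\gamma}_{\Upu,\sV}.
\end{align*}
Then bound $n(\kappa_{\gamma,\max}/\widetilde\kappa_\gamma)^n$ by the maximum of $xa^x$, namely $-(e\log a)^{-1}$ with $a=\kappa_{\gamma,\max}/\widetilde\kappa_\gamma$. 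This gives~\eqref{eq-lambda-gamma-semi-contract}.
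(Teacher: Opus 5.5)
Your Steps 1, 2 and 4 follow the paper's argument. The paper uses the same sandwich between $\sQ^{\widehat v,w^*}_\tau\Phi^{n-1}_\gamma$ and $\sQ^{v^*,\widehat w}_\tau\Phi^{n-1}_\gamma$, integrates the oscillation bound \eqref{eq-est-v-v^*-gamma} against the kernels at $x=0$, uses the drift bound of Lemma~\ref{lem-exp-conv}, linearizes the exponential, and unrolls as in Lemma~\ref{lem-L-conv}. The only difference is that the paper linearizes with $e^x\ge1+x$ on each side, where you use the mean value theorem. Two bookkeeping points:
\begin{itemize}
\item Lemma~\ref{lem-exp-conv} is stated for $(\bar v,w^*)$ and $(v^*,\bar w)$. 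It applies to your hatted pairs because $V_\gamma(t,\cdot)-\overline V_\gamma(t,\cdot)$ is constant in $x$, so $\widehat v=\bar v$ and $\widehat w=\bar w$; you should say this.
\item Integrating $2+\Upu\sV(y)+\Upu\sV(0)$ against $\sQ_\tau(0,\D y)$ gives $2+\Upu(1+\varrho)\sV(0)+\Upu K$. This equals the stated $C_{\gamma,0}$ only if $\sV(0)=0$, so Step 1 does not quite ``explain its form''.
\end{itemize}

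The genuine gap is Step 3. Step 4 rests on it, you leave it open, and as sketched it fails.

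First, your linearization is correct and gives $q_n=\tau\delta\,e^{\delta(\widehat\Lambda^*_\gamma+\zeta_{n-1})}$. Since $V^*_\gamma(0)=0$, on the range you propose $q$ reaches $\tau\delta e^{\delta V^0_\gamma(0)}$ at $\zeta=\Phi^0_\gamma(0)$. The hypotheses $\tau\delta e^{\delta\widehat\Lambda^*_\gamma}<1$ and $\tau\delta e^{\delta\Phi^0_\gamma(0)}<1$ bound the two exponentials separately, not $q$. For $\Phi^0_\gamma(0)>0$ they do not give $q<1$. For $\Phi^0_\gamma(0)<0$ with $\widehat\Lambda^*_\gamma<0$ they do not give $1-q\le\kappa_{\gamma,\max}$. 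So they are not ``exactly what keeps $q_k<1$''.

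Second, the errors $\epsilon_k$ are not small. You only know $C_{\gamma,0}\ge2$ and $\vvvert{\Phi^0_\gamma}_{\Upu,\sV}\le\|\Phi^0_\gamma\|_{\Upu,\sV}<1$. Hence $\Phi^k_\gamma(0)$ can leave the interval between $0$ and $\Phi^0_\gamma(0)$. For example, if $\Phi^0_\gamma(0)=0$ and $\kappa_{\gamma,\max}=1-\tau\delta e^{\delta\widehat\Lambda^*_\gamma}$, then $\Phi^1_\gamma(0)=\epsilon_1<0$ already forces $1-q_2>\kappa_{\gamma,\max}$.

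Neither of your remedies repairs this. The assumption $\|\Phi^0_\gamma\|_{\Upu,\sV}<1$ gives no smallness relative to that margin. A one-step factor larger than $\kappa_{\gamma,\max}$ is incompatible with the constant $C_{\gamma,0}/(e\kappa_{\gamma,\max}\log(\widetilde\kappa_\gamma\kappa_{\gamma,\max}^{-1}))$, which must work for every $\widetilde\kappa_\gamma\in(\kappa_{\gamma,\max},1)$.

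What does close the recursion is an invariant-interval induction. Set $B\doteq C_{\gamma,0}\vvvert{\Phi^0_\gamma}_{\Upu,\sV}/(1-\kappa_\gamma)$ and $\rho\doteq\sup\{|1-\tau\delta e^{\delta(\widehat\Lambda^*_\gamma+\zeta)}|:|\zeta|\le|\Phi^0_\gamma(0)|+B\}$, and assume $\rho<1$. Then $|\Phi^k_\gamma(0)|\le|\Phi^0_\gamma(0)|+B$ for all $k$, and the one-step factor is at most $\rho$. This proves \eqref{eq-lambda-gamma-semi-contract} with $\max\{\kappa_\gamma,\rho\}$ in place of the $\kappa_{\gamma,\max}$ of \eqref{eq-widekappa}, not as stated.

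The paper's proof does not supply this step either. Its lower one-sided bound should read $\widehat\Phi^{n-1}_\gamma\ge-\delta e^{\delta V^{n-1}_\gamma(0)}\Phi^{n-1}_\gamma(0)$; it drops the factor $e^{\delta\widehat\Lambda^*_\gamma}$. It then asserts $|\Phi^n_\gamma(0)|\le\widetilde\kappa_\gamma|\Phi^{n-1}_\gamma(0)|+\cdots$ ``from the definition of $\widetilde\kappa_\gamma$'', although \eqref{eq-widekappa} controls the factor only for $n-1=0$.
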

\begin{proof}   Consider the pairs $(v^*,\widehat  w)$ and $(\widehat v,w^*)$  and also recall that $V_\gamma^{n}(x)= V_\gamma(\tau,x)$ with $V_\gamma$ satisfying \begin{align*}
\partial_t   V_\gamma(t,x)= \min_{u\in \bU}\max_{w\in \RR^d} \Big[\bar \Lg^{u,w}  V_\gamma(t,x)+ {\gamma} r(x,u)-\frac{1}{2}\|w\|^2\Big]- \exp\big( \delta V_\gamma^{n-1}(0)\big), \quad V_\gamma(0,x)= V_\gamma^{n-1}(x)\,.
\end{align*} Following the similar arguments as used in the proof of Lemma~\ref{lem-V-conv-gamma} for $V_\gamma^*$ and $V_\gamma$, instead of $V_\gamma^*$ and $\overline V_\gamma$ and using the definition for $\widehat\Lambda^*_\gamma$ and $\Phi^n_\gamma$, we can obtain 
\begin{align*}
\big(\sQ_\tau ^{\widehat v,w^*}\Phi^{n-1}_\gamma\big)(x) &\leq \Phi^n_\gamma (x) - {\tau}{}\big(\gamma\Lambda_\gamma^*-\exp\big( \delta V^{n-1}(0)\big)\big) \leq \big(\sQ_\tau ^{ v^*,\widehat  w}\Phi^{n-1}_\gamma\big)(x) \,,\\
\big(\sQ_\tau ^{\widehat  v,w^*}\Phi^{n-1}_\gamma\big)(x) &\leq \Phi^n_\gamma (x) - {\tau}{}\big(\exp\big(\delta  \widehat \Lambda^*_\gamma\big)-\exp\big( \delta V^{n-1}(0)\big)\big) \leq \big(\sQ_\tau ^{ v^*,\widehat  w}\Phi^{n-1}_\gamma\big)(x)\,,\\
\big(\sQ_\tau ^{\widehat  v,w^*}\Phi^{n-1}_\gamma\big)(x) &\leq \Phi^n_\gamma (x) - {\tau}{}\widehat \Phi^{n-1}_\gamma \leq \big(\sQ_\tau ^{ v^*,\widehat  w}\Phi^{n-1}_\gamma\big)(x),
\end{align*}
where $\widehat \Phi^n_\gamma\doteq \exp\big(\delta  \widehat \Lambda^*_\gamma\big)-\exp\big( \delta V^{n}(0)\big) $. From here, the rest of the proof follows along the same line as the proof of Lemma~\ref{lem-L-conv}. Using~\eqref{eq-est-v-v^*-gamma}, for any $x\in \RR^d$ we have 
 \begin{align*}
\Phi^{n-1}_\gamma(0) - &(\kappa_\gamma)^{n-1}\Big(2+\Upu \sV(x)+\Upu \sV(0)\Big)\vvvert{\Phi_\gamma^0}_{\Upu,\sV}\\
&\leq \Phi^{n-1}_\gamma(x) \leq \Phi^{n-1}_\gamma(0) + (\kappa_\gamma)^{n-1}\Big(2+\Upu \sV(x)+\Upu \sV(0)\Big)\vvvert{\Phi_\gamma^0}_{\Upu,\sV}\,.
 \end{align*}
Define $\bar \sV_\Upu(x)\doteq 2+\Upu \sV(x)+\Upu \sV(0)$. From the above two displays, we obtain
\begin{align*}
\Phi_\gamma^{n-1}(0)-(\kappa_\gamma)^{n-1}\vvvert{\Phi_\gamma^0}_{\Upu,\sV}\E_0^{\widehat  v,w^*} \big[\bar \sV_\Upu (Z_\tau)\big]&\leq \Phi_\gamma^{n}(0)  +  {\tau}{}\big(\gamma\Lambda_\gamma^*-\exp\big( \delta V^{n-1}(0)\big)\big)\\
& \leq \Phi_\gamma^{n-1}(0)+(\kappa_\gamma)^{n-1}\vvvert{\Phi_\gamma^0}_{\Upu,\sV}\E_0^{ v^*,\widehat w} \big[\bar \sV_\Upu (Z_\tau)\big]\,.
\end{align*}
Using Lemma~\ref{lem-exp-conv} and It\^o's formula, we can conclude that  for  $(v,w)= (\widehat  v,w^*)$ or $(v^*,\widehat  w)$,
\begin{align*} 
\E_0^{v,w}\big[\bar \sV(Z_\tau)\big] & = 2+\Upu \E_0^{v,w}\big[\sV(Z_\tau)\big] +\Upu \sV(0) \doteq  C_{\gamma,0}\,. \end{align*}
From here, we obtain
\begin{align*}
\Phi^{n-1}_\gamma(0)+ &{\tau}{}\widehat \Phi^{n-1}_\gamma- C_{\gamma,0}(\kappa_\gamma)^{n-1}\vvvert{\Phi_\gamma^0}_{\Upu,\sV}\leq \Phi_\gamma^n(0)\leq \Phi_\gamma^{n-1}(0)+ {\tau}{}\widehat \Phi^{n-1}_\gamma+ C_{\gamma,0}(\kappa_\gamma)^{n-1}\vvvert{\Phi_\gamma^0}_{\Upu,\sV}\,.
\end{align*}
Using the inequality: $e^x \geq 1+x$ for $x\in \RR$ and the definition of $\widehat \Phi^{n-1}_\gamma$, we can immediately conclude that 
$$ - \delta \exp\big(\delta \Phi^{n-1}_\gamma(0)\big) \Phi^{n-1}_\gamma (0)\leq  \widehat \Phi^{n-1}_\gamma \leq -  \delta \exp\big(\delta \widehat \Lambda^*_\gamma\big) \Phi^{n-1}_\gamma (0)\,.$$
From the above two displays, we have
\begin{align*}
 \Phi_\gamma^n(0)&\leq\Big(1- \tau\delta \exp\big(\delta \widehat \Lambda^*_\gamma\big)\Big) \Phi_\gamma^{n-1}(0)+ C_{\gamma,0}(\kappa_\gamma)^{n-1}\vvvert{\Phi_\gamma^0}_{\Upu,\sV}\,.
\end{align*} 
Similarly, we also have 
\begin{align*}
 \Phi_\gamma^n(0)&\geq  \Big(1- \tau\delta \exp\big(\delta \Phi^{n-1}_\gamma(0)\big)\Big) \Phi_\gamma^{n-1}(0)+ C_{\gamma,0}(\kappa_\gamma)^{n-1}\vvvert{\Phi_\gamma^0}_{\Upu,\sV}\,.
\end{align*} 
Therefore, from the definition of $\widetilde \kappa_\gamma$, we have 
$$ |\Phi_\gamma ^n(0)|\leq\widetilde \kappa_\gamma |\Phi_\gamma^{n-1}(0)|+ C_{\gamma,0}(\kappa_\gamma)^{n-1}\vvvert{\Phi_\gamma^0}_{\Upu,\sV} $$
This completes the proof.
\end{proof}
\begin{proof}[\bf Completing the proof of Theorem~\ref{thm-rvi-gamma}] Combining Lemmas~\ref{lem-V-conv-gamma} and~\ref{lem-lambda-gamma-semi-contract}, we immediately obtain Theorem~\ref{thm-rvi-gamma}. 
\end{proof}

\medskip

\appendix
\section{A  contraction property of Markov kernels in general state spaces }
We now recall a result from \cite{hairer2011} for Markov chains in general state spaces. Most of the content of this  section  is taken from   \cite{hairer2011}, where the authors give a new and simpler proof of  Harris' ergodic theorem of Markov chains in general state spaces; see Theorem 1.2 of that paper.  Their proof of Harris' ergodic theorem critically hinges on the contraction property of  the associated Markov kernel with respect to a particular weighted semi-norm (see Section~\ref{sec-semi-norm} below for its introduction). In this work, we extensively make use of this contraction property,  in the context of state space being $\RR^d$.  Before we proceed further, let us introduce a few important notions. We fix a measurable space $(\cX,\cB)$ for the entire section.

\subsection{ A weighted norm and a corresponding weighted semi-norm}\label{sec-semi-norm} Recall the notation $ \|f\|_{\Upu,\mathscr{F}}$ from~\eqref{def-norm} and $\cC_{\beta,\mathscr{F}}(\cX)$.    For any $f\in \cC_{\Upu,\mathscr{F}}(\cX)$, we also define a semi-norm $\vvvert{\cdot}_{\Upu,\mathscr{F}}$ that is very closely related to the norm $\|\cdot\|_{\Upu,\mathscr{F}}$:
\begin{align}\label{def-norm-w} \vvvert{f}_{\Upu,\mathscr{F}}\doteq  \sup_{x\neq y\in \cX}\frac{|f(x)-f(y)|}{2+\Upu\mathscr{F}(x)+\Upu\mathscr{F}(y)}\,.\end{align}
It is easy to verify that $\vvvert{\cdot}_{\Upu,\mathscr{F}}$ is a semi-norm.  To the best of our knowledge, the semi-norm $\vvvert{\cdot}_{\Upu,\mathscr{F}}$ is first introduced in \cite{hairer2011}.  
The  result below (which is \cite[Lemma 2.1]{hairer2011}) shows explicitly the aforementioned relation between  $\|\cdot\|_{\Upu,\mathscr{F}}$   and $\vvvert{\cdot}_{\Upu,\mathscr{F}}$. 
\begin{lemma}\label{lem-compare} The following identity holds:
$$ \vvvert{f}_{\Upu,\mathscr{F}}=\inf_{c\in \RR} \|f+c\|_{\Upu,\mathscr{F}}\,.$$
Moreover, we have \[ \inf_{c\in \RR} \|f+c\|_{\Upu,\mathscr{F}}= \|f+c^*_f\|_{\Upu,\mathscr{F}},\]
for \begin{align}\label{def-c*}c^*_f\doteq \inf_{x\in \cX}\big(\vvvert{f}_{\Upu,\mathscr{F}}\big(1+\Upu\mathscr{F}(x)\big)-f(x)\big).\end{align}
\end{lemma}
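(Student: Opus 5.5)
The identity $\vvvert{f}_{\Upu,\mathscr{F}}=\inf_{c}\|f+c\|_{\Upu,\mathscr{F}}$ will be proved by two inequalities, and the second part by checking that $c^*_f$ attains the infimum. Throughout write $W(x)\doteq 1+\Upu\mathscr{F}(x)\geq 1$ and $N\doteq\vvvert{f}_{\Upu,\mathscr{F}}$, which we may assume finite (otherwise both sides are infinite, since the easy inequality below bounds $N$ by any $\|f+c\|$).

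\emph{Easy direction.} For any $c\in\RR$ and $x\neq y$,
\[
|f(x)-f(y)|=|(f+c)(x)-(f+c)(y)|\leq \|f+c\|_{\Upu,\mathscr{F}}\big(W(x)+W(y)\big).
\]
Since $W(x)+W(y)=2+\Upu\mathscr{F}(x)+\Upu\mathscr{F}(y)$, dividing and taking the supremum gives $N\leq \|f+c\|_{\Upu,\mathscr{F}}$. Taking the infimum over $c$ yields $N\leq\inf_c\|f+c\|_{\Upu,\mathscr{F}}$.

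\emph{Reverse direction and attainment.} This is the main step. Set $c=c^*_f=\inf_x\big(NW(x)-f(x)\big)$. The definition of $N$ gives, for all $x,y$,
\[
f(x)-f(y)\leq N\big(W(x)+W(y)\big),
\]
which holds trivially when $x=y$. Rearranging, $f(x)-NW(x)\leq NW(y)-f(y)$. Taking $\sup_x$ on the left and $\inf_y$ on the right shows that
\[
\sup_x\big(f(x)-NW(x)\big)\leq c^*_f ,
\]
so in particular $c^*_f>-\infty$. It is also finite above, since $c^*_f\leq NW(x_0)-f(x_0)$ for any fixed $x_0$. We then have two pointwise bounds:
\begin{itemize}
\item[] \emph{Upper bound.} By definition of $c^*_f$ as an infimum, $NW(x)-f(x)\geq c^*_f$, i.e.\ $f(x)+c^*_f\leq NW(x)$.
\item[] \emph{Lower bound.} From the displayed supremum bound, $f(x)-NW(x)\leq c^*_f$ for every $x$. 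This alone does not give a lower bound on $f+c^*_f$, so we return to the pairwise inequality: $f(y)+NW(y)\geq f(x)-NW(x)$ for all $x,y$. Fix $y$ and let $\varepsilon>0$. Choose $x$ with $NW(x)-f(x)\leq c^*_f+\varepsilon$, i.e.\ $f(x)-NW(x)\geq -c^*_f-\varepsilon$. Then $f(y)+NW(y)\geq -c^*_f-\varepsilon$, so $f(y)+c^*_f\geq -NW(y)-\varepsilon$. Letting $\varepsilon\to0$ gives $f+c^*_f\geq -NW$.
\end{itemize}
Hence $|f+c^*_f|\leq NW$ pointwise, so $\|f+c^*_f\|_{\Upu,\mathscr{F}}\leq N$. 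Combined with the easy direction, $N=\inf_c\|f+c\|_{\Upu,\mathscr{F}}=\|f+c^*_f\|_{\Upu,\mathscr{F}}$.

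The only delicate point is the lower bound via the near-minimizer $x$, which uses the pairwise inequality in the reversed roles. Measurability plays no role, and $f(x)$ is finite for each $x$.
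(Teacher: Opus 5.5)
Your proof is correct and follows essentially the same route as \cite[Lemma 2.1]{hairer2011}, which the paper cites instead of giving its own proof. The triangle inequality gives $\vvvert{f}_{\Upu,\mathscr{F}}\le\|f+c\|_{\Upu,\mathscr{F}}$ for every $c$. For $c=c^*_f$, both pointwise bounds $-NW\le f+c^*_f\le NW$ come from the pairwise inequality $f(x)-f(y)\le N\big(W(x)+W(y)\big)$. Your $\varepsilon$-step for the lower bound can be shortened: taking the infimum over $x$ directly in $NW(x)-f(x)\ge -NW(y)-f(y)$ gives $c^*_f\ge -NW(y)-f(y)$ at once.
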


\subsection{Contraction property with respect to the semi-norm in~\eqref{def-norm-w}}
We now state the main result of this section which is Theorem 3.1 of \cite{hairer2011}.
\begin{proposition}\label{prop-contract}
Let  $\tP:\cX\times\cB\rightarrow [0,1]$ be a Markov kernel on $(\cX,\cB)$. 
Suppose that $\tP$ satisfies the following two properties: 
\begin{enumerate}
\item[(i)] There exist an inf-compact function $\mathfrak{V}:\cX\rightarrow [0,\infty)$,  and constants $0<\eta<1$ and $K>0$ such that  for every $x\in \cX$,
\begin{align}\label{eq-drift} \big(\tP\mathfrak{V}\big)(x)\leq \eta \mathfrak{V}(x) + K\,. \end{align}
\item[(ii)] There exists constants $0<\alpha<1$, $R>\frac{2K}{1-\eta}$ and a probability measure $\nu\in \calP(\cX)$ such that
\begin{align}\label{eq-minor} \inf_{x\in \cC_R} \tP(x,\cdot)\geq \alpha \nu (\cdot),\end{align}
for $\cC_R\doteq \{x\in \cX: \mathfrak{V}(x)\leq R\}$.  Here, $K$ and $\eta$ are the same as in (i) above.
\end{enumerate}
Then, for 
\begin{align}\label{eq-constants-contract} \eta_0\doteq \eta+\frac{2K}{R}<1,\quad 0<\alpha_0<\alpha,\quad \Upu\doteq   \frac{\alpha_0}{K},\quad\text{and}\quad \kappa \doteq \big(1-\alpha+\alpha_0\big)\vee \big(\frac{2+\Upu R\eta_0}{2+\Upu R}\big)<1,\end{align}
we have
\begin{align}\label{eq-contract}
\vvvert{\tP f}_{\Upu,\mathfrak{V}}\leq \kappa \vvvert{f}_{\Upu,\mathfrak{V}}\,.
\end{align}
\end{proposition}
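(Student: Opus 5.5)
The plan is to reduce \eqref{eq-contract} to a pointwise two-point estimate and prove it by splitting into two cases, according to whether $(x,y)$ lies far out in the tails or inside the small set $\cC_R$. Both sides of \eqref{eq-contract} are homogeneous in $f$. Also, $\tP(f+c)=\tP f+c$ because $\tP$ is a Markov kernel, and the semi-norm ignores additive constants. So I may assume $\vvvert{f}_{\Upu,\mathfrak{V}}=1$ and, by Lemma~\ref{lem-compare}, replace $f$ by $f+c^*_f$, so that $\|f\|_{\Upu,\mathfrak{V}}=1$, i.e. $|f(z)|\leq 1+\Upu\mathfrak{V}(z)$ for all $z$. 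It then suffices to show, for all $x\neq y$,
\begin{align*}
|\tP f(x)-\tP f(y)|\leq \kappa\big(2+\Upu\mathfrak{V}(x)+\Upu\mathfrak{V}(y)\big).
\end{align*}

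\emph{Case 1: $\mathfrak{V}(x)+\mathfrak{V}(y)\geq R$.} I bound each term separately: $|\tP f(z)|\leq 1+\Upu(\tP\mathfrak{V})(z)\leq 1+\Upu\eta\mathfrak{V}(z)+\Upu K$ by \eqref{eq-drift}. Summing over $z=x,y$ and using $2K\leq \frac{2K}{R}(\mathfrak{V}(x)+\mathfrak{V}(y))$ gives
\begin{align*}
|\tP f(x)-\tP f(y)|\leq 2+\Upu\eta_0 s, \qquad s\doteq\mathfrak{V}(x)+\mathfrak{V}(y).
\end{align*}
Since $\eta_0<1$, the ratio $s\mapsto (2+\Upu\eta_0 s)/(2+\Upu s)$ is nonincreasing. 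Hence for $s\geq R$ it is at most $(2+\Upu R\eta_0)/(2+\Upu R)\leq\kappa$.

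\emph{Case 2: $\mathfrak{V}(x)+\mathfrak{V}(y)<R$.} Here both $x,y\in\cC_R$, so by \eqref{eq-minor} I can write $\tP(z,\cdot)=\alpha\nu+(1-\alpha)\widetilde\tP(z,\cdot)$ with $\widetilde\tP(z,\cdot)$ a probability measure for $z=x,y$. The $\nu$-parts cancel, giving $\tP f(x)-\tP f(y)=(1-\alpha)\big(\widetilde\tP f(x)-\widetilde\tP f(y)\big)$. Next, $(1-\alpha)|\widetilde\tP f(z)|\leq (1-\alpha)+\Upu(1-\alpha)\widetilde\tP\mathfrak{V}(z)$, and $(1-\alpha)\widetilde\tP\mathfrak{V}(z)\leq \tP\mathfrak{V}(z)\leq \eta\mathfrak{V}(z)+K$. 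Using $\Upu K=\alpha_0$, this yields
\begin{align*}
|\tP f(x)-\tP f(y)|\leq 2(1-\alpha+\alpha_0)+\Upu\eta\big(\mathfrak{V}(x)+\mathfrak{V}(y)\big).
\end{align*}
To conclude I need $\eta\leq\kappa$. This is the only slightly delicate point: I would check that $\kappa\geq (2+\Upu R\eta_0)/(2+\Upu R)\geq\eta_0\geq\eta$, the middle inequality holding because $\eta_0<1$. Then the right side is at most $\kappa(2+\Upu\mathfrak{V}(x)+\Upu\mathfrak{V}(y))$, since $1-\alpha+\alpha_0\leq\kappa$.

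Taking the supremum over $x\neq y$ and combining the two cases gives \eqref{eq-contract}. The main obstacle is only the bookkeeping of constants, specifically that the single $\kappa$ in \eqref{eq-constants-contract} dominates both $1-\alpha+\alpha_0$ and $\eta$. The monotonicity of the ratio in Case 1 takes care of the tail case.
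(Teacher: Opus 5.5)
Your argument is correct and is essentially the proof of \cite[Theorem~3.1]{hairer2011}, which the paper cites without reproducing. That proof normalizes via Lemma~\ref{lem-compare} so that $|f|\le 1+\Upu\mathfrak{V}$, then splits on whether $\mathfrak{V}(x)+\mathfrak{V}(y)\geq R$ (drift bound plus monotonicity of $s\mapsto(2+\Upu\eta_0 s)/(2+\Upu s)$) or $<R$ (the decomposition $\tP=\alpha\nu+(1-\alpha)\widetilde\tP$ on $\cC_R$ together with $\Upu K=\alpha_0$), and your explicit check $\kappa\geq(2+\Upu R\eta_0)/(2+\Upu R)\geq\eta_0\geq\eta$ correctly closes the second case.
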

\begin{remark}\label{rem-special-beta} In the above proposition, $\eta_0<1$ follows from the choice of $R$, and $\kappa<1$ follows from the choice of $\alpha_0$ and the fact that $\eta_0<1$\,. For the sake of illustrating the contraction in a more explicit manner (without focusing on the optimal contraction rate), we set $R=\frac{4K}{1-\gamma}$, $\alpha_0=\frac{\alpha}{2}$. Then, from~\eqref{eq-constants-contract}, we have $\eta_0= \frac{1+\eta}{2}$,
 $\beta= \frac{\alpha}{2K}$ and $$\kappa= \Big(1-\frac{\alpha}{2}\Big)\vee  \Big(\frac{2(1-\eta)+\alpha (1+\eta)}{2(1-\eta)+ 2\alpha}\Big)\,.$$
\end{remark}
\begin{remark} The value of $\Upu>0$ is irrelevant for $\vvvert{\cdot}$ to be well-defined. However, $\Upu$ is chosen sufficiently small for the above result to hold. It should also be pointed out that in \cite[Theorem 3.1]{hairer2011}, $\Upu$  is chosen to be only $\frac{\alpha_0}{K}$. However, one can easily check that the above proposition holds for any $\Upu\leq \frac{\alpha_0}{K}$ with the only caveat being the `worsening' of $\kappa$ (that is, as $\Upu$ decreases, $\kappa$ increases). Hence, $\Upu=\frac{\alpha_0}{K}$ is the optimal choice of $\Upu$ for which the above proposition holds. 
\end{remark}
\begin{remark} As far as this work is concerned, the strength of Proposition~\ref{prop-contract} lies in the  fact that it provides us with explicit values of constants involved like $\Upu$ and $\kappa$.  We remark that  the explicit values of $\Upu$ and $\kappa$ are not important in the case of the CEC problem; only the existence of $\Upu$ and that fact that $\kappa<1$ are sufficient for the exponential convergence in Theorem~\ref{thm-rvi-0}. On the other hand, the explicit values of $\Upu$ and $\kappa$ become important in the proof of Theorem~\ref{thm-rvi-gamma}.
\end{remark}

\medskip

\bibliographystyle{abbrv}
\bibliography{RVI}

@article{hordijk1974convergence,
 ISSN = {00251909, 15265501},
 URL = {http://www.jstor.org/stable/2629972},
 author = {Arie Hordijk and Henk Tijms},
 journal = {Management Science},
 number = {11},
 pages = {1432--1438},
 publisher = {INFORMS},
 title = {Convergence Results and Approximations for Optimal $(s, {S})$ Policies},
 urldate = {2026-05-08},
 volume = {20},
 year = {1974}
}

@article{schweitzer1988contraction,
  title={Contraction mappings underlying undiscounted {M}arkov decision problems -- {II}},
  author={Schweitzer, PJ},
  journal={Journal of Mathematical Analysis and Applications},
  volume={132},
  number={1},
  pages={154--170},
  year={1988},
  publisher={Elsevier}
}

@article{federgruen1978contraction,
  title={Contraction mappings underlying undiscounted {M}arkov decision problems},
  author={Federgruen, Awi and Schweitzer, Paul J and Tijms, Hendrik Cornelis},
  journal={Journal of Mathematical Analysis and Applications},
  volume={65},
  number={3},
  pages={711--730},
  year={1978},
  publisher={Elsevier}
}

@article{cavaozs1996value1,
author = {Cavazos-Cadena, Rolando},
title = {Value Iteration in a Class of Communicating {M}arkov Decision Chains with the Average Cost Criterion},
journal = {SIAM Journal on Control and Optimization},
volume = {34},
number = {6},
pages = {1848-1873},
year = {1996},
doi = {10.1137/S1064827590192863},

URL = { 
    
        https://doi.org/10.1137/S1064827590192863
    
    

},
eprint = { 
    
        https://doi.org/10.1137/S1064827590192863
    
    

}
}

@article{cavazos1996value,
 ISSN = {00219002},
 URL = {http://www.jstor.org/stable/3214980},
 author = {Rolando Cavazos-Cadena and Emmanuel Fernández-Gaucherand},
 journal = {Journal of Applied Probability},
 number = {4},
 pages = {986--1002},
 publisher = {Applied Probability Trust},
 title = {Value Iteration in a Class of Average Controlled {M}arkov Chains with Unbounded Costs: Necessary and Sufficient Conditions for Pointwise Convergence},
 urldate = {2026-05-06},
 volume = {33},
 year = {1996}
}

@article{aviv1999value,
title = {The value iteration method for countable state {M}arkov decision processes},
journal = {Operations Research Letters},
volume = {24},
number = {5},
pages = {223-234},
year = {1999},
issn = {0167-6377},
doi = {https://doi.org/10.1016/S0167-6377(99)00015-2},
url = {https://www.sciencedirect.com/science/article/pii/S0167637799000152},
author = {Yossi Aviv and Awi Federgruen}
}

@article{arapostathis2019open,
author = {Arapostathis, Ari},
title = {Open Problem{--}Convergence and Asymptotic Optimality of the Relative Value Iteration in Ergodic Control},
journal = {Stochastic Systems},
volume = {9},
number = {3},
pages = {292-294},
year = {2019}
}

@article{cavazos2003value,
author = {Cavazos-Cadena, Rolando and Montes-de-Oca, Ra\'{u}l},
title = {The Value Iteration Algorithm in Risk-Sensitive Average {M}arkov Decision Chains with Finite State Space},
journal = {Mathematics of Operations Research},
volume = {28},
number = {4},
pages = {752-776},
year = {2003}
}

@article{anugu2025new,
  title={Jacobi-like relative value iteration algorithms for the ergodic risk-sensitive control of {M}arkov chains},
  author={Anugu, Sumith Reddy and Pang, Guodong and Sassone, Nicola},
 journal={submitted}
}

@article{bielecki1999risk,
  title={Risk sensitive control of finite state {M}arkov chains in discrete time, with applications to portfolio management},
  author={Bielecki, Tomasz and Hern{\'a}ndez-Hern{\'a}ndez, Daniel and Pliska, Stanley R},
  journal={Mathematical Methods of Operations Research},
  volume={50},
  number={2},
  pages={167--188},
  year={1999},
  publisher={Springer}
}

@article{bertsekas1998new,
author = {Bertsekas, Dimitri P.},
title = {A New Value Iteration method for the Average Cost Dynamic Programming Problem},
journal = {SIAM Journal on Control and Optimization},
volume = {36},
number = {2},
pages = {742-759},
year = {1998}}

@article{white1963dynamic,
title = {Dynamic programming, {M}arkov chains, and the method of successive approximations},
journal = {Journal of Mathematical Analysis and Applications},
volume = {6},
number = {3},
pages = {373-376},
year = {1963},
issn = {0022-247X},
doi = {https://doi.org/10.1016/0022-247X(63)90017-9},
url = {https://www.sciencedirect.com/science/article/pii/0022247X63900179},
author = {D.J White}
}

@article{borkar2002risk,
  title={Risk-sensitive optimal control for {M}arkov decision processes with monotone cost},
  author={Borkar, Vivek S and Meyn, Sean P},
  journal={Mathematics of Operations Research},
  volume={27},
  number={1},
  pages={192--209},
  year={2002},
  publisher={INFORMS}
}

@article{biswas2022ergodic,
     author = {Biswas, Anup and Pradhan, Somnath},
     title = {Ergodic risk-sensitive control of {M}arkov processes on countable state space revisited},
     journal = {ESAIM: Control, Optimisation and Calculus of Variations},
     year = {2022},
     publisher = {EDP-Sciences},
     volume = {28, article number 26},
     doi = {10.1051/cocv/2022018},
     mrnumber = {4429406},
     zbl = {1493.90218},
     language = {en},
     url = {https://www.numdam.org/articles/10.1051/cocv/2022018/}
}

@ARTICLE{meyn1997policy,
  author={Meyn, S.P.},
  journal={IEEE Transactions on Automatic Control}, 
  title={The policy iteration algorithm for average reward {M}arkov decision processes with general state space}, 
  year={1997},
  volume={42},
  number={12},
  pages={1663-1680},
  doi={10.1109/9.650016}}

@article{arapostathis2021policy, title={On the policy improvement algorithm for ergodic risk-sensitive control}, volume={151}, DOI={10.1017/prm.2020.61}, number={4}, journal={Proceedings of the Royal Society of Edinburgh: Section A Mathematics}, author={Arapostathis, Ari and Biswas, Anup and Pradhan, Somnath}, year={2021}, pages={1305–1330}}

@Inbook{arapostathis2012,
author="Arapostathis, Ari",
editor="Hern{\'a}ndez-Hern{\'a}ndez, Daniel
and Minj{\'a}rez-Sosa, J. Adolfo",
title="On the Policy Iteration Algorithm for Nondegenerate Controlled Diffusions Under the Ergodic Criterion",
bookTitle="Optimization, Control, and Applications of Stochastic Systems: In Honor of On{\'e}simo Hern{\'a}ndez-Lerma",
year="2012",
publisher="Birkh{\"a}user Boston",
address="Boston",
pages="1--12"
}

@article{arapostathis1993discrete,
author = {Arapostathis, Aristotle and Borkar, Vivek S. and Fern\'{a}ndez-Gaucherand, Emmanuel and Ghosh, Mrinal K. and Marcus, Steven I.},
title = {Discrete-Time Controlled {M}arkov Processes with Average Cost Criterion: A Survey},
journal = {SIAM Journal on Control and Optimization},
volume = {31},
number = {2},
pages = {282-344},
year = {1993}
}

@book{arapostathis2012ergodic, place={Cambridge}, series={Encyclopedia of Mathematics and its Applications}, title={Ergodic Control of Diffusion Processes}, publisher={Cambridge University Press}, author={Arapostathis, Ari and Borkar, Vivek S. and Ghosh, Mrinal K.}, year={2011}, collection={Encyclopedia of Mathematics and its Applications}}

@article{FM95,
	author = {Fleming, Wendell H. and McEneaney, William M.},
	doi = {10.1137/S0363012993258720},
	eprint = {https://doi.org/10.1137/S0363012993258720},
	journal = {SIAM Journal on Control and Optimization},
	number = {6},
	pages = {1881-1915},
	title = {Risk-Sensitive Control on an Infinite Time Horizon},
	url = {https://doi.org/10.1137/S0363012993258720},
	volume = {33},
	year = {1995}}

@article{biswas2022survey,
	author = {Anup Biswas and Vivek S. Borkar},
	journal = {Annual Reviews in Control},
	pages = {118-141},
	title = {Ergodic risk-sensitive control---{A} survey},
	volume = {55},
	year = {2023}}

@article{AB20,
	author = {Arapostathis, Ari and Biswas, Anup},
	journal = {SIAM Journal on Control and Optimization},
	number = {1},
	pages = {85--103},
	publisher = {SIAM},
	title = {A variational formula for risk-sensitive control of diffusions in {$\RR^d$}},
	volume = {58},
	year = {2020}}

@article{ABBK20,
	author = {Arapostathis, Ari and Biswas, Anup and Borkar, Vivek S and Kumar, K Suresh},
	journal = {SIAM Journal on Control and Optimization},
	number = {6},
	pages = {3785--3813},
	publisher = {SIAM},
	title = {A Variational Characterization of the Risk-Sensitive Average Reward for Controlled Diffusions on {$\RR^d$}},
	volume = {58},
	year = {2020}}

@book{karatzas_brownian_1998,
	address = {New York, NY},
	series = {Graduate {Texts} in {Mathematics}},
	title = {Brownian {Motion} and {Stochastic} {Calculus}},
	volume = {113},
	copyright = {http://www.springer.com/tdm},
	isbn = {9780387976556 9781461209492},
	url = {http://link.springer.com/10.1007/978-1-4612-0949-2},
	language = {en},
	urldate = {2026-03-26},
	publisher = {Springer},
	author = {Karatzas, Ioannis and Shreve, Steven E.},
	year = {1998},
	doi = {10.1007/978-1-4612-0949-2}
}

@article{arapostathis2017correction,
author = {Arapostathis, Ari and Borkar, Vivek S.},
title = {A Correction to “A Relative Value Iteration Algorithm for Nondegenerate Controlled Diffusions''},
journal = {SIAM Journal on Control and Optimization},
volume = {55},
number = {3},
pages = {1711--1715},
year = {2017}

}

@Inbook{arapostathis2013relative,
author="Arapostathis, Ari
and Borkar, Vivek S.
and Kumar, K. Suresh",
editor="K{\v{r}}ivan, Vlastimil
and Zaccour, Georges",
title="Relative Value Iteration for Stochastic Differential Games",
bookTitle="Advances in Dynamic Games: Theory, Applications, and Numerical Methods",
year="2013",
publisher="Springer International Publishing",
address="Cham",
pages="3--27",
isbn="978-3-319-02690-9",
doi="10.1007/978-3-319-02690-9_1",
url="https://doi.org/10.1007/978-3-319-02690-9_1"
}

@article{hmedi2023global,
title = {On the global convergence of relative value iteration for infinite-horizon risk-sensitive control of diffusions},
journal = {Systems \& Control Letters},
author = {Hassan Hmedi and Ari Arapostathis and Guodong Pang},
volume = {171},
pages = {105413},
year = {2023},
issn = {0167--6911}

}

@article{arapostathis2020relative,
author = {Arapostathis, Ari and Borkar, Vivek},
year = {2020},
month = {01},
pages = {9-24},
title = {On the relative value iteration with a risk-sensitive criterion},
volume = {122},
journal = {Banach Center Publications},
doi = {10.4064/bc122-1}
}

@article{arapostathis2014convergence,
author = {Arapostathis, Ari and Borkar, Vivek S. and Kumar, K. Suresh},
title = {Convergence of the Relative Value Iteration for the Ergodic Control Problem of Nondegenerate Diffusions under Near-Monotone Costs},
journal = {SIAM Journal on Control and Optimization},
volume = {52},
number = {1},
pages = {1--31},
year = {2014}
}

@article{anugu24ergodicgen,
      title={Ergodic Risk Sensitive Control of Diffusions under a General Structural Hypothesis}, 
      author={Sumith Reddy Anugu and Guodong Pang},
      year={2025},
      eprint={2511.01100},
     journal={arxiv:2511.01100},
      url={https://arxiv.org/abs/2511.01100}
}

@article{arapostathis2012relative,
	author = {Arapostathis, Ari and Borkar, Vivek S.},
	doi = {10.1137/110850529},
	eprint = {https://doi.org/10.1137/110850529},
	journal = {SIAM Journal on Control and Optimization},
	number = {4},
	pages = {1886-1902},
	title = {A Relative Value Iteration Algorithm for Nondegenerate Controlled Diffusions},
	url = {https://doi.org/10.1137/110850529},
	volume = {50},
	year = {2012}}

@inproceedings{hairer2011,
	address = {Basel},
	author = {Hairer, Martin and Mattingly, Jonathan C.},
	booktitle = {Seminar on Stochastic Analysis, Random Fields and Applications VI},
	isbn = {978-3-0348-0021-1},
	pages = {109--117},
	publisher = {Springer Basel},
	title = {Yet Another Look at {H}arris' Ergodic Theorem for {M}arkov Chains},
	year = {2011}}

@article{menozzi2021,
	author = {S. Menozzi and A. Pesce and X. Zhang},
	issn = {0022-0396},
	journal = {Journal of Differential Equations},
	pages = {330-369},
	title = {Density and gradient estimates for non degenerate {B}rownian {SDE}s with unbounded measurable drift},
	volume = {272},
	year = {2021}
}

@article{arapostathis2019strict,
  title={Strict monotonicity of principal eigenvalues of elliptic operators in $\mathbb{R}^d$ and risk-sensitive control},
  author={Arapostathis, Ari and Biswas, Anup and Saha, Subhamay},
  journal={Journal de Math{\'e}matiques Pures et Appliqu{\'e}es},
  volume={124},
  pages={169--219},
  year={2019}
}

\end{document}